\newcommand{\rk}{{\rm rk}}
\newcommand{\sJ}{{\mathcal J}}
\newcommand{\sK}{{\mathcal K}}
\newtheorem{lemma1}{}[section]
\newenvironment{lemma}{\begin{lemma1}{\bf Lemma.}}{\end{lemma1}}
\newenvironment{example}{\begin{lemma1}{\bf Example.}\rm}{\end{lemma1}}
\newenvironment{theorem}{\begin{lemma1}{\bf Theorem.}}{\end{lemma1}}
\newenvironment{proposition}{\begin{lemma1}{\bf Proposition.}}{\end{lemma1}}
\newenvironment{corollary}{\begin{lemma1}{\bf Corollary.}}{\end{lemma1}}
\newenvironment{remark}{\begin{lemma1}{\bf Remark.}\rm}{\end{lemma1}}
\newenvironment{definition}{\begin{lemma1}{\bf Definition.}}{\end{lemma1}}
\newenvironment{notation}{\begin{lemma1}{\bf Notation.}}{\end{lemma1}}
\newenvironment{assumption}{\begin{lemma1}{\bf Assumption.}}{\end{lemma1}}
\newenvironment{remark*}{{\bf Remark.}}{}
\newenvironment{example*}{{\bf Example.}}{}
\newenvironment{assumption*}{{\bf Assumption.}}{}
\newcommand{\R}{\ensuremath{\mathbb{R}}}
\newcommand{\Q}{\ensuremath{\mathbb{Q}}}
\newcommand{\Z}{\ensuremath{\mathbb{Z}}}
\newcommand{\C}{\ensuremath{\mathbb{C}}}
\newcommand{\N}{\ensuremath{\mathbb{N}}}
\newcommand{\PP}{\ensuremath{\mathbb{P}}}
\newcommand{\Homsheaf} { \ensuremath{ \mathcal{H} \! om}}
\newcommand{\merom}[3]{\ensuremath{#1:#2 \dashrightarrow #3}}
\newcommand{\holom}[3]{\ensuremath{#1:#2  \rightarrow #3}}
\newcommand{\fibre}[2]{\ensuremath{#1^{-1} (#2)}}
\newcommand\sA{{\mathcal A}}
\newcommand\sE{{\mathcal E}}
\newcommand\sF{{\mathcal F}}
\newcommand\sG{{\mathcal G}}
\newcommand\sH{{\mathcal H}}
\newcommand\sI{{\mathcal I}}
\newcommand\sQ{{\mathcal Q}}
\newcommand\sL{{\mathcal L}}
\newcommand\sO{{\mathcal O}}
\newcommand\sS{{\mathcal S}}
\newcommand\sC{{\mathcal C}}
\newcommand\bQ{{\mathbb Q}}
\newcommand\sLe{{L_{\varepsilon}}}
\newcommand\sLee{{L^3_{\varepsilon}}}
\newcommand\sLef{{L^2_{\varepsilon}}}
\DeclareMathOperator*{\pic}{Pic}
\DeclareMathOperator*{\sing}{sing}
\DeclareMathOperator*{\red}{red}
\DeclareMathOperator*{\nons}{nons}
\newcommand{\Chow}[1]{\ensuremath{\mbox{\rm Chow}(#1)}}
\DeclareMathOperator*{\supp}{Supp}
\newcommand{\NAY}{\overline{\mbox{NA}}(Y)}
\newcommand{\NEX}{\overline{\mbox{NE}}(X)}
\newcommand{\NAX}{\overline{\mbox{NA}}(X)}
\newcommand{\NE}[1]{ \ensuremath{ \overline { \mbox{NE} }(#1)} }
\newcommand{\NA}[1]{ \ensuremath{ \overline { \mbox{NA} }(#1)} }
\DeclareMathOperator*{\NS}{NS}
\title{Abundance for K\"ahler threefolds} 
\date{\today}
\subjclass[2000]{32J27, 14E30, 14J30, 32J17, 32J25}
\keywords{MMP, rational curves, Zariski decomposition, K\"ahler manifolds, abundance}
\author{Fr\'ed\'eric Campana}
\author{Andreas H\"oring}
\author{Thomas Peternell}
\address{Fr\'ed\'eric Campana, Institut Elie Cartan,
Universit\'e Henri Poincar\'e,
BP 239,
F-54506. Vandoeuvre-les-Nancy C\'edex,
et: Institut Universitaire de France\\
}
\email{frederic.campana@univ-lorraine.fr}
\address{Andreas H\"oring, Laboratoire de Math{\'e}matiques J.A. Dieudonn{\'e},
UMR 7351 CNRS, Universit{\'e} de Nice Sophia-Antipolis, 06108 Nice Cedex 02, France        
}
\email{hoering@unice.fr}
\address{Thomas Peternell, Mathematisches Institut, Universit\"at Bayreuth, 95440 Bayreuth, 
Germany}
\email{thomas.peternell@uni-bayreuth.de}
\begin{document}

\begin{abstract} 
Let $X$ be a compact K\"ahler threefold with terminal singularities such that $K_X$ is nef.
We prove that $K_X$ is semiample, i.e., some multiple $mK_X$ is generated by global sections. 
\end{abstract}

\maketitle


\section{Introduction}

\subsection{Main results}

Since the 1990's, the minimal model program for smooth complex projective threefolds is complete: 
every such manifold $X$ admits a birational model $X'$, which is $\mathbb Q$-factorial with only terminal singularities
such that  either 
\begin{itemize} 
\item $X'$ carries a Fano fibration, in particular, $X'$ is uniruled, or
\item the canonical bundle $K_{X'}$ is semi-ample, i.e., some positive multiple $m K_{X'}$ is generated by global sections. 
\end{itemize} 
There are basically two parts in the program: first to establish the existence of a model $X'$ which is either a Mori fibre space or has nef canonical divisor, and then to show that nefness implies semi-ampleness. This second part, known as ``abundance'', is established by 
\cite{Miy87, Kaw92c, Uta92}. 

The aim of the present paper is to fully establish the minimal model program in the category of K\"ahler threefolds. The first part of the program, i.e., the existence of a bimeromorphic model $X'$ which is a either Mori fibre space or has nef canonical divisor
was carried out in the papers \cite{HP13a} and \cite{HP15}. Thus it remains to show 
that nefness of the canonical divisor implies semi-ampleness, i.e. 
abundance holds for K\"ahler threefolds:

\begin{theorem} \label{theoremmain} 
Let $X$ be a normal $\Q$-factorial compact K\"ahler threefold with at most terminal singularities
such that $K_X$ is nef. Then $K_X$ is semi-ample, that is some positive multiple $m K_X$ is globally generated.
\end{theorem}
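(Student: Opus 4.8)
The plan is to argue by the numerical dimension $\nu := \nu(K_X) \in \{0,1,2,3\}$ and to prove in each case that $\kappa(X) = \nu$; since $K_X$ is nef, the equality $\kappa(X)=\nu(X)$ then yields semi-ampleness by the Kähler analogue of Kawamata's theorem (nef $+$ $\kappa=\nu$ $\Rightarrow$ semi-ample). On threefolds the latter itself reduces, via the Iitaka fibration of a resolution, to the base-point-free theorem in the big case and to abundance for surfaces and curves, all of which are available.

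First I would dispose of the extreme values. If $\nu=3$, then $K_X$ is nef and big, and the base-point-free theorem in the $\mathbb{Q}$-factorial terminal compact Kähler setting — built on the Kähler minimal model machinery of \cite{HP13a} and \cite{HP15} — shows directly that $K_X$ is semi-ample. If $\nu=0$, then $K_X\equiv 0$ and one must upgrade numerical triviality to torsion: here I would invoke the structure theory of compact Kähler threefolds with numerically trivial canonical class, whereby after a quasi-\'etale cover $X$ decomposes (Beauville--Bogomolov type) into torus, Calabi--Yau and hyperk\"ahler pieces, forcing $mK_X\cong\mathcal{O}_X$ for a suitable $m$.

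The substance is the range $\nu\in\{1,2\}$, where I would first establish non-vanishing, i.e. $\kappa(X)\ge 0$. I would split according to the irregularity $q(X)=h^1(X,\mathcal{O}_X)=\dim\mathrm{Alb}(X)$. If $q(X)\ge 1$, study the Albanese map $a\colon X\to\mathrm{Alb}(X)$ by the dimension of its image: restrict $K_X$ to a general fibre $F$, where $K_F=(K_X)|_F$ is nef and abundance is known in dimension $\le 2$, and combine with the (Fujita--Kawamata--Viehweg type) semipositivity of the direct images $a_*(\mathcal{O}_X(mK_{X/\mathrm{Alb}(X)}))$, valid in the K\"ahler category, to produce sections of $mK_X$; when $a$ is generically finite one moreover uses that subvarieties of complex tori have the expected Kodaira dimension. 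If $q(X)=0$, the situation is genuinely harder, since passing to finite \'etale covers need not produce any irregularity; here I would use generic semipositivity of the (orbifold) cotangent sheaf in the spirit of Miyaoka, fed by the K\"ahler cone and contraction theorems, together with Bogomolov--Sommese vanishing and Campana's orbifold additivity, to rule out $\kappa(X)=-\infty$ for a nef $K_X$ with $\nu\ge 1$. Once $\kappa(X)\ge 0$: if $\kappa(X)=0$ one shows $K_X\equiv 0$, reducing to the already-treated case $\nu=0$; if $\kappa(X)\ge 1$, proceed to the abundance step below.

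Finally, to prove $\kappa(X)=\nu$ when $\kappa(X)\ge 1$, I would pass to the Iitaka fibration $f\colon X'\to Y$ of a resolution $X'\to X$: the general fibre is a curve or a surface whose canonical bundle is nef, hence abundant, and $K_{X'/Y}$ together with the moduli part of the canonical bundle formula controls $K_{X'}$; comparing $\nu$ and $\kappa$ across $f$, using semipositivity of the moduli part and the lower-dimensional abundance already established, forces $\kappa(X)=\nu(X)$. I expect the main obstacle to be precisely the non-vanishing when $q(X)=0$ and $\nu\in\{1,2\}$: there all the ``cheap'' positivity coming from the Albanese is unavailable, and one is forced to transplant Miyaoka-type generic semipositivity and the orbifold minimal model yoga into the K\"ahler setting, which is exactly where the delicate interplay between Hodge theory (available) and the algebraic vanishing and fibration techniques (needing K\"ahler analogues) becomes critical.
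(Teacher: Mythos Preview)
Your outline has a genuine gap at exactly the point you gloss over, and you misidentify where the difficulty lies.

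First, non-vanishing $\kappa(X)\geq 0$ is not the obstacle: for a terminal compact K\"ahler threefold with $K_X$ nef this is already established in \cite[Thm.0.3]{DP03}. Your elaborate case split on $q(X)$, invoking Albanese geometry and Miyaoka-type generic semipositivity to produce a single section, is therefore unnecessary. (For $\nu=3$ one has $X$ Moishezon hence projective, and for $\nu=0$ the equality $\kappa=\nu=0$ is immediate once $\kappa\geq 0$, so Kawamata's theorem applies; the Beauville--Bogomolov route is overkill.)

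The real content is the sentence you pass over: ``if $\kappa(X)=0$ one shows $K_X\equiv 0$''. This is precisely the implication $\kappa=0 \Rightarrow \nu=0$, i.e.\ ruling out $\kappa(X)=0$ with $\nu(X)\in\{1,2\}$, and you offer no mechanism for it. The paper spends the bulk of its length here. One first runs a carefully controlled log MMP for the dlt pair $(X,B)$ with $B=\supp D$, $D\in|mK_X|$ (this requires developing the cone and contraction theorems for non-uniruled K\"ahler threefold pairs, Sections \ref{sectionkaehler}--\ref{sectionMMP}), to arrive at a model where $B$ has good combinatorics (Section \ref{sectionreduction}). Then for $\nu=1$ one isolates an slc surface component $S$ with $(K_X+B)|_S\equiv 0$, proves $K_S+\Delta$ is torsion, and uses a deformation argument of Miyaoka together with surjectivity of $H^p(S,\C)\to H^p(S,\sO_S)$ to lift sections. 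For $\nu=2$ one proves a Chern class inequality $L\cdot c_2(X')\geq 0$ on a terminal model (Theorem \ref{theoremnonnegative}, a variant of Miyaoka's inequality valid even though $K_{X'}$ is not nef), feeds this into Riemann--Roch, and shows $h^0(X,nL)$ grows linearly. None of this is captured by ``one shows $K_X\equiv 0$'', nor by the Iitaka fibration argument you sketch for $\kappa\geq 1$, which presupposes the hard step is already done.
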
 

The paper \cite{DP03} established the existence of some section in $mK_X$ for non-algebraic minimal models,
so the assumption above implies $\kappa(X) \geq 0$. In \cite{Pet01} abundance was shown for non-algebraic minimal models,
excluding however the case when $X$ has no non-constant meromorphic function. Our arguments do not use any information
on the structure of $X$ and work both for algebraic and non-algebraic K\"ahler threefolds.

As a corollary, we establish a longstanding conjecture \cite{Fuj83} on K\"ahler threefolds:

\begin{theorem} \label{theoremkummer} Let $X$ be smooth compact K\"ahler threefold. Assume that $X$ is simple, i.e. there is no positive-dimensional proper subvariety through a 
very general point of $X$. Then there exists a bimeromorphic morphism $X \rightarrow T/G$ where $T$ is a torus
and $G$ a finite group acting on $T$. 
\end{theorem}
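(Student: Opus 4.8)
The plan is to run the full Kähler minimal model program on $X$ and read off the structure from the output, using Theorem \ref{theoremmain} as the abundance input. First I would observe that a simple threefold cannot be uniruled: through a very general point of a uniruled variety there passes a rational curve, contradicting simplicity. Hence, by the non-vanishing/MMP results of \cite{HP13a, HP15}, after finitely many divisorial contractions and flips we obtain a bimeromorphic model $X'$ which is $\Q$-factorial with terminal singularities and has $K_{X'}$ nef. Each elementary step of the MMP either contracts a divisor or is a flip; in both cases the exceptional/flipping locus is covered by rational curves, so it maps into the (countable) union of proper subvarieties that simplicity forbids through a very general point. Thus the bimeromorphic map $X \dashrightarrow X'$ is an isomorphism over the complement of a proper analytic subset, and $X'$ is again simple.

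Next I would invoke Theorem \ref{theoremmain}: since $K_{X'}$ is nef, it is semi-ample, so some multiple $mK_{X'}$ defines a morphism $X' \to Y$ with connected fibres onto a normal projective variety $Y$ of dimension $\kappa(X') = \kappa(X)$. The base $Y$, being the image, has dimension $0$, $1$, $2$, or $3$. If $\dim Y \geq 1$, a general fibre is a positive-dimensional proper subvariety of $X'$ through a very general point, contradicting simplicity; hence $\kappa(X') = 0$. Here I also need $\kappa \neq -\infty$, which follows because $X'$ is not uniruled together with the existence of sections in some $mK_{X'}$ (e.g.\ \cite{DP03}, or abundance itself once nefness is known). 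So $X'$ is a terminal $\Q$-factorial compact Kähler threefold with $K_{X'}$ torsion.

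The main obstacle is the final step: concluding that a terminal threefold with numerically (hence, by semi-ampleness, actually) torsion canonical class and no subvarieties through a very general point is bimeromorphic to a quotient $T/G$ of a torus. The strategy is to pass to the global index-one cover to make $K$ trivial, producing a threefold with trivial canonical sheaf and at worst canonical singularities, then take a terminal (or crepant) resolution to land on a compact Kähler threefold with trivial canonical bundle — a ``Calabi–Yau type'' threefold in the broad sense. By the Beauville–Bogomolov decomposition in the Kähler setting (applied to a finite étale cover), such a manifold has a finite cover that splits as a product of a torus, strict Calabi–Yau factors (with $h^{2,0}=0$), and hyperkähler factors. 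In dimension three the only possibilities for the non-torus part are a strict Calabi–Yau threefold, a product of an elliptic curve with a K3 surface, or a torus. A strict Calabi–Yau threefold and a K3 surface each contain lots of proper subvarieties (in fact rational curves, or at least positive-dimensional subvarieties through every point in the K3 case), and an elliptic-curve factor gives a fibration with positive-dimensional base — each of these contradicts simplicity, which is a bimeromorphic-invariant condition on very general points and is inherited by finite covers. Therefore the cover must be a torus $T$, and unwinding the chain of finite maps $T \to X'$ exhibits $X'$, hence $X$, as bimeromorphic to $T/G$ for the finite group $G$ of deck-type transformations. The delicate points to be checked carefully are that simplicity really is preserved under the index-one cover and the crepant resolution (one must track images of subvarieties through very general points under generically finite maps), and that the Beauville–Bogomolov-type decomposition is available for the singular/Kähler threefolds in question at the relevant level of generality.
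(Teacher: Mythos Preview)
Your broad outline---run the MMP to a minimal model $X'$, invoke abundance to get $K_{X'}$ torsion, then deduce the torus-quotient structure---is correct and matches the paper. The genuine gap is in your ``final step''. After the index-one cover you land on a terminal threefold $X''$ with $K_{X''}\simeq\sO_{X''}$, but terminal singularities admit \emph{no} crepant resolution: by definition every exceptional divisor over a terminal point has strictly positive discrepancy, so any resolution $\hat X\to X''$ satisfies $K_{\hat X}\sim_\Q E$ with $E$ a nonzero effective exceptional divisor. Hence $c_1(\hat X)\neq 0$ and the Beauville--Bogomolov decomposition is unavailable. (A ``terminal resolution'' in the MMP sense is just a terminalisation, which for an already terminal variety is the identity and does not smooth anything; a singular Beauville--Bogomolov theorem is much later and far deeper work.) Your aside that a K3 surface has subvarieties through every point is also false for non-algebraic K3's, though in the $E\times\mathrm{K3}$ case the fibres $E\times\{\mathrm{pt}\}$ would still contradict simplicity.

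The paper bypasses all of this via Theorem~\ref{theoremkxzero}, whose proof goes through the Albanese map rather than any decomposition theorem. The key input you are missing is a Riemann--Roch computation on the index-one cover: one gets $\chi(\sO_{X''})=0$, and non-algebraicity forces $h^2(\sO_{X''})\geq 1$, hence $q(X'')\geq 1$. In the simple case the Albanese map can have no positive-dimensional fibre, so in fact $q(X'')=3$ and the Albanese is a bimeromorphic morphism onto a torus $T$; comparing canonical classes shows it is an isomorphism. This replaces your Beauville--Bogomolov step entirely and requires no smoothness.

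Finally, you stop at ``bimeromorphic to $T/G$'' while the statement demands a \emph{morphism} $X\to T/G$. The paper closes this gap by observing that simplicity of $X'$ forces $T$, and hence $T/G$, to contain no positive-dimensional proper subvariety; a meromorphic map from a smooth threefold into such a target has no indeterminacy and is therefore everywhere holomorphic.
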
 

\subsection{Outline of the paper}

Let $X$ be a normal compact $\mathbb Q$-factorial 
K\"ahler threefold $X$ with only terminal singularities and nef canonical divisor $K_X$. 
Denote by $\kappa (X) $ its Kodaira dimension and  by $\nu(X)$ the numerical dimension, which is defined as
$$
\nu(X) := \max \{ m \in \N \ | \  c_1(K_X)^{m} \not \equiv 0 \}.
$$
Both invariants are subject to the inequality $\kappa (X) \leq \nu(X)$, with equality if $K_X$ is semi-ample. 
Conversely, as Kawamata observed in \cite[Thm.6.1]{Kaw85b}, in order to prove abundance, it is sufficient to prove equality: $\kappa (X) = \nu(X)$. By the base-point free theorem and an argument of Kawamata \cite[Thm.7.3]{Kaw85b} the main challenge is
to rule out the possibility
$$
\kappa(X)=0 \ \mbox{and} \ 0 < \nu(X) < 3.
$$
Since we know that $\kappa (X) \geq 0,$ there exists a positive number $m$ and an effective divisor $D$ such that 
 $D \in \vert  mK_X \vert$. A natural way to prove that $\kappa(X) \geq 1$ is to consider the restriction map
$$
r : H^0(X, d(m+1)K_X) \rightarrow H^0(D, d K_D).
$$
 Arguing by induction on the dimension we want to prove that $H^0(D, d K_D) \neq 0$ for some $d \in \N$ and that 
 some non-zero section $u \in H^0(D, d K_D)$ lifts via $r$ to a global section $\tilde u \in H^0(X, d(m+1)K_X)$ on $X$.
Since $D$ might be very singular it is however not possible to analyse the divisor $D$ directly. 
In order to circumvent this difficulty, Kawamata \cite{Kaw92c} developed the strategy, further explored in \cite{Uta92}, 
to consider log pairs $(X, B)$ with $B = \supp D$ and to improve the singularities of this pair
via certain birational transformations. 
This requires deep techniques of birational geometry of pairs within the theory of minimal models. In particular we have to run a log MMP for certain log pairs $(X, \Delta)$.
 
Therefore the first part of the paper (Sections \ref{sectionkaehler} and \ref{sectionMMP}) establishes the 
foundations for a minimal model program for log pairs on K\"ahler threefolds. As a first step we prove 
the cone theorem for the dual K\"ahler cone:
 
\begin{theorem} \label{theoremNA}
Let $X$ be a normal $\Q$-factorial compact  K\"ahler threefold that is not uniruled. 
Let $\Delta$ be an effective $\Q$-divisor on $X$ such that the pair $(X, \Delta)$ is dlt.
Then there exists an at most countable family $(\Gamma_i)_{i \in I}$  of rational curves  on $X$  
such that 
$$
0 < -(K_X+\Delta) \cdot \Gamma_i \leq 4
$$
and
$$
\NAX = \NAX_{(K_X+\Delta) \geq 0} + \sum_{i \in I} \R^+ [\Gamma_i] 
$$
\end{theorem}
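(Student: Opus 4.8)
The plan is to follow Mori's bend-and-break strategy adapted to the Kähler setting, using the duality between the cone of curves and the dual of the Kähler cone. First I would set up the relevant cones: since $X$ is a normal $\Q$-factorial compact Kähler threefold, the space $N^1(X)$ of $(1,1)$-classes modulo numerical equivalence is finite-dimensional, and we have the Kähler cone $\mathcal{K}(X)$ inside it, with dual cone $\NAX \subseteq N_1(X)$. The statement to prove is a cone-type decomposition for $\NAX$ with respect to $(K_X+\Delta)$, where the extremal rays are spanned by rational curves of bounded anticanonical degree. Because $X$ is \emph{not uniruled}, there is an a priori restriction: any $(K_X+\Delta)$-negative extremal behaviour must be concentrated on curves, and these rational curves will be forced to live in the non-nef locus of $K_X+\Delta$ (typically in the components of $\Delta$ or in the exceptional loci of the dlt structure).

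The key steps I would carry out, in order. (1) Reduce to the case where $K_X+\Delta$ is not nef (otherwise $\NAX = \NAX_{(K_X+\Delta)\geq 0}$ and there is nothing to do). (2) For a class $\alpha$ on the boundary of $\NAX$ that is $(K_X+\Delta)$-negative, produce a rational curve: this is where one invokes a Kähler analogue of the Cone Theorem's ``production of rational curves''. The standard route is to use the characterization of non-uniruledness together with the existence of modifications or deformation arguments; in the algebraic case this is Mori's theorem, and in the Kähler threefold case one should cite or adapt the results of \cite{HP13a}, \cite{HP15}, which establish a cone theorem for $K_X$ itself, then extend to the log pair $(X,\Delta)$. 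The bound $-(K_X+\Delta)\cdot \Gamma_i \leq 4 = \dim X + 1$ is the usual bend-and-break length bound. (3) Show the family $(\Gamma_i)_{i\in I}$ is at most countable: the rays $\R^+[\Gamma_i]$ are discrete away from the hyperplane $(K_X+\Delta)=0$, by a standard argument bounding the number of extremal rays in any compact slice, combined with boundedness of the degree. (4) Assemble the decomposition $\NAX = \NAX_{(K_X+\Delta)\geq 0} + \sum_{i\in I} \R^+[\Gamma_i]$ by a Minkowski-sum/closure argument: any class is a limit of classes lying in the non-negative part plus finitely many of the extremal rays, using that the $\Gamma_i$ exhaust the $(K_X+\Delta)$-negative extremal rays.

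The main obstacle, I expect, is step (2): producing rational curves on a possibly non-projective Kähler threefold. In the projective case one has Mori's characteristic-$p$ bend-and-break, which is unavailable here; instead one must rely on the deformation theory of curves and the structure theory developed in \cite{HP13a, HP15} for Kähler threefolds, handling carefully the terminal singularities and the dlt boundary $\Delta$. A secondary technical point is passing from the absolute cone theorem for $K_X$ to the log version for $K_X+\Delta$: one must control how adding the effective boundary $\Delta$ creates new negative rays, and verify these are still spanned by rational curves with the degree bound, which typically uses that $(X,\Delta)$ is dlt so that the relevant discrepancy/adjunction computations behave well. Finally, one should check that the countability (rather than local finiteness) is genuinely all that can be claimed, since in the Kähler setting the Picard number arguments that give local finiteness in the relative projective case may degenerate; the statement as phrased already anticipates this.
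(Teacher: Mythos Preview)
Your outline correctly identifies the overall architecture (reduce to the non-nef case, produce rational curves in the negative extremal rays, assemble the cone), but step (2) --- producing rational curves with the bound $4$ --- is where the real work lies, and your sketch does not supply a mechanism that actually works here. You write that one should ``cite or adapt \cite{HP13a}, \cite{HP15}'' and that $4 = \dim X + 1$ is ``the usual bend-and-break length bound.'' Two problems. First, Mori's bend-and-break needs reduction mod $p$ and projectivity, neither available on a non-algebraic K\"ahler threefold; you flag this yourself but do not say what replaces it. Second, the bound $4$ in the paper is \emph{not} obtained by running bend-and-break on $X$.

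The paper's route is a two-stage argument quite different from what you describe. \textbf{Stage 1} is a weak cone theorem (Theorem~\ref{theoremweakNA}): one first proves the decomposition of $\NAX$ with curves $\Gamma_i$ that are not yet known to be rational and a bound $d$ depending on $(X,\Delta)$, not yet $4$. The engine is a bend-and-break \emph{property} (Definition~\ref{definitionbbproperty}, Proposition~\ref{propositionbb}): if $-(K_X+\Delta)\cdot C > d$ then $[C]=[C_1]+[C_2]$. This is not proved on $X$ directly. Since $X$ is not uniruled, $K_X$ is $\Q$-effective (Assumption~\ref{assumption}), so any curve with $-(K_X+\Delta)\cdot C$ large is forced into a surface $S$ in the support of $K_X$ or of $\Delta$; adjunction (\S\ref{subsectionadjunction}) then shows $S$ is uniruled, hence \emph{projective}, and one breaks the curve using surface theory on a resolution $\hat S$. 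The passage from $\NEX$ to $\NAX$ is a separate, nontrivial step: one pushes forward the three types of generators $[\hat\omega]^2$, $[\hat\omega]\cdot[\hat S]$, $[\hat C]$ from a resolution and analyses each case. \textbf{Stage 2} upgrades to rational curves with bound $4$. For each $(K_X+\Delta)$-negative extremal ray one splits into cases (Definition~\ref{definitionrays}): if the ray is divisorial with $n(\alpha)=1$, adjunction on the exceptional surface gives a rational curve with $-(K_X+\Delta)\cdot C \leq 2$ directly (Lemma~\ref{lemmanone}); in the remaining cases one first \emph{constructs the contraction morphism} (Theorem~\ref{theoremsmallray}, Lemma~\ref{lemmacontractdivisorpoint}) and then applies Kawamata's length estimate \cite{Kaw91}, which is valid because the contraction is a projective morphism with $-(K_X+\Delta)$ relatively ample. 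The rationality and the bound $4$ therefore come \emph{after} the contraction exists, not from a global bend-and-break.

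Your proposal lacks both the restriction-to-surfaces mechanism that drives the weak theorem and the contraction-then-Kawamata mechanism that yields rationality and the sharp bound; without these, step (2) remains a genuine gap.
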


The  dual K\"ahler cone $\NAX$ replaces the Mori cone of curves $\NEX$ which is obviously too small in the non-algebraic 
setting (cf. \cite[Sect.1]{HP13a}). Our result is actually more general: first we prove a weak form of the cone
theorem for lc pairs $(X, \Delta)$ such that $X$ has rational singularities (cf. Theorem \ref{theoremweakNA}).
Then we derive Theorem \ref{theoremNA} on page \pageref{proofNA} as a consequence of the weak cone theorem and some contraction results. 
The second step is to prove that the $(K_X+\Delta)$-negative extremal rays appearing in the cone theorem
can be contracted by a bimeromorphic morphism.

\begin{theorem} \label{theoremcontraction} 
Let $X$ be a normal $\Q$-factorial compact  K\"ahler threefold that is not uniruled. 
Let $\Delta$ be an effective $\Q$-divisor such that the pair $(X, \Delta)$ is dlt.
Let $R$ be a $(K_X+\Delta)$-negative extremal ray in $\NAX$.
If the ray $R$ is divisorial with exceptional divisor $S$ (cf. Definition \ref{definitionrays}), suppose that $S$ has slc singularities. 

Then the contraction of $R$ exists in the K\"ahler category.
\end{theorem}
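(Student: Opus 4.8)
The plan is to adapt the construction of divisorial and flipping contractions from the birational Kähler minimal model program of \cite{HP13a}, \cite{HP15} to the logarithmic setting. Since $X$ is not uniruled, no $(K_X+\Delta)$-negative extremal ray can be of fibre type, as a fibre-type contraction would exhibit $X$ as rationally connected over a lower-dimensional base and hence uniruled. Therefore the locus of $R$ --- the union of all curves whose class lies in $R$ --- is a proper subvariety of $X$, and by Definition \ref{definitionrays} it is either the irreducible exceptional divisor $S$ (the divisorial case) or, since $-(K_X+\Delta)\cdot\Gamma>0$ for a generating curve $\Gamma$ of $R$, a union of curves $C_1\cup\dots\cup C_k$ (the small case). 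From the proof of Theorem \ref{theoremNA} one moreover extracts a nef $(1,1)$-class $\alpha_R$ on $X$ with $\alpha_R^{\perp}\cap\NAX=R$; this is the class that will eventually be descended to the contraction.

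\textbf{Divisorial case.} The exceptional divisor $S$ has slc singularities by hypothesis. Its restriction $\alpha_R|_S$ is nef, and since $S$ is swept out by $\alpha_R$-trivial curves, $\alpha_R|_S$ has numerical dimension $0$ or $1$. In the first case all of $S$ must be contracted to a point; in the second, $\alpha_R|_S$ induces a fibration $S\to C$ onto a smooth curve whose fibres are the $\alpha_R$-trivial curves. In either case one checks, using adjunction on $S$ (relating $(K_X+\Delta)|_S$ to $K_S$) together with the nefness of $K_X$ and the strict negativity of $K_X+\Delta$ on the curves to be contracted, that $-S|_S$ is ample, respectively ample relative to $C$ --- equivalently, the normal sheaf of $S$ in $X$ is suitably negative. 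This is precisely the hypothesis of Grauert's contractibility criterion and its relative version, so $S$ can be blown down to a point of, respectively to a smooth curve in, a normal compact complex space $Y$, yielding a bimeromorphic morphism $\pi\colon X\to Y$ that contracts exactly the curves with class in $R$.

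\textbf{Small case.} Here the locus of $R$ is a union of rational curves $C_i\cong\PP^1$ with class in $R$; the contraction $\pi$ is small and, since $K_X+\Delta$ is $\pi$-antiample, a flipping contraction. Analysing the analytic germs of $X$ along each $C_i$ --- exploiting the negativity of its normal sheaf forced by $-(K_X+\Delta)\cdot C_i>0$ together with the nefness of $K_X$ --- one shows, via a Grauert-type contractibility criterion as in the analysis of flipping curves, that each $C_i$ can be blown down to a point, yielding a small bimeromorphic morphism $\pi\colon X\to Y$ onto a normal compact complex space which is an isomorphism in codimension one.

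\textbf{Kählerness of $Y$: the main obstacle.} In all cases $\pi\colon X\to Y$ is a bimeromorphic morphism onto a normal compact complex space, an isomorphism over the complement of the image of the contracted locus; what remains is to show that $Y$ is Kähler. This is the genuinely Kähler-specific difficulty: there is no ample divisor to descend through a base-point-free theorem, and $Y$, although automatically in Fujiki's class $\mathcal{C}$, need not a priori be Kähler. Since $\alpha_R$ is trivial on every curve contracted by $\pi$ and $\pi$ has connected fibres, $\alpha_R=\pi^{*}\beta$ for a unique $(1,1)$-class $\beta$ on $Y$; one then realises $\beta$ by a genuine Kähler form, obtained by patching a smooth Kähler form on the open set where $\pi$ is an isomorphism --- transported from $X$ --- with local Kähler potentials near the image of the contracted locus, whose existence is furnished by the strong pseudoconvexity (negativity of the normal sheaf) established in the contraction step. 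Carrying this local analysis through in the presence of the boundary $\Delta$, and for the slc surface $S$ in the divisorial case --- where one must invoke the structure of slc surface singularities in the relevant vanishing and extension statements --- is the heart of the argument and where the bulk of the work lies.
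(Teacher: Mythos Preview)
Your overall architecture (split into small/divisorial, construct $\pi$, then show $Y$ is K\"ahler) matches the paper's, but several of the steps you treat as routine are in fact the substantial content, and one of them contains a genuine gap.

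\textbf{Small case.} You assert that the locus of $R$ is a finite union of rational curves $C_i\cong\PP^1$ and then analyse their normal sheaves. Neither finiteness nor rationality is available at this point: a priori one only knows that the curves with class in $R$ are very rigid, hence at most countably many. The paper establishes finiteness by first proving the key Proposition~\ref{propositionalphabig}: for \emph{every} irreducible surface $S\subset X$ one has $\alpha^2\cdot S>0$, where $\alpha$ is the nef supporting class. This is a nontrivial argument (adjunction plus Araujo's decomposition of the nef class on a resolution of $S$), and it is needed to feed into the Collins--Tosatti theorem \cite{CT13} on the null locus of $\pi^*\alpha$ on a resolution, which then forces $E_{nK}(\alpha)$ to be a finite union of curves and points. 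Only after this does the Grauert-type contraction go through, and even then via an ideal-sheaf construction (Steps~2--3 of the proof of Theorem~\ref{theoremsmallray}) rather than a direct normal-bundle computation. Your appeal to ``negativity of the normal sheaf forced by $-(K_X+\Delta)\cdot C_i>0$ together with the nefness of $K_X$'' does not work as stated: $K_X$ is not assumed nef (only $\Q$-effective, via $\kappa(X)\geq 0$), and the inequality on a single curve does not by itself control the conormal sheaf.

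\textbf{Divisorial case, $n(\alpha)=1$.} You say ``$\alpha_R|_S$ induces a fibration $S\to C$''. On the \emph{normalisation} $\tilde S$ one has the nef reduction $\tilde f:\tilde S\to\tilde T$, but descending this to a morphism on the non-normal slc surface $S$ is exactly where the slc hypothesis is used and is the content of Proposition~\ref{propositionnone}. The argument there analyses the double curve of $S$ and shows, via a careful count $\tilde Z\cdot\tilde C=1$, that the general fibre of the putative map on $S$ is a nodal union of two copies of $\PP^1$, which then defines a genuine linear system on $S$. This is not a formality.

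\textbf{K\"ahlerness of $Y$.} The paper does not patch local potentials. Instead it proves a numerical criterion (Theorem~\ref{kaehler2}): if a closed real $(1,1)$-class $\beta$ on a normal threefold in class~$\mathcal C$ is strictly positive on $\overline{NA}(Y)\setminus\{0\}$ and no curve class vanishes in $N_1(Y)$, then $\beta$ is K\"ahler. One then checks these hypotheses for the push-down of $\alpha$ (Corollary~\ref{corollarycontractionkaehler}). Your local patching sketch could perhaps be made to work, but as written it presupposes the pseudoconvexity that in the paper is an \emph{output} of the ideal-sheaf construction, not an input.
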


These two theorems generalise the analogous statements for terminal threefolds \cite[Thm.1.2,Thm.1.3]{HP13a}.
While the global strategy of the proofs is similar to \cite{HP13a}, it is not an extension of earlier work: in this paper we
have to deal with threefolds with non-isolated singularities. This leads to new geometric problems (cf. the proof of 
Theorem \ref{theoremsmallray}) and is more representative of the challenges that appear in higher dimension.

Based on these results we show in Section \ref{sectionreduction} how to replace the threefold $X$
with some bimeromorphic model having a pluricanonical divisor $D \in | m K_X |$ such that $D_{\red}$ is not too singular.
In this section we follow the arguments of \cite[Ch.13, 14]{Uta92}, and we also address some technical points which do not 
appear in the literature. 
While these reduction steps assure the existence of some effective pluricanonical divisor on $D_{\red}$,
it is not obvious that it extends to a pluricanonical divisor on $X$ (cf. \cite{DHP13} for recent progress in the projective case).
Following a deformation argument of Miyaoka \cite{Miy88} \cite[Ch.11]{Uta92}, the 
case $\kappa(X)=0, \nu(X)=1$ can be excluded without too much effort. 

For the case $\nu(X)=2$ the idea is to prove via 
a Riemann-Roch computation that $h^0(X, m K_X)$ grows linearly. 
Establishing the results necessary for this Riemann-Roch computation
is the second part of the paper (Sections \ref{sectionpositivity} and \ref{sectionabundance}).
First we extend Enoki's theorem \cite{Eno88} to show that the cotangent sheaf of a non-uniruled threefold with canonical singularities is generically nef, then we want to use this positivity result to prove that $(K_X+B) \cdot \hat c_2(X) \geq 0$ where  
$K_X+B$ is a nef log-canonical divisor and $\hat c_2(X)$ is the second Chern class of the $Q$-sheaf $\hat \Omega_X$.
Since $X$ is not smooth in codimension two, the proof of this inequality in the projective case (\cite{Kaw92c}, \cite[Ch.14]{Uta92})
uses some involved computation for the second Todd class of the $Q$-sheaf $\hat \Omega_X$.
We give a new argument which should be of independent interest:
let  $X' \rightarrow X$ be a terminal modification of $X$.
Since $X'$ is smooth in codimension one, the Riemann-Roch computation on $X'$ can be done using the classical second Chern
class $c_2(X')$. Although the (log-)canonical divisor of $X'$ is not necessarily nef,
a generalisation of Miyaoka's inequality for the second Chern class (cf. Theorem \ref{theoremnonnegative}) allows us to conclude. 

{\bf Acknowledgements.} 
We thank S. Boucksom and V. Lazi\'c for some very helpful communications.
We thank the Forschergruppe 790 ``Classification of algebraic surfaces and compact complex manifolds'' of the Deutsche
Forschungsgemeinschaft for
financial support. 
A. H\"oring was partially also supported by the A.N.R. project CLASS\footnote{ANR-10-JCJC-0111}.

\section{Notation and basic facts}  \label{sectionnotation}

We will use frequently standard terminology  
of the minimal model program (MMP) as explained in \cite{KM98} or \cite{Deb01}.
We will also use without further comment results which are stated for algebraic varieties in \cite{KM98}, \cite{Uta92} if their proof
obviously works for complex spaces.

Recall that a normal complex space $X$ is $\Q$-factorial if for every Weil divisor $D$
there exists an integer $m \in \N$ such that $\sO_X(mD)$ is a locally free sheaf, i.e. $mD$ is a Cartier divisor.
Since however the canonical sheaf $K_X = \omega_X$ need not be a $\mathbb Q$-Weil divisor, we include in the 
definition of $\mathbb Q-$factoriality also the condition that there is a number $m \in \N$ such that the coherent sheaf
$$
(K_X^{\otimes m})^{**} = (\omega_X^{\otimes m})^{**}
$$
is locally free. We shall write 
$$  mK_X = (K_X^{\otimes m})^{**}$$
for short.

\begin{definition}
Let $X$ be a normal complex space. A boundary divisor is an effective $\Q$-divisor $\Delta = \sum_i a_i \Delta_i$
such that $0 \leq a_i \leq 1$.
\end{definition}

Given a boundary divisor $\Delta,$ we refer to $(X,\Delta)$ as a ``pair''. For pairs $(X,\Delta)$, the notions ``lc'' (log-canonical), ``klt'' (Kawamata log-terminal),``dlt'' (divisorial log-terminal) and ``terminal'' are defined exactly as in the algebraic context; we refer to \cite{KM98}.

\begin{definition} \label{definitionmmp}
Let $X$ be a normal $\Q$-factorial compact K\"ahler space that is not uniruled, and let $\Delta$ be a boundary divisor on $X$ such that the pair $(X, \Delta)$ is lc 
(resp. dlt, klt, terminal).
Set $(X_0, \Delta_0):=(X, \Delta)$. 
A terminating $(K+\Delta)$-MMP is a finite sequence of bimeromorphic maps
$$
\merom{\varphi_i}{(X_i, \Delta_i)}{(X_{i+1}, \Delta_{i+1}:= (\varphi_i)_* \Delta_i)}
$$
where $i \in \{ 0, \ldots, n \}$ for some $n \in \N$ that has
the following properties:
\begin{enumerate}
\item For every $i \in \{ 0, \ldots, n+1 \}$, the 
complex space $X_i$ is a normal $\Q$-factorial compact K\"ahler space
and the pair $(X_i, \Delta_i)$ is lc
(resp. dlt, klt, terminal).
\item For every $i \in \{ 0, \ldots, n \}$, the map $\varphi_i$ is either 
the contraction of a $(K_{X_i}+\Delta_i)$-negative extremal ray $R_i \in \NA{X_i}$ that is divisorial
or the flip
of a $K_{X_i}+\Delta_i$-negative extremal ray $R_i \in \NA{X_i}$
that is small.
\item The class $K_{X_{n+1}}+\Delta_{n+1}$ is nef.
\end{enumerate}
We will abbreviate such a $(K+\Delta)$-MMP by
$$
(X, \Delta) \dashrightarrow (X', \Delta')
$$
where $(X', \Delta'):=(X_{n+1}, \Delta_{n+1})$ and the bimeromorphic map
is the composition $\varphi_n \circ \ldots \circ \varphi_0$.
\end{definition}

In the definition, $\NAX $ denotes the  ``dual K\"ahler cone'', as defined in \cite[3.8]{HP13a}. We will review this notion in Definition \ref{defNA1} below. 

\subsection{Bimeromorphic models} \label{subsectionbimeromorphic}

Bimeromorphic models arising as partial resolutions of singularities play an 
important role in the recent development of the MMP for projective manifolds.
The existence of these models is usually - in the algebraic setting, see \cite[10.4]{Fuj11}, \cite[3.1]{KK10}, \cite[Ch.1.4]{Kol13} - derived as a consequence of the existence and termination of some MMP for pairs, which is at this point not yet established 
in the K\"ahler category. 
We may nevertheless assume their existence since they are constructed by taking first
a log-resolution $Y' \rightarrow X$ of the pair $(X, \Delta)$
and then running some MMP {\em over $X$}. Since we may choose the log-resolution to be a projective morphism, 
the usual relative versions of the cone and contraction theorem \cite{Nak85} \cite[Ch.3.6]{KM98} together with termination results in dimension three allows to conclude. 
The K\"ahler property of $Y$ follows again from \cite[1.3.1]{Var89}, since $\mu$ is  a projective morphism. 
We will now apply this argument in two situations:

\begin{theorem} \label{theoremterminalmodel}
Let $X$ be a normal compact K\"ahler threefold with lc singularities, i.e., $(X,0)$ is lc. 
Then there exists a bimeromorphic morphism $\mu: Y \to X$ 
from a normal $\Q$-factorial K\"ahler threefold $Y$ with terminal singularities
and a  boundary divisor $\Delta_Y$ such that the pair $(Y, \Delta_Y)$ is lc and
$$
K_Y+\Delta_Y \sim_\Q \mu^* K_X.
$$ 
\end{theorem}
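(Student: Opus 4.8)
The plan is to construct $Y$ in two stages, combining an abstract dlt/terminal modification statement with a comparison of log-canonical divisors. First I would take a log-resolution $\pi \colon Y' \to X$ of the pair $(X,0)$, so that $Y'$ is smooth and the exceptional locus is a simple normal crossing divisor; this exists, and $Y'$ is K\"ahler by \cite[1.3.1]{Var89} since $\pi$ is projective. Write $K_{Y'} = \pi^* K_X + \sum_i a_i E_i$ with discrepancies $a_i$; since $X$ is lc we have $a_i \geq -1$ for all $i$. The idea is now to pass to the \emph{terminal} model by running a suitable MMP over $X$ that contracts exactly the divisors with positive discrepancy while keeping the ones with $a_i \leq 0$ in the boundary. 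Concretely, I would set $\Delta_{Y'} := \sum_i \max(-a_i, 0)\, E_i$, which is a boundary divisor (all coefficients lie in $[0,1]$ because $a_i \geq -1$), so that
$$
K_{Y'} + \Delta_{Y'} = \pi^* K_X + \sum_{a_i > 0} a_i E_i,
$$
an effective $\pi$-exceptional $\mathbb{Q}$-divisor supported on the divisors of positive discrepancy.

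Next I would run a $(K_{Y'}+\Delta_{Y'})$-MMP over $X$. By the relative cone and contraction theorems \cite{Nak85}, \cite[Ch.3.6]{KM98} together with termination in dimension three, this terminates with a relative minimal model $\mu \colon Y \to X$ over which $K_Y + \Delta_Y$ (with $\Delta_Y$ the strict transform of $\Delta_{Y'}$) is $\mu$-nef; since $K_{Y'}+\Delta_{Y'}$ is $\pi$-exceptional and effective, the negativity lemma forces every step of this MMP to contract a component of $\operatorname{Supp}(\sum_{a_i>0} a_i E_i)$, so on $Y$ all the positive-discrepancy exceptionals are gone and $K_Y + \Delta_Y \sim_{\mathbb{Q}} \mu^* K_X$, i.e. the relative numerical class is trivial. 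That $Y$ is $\mathbb{Q}$-factorial and K\"ahler follows because these properties are preserved under the divisorial contractions and flips of a MMP (the flip existence in the relative threefold setting being available as quoted in the excerpt), and because a relative MMP over $X$ stays in the K\"ahler category. The pair $(Y,\Delta_Y)$ is lc because log-canonicity is preserved under steps of a $(K+\Delta)$-MMP, and by construction $Y$ has terminal singularities: after removing the $a_i > 0$ divisors, every exceptional divisor over $Y$ has discrepancy $> 0$ — here one uses that for any divisor $E$ with center on $Y$, its discrepancy over $Y$ equals its discrepancy over $X$ (which is $> 0$ since $E$ was either contracted or has positive $a_i$), because $K_Y + \Delta_Y$ and $\mu^* K_X$ agree.

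The main obstacle I expect is not the birational bookkeeping but ensuring that $\Delta_Y$ is genuinely a \emph{boundary} divisor — i.e. that the terminalization does not introduce exceptional divisors that must be retained with coefficient exactly $1$ yet also destroy terminality of $Y$ itself. This is a familiar subtlety: one wants $Y$ terminal \emph{as a variety} ($(Y,0)$ terminal) rather than merely $(Y,\Delta_Y)$ being dlt, and reconciling this with keeping $K_Y + \Delta_Y \sim_{\mathbb{Q}} \mu^* K_X$ requires that no divisor of discrepancy $a_i \in (-1, 0]$ over $X$ survives on $Y$. One resolves this by choosing the MMP to also contract those divisors — replacing $\Delta_{Y'}$ above by $\sum_i \max(-a_i + \epsilon, 0) E_i$ for a small $\epsilon$, or equivalently running a $(K_{Y'} + (1+\epsilon)\sum_{a_i \leq 0}(-a_i)E_i + \sum_{a_i>0}\epsilon E_i)$-type MMP — so that precisely the divisors with $a_i \leq 0$ are contracted and only the strictly-positive ones remain absent; then $\Delta_Y$ is the pushforward of the non-exceptional part of $X$'s boundary (here $0$, since we started from $(X,0)$), which forces $\Delta_Y = 0$ after all, and the statement reduces to the standard existence of a terminal modification with $K_Y \sim_{\mathbb{Q}} \mu^* K_X$ for lc $X$. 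The remaining care is purely in verifying that the relative MMP with this scaling exists and terminates, which is exactly the situation covered by \cite{Nak85}, \cite[Ch.3.6]{KM98} and threefold termination invoked in Section~\ref{subsectionbimeromorphic}.
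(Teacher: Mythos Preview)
Your overall setup (projective log-resolution, then relative MMP over $X$) is right, but the execution goes astray, and the final paragraph contains a genuine error.

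The paper's argument is much simpler than yours: take a resolution $\pi\colon \hat X \to X$ and run a plain $K_{\hat X}$-MMP over $X$ (no boundary). Since $\hat X$ is smooth and the $K$-MMP preserves terminal singularities, the outcome $\mu\colon Y\to X$ is automatically terminal with $K_Y$ $\mu$-nef. Only \emph{afterwards} does one define $\Delta_Y$ by the relation $K_Y+\Delta_Y\sim_{\mathbb Q}\mu^*K_X$; the negativity lemma (applied to $K_Y-\mu^*K_X$, which is $\mu$-nef and exceptional) forces $\Delta_Y\geq 0$, and the lc hypothesis on $X$ gives coefficients $\leq 1$. The lc property of $(Y,\Delta_Y)$ then follows from invariance under crepant pullback \cite[Lemma 2.30]{KM98}. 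By baking a boundary into the MMP you lose exactly the feature you need: a $(K+\Delta)$-MMP preserves dlt pairs, but the underlying space of a dlt pair is only klt, not terminal.

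Your attempted repair in the last paragraph is incorrect. You claim that after a suitable perturbation the MMP contracts all exceptional divisors with $a_i\leq 0$, so that $\Delta_Y=0$ and one is reduced to ``the standard existence of a terminal modification with $K_Y\sim_{\mathbb Q}\mu^*K_X$''. But this is false whenever $X$ has a non-canonical singularity: if some $a_i<0$ (in particular if $a_i=-1$ at a strictly lc point), then any terminal model $Y$ \emph{must} retain the corresponding divisor and put it into $\Delta_Y$ with positive coefficient; there is no bimeromorphic $Y\to X$ with $Y$ terminal and $K_Y\sim_{\mathbb Q}\mu^*K_X$, since that would force all discrepancies of $X$ to be $\geq 0$. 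So $\Delta_Y\neq 0$ in general, and the reduction you describe does not exist. The remedy is simply to drop the boundary from the MMP, as above.
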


\begin{proof}
Let $\holom{\pi}{\hat X}{X}$ be a resolution of singularities. Let $\hat X \dashrightarrow Y$ be a
$K_\bullet$-MMP over $X$.
The outcome of this MMP is a normal $\Q$-factorial threefold $\holom{\mu}{Y}{X}$ with terminal singularities
such that $K_Y$ is $\mu$-nef. We write
$$
K_Y + \Delta_Y \sim_\Q \mu^* K_X,
$$ 
where the support of $\Delta_Y$ is contained in the $\mu$-exceptional locus. By the negativity lemma \cite[Lemma 3.39]{KM98} we see that $\Delta_Y$ is
effective and since $X$ has lc singularities, $\Delta_Y$ is a boundary divisor. The pair $(Y, \Delta_Y)$ is lc
since this property is invariant under crepant bimeromorphic morphisms \cite[Lemma 2.30]{KM98}.
\end{proof}

\begin{theorem} \label{theoremdltmodel}
Let $X$ be a normal compact K\"ahler threefold, and let $\Delta$
be a boundary divisor on $X$ such that the
pair $(X, \Delta)$ is lc. Then $(X, \Delta)$ has a dlt model, i.e.
there exists a bimeromorphic morphism
from a normal $\Q$-factorial K\"ahler threefold $Y$ and a 
boundary divisor $\Delta_Y$ such that the pair $(Y, \Delta_Y)$ is dlt and
$$
K_Y+\Delta_Y \sim_\Q \mu^* (K_X+\Delta).
$$
\end{theorem}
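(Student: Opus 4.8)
The plan is to follow exactly the blueprint sketched in the paragraph preceding Theorem \ref{theoremterminalmodel}: construct the dlt model as the outcome of a relative MMP over $X$ for a suitable pair on a log-resolution, and then verify that the numerical outcome has the crepant property $K_Y + \Delta_Y \sim_\Q \mu^*(K_X+\Delta)$. First I would take a log-resolution $\pi : \hat X \to X$ of the pair $(X,\Delta)$, which exists and can be chosen to be a projective morphism (blow-ups of the ideal sheaves resolving $X$ and the support of $\Delta$). Write $E = \sum E_j$ for the reduced $\pi$-exceptional divisor and let $\tilde\Delta$ be the strict transform of $\Delta$. On $\hat X$ one sets up the boundary
$$
\Delta_{\hat X} := \tilde\Delta + E
$$
(or, more precisely, $\Delta_{\hat X} := \pi^{-1}_* \Delta + \sum_j E_j$ counted with coefficient one), so that $(\hat X, \Delta_{\hat X})$ is log-smooth, hence dlt, and one has the crepant identity
$$
K_{\hat X} + \Delta_{\hat X} \sim_\Q \pi^*(K_X+\Delta) + \sum_j a_j E_j
$$
with all discrepancy coefficients $a_j \geq 0$ because $(X,\Delta)$ is lc. (Here one uses that lc-ness guarantees $a(E_j; X,\Delta) \geq -1$, so after putting $E_j$ into the boundary with coefficient $1$ the total coefficient of $E_j$ on the left is $\geq 0$.)

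Next I would run a $(K_{\hat X}+\Delta_{\hat X})$-MMP over $X$. Because the morphism $\hat X \to X$ is projective, the relative cone and contraction theorems and the relative existence of flips for dlt pairs are available in the form cited in the excerpt (the relative statements from \cite{Nak85}, \cite[Ch.3.6]{KM98}), and termination holds in relative dimension three; moreover the K\"ahler property is preserved at each step by \cite[1.3.1]{Var89} since each intermediate space is projective over the K\"ahler space $X$. Every step of this MMP is $(K_{\hat X}+\Delta_{\hat X})$-negative, hence (by the crepant identity above and the projection formula, since the curves contracted over $X$ lie in fibres of $\hat X \to X$) contracts curves on which $\sum_j a_j E_j$ is negative; thus the standard negativity-lemma argument shows that the divisor $\sum_j a_j E_j$ is effective and stays effective throughout, and after finitely many steps we reach $\mu : Y \to X$ with $K_Y + \Delta_Y$ relatively nef over $X$, where $\Delta_Y$ is the pushforward of $\Delta_{\hat X}$. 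Relative nefness of a divisor $\Q$-linearly equivalent over $X$ to an effective, $\mu$-exceptional-supported divisor forces, by the negativity lemma \cite[Lemma 3.39]{KM98}, that this divisor vanishes, i.e.
$$
K_Y+\Delta_Y \sim_\Q \mu^*(K_X+\Delta).
$$
Dlt-ness of $(Y,\Delta_Y)$ is preserved under steps of a dlt MMP (this is part of why one works with dlt rather than klt pairs here), so $(Y,\Delta_Y)$ is the desired dlt model; $\Q$-factoriality of $Y$ is built into the MMP, and $\Delta_Y$ is a boundary divisor since its coefficients are pushforwards of coefficients in $[0,1]$.

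The main obstacle — and the only genuinely non-formal point — is justifying that the relative MMP over $X$ actually exists and terminates \emph{in the K\"ahler category}, i.e., that at no stage does one leave the class of spaces for which the cited tools apply. This is precisely the subtlety flagged in the excerpt: the absolute K\"ahler MMP is not yet available, but one circumvents it by keeping everything \emph{projective over the fixed base $X$}, so that only the relative projective MMP (with its established cone, contraction, flip, and termination statements in dimension three) is needed, and the K\"ahler property is transported from $X$ to each model via Varouchas's result on projective morphisms. Once this relative-MMP framework is in place, the rest is the routine crepant-pullback and negativity-lemma bookkeeping indicated above, mirroring verbatim the proof of Theorem \ref{theoremterminalmodel}.
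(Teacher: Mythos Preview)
Your proposal is correct and follows essentially the same approach as the paper: the paper's proof is a one-line reference back to the blueprint preceding Theorem \ref{theoremterminalmodel} (log-resolution, relative MMP over $X$ using the projective-over-$X$ framework, negativity lemma, K\"ahler property via Varouchas), together with citations to \cite[3.1]{KK10}, \cite[10.4]{Fuj11}, and \cite[Thm.1.2]{BCHM10}. You have in fact written out in detail the discrepancy bookkeeping that the paper only alludes to as ``basically a computation of discrepancies including an application of the negativity lemma.''
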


\begin{proof} The same reasons as in Theorem \ref{theoremterminalmodel} applies also here; we refer to \cite[3.1]{KK10} and \cite[10.4]{Fuj11}, based on \cite[Thm.1.2]{BCHM10}, which is a result
on projective morphisms. The deduction from \cite{BCHM10} is basically a computation of discrepancies including an application of the negativity lemma. 
\end{proof}

\subsection{Adjunction} \label{subsectionadjunction}

Let $X$ be a normal complex space and let $S \subset X$ be a prime divisor that is $\Q$-Cartier. 
Let $B$ be an effective $\Q$-divisor such that $K_X+B$ is $\Q$-Cartier and $S \not\subset \supp(B)$.
Let $\holom{\nu}{\tilde S}{S}$ be the normalisation, then Shokurov's differend \cite[Ch.3]{Sho92} 
is a naturally defined effective $\Q$-divisor $B_{\tilde S}$ on $\tilde S$ 
such that
\begin{equation} \label{differend}
K_{\tilde S}+B_{\tilde S} \sim_\Q \nu^* (K_X+S+B)|_S, 
\end{equation}
From the construction in \cite[Ch.3]{Sho92} one sees immediately that 
$$
\supp \nu_*(B_{\tilde S}) \subset S_{\sing} \cup (\supp B \cap S) \cup (X_{\sing} \cap S).
$$
Suppose now that $\dim X=3$ and let $\holom{\mu}{\hat S}{\tilde S}$ be the minimal resolution.
Then we have
$$
K_{\hat S} \sim_\Q \mu^* K_{\tilde S} - N,
$$
where $N$ is an effective $\mu$-exceptional divisor \cite[4.1]{Sak81}. Set $E:=N+\mu^* B_{\tilde S}$,
then $E$ is a canonically defined effective $\Q$-divisor, such that
\begin{equation} \label{equationresolve}
K_{\hat S} + E \sim_\Q \pi^* (K_X+S+B)|_S,
\end{equation}
where  \holom{\pi}{\hat S}{S} is the composition $\nu \circ \mu$. Since $E$ is $\mu$-exceptional, we obtain
$$ \supp \pi_*(E) \subset S_{\sing} \cup (\supp B \cap S) \cup (X_{\sing} \cap S).
$$
Let $C \subset S$ be a curve such that 
$$
C \not\subset S_{\sing} \cup (\supp B \cap S) \cup (X_{\sing} \cap S).
$$
Then the morphism $\pi$ is an isomorphism in the generic point of $C$,
and we can define the strict transform $\hat C \subset \hat S$ as the closure of $C \setminus S_{\sing}$. 
By what precedes, $\hat C \not\subset E$, thus by the projection formula
and \eqref{equationresolve}, we deduce
\begin{equation} \label{inequalitycanonical}
K_{\hat S} \cdot \hat C \leq 
(K_{\hat S} + E) \cdot \hat C = (K_X+S+B) \cdot C. 
\end{equation}

\section{Singular K\"ahler spaces} \label{sectionkaehler}

\subsection{Bott-Chern cohomology}

In this section we review very briefly - following \cite{HP13a} -  the cohomology groups that replace the N\'eron-Severi space $\NS(X) \otimes \R$ from the projective setting. For details, we refer to \cite{HP13a} and the literature given there.

We recall that $\sA_X^{p,q} $ denotes the sheaf of $\mathcal C^{\infty} -$ forms of type $(p,q)$, and $\sA_X^0$ denotes the sheaf of $\sC^{\infty}-$functions; 
$\mathcal D_X$ denotes the sheaf of distributions. 

\begin{definition}  \cite[Defn. 4.6.2]{BEG13}
Let $X$ be an irreducible reduced complex space. Let $\sH_X$ be the sheaf of real parts of holomorphic
functions multiplied\footnote{We ``twist'' the definition from \cite{BEG13} in order to get a group that injects in
$H^2(X, \R)$ rather than $H^2(X, i \R)$} with $i$. A $(1,1)$-form (resp. $(1,1)$-current) with local potentials on $X$ is a global section
of the quotient sheaf $\sA^0_X/\sH_X$ (resp. $\mathcal D_X/\sH_X$). We define the Bott-Chern cohomology
$$
N^1(X) := H^{1,1}_{\rm BC}(X) := H^1(X, \sH_X).
$$
\end{definition}

Using the exact sequence 
$$
0 \rightarrow \sH_X \rightarrow \sA^0_X \rightarrow \sA^0_X/\sH_X \rightarrow 0,
$$
and the fact that $\sA^0_X$ is acyclic, we obtain a surjective map
$$
H^0(X,  \sA^0_X/\sH_X) \rightarrow H^1(X, \sH_X).
$$
Thus we can see an element of the Bott-Chern cohomology group as a closed $(1,1)$-form with local potentials
modulo all the forms that are globally of the form $dd^c u$.  Using the exact sequence
$$
0 \rightarrow \sH_X \rightarrow \mathcal D_X \rightarrow \mathcal D_X/\sH_X \rightarrow 0,
$$
we see that one obtains the same Bott-Chern group, if we consider $(1,1)$-currents with local potentials.

Recall 
the following  basic lemma, cp. \cite[Lemma 3.3.]{HP13a}.

\begin{lemma} \label{pullback} 
Let  \holom{\varphi}{X}{Y} be a proper bimeromorphic morphism between
compact normal spaces in class $\mathcal C$ with at most rational singularities. 
Then we have an injection
$$
\varphi^*: H^1(Y, \sH_Y) \hookrightarrow H^1(X, \sH_X) 
$$
and
$$
\mbox{\rm Im} \  \varphi^* = \{
\alpha \in H^1(X, \sH_X) \ | \ \alpha \cdot C = 0 \quad \forall \ C \subset X \ \mbox{curve s.t.} \ \varphi(C)=pt
\}. $$
Furthermore, let $\alpha \in H^1(X, \sH_X) \subset H^2(X, \R)$ be a class such that $\alpha = \varphi^* \beta$ with
$\beta \in H^2(Y, \R)$. Then there exists a smooth real closed $(1,1)$-form with local potentials $\omega_Y$ on $Y$
such that $\alpha = \varphi^* [\omega_Y]$.
\end{lemma}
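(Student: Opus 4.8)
The plan is to feed the two exponential-type short exact sequences
\[
0 \to \bR_X \to \sO_X \to \sH_X \to 0, \qquad 0 \to \bR_Y \to \sO_Y \to \sH_Y \to 0
\]
into the Leray spectral sequence of $\varphi$ and compare them through $\varphi^*$, using two consequences of the rationality of the singularities. First, choosing a resolution $\rho\colon Z \to X$ and setting $\sigma := \varphi \circ \rho$, one has $R\rho_*\sO_Z = \sO_X$ and $R\sigma_*\sO_Z = \sO_Y$, hence $R\varphi_*\sO_X = R\varphi_*(R\rho_*\sO_Z) = R\sigma_*\sO_Z = \sO_Y$; in particular $\varphi^*$ induces isomorphisms $H^j(Y,\sO_Y)\to H^j(X,\sO_X)$ for all $j$. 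Second, since $\varphi$ is proper bimeromorphic and $Y$ is normal the fibres are connected, so $\varphi_*\bR_X = \bR_Y$, and for $j\ge 1$ the sheaf $R^j\varphi_*\bR_X$ is a skyscraper supported on the proper analytic subset $Z_0\subsetneq Y$ over which $\varphi$ is not an isomorphism, with stalk at $y$ the group $H^j(\varphi^{-1}(y),\bR)$. Applying $R\varphi_*$ to the first sequence and using $R^1\varphi_*\sO_X = R^2\varphi_*\sO_X = 0$ gives $R^1\varphi_*\sH_X \cong R^2\varphi_*\bR_X$, while $\varphi_*\sH_X$ is an extension of $R^1\varphi_*\bR_X$ by $\sH_Y$; thus all higher direct images of $\sH_X$ are purely topological and supported on $Z_0$.

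Now I put the long exact cohomology sequences of the two displayed sequences side by side, linked by $\varphi^*$. The $\sO$-columns are isomorphisms by the first point; and, as recalled in the text for compact spaces in class $\mathcal C$, the maps $H^1(Y,\bR)\to H^1(Y,\sO_Y)$ and $H^1(X,\bR)\to H^1(X,\sO_X)$ are surjective, so $H^1(Y,\sH_Y)$ and $H^1(X,\sH_X)$ inject into $H^2(Y,\bR)$ and $H^2(X,\bR)$ with images $\ker(H^2(Y,\bR)\to H^2(Y,\sO_Y))$ and $\ker(H^2(X,\bR)\to H^2(X,\sO_X))$ respectively. Hence everything reduces to the behaviour of $\varphi^*$ on second real cohomology, which the Leray sequence for $\bR_X$ determines: since the $R^j\varphi_*\bR_X$ ($j\ge 1$) are skyscrapers, the kernel of $\varphi^*\colon H^2(Y,\bR)\to H^2(X,\bR)$ is the image of the differential $d_2\colon H^0(Y,R^1\varphi_*\bR_X)\to H^2(Y,\bR)$, and the cokernel injects into $H^0(Y,R^2\varphi_*\bR_X)$; concretely, a class of $H^2(X,\bR)$ is pulled back from $Y$ precisely when its restriction to every fibre $\varphi^{-1}(y)$ vanishes in $H^2(\varphi^{-1}(y),\bR)$, equivalently when it meets every curve $C$ contracted by $\varphi$ in degree $0$. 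The projection formula $\varphi^*\beta\cdot C = \beta\cdot\varphi_*C = 0$ then gives one inclusion in the description of $\im\varphi^*$; for the reverse inclusion and for the injectivity of $\varphi^*$ on $H^1(\sH)$ one uses in addition that $\im d_2$ meets the image of $H^1(Y,\sH_Y)$ in $H^2(Y,\bR)$ only in $0$ (a Hodge-theoretic fact, e.g.\ because $\im d_2$ carries weight $\le 1$ whereas $H^1(Y,\sH_Y)$ maps into the $(1,1)$-part of weight $2$).

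The mechanism behind the reverse inclusion and the final assertion is the same lifting argument. Suppose $\alpha\in H^1(X,\sH_X)\subset H^2(X,\bR)$ equals $\varphi^*\beta$ for some $\beta\in H^2(Y,\bR)$. The image of $\beta$ in $H^2(Y,\sO_Y)$ corresponds, under $H^2(Y,\sO_Y)\cong H^2(X,\sO_X)$, to the image of $\alpha$ in $H^2(X,\sO_X)$, which is $0$ because two consecutive arrows of a long exact sequence compose to zero; hence $\beta$ lifts to some $\gamma\in H^1(Y,\sH_Y)$. As $\varphi^*\gamma$ and $\alpha$ then have the same image, namely $\varphi^*\beta=\alpha$, in $H^2(X,\bR)$, and $H^1(X,\sH_X)\hookrightarrow H^2(X,\bR)$, we get $\varphi^*\gamma=\alpha$. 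Together with the previous paragraph this shows $\im\varphi^* = \{\alpha : \alpha\cdot C=0\text{ for every contracted }C\}$; and, for the final statement, the surjection $H^0(Y,\sA^0_Y/\sH_Y)\twoheadrightarrow H^1(Y,\sH_Y)$ recalled above represents the lift $\gamma$ by a smooth closed real $(1,1)$-form with local potentials $\omega_Y$ on $Y$, so that $\alpha = \varphi^*[\omega_Y]$, as claimed.

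The step I expect to be the main obstacle is the one flagged in the second paragraph: controlling $\varphi^*$ on $H^2(\cdot,\bR)$, i.e.\ that its kernel is exactly the Leray edge image, that its cokernel is detected by intersection with $\varphi$-contracted curves, and that no class pulled back from $Y$ in a non-trivial way can be of Bott--Chern type --- in other words that the part of $H^2(X,\bR)$ not coming from $Y$ is spanned by, and pairs perfectly with, classes of exceptional curves. For a resolution of singularities this is classical (local cohomology of the fibres, or the decomposition theorem), and the general case is reduced to it by factoring $\varphi$ through resolutions of $X$ and of $Y$ and invoking the rationality of the singularities once more to keep $R^j\varphi_*\sO$ trivial in positive degrees; this is the content of \cite[Lemma 3.3]{HP13a}, whose proof we follow.
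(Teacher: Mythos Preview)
The paper does not actually prove this lemma: it is stated as a recall from \cite[Lemma 3.3]{HP13a} with no argument given. Your sketch is therefore not being compared against a proof but against a citation, and you end up in the same place by invoking \cite{HP13a} in your final paragraph.

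That said, your expansion has a genuine slip worth naming. You assert that $R^j\varphi_*\bR_X$ for $j\ge 1$ is a \emph{skyscraper} sheaf, and then deduce from this that the cokernel of $\varphi^*$ on $H^2(\cdot,\bR)$ injects into $H^0(Y,R^2\varphi_*\bR_X)$. But $R^j\varphi_*\bR_X$ is only supported on the non-isomorphism locus $Z_0$, which can have positive dimension (e.g.\ a divisorial contraction onto a curve in a threefold); in that case $H^1(Y,R^1\varphi_*\bR_X)$ need not vanish, the $E_2^{1,1}$-term of Leray can contribute to $H^2(X,\bR)$, and your description of the cokernel does not follow. Similarly, the asserted equivalence ``$\alpha$ restricts to zero in $H^2(\varphi^{-1}(y),\bR)$ for every $y$'' $\Leftrightarrow$ ``$\alpha\cdot C=0$ for every contracted curve $C$'' presupposes that $H^2$ of each fibre is detected by curve classes, which is not automatic for an arbitrary compact analytic fibre.

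You correctly identify this as the crux in your last paragraph and point to the fix (factor through resolutions and use the classical statement there, plus rationality of singularities), which is exactly the content of \cite[Lemma 3.3]{HP13a}. So your write-up is a reasonable outline of the argument with one mislabelled step; the paper's treatment and yours ultimately rest on the same external reference.
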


Following \cite{Pau98}, we define the notion of a pseudo-effective resp. nef class: 

\begin{definition} \label{defPN}
Let $X$ be an irreducible reduced compact complex space and $u \in N^1(X)$.
\begin{enumerate}
\item $u$ is pseudo-effective if it can be represented by a current $T$ which is locally of the form $T = \partial \overline \partial \varphi$ with
a psh function $\varphi.$ 
\item $u$ is nef if it can be represented by a form $\alpha$ with local potentials such that for some positive $(1,1)-$form $\omega$  on $X$ and for all $\epsilon > 0,$ 
there exists $ f_{\epsilon} \in \sA^0(X),$ such that 
$$ \alpha + i \partial \overline \partial f_{\epsilon} \geq - \epsilon \omega. $$
\end{enumerate} 
\end{definition}

\begin{definition} \label{defNA1} 
Let $X$ be a normal compact complex space in class $\mathcal C.$ We define $N_1(X) $ to be the vector space of real closed currents of bidimension $(1,1)$ modulo the
 following equivalence relation: 
 $ T_1 \equiv  T_2 $ if and only if
 $$ T_1(\eta) = T_2(\eta)$$
 for all real closed $(1,1)$-forms $\eta$ (with local potentials).  \\
 We  furthermore define $\NAX \subset  N_1(X)$ to be the closed cone generated by the classes of positive closed currents. 
The closed cone of curves is the subcone
 $$ \NEX \subset \NAX $$
 of those positive closed currents arising as currents of integration over curves. 
 \end{definition}

For the proof of the contraction theorem \ref{theoremcontraction}
we will need the following statements, generalizing \cite[Prop.8.1]{HP13a}. 

\begin{proposition} \label{propositioncontraction}
Let $X$ be a normal $\Q$-factorial compact  K\"ahler space.
Let $\Delta$ be a boundary divisor such that the pair $(X, \Delta)$ is dlt.
Let $\R^+ [\Gamma_i]$ be a $(K_X+\Delta)$-negative extremal ray in $\NAX$.
Suppose that there exists a bimeromorphic morphism \holom{\varphi}{X}{Y} onto a normal complex space $Y$
such that $-(K_X+\Delta)$ is $\varphi$-ample and
a curve $C \subset X$ is contracted if and only if $[C] \in \R^+ [\Gamma_i]$.
\begin{enumerate}
\item Then we have two exact sequences
\begin{equation} \label{exactH2}
0 \rightarrow H^2(Y, \R) \stackrel{\varphi^*}{\rightarrow} H^2(X, \R) \stackrel{\alpha \mapsto \alpha \cdot \Gamma_i}{\longrightarrow} \R \rightarrow 0 
\end{equation}
\begin{equation} \label{exactN1}
0 \to N^1(Y)  \stackrel{\varphi^*}{\rightarrow}  N^1(X) \stackrel{[L] \mapsto L \cdot \Gamma_i}{\longrightarrow} \R \to 0.
\end{equation}
In particular we have $b_2(X) = b_2(Y) + 1$. 
\item We have an exact sequence
\begin{equation} \label{exactPic}
0 \rightarrow \pic(Y) \stackrel{\varphi^*}{\rightarrow} \pic(X) \stackrel{[L] \mapsto L \cdot \Gamma_i}{\longrightarrow} \Z. 
\end{equation}
\item If the contraction is divisorial, the variety $Y$
is $\Q$-factorial and its Picard number is $\rho(X)-1$.
Moreover the pair $(Y, \varphi_* \Delta)$ is dlt
\item If the contraction is small with flip $\varphi^+: X^+ \rightarrow Y$, the complex space $X^+$ 
is $\Q$-factorial and its Picard number 
is $\rho(X)$. 
Moreover the pair $(X^+, (\varphi^+)_*^{-1} \circ  \varphi_* \Delta)$ is dlt.
\end{enumerate}
\end{proposition}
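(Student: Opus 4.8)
The plan is to prove (1) first, deduce (2) from it, and then obtain (3) and (4) from (1), (2) together with standard discrepancy computations. Throughout I use that a dlt pair has rational singularities and that the target $Y$ has them too: since $-(K_X+\Delta)$ is $\varphi$-ample and $(X,\Delta)$ is dlt we may perturb $\Delta$ to a boundary $\Delta'$ with $(X,\Delta')$ klt and $-(K_X+\Delta')$ still $\varphi$-ample, so relative Kawamata--Viehweg vanishing gives $R^q\varphi_*\sO_X=0$ for $q>0$ and hence $\sO_Y\xrightarrow{\sim}R\varphi_*\sO_X$; in particular $Y$ has rational singularities and $H^q(Y,\sO_Y)\cong H^q(X,\sO_X)$ for all $q$. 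Thus Lemma~\ref{pullback} is available for $\varphi$ in both the divisorial and the small case.

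\textbf{Part (1).} By assumption a curve $C\subset X$ is $\varphi$-exceptional if and only if $[C]\in\R^+[\Gamma_i]$, so Lemma~\ref{pullback} gives that $\varphi^*\colon N^1(Y)\hookrightarrow N^1(X)$ is injective with image $\{\alpha\in N^1(X)\mid \alpha\cdot\Gamma_i=0\}$; moreover $[L]\mapsto L\cdot\Gamma_i$ maps onto $\R$ because $-(K_X+\Delta)$ is $\varphi$-ample, so $-(K_X+\Delta)\cdot\Gamma_i>0$. This is the $N^1$-sequence, and in particular $\rho(X)=\rho(Y)+1$ and $\rho(X/Y)=1$. For the $H^2$-sequence, $\varphi^*$ is injective since $\varphi$ is bimeromorphic (so $\varphi_*\varphi^*=\id$ on cohomology), $\operatorname{Im}\varphi^*\subseteq\ker(\alpha\mapsto\alpha\cdot\Gamma_i)$ by the projection formula, and $\alpha\mapsto\alpha\cdot\Gamma_i$ is already onto $\R$ on the subspace $N^1(X)\subseteq H^2(X,\R)$. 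Exactness then follows from $b_2(X)=b_2(Y)+1$ by a dimension count. For a compact Kähler space with rational singularities one has the Hodge decomposition $H^2(-,\C)=H^{2,0}\oplus H^{1,1}\oplus H^{0,2}$ with $\dim_\R N^1(-)=h^{1,1}$, whence $b_2=\dim N^1+2h^{0,2}$; since $\dim N^1(X)=\dim N^1(Y)+1$ and $h^{0,2}(X)=\dim H^2(X,\sO_X)=\dim H^2(Y,\sO_Y)=h^{0,2}(Y)$, we get $b_2(X)=b_2(Y)+1$. (Alternatively one argues with the Leray spectral sequence of $R\varphi_*\R$ and the structure of the fibres of $\varphi$, as in \cite[Prop. 8.1]{HP13a}.)

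\textbf{Part (2).} Injectivity of $\varphi^*$ on $\pic$ is the projection formula: $\varphi^*L\cong\sO_X$ forces $L\cong\varphi_*\varphi^*L\cong\varphi_*\sO_X=\sO_Y$. For exactness in the middle, let $L\in\pic(X)$ with $L\cdot\Gamma_i=0$. From the exponential sequence and $R^{>0}\varphi_*\sO_X=0$ one gets $R^1\varphi_*\sO_X^*\cong R^2\varphi_*\Z_X$, and the low-degree exact sequence of the Leray spectral sequence for $\sO_X^*$, using $\varphi_*\sO_X^*=\sO_Y^*$, reads $0\to\pic(Y)\xrightarrow{\varphi^*}\pic(X)\xrightarrow{e}H^0(Y,R^2\varphi_*\Z_X)$. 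The sheaf $R^2\varphi_*\Z_X$ is supported on $\varphi(\operatorname{Exc}\varphi)$ with stalk at $y$ equal to $H^2(\varphi^{-1}(y),\Z)$, and $e(L)$ records the restrictions $L|_{\varphi^{-1}(y)}$. Every curve contained in a fibre of $\varphi$ is proportional to $\Gamma_i$, so $L$ has degree zero on all of them; for one-dimensional fibres this already forces $L|_{\varphi^{-1}(y)}=0$, while the only possible two-dimensional fibre occurs in a divisorial contraction to a point and equals the (irreducible) exceptional divisor $S$, which is projective because $-(K_X+\Delta)|_S$ is ample, so the Hodge index theorem shows $L|_S$ is numerically trivial and (after the necessary torsion bookkeeping) trivial. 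Hence $e(L)=0$ and $L\in\varphi^*\pic(Y)$.

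\textbf{Parts (3) and (4).} Assume the contraction is divisorial with exceptional prime divisor $S$. Since $\rho(X/Y)=1$ and $S$ is $\Q$-Cartier with $S\cdot\Gamma_i\neq 0$, there is $\mu\in\Q$ with $(K_X+\Delta+\mu S)\cdot\Gamma_i=0$; by the $\Q$-linear version of (2), $K_X+\Delta+\mu S=\varphi^*C$ with $C$ $\Q$-Cartier on $Y$, and pushing forward gives $C=K_Y+\varphi_*\Delta$, so $K_Y+\varphi_*\Delta$ is $\Q$-Cartier. The same device proves $\Q$-factoriality of $Y$: for a Weil divisor $D_Y$, its strict transform $D_X$ is $\Q$-Cartier on $X$; correcting $D_X$ by a rational multiple of $K_X+\Delta$ to a $\varphi$-numerically trivial class, descending by (2) to a $\Q$-Cartier class on $Y$, and pushing forward shows $D_Y$ is $\Q$-Cartier. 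Then $\rho(Y)=\rho(X)-1$ is the $N^1$-sequence, and $(Y,\varphi_*\Delta)$ is dlt because discrepancies do not decrease under a $(K_X+\Delta)$-negative divisorial contraction — the usual local computation and the negativity lemma \cite[Lemma 3.39]{KM98}, whose proof is valid over complex spaces. In the small case, identify Weil divisors on $X^+$ with those on $X$ via the small modification; given $D^+\leftrightarrow D$, the divisor $D$ is $\Q$-Cartier, and applying the $\Q$-version of (2) to the small contraction $\varphi$ yields $mD+\lambda(K_X+\Delta)=\varphi^*C$ with $C$ $\Q$-Cartier on $Y$; pulling back along $\varphi^+$ and taking strict transforms gives $mD^++\lambda(K_{X^+}+\Delta^+)=(\varphi^+)^*C$, which is $\Q$-Cartier, hence $D^+$ is $\Q$-Cartier since $K_{X^+}+\Delta^+$ is ($\varphi^+$-ample by definition of the flip); so $X^+$ is $\Q$-factorial. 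That $(X^+,\Delta^+)$ is dlt follows from the standard comparison of discrepancies over $X$ and $X^+$ through a common resolution, again via the negativity lemma; and once this is known $X^+$ has rational singularities, so the $N^1$-argument of (1) applies to $\varphi^+$ as well and gives $\rho(X^+)=\rho(Y)+1=\rho(X)$. The step I expect to require the most work is the identity $b_2(X)=b_2(Y)+1$ in (1) and the descent step in (2): beyond Lemma~\ref{pullback}, both need control of the cohomology of the fibres of $\varphi$ — in particular of the contracted surface $S$, now possibly with non-isolated singularities and $H^2$-torsion — which is precisely where the dlt (rather than terminal) setting is more delicate than in \cite[Prop. 8.1]{HP13a}.
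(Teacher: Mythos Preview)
Your overall strategy matches the paper's: perturb $(X,\Delta)$ to a klt pair, apply relative Kawamata--Viehweg vanishing (Ancona) to get $R^q\varphi_*\sO_X=0$ and hence rational singularities on $Y$, use Lemma~\ref{pullback} for the $N^1$-sequence in (a), and derive (c) and (d) from (a) and (b) via the standard arguments of \cite[Prop.~3.36, 3.37, Cor.~3.44]{KM98}.

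The one genuine gap is in (b). Your Leray spectral sequence argument correctly reduces descent of a $\varphi$-numerically trivial line bundle $L$ to the vanishing of $c_1(L|_{\varphi^{-1}(y)})$ in $H^2(\varphi^{-1}(y),\Z)$; for one-dimensional fibres this follows from degree considerations, but when $\varphi$ contracts a surface $S$ to a point, numerical triviality of $L|_S$ only forces $c_1(L|_S)$ to be \emph{torsion} in $H^2(S,\Z)$, and your parenthetical ``after the necessary torsion bookkeeping'' does not close this. In the dlt setting $S$ is merely slc, so there is no reason to expect $H^2(S,\Z)$ to be torsion-free. The paper bypasses the issue by citing \cite[Thm.~3.25(4)]{KM98}: since $-(K_X+\Delta)$ is $\varphi$-ample and $L$ is $\varphi$-numerically trivial, the relative base-point-free theorem (valid for projective morphisms between complex spaces, cf.\ \cite{Nak85}, \cite{Anc87}) makes $mL$ and $(m+1)L$ both $\varphi$-free for $m\gg 0$; being $\varphi$-free and $\varphi$-trivial they are pullbacks from $Y$, hence so is their difference $L$. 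No fibrewise cohomology computation is needed, and the argument is insensitive to how singular $S$ is.

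A minor remark on (a): your dimension count $b_2=\dim N^1+2h^{0,2}$ presupposes a Hodge decomposition of $H^2$ on a singular compact K\"ahler space with rational singularities, which is not entirely standard and would itself need a reference or proof. The paper simply cites Lemma~\ref{pullback} here; the Leray alternative you mention parenthetically is closer to what is actually done in \cite[Prop.~8.1]{HP13a}.
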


\begin{proof} 
Since $-(K_X + \Delta)$ is $\varphi-$ample, the morphism $\varphi$ is projective. 
Since $(X,\Delta)$ is dlt and $X$ is $\Q$-factorial, the pair $(X,(1-\varepsilon)\Delta) $ is klt for small positive $\varepsilon$. 
Since $-(K_X + (1-\varepsilon)\Delta)$ is $\varphi-$ample, possibly after passing to a smaller $\varepsilon,$ we obtain 
$R^q\varphi_*(\sO_X) = 0$ for $q \geq 1$ by \cite[Thm.3.1]{Anc87}. Since $X$ has rational singularities (\cite[5.22]{KM98}), so does $Y$; moreover a) is proved by Lemma \ref{pullback}. 
The property $b)$ follows from \cite[Thm.3.25(4)]{KM98} and implies  the properties $c)$ and $d)$
as in \cite[Prop.3.36, Prop.3.37, Cor.3.44]{KM98}. 
\end{proof}

\begin{lemma} \label{lemmacrepantMMP}
Let $X$ be a normal $\Q$-factorial compact  K\"ahler threefold, and let $\Delta$ be a boundary
divisor such that the pair $(X, \Delta)$ is dlt. Let
$$
\varphi: X \to Y  \ {\rm resp.}  \ \merom{\varphi}{X}{X^+} 
$$
be the divisorial contraction resp. the flip of a $K_X+\Delta$-negative extremal ray $R \in \NA{X}$. Then the following holds:
\begin{enumerate}
\item Let $D$ be a nef divisor such that $D \cdot R=0$. Then $\varphi_*(D)$ is nef and
$$
\nu(D) = \nu(\varphi_*(D)).
$$
Moreover if $S \subset X$ is a prime divisor that is not contracted by $\varphi$, then $D \cdot S \neq 0$ in $H^4(X, \R)$ 
if and only if $\varphi_*(D) \cdot \varphi(S) \neq 0$ in $H^4(Y, \R)$. 
\item Let $B$ be a boundary divisor such that the pair $(X, B)$ is lc. If $(K_X+B) \cdot R=0$, then the pair $(Y, \varphi_*(B))$ is lc.
\end{enumerate}
\end{lemma}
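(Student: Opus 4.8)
The plan is to treat the two assertions separately, reducing both to standard facts about divisorial contractions and flips on threefolds, combined with the structural information on $\varphi$ already available in the K\"ahler setting.

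\textbf{Statement (a).} First I would observe that $\varphi$ (or $\varphi^+$) is a projective morphism — in the divisorial case because $-(K_X+\Delta)$ is $\varphi$-ample by definition of an extremal contraction, and in the flip case because both $\varphi$ and $\varphi^+$ are small projective morphisms over $Y$ with $K_X+\Delta$ resp. $K_{X^+}+\Delta^+$ being relatively (anti-)ample. Since $D \cdot R = 0$, $D$ is numerically trivial on the fibres of $\varphi$, so by the description of $\varphi^* \colon N^1(Y) \to N^1(X)$ in Lemma \ref{pullback} (the image consists exactly of the classes vanishing on $\varphi$-contracted curves) there is a class $D_Y \in N^1(Y)$ with $\varphi^* D_Y = D$. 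The class $D_Y$ agrees with $\varphi_*(D)$ up to the $\varphi$-exceptional part, so in the flip case (where $\varphi$ is small) $\varphi_* D = D_Y$, and in the divisorial case $\varphi_* D = D_Y$ as well since the exceptional divisor is contracted. Nefness of $\varphi_* D$: a class pulled back from $Y$ is nef on $X$ if and only if it is nef on $Y$ — one direction is trivial ($\varphi^*$ of a nef class is nef), and the other follows because every curve in $Y$ is the image of a curve in $X$ and $\varphi^* D_Y \cdot \tilde C = D_Y \cdot \varphi_* \tilde C$ by the projection formula; more carefully, one uses that $D_Y$ is represented by a form with local potentials pulled back from $Y$ (again Lemma \ref{pullback}, last assertion) so that the defining approximation property of nefness in Definition \ref{defPN} descends. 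For the numerical dimension: $\nu(D) = \nu(\varphi^* D_Y)$, and since $\varphi$ is bimeromorphic, $(\varphi^* D_Y)^k = \varphi^*(D_Y^k)$ in $H^{2k}$, which is nonzero iff $D_Y^k \neq 0$ because $\varphi^* \colon H^{2k}(Y) \to H^{2k}(X)$ is injective (it admits $\varphi_*$ as a left inverse, by the projection formula, on the subring generated by pullbacks); hence $\nu(D) = \nu(D_Y) = \nu(\varphi_* D)$. The last sentence of (a) is the same projection-formula computation one dimension up: for a prime divisor $S$ not contracted, $D \cdot S = \varphi^* D_Y \cdot S$ and $\varphi_*(D \cdot S) = D_Y \cdot \varphi(S)$, and pushforward $H^4(X,\R) \to H^4(Y,\R)$ is injective on classes of the form (pullback)$\cdot$(non-exceptional divisor), so one vanishes iff the other does.

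\textbf{Statement (b).} Here the key point is that the contraction is $(K_X+B)$-crepant by hypothesis. In the divisorial case, $(K_X+B) \cdot R = 0$ means $-(K_X+B)$ is numerically trivial on the fibre of $\varphi$, hence $K_X+B = \varphi^*(K_Y+\varphi_* B)$ as $\Q$-Cartier divisors (using $\Q$-factoriality of $Y$ from Proposition \ref{propositioncontraction}(c) to make sense of $K_Y+\varphi_* B$, and the numerical characterization of $\varphi^*$ again, now combined with the fact that $\varphi_*(K_X+B) = K_Y + \varphi_* B$). Thus $\varphi$ is crepant for the pair, and lc-ness is preserved under crepant bimeromorphic morphisms by \cite[Lemma 2.30]{KM98} — precisely the argument used in the proof of Theorem \ref{theoremterminalmodel}. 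In the flip case one argues over $Y$: $K_X+B = \varphi^*(K_Y+\varphi_*B)$ and $K_{X^+}+B^+ = (\varphi^+)^*(K_Y+\varphi_*B)$ where $B^+ = (\varphi^+)^{-1}_* \varphi_* B$, so $(X,B) \to (Y,\varphi_* B) \leftarrow (X^+,B^+)$ are both crepant; since lc descends from $X$ to $Y$ and then ascends to $X^+$ (equivalently, discrepancies of the pair over any divisor are the same computed on $X$ or on $X^+$), $(Y,\varphi_*B)$ is lc — and one could also record that $(X^+,B^+)$ is lc, though the statement only asks for $Y$.

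\textbf{Main obstacle.} The routine-looking parts are the projection-formula bookkeeping; the genuine point requiring care is the descent of nefness in (a), because in the K\"ahler (non-projective) category nefness is the metric/approximation condition of Definition \ref{defPN} rather than ``nonnegative on all curves''. This is handled by invoking the last assertion of Lemma \ref{pullback} — that a class on $X$ which is a pullback from $Y$ is in fact the pullback of a smooth closed $(1,1)$-form with local potentials on $Y$ — so that the $f_\varepsilon$ witnessing nefness upstairs can be taken $\varphi$-pulled-back (or averaged down), giving nefness of $D_Y = \varphi_* D$ on $Y$. Everything else is a crepancy computation identical in form to the ones already carried out in Section \ref{subsectionbimeromorphic}.
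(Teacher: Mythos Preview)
Your approach is correct and essentially identical to the paper's. The paper's proof is terse: for (a) it uses the exact sequence \eqref{exactPic} of Proposition~\ref{propositioncontraction}(b) rather than Lemma~\ref{pullback} to write $\sO_X(D) \simeq \varphi^* M$ with $M \simeq \sO_Y(\varphi_* D)$, then directly cites that nefness is invariant under bimeromorphic pull-back (\cite{Pau98}, \cite[Lemma~3.13]{HP13a}); the numerical-dimension and $D \cdot S$ claims follow from compatibility with the intersection product and the projection formula, and the flip case is handled by applying this to $f:X\to Y$ and then to $f^+:X^+\to Y$. Part (b) is exactly your argument via \cite[Lemma~2.30]{KM98}.

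One point to tighten: you correctly flag the descent of nefness as the delicate step, but your sketch---that the $f_\varepsilon$ witnessing nefness upstairs ``can be taken $\varphi$-pulled-back (or averaged down)''---is not a proof as written; the functions on $X$ have no reason to descend, and making the push-down precise is exactly the content of \cite{Pau98} (or \cite[Lemma~3.13]{HP13a}). The paper simply cites this, and you should too rather than re-derive it from Lemma~\ref{pullback}, whose last assertion only gives a smooth representative on $Y$, not the nefness estimate.
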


\begin{proof} Set $L := \sO_X(D)$. We first treat the case when $\varphi $ is divisorial:
by Proposition \ref{propositioncontraction},b) we have $L \simeq \varphi^* M$ with $M$ a line bundle on $Y$ and
obviously $M \simeq \sO_Y(\varphi_* D)$. Being nef is invariant under pull-back \cite{Pau98} \cite[Lemma 3.13]{HP13a}, so $M$ is nef.
The numerical dimension is invariant since the pull-back commutes with the intersection product.
If $S' = \varphi(S)$, then $L|_S = (\varphi|_S)^*(M|_S')$, hence $L \cdot S \neq 0$ if and only  $M \cdot \varphi(S) \neq 0$ by the
the projection formula. For the property $b)$ we refer to \cite[Lemma 2.30]{KM98}. 

In the flipping case, consider the small contraction $f: X \to Y$ associated to $R$ and the flip $f^+: X^+ \to Y$. 
For $a)$ apply the considerations above first to $f$ and then to $f^+$. 
For $b)$, apply \cite[2.30]{KM98} to see that $(Y,f_*(B))$ is lc and then again to conclude that $(X^+,\varphi_*(B) = (f^+)_*^{-1} \circ f_* (B))$ is lc. 
\end{proof}

\subsection{K\"ahler criteria}

In this subsection we generalize some K\"ahler criteria given in \cite{HP13a} to threefolds with non-isolated singularities. 

\begin{theorem} \label{kaehler2} Let $X$ be a normal compact threefold
in class $\sC$.
Let $\eta \in \sA^{1,1}(X)$ be a closed real (1,1)-form with local potentials such that
$T(\eta) > 0$ for all $[T] \in \overline {NA}(X) \setminus 0.$ 
Suppose that for every irreducible curve $C \subset X$ we have $[C] \neq 0$ in $N_1(X)$.
Then $\{ \eta \} \in N^1(X)$ is represented by a K\"ahler class, in particular $X$ is K\"ahler. 
\end{theorem}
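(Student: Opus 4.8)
The plan is to deduce this from a known K\"ahler criterion for normal compact spaces in class $\mathcal C$, applied not directly to $X$ but to a suitable resolution, and then to push the K\"ahler class down. More precisely, first I would fix a resolution of singularities $\pi \colon \hat X \to X$; since $X$ is in class $\mathcal C$, so is $\hat X$, and $\hat X$ is a smooth compact complex manifold. The strategy is to produce on $\hat X$ a class in $N^1(\hat X)$ which is a modification of $\pi^*\{\eta\}$ and which is strictly positive on $\overline{NA}(\hat X)\setminus 0$, then invoke the numerical characterisation of the K\"ahler cone (Demailly--P\u{a}un) on the manifold $\hat X$ to conclude $\hat X$ is K\"ahler with a K\"ahler class in the image of $\pi^*$ up to corrections, and finally descend to $X$ using Lemma \ref{pullback}.

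The key steps, in order, would be: (1) Check that $\pi^*\eta$ is a closed real $(1,1)$-form with local potentials on $\hat X$ which is nef (its class lies on the boundary of the positive cone since it is a pullback) and is strictly positive against any curve $C \subset \hat X$ not contracted by $\pi$; the contracted curves are exactly those on which $\pi^*\eta$ vanishes. (2) Using that $\pi$ is a projective morphism, pick a $\pi$-ample class, or rather an effective $\pi$-exceptional combination whose class is $\pi$-antiample, to build $\alpha_\varepsilon := \pi^*\{\eta\} + \varepsilon u$ for small $\varepsilon>0$, where $u$ is chosen so that $\alpha_\varepsilon$ is strictly positive on all of $\overline{NA}(\hat X)\setminus 0$: on the extremal contracted rays this uses the $\pi$-ampleness of $u$, on the rest it uses the strict positivity of $\pi^*\{\eta\}$ coming from the hypothesis on $X$ together with the hypothesis that every irreducible curve of $X$ is nonzero in $N_1(X)$ (so no curve of $\hat X$ maps to a numerically trivial curve of $X$). (3) Apply the Demailly--P\u{a}un-type criterion on $\hat X$ — strict positivity on the dual of the cone of currents, plus the numerical conditions on subvarieties — to conclude $\alpha_\varepsilon$ is K\"ahler on $\hat X$ for $\varepsilon$ small; here one needs to verify the intersection-theoretic positivity on all positive-dimensional subvarieties, which for a threefold reduces to surfaces and curves. (4) Let $\varepsilon \to 0$: $\pi^*\{\eta\}$ is then nef on $\hat X$ and positive on all non-contracted curves, so by Lemma \ref{pullback} it is the pullback of a class $\{\eta\} = \omega_X$ on $X$ that is nef; then run a similar but easier argument directly on $X$ (or use that a nef class positive on $\overline{NA}(X)\setminus 0$ with all curves nonzero in $N_1$ is K\"ahler) to promote nef to K\"ahler on $X$.

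The main obstacle I expect is Step (3): verifying the full list of numerical positivity conditions required by the Demailly--P\u{a}un criterion on $\hat X$ — in particular controlling $\int_Z \alpha_\varepsilon^{\dim Z} > 0$ for every irreducible analytic subset $Z$, not just curves. On a threefold the only new case beyond curves is an irreducible surface $S \subset \hat X$, where one must show $\int_S \alpha_\varepsilon^2 > 0$; the correction term $\varepsilon u$ helps when $S$ is $\pi$-exceptional or $\pi$-contracted, while for $S$ dominating its image one uses that $\pi^*\{\eta\}$ restricted to $S$ is nef and big-enough on the generic fibre structure, combined again with the hypothesis that curves in $X$ are numerically nonzero. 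A secondary delicate point is the descent in Step (4): making sure the nef class one obtains on $X$ is genuinely K\"ahler and not merely nef, which requires re-running a positivity argument at the level of $X$ using $\overline{NA}(X)$ directly, exactly as in the cited analogue \cite[Thm.~3.18]{HP13a} but now allowing the non-isolated singularities, which is where the hypothesis ``$[C]\neq 0$ in $N_1(X)$ for every irreducible curve'' is indispensable.
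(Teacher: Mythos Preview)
Your overall architecture matches the paper's: pass to a resolution $\pi\colon \hat X\to X$, correct $\pi^*\{\eta\}$ by a small $\pi$-ample exceptional term so that the resulting class is strictly positive on $\overline{NA}(\hat X)\setminus 0$, and apply the smooth case. Two points, however, need correction.

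First, your Step (4) --- the descent --- has a genuine gap. Letting $\varepsilon\to 0$ only tells you $\pi^*\{\eta\}$ is nef, which you knew from the start; and your fallback ``use that a nef class positive on $\overline{NA}(X)\setminus 0$ with all curves nonzero in $N_1$ is K\"ahler'' is precisely the statement you are trying to prove. The paper does \emph{not} pass to the limit. Instead it fixes the correction (writing the class as $\pi^*\{\eta\}-\tfrac{1}{n}E$ with $E$ exceptional), takes an actual K\"ahler form $\hat\eta$ in this class on $\hat X$, and pushes it forward: $\pi_*\hat\eta$ is a K\"ahler current on $X$, and since $E$ is $\pi$-exceptional its pushforward vanishes, so this current represents exactly $\{\eta\}$. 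One then invokes \cite[Prop.~3.3(iii)]{DP04}: a class containing a K\"ahler current is K\"ahler provided its restriction to every irreducible component of the Lelong level sets is K\"ahler. Those components are points or irreducible curves $C$, and on each curve $\{\eta\}|_C$ is K\"ahler because $[C]\neq 0$ in $N_1(X)$ by hypothesis, so $\eta\cdot C>0$. This is where the curve hypothesis is actually used, not in the resolution step.

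Second, you overestimate the difficulty of Step (3). On the smooth manifold $\hat X$ one does not re-verify the full Demailly--P\u{a}un positivity on every subvariety; one simply applies the already-established smooth case \cite[Thm.~3.18]{HP13a}, whose hypotheses are exactly ``$T(\cdot)>0$ on $\overline{NA}(\hat X)\setminus 0$'' together with ``every irreducible curve in $\hat X$ has nonzero class in $N_1(\hat X)$''. The first is your Step (2); the second is checked as in \cite{HP13a}. So the surface case you worry about is already absorbed into the cited result.
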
 

\begin{proof} We slightly generalise the arguments of \cite[Thm.3.18]{HP13a} 
by removing the assumption that the singularities are isolated. If $X$ is smooth, 
then \cite[Thm.3.18]{HP13a} applies, so that $\{ \eta \}$ is a K\"ahler class.

We shall now reduce ourselves to this case by considering a resolution $\mu:\hat{X}\to X$ with $\hat{X}$ a compact K\"ahler manifold
such that a suitable exceptional divisor $E$ of $\hat{X}$ is $\mu$-ample. As in \cite{HP13a}, we argue that the class of an irreducible curve in $\hat X$ does not
vanish. Furthermore,
we check as in 
Step 2 of the proof of \cite[Thm.3.18]{HP13a} that 
$$T(\mu^*(\eta)-\frac{1}{n}E) > 0 $$ 
for all classes $[T] \in \overline{NA}(\hat X) $ 
if $n$ is a sufficiently big positive integer. 
Hence, by the solution in the smooth case, we may choose 
a K\"ahler form $\hat{\eta}$ on $\hat{X}$ in the class $\{\mu^*(\eta)-\frac{1}{n} E\}$.
Consider the K\"ahler current 
$$\{\mu_*(\hat{\eta})\}=\{\eta\}.$$ 
Let $Z$ be an irreducible component of the Lelong level sets of $\mu_*(\hat{\eta})$; then $Z$ is either
a point or an irreducible curve. By our assumption, $\{\eta\} \vert Z$ is a K\"ahler class on $Z$. 
By \cite[Prop.3.3(iii)]{DP04} this implies that $\{\eta\}$ is a K\"ahler class on $X$. 
\end{proof}

\begin{corollary} \label{corollarycontractionkaehler}
Let $X$ be a normal $\Q$-factorial compact  K\"ahler threefold that is not uniruled. 
Let $\Delta$ be an effective $\Q$-divisor such that the pair $(X, \Delta)$ is dlt.
Let $\R^+ [\Gamma_i]$ be a $(K_X+\Delta)$-negative extremal ray in $\NAX$.

Suppose that there exists a bimeromorphic morphism \holom{\varphi}{X}{Y} 
such that $-(K_X+\Delta)$ is $\varphi$-ample and
a curve $C \subset X$ is contracted if and only if $[C] \in \R^+ [\Gamma_i]$.
Then $Y$ is a K\"ahler space. 
\end{corollary}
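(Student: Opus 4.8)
The plan is to deduce the K\"ahler property of $Y$ from the criterion in Theorem \ref{kaehler2} applied to $Y$, by producing on $Y$ a closed real $(1,1)$-form with local potentials that is strictly positive against every nonzero class in $\overline{\mathrm NA}(Y)$, and by checking that no irreducible curve in $Y$ is numerically trivial. First I would fix a K\"ahler form $\omega_X$ on $X$ and use the $\varphi$-ampleness of $-(K_X+\Delta)$: since $\varphi$ is projective, there is a $\varphi$-ample line bundle, and after twisting by the pullback of an ample-enough class from $Y$ one gets a genuine positive form; more directly, the standard device is to write, for a sufficiently small $\varepsilon>0$, that $\omega_X + \varepsilon\,\theta$ is $\varphi$-trivial for a suitable form $\theta$ representing a multiple of $-(K_X+\Delta)$, or rather to use that the class $\{\omega_X\}$ restricted to the $\varphi$-fibres can be compensated. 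The cleaner route uses Proposition \ref{propositioncontraction}(1): the sequence $0\to N^1(Y)\xrightarrow{\varphi^*} N^1(X)\xrightarrow{\cdot\,\Gamma_i}\R\to 0$ shows that the hyperplane $\{\alpha\cdot\Gamma_i=0\}\subset N^1(X)$ is exactly $\varphi^* N^1(Y)$, so I would look for a class in that hyperplane that is K\"ahler on $X$ and descends.

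Concretely, I would take $t>0$ small enough that $\{\omega_X\} + t\,c_1(-(K_X+\Delta))$ — or, more honestly, a class of the form $\{\omega_X\} - s\,c_1(L)$ with $L$ a $\varphi$-ample line bundle on $X$ and $s$ chosen so that this class pairs to zero with $\Gamma_i$ — still lies in the K\"ahler cone of $X$ (the K\"ahler cone is open and $\Gamma_i$ spans a single ray, so the hyperplane $\{\cdot\,\Gamma_i=0\}$ meets it), and moreover that $-(K_X+\Delta)$ being $\varphi$-ample forces this class to be $\varphi$-nef and $\varphi$-trivial, hence to lie in $\varphi^* N^1(Y)$ by the image description in Lemma \ref{pullback}. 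Write this class as $\varphi^*\{\eta_Y\}$; by the last sentence of Lemma \ref{pullback} we may take $\eta_Y$ to be an actual smooth closed real $(1,1)$-form with local potentials on $Y$. I then claim $\eta_Y$ satisfies the hypotheses of Theorem \ref{kaehler2} on $Y$: for $[T]\in\overline{\mathrm{NA}}(Y)\setminus 0$ one pulls $T$ back (push-forward of currents along $\varphi$, or rather the natural map $\overline{\mathrm{NA}}(X)\to\overline{\mathrm{NA}}(Y)$ induced by $\varphi_*$ which is surjective) to a nonzero class in $\overline{\mathrm{NA}}(X)$ and uses $\varphi^*\{\eta_Y\}\cdot T_X>0$ since $\varphi^*\{\eta_Y\}$ is K\"ahler on $X$; the projection formula gives $\{\eta_Y\}\cdot T>0$.

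The remaining hypothesis of Theorem \ref{kaehler2} is that every irreducible curve $C\subset Y$ has $[C]\neq 0$ in $N_1(Y)$. For this I would argue: $Y$ has rational singularities (as noted in the proof of Proposition \ref{propositioncontraction}, via \cite[Thm.3.1]{Anc87}), and $X$ is not uniruled, hence $Y$ is not uniruled either; take a curve $C\subset Y$. If $C$ is not contained in the image of the exceptional locus, its strict transform $\tilde C\subset X$ is a curve with $\varphi_*\tilde C = (\deg)\cdot C$, and $[\tilde C]\neq 0$ in $N_1(X)$ by hypothesis on $X$, so pairing with $\varphi^*$ of a K\"ahler class on $Y$ — or just with $\{\eta_Y\}$ against $C$ using $\{\eta_Y\}\cdot C = \varphi^*\{\eta_Y\}\cdot\tilde C/\deg >0$ — shows $[C]\neq 0$. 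If $C$ lies in the image of the exceptional divisor $S$, then in the divisorial case $\dim\varphi(S)\le 1$: if $\varphi(S)$ is a point this case is empty, and if $\varphi(S)$ is a curve then $C=\varphi(S)$ and one uses that $S$ dominates $C$ and $S$ is not uniruled to find a multisection giving a nonzero class, or more simply that $\{\eta_Y\}\cdot C>0$ as above since any curve through a general point of $C$ has a strict transform. The main obstacle I expect is precisely this verification for curves inside $\varphi(\mathrm{Exc}(\varphi))$ together with the clean production of the descended positive form $\eta_Y$ in the borderline case where the natural candidate class sits on the boundary of the K\"ahler cone of $X$; handling that requires either perturbing by a genuinely $\varphi$-ample twist (available since $\varphi$ is projective) so that the class stays strictly inside $\mathcal K(X)\cap\{\cdot\,\Gamma_i=0\}$, or invoking openness of the K\"ahler cone directly. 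Everything else is formal manipulation with Lemma \ref{pullback}, Proposition \ref{propositioncontraction}, and Theorem \ref{kaehler2}.
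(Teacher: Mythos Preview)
Your overall strategy --- descend a class from $X$ to $Y$ via Lemma \ref{pullback} and Proposition \ref{propositioncontraction}, then apply Theorem \ref{kaehler2} --- is exactly right and matches the paper. But there is a genuine gap in the construction of the descended class.

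You claim that the hyperplane $\{\,\cdot\,\Gamma_i = 0\}$ meets the K\"ahler cone of $X$, so that you can pick a class which is simultaneously K\"ahler on $X$ and trivial on $\Gamma_i$. This is false: any K\"ahler class pairs strictly positively with every nonzero element of $\NAX$, in particular with $[\Gamma_i]$. The K\"ahler cone lies entirely in the open half-space $\{\,\cdot\,\Gamma_i > 0\}$, so $\mathcal K(X) \cap \{\,\cdot\,\Gamma_i = 0\} = \emptyset$. Your proposed fixes at the end (perturbing by a $\varphi$-ample twist, or invoking openness) cannot repair this, for the same reason. Note also that your specific candidate $\{\omega_X\} - s\,c_1(L)$ with $L$ merely $\varphi$-ample need not even be nef: $L$ can be arbitrarily negative on curves not contracted by $\varphi$.

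What the paper does instead is use the nef supporting class $\alpha$ furnished by Proposition \ref{propositionperp}. This $\alpha$ lies on the \emph{boundary} of the K\"ahler cone, not in its interior: one has $\alpha \cdot \Gamma_i = 0$ but $\alpha \cdot Z' > 0$ for every $Z' \in \NAX \setminus \R^+_0[\Gamma_i]$. This weaker positivity is exactly enough: since $\alpha$ kills the ray, it descends to $\beta \in N^1(Y)$ by the exact sequence \eqref{exactN1}; and since any nonzero $Z \in \overline{\mathrm{NA}}(Y)$ lifts to a class in $\NAX$ not lying on $\R^+[\Gamma_i]$ (its push-forward being nonzero), one gets $\beta \cdot Z > 0$. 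Your argument for the curve condition is essentially the paper's, though note that when $C$ lies in the image of the exceptional locus the relevant point is simply that $\varphi|_S : S \to C$ is projective, hence $S$ is projective and admits a multisection --- the non-uniruledness of $S$ plays no role (and $S$ may well be uniruled).
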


\begin{proof}
As in the proof of Proposition \ref{propositioncontraction} we argue that $Y$ has rational singularities.
By assumption, $\R^+ [\Gamma_i]$ is extremal in $\NAX$; denote by $\alpha \in N^1(X)$
a nef supporting class (cf. Proposition \ref{propositionperp}). Consequently
$$
\alpha \cdot Z' > 0
$$
for every class $Z' \in \NAX \setminus \R_0^+ [\Gamma_i]$.
By Proposition \ref{propositioncontraction},a) there exists a $\beta \in N^1(Y)$ with  $\alpha = \varphi^*(\beta)$ and by what precedes we have
$$ 
\beta \cdot Z>0
$$ 
for any $Z \in \NAY \setminus 0$. \\
We claim that for any irreducible curve $C \subset Y$, there exists a curve $C' \subset X$
such that $C = \varphi(C')$. Since $C'$ is not contracted by $\varphi$ we have $\alpha \cdot C'>0$, so  
$\beta \cdot C > 0$ by the projection formula. In particular, $[C] \neq 0$ in $N_1(X)$, so the statement follows from  Theorem \ref{kaehler2}.

For the proof of the claim note first that if $C$ is not contained in the image of the exceptional locus, then we can just take the strict transform $C' \subset X$. If $C$ is in the image of the exceptional locus, then $\varphi$ is divisorial and maps a surface $S$ onto $C$.
Since the morphism $\varphi|_S: S \rightarrow C$ is projective, the surface $S$ is projective. Thus some multisection $C' \subset S$ has
the required property. 
\end{proof}

\section{MMP for pairs} \label{sectionMMP}

In this section we prove several results on the MMP for pairs for K\"ahler threefolds.  While we are not able to establish the log-MMP in full generality,
the results are sufficient for the application to the abundance problem in Section \ref{sectionreduction}.
We start by proving the cone theorem and (parts of) the contraction theorem in the Subsections \ref{subsectioncone} and \ref{subsectioncontraction}, while Subsection \ref{subsectionrunMMP}
merely collects the relevant results on existence of flips and termination.
In order to simplify the statements we make the following

\begin{assumption} \label{assumption}
Let $X$ be a normal $\Q$-factorial compact  K\"ahler threefold with rational singularities.
Let $\Delta = \sum_i a_i \Delta_i$ be an effective $\Q$-divisor on $X$ such that the pair $(X, \Delta)$ is lc.
We also assume that $X$ is not uniruled, so by \cite[Cor.1.4]{HP13a} 
the Kodaira dimension $\kappa(\hat X)$  of a desingularisation $\hat X$ is non-negative, hence $\kappa (X) \geq 0.$  Thus we have
\begin{equation} \label{canonicaldecomp}
K_X \sim_\Q \sum \lambda_j S_j,
\end{equation}
where the $S_j$ are integral surfaces in $X$, the coefficients $\lambda_j \in \Q^+$.
\end{assumption}

Note that in the situation of Theorem \ref{theoremNA} and Theorem \ref{theoremcontraction} the conditions
above are satisfied: dlt pairs have rational singularities \cite[5.22]{KM98}.

\subsection{Cone theorem} \label{subsectioncone}

In this section we will prove a weak form of the cone theorem for non-uniruled lc pairs. 
The stronger cone theorem \ref{theoremNA} will then be a consequence of the contraction results
in Subsection \ref{subsectioncontraction} which are based on the weak cone theorem:

\begin{theorem} \label{theoremweakNA}
Let $X$ be a normal $\Q$-factorial compact  K\"ahler threefold with rational singularities.
Let $\Delta$ be a boundary divisor on $X$ such that the pair $(X, \Delta)$ is lc.
If $X$ is not uniruled, there exists a positive integer 
$d \in \N$ and an at most countable family $(\Gamma_i)_{i \in I}$  of curves  on $X$
such that 
$$
0 < -(K_X+\Delta) \cdot \Gamma_i \leq d
$$
and
$$
\NAX = \NAX_{(K_X+\Delta) \geq 0} + \sum_{i \in I} \R^+ [\Gamma_i].
$$
\end{theorem}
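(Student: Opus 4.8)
The plan is to adapt the bend-and-break / Mori cone argument of \cite{HP13a}, specifically the one used there for the terminal case, replacing the pair $(X,0)$ by the lc pair $(X,\Delta)$. The starting point is the canonical decomposition \eqref{canonicaldecomp}: since $X$ is not uniruled we have $\kappa(X)\geq 0$, so $K_X \sim_\Q \sum \lambda_j S_j$ with $\lambda_j>0$. The key observation is that any curve $C$ with $(K_X+\Delta)\cdot C<0$ must have either $S_j\cdot C<0$ for some $j$, and hence $C\subset S_j$, or $\Delta_i \cdot C < 0$ for some component, and hence $C\subset \Delta_i$, or $K_X\cdot C<0$ (forcing $C\subset S_j$ again) — in any case $C$ is contained in one of the finitely many surfaces $S_j$ or $\Delta_i$. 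So the $(K_X+\Delta)$-negative part of $\NAX$ is controlled by curves lying on a fixed finite set of surfaces.

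First I would reduce to working on the normalisation $\tilde S$ of each such surface $S$ (one of the $S_j$ or $\Delta_i$), and use the adjunction formula from Subsection \ref{subsectionadjunction}: passing to the minimal resolution $\pi:\hat S\to S$, inequality \eqref{inequalitycanonical} gives $K_{\hat S}\cdot \hat C \leq (K_X+S+B)\cdot C$ for a suitable boundary $B$ and curves $\hat C$ not contained in the finitely many bad loci (the $\pi$-exceptional divisor $E$). Thus a curve $C\subset S$ that is $(K_X+\Delta)$-negative has $K_{\hat S}\cdot\hat C$ bounded above, so $\hat C$ moves in $\hat S$ or is one of finitely many rigid curves; the classical surface cone theorem / rationality arguments then show that these curves span a rational polyhedral (or at least locally polyhedral, countable) subcone of $\NE{\hat S}$ with $-(K_{\hat S})\cdot\hat C$ bounded — this is where the universal bound $d$ enters (for smooth surfaces the bound is $3$ coming from $-K_{\hat S}\cdot\hat C\leq 3$ for extremal curves, and the extra contributions of $E$ and $B$ to the difference $(K_X+S+B)\cdot C - K_{\hat S}\cdot\hat C$ are nonnegative, so one still gets an absolute constant).

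Then I would push this information forward along $\hat S\to S\hookrightarrow X$. The pushforward $N_1(\hat S)\to N_1(X)$ sends the $K_{\hat S}$-negative extremal curves to a family $(\Gamma_i)_{i\in I}$ of (images of) rational curves — here I use that $K_{\hat S}$-negative extremal curves on a smooth surface are rational — with $0<-(K_X+\Delta)\cdot\Gamma_i\leq d$. Taking the union over the finitely many surfaces $S_j$ and $\Delta_i$ gives a countable family. For the cone decomposition $\NAX = \NAX_{(K_X+\Delta)\geq 0}+\sum \R^+[\Gamma_i]$, I would argue as in \cite{HP13a}: given a class $\alpha\in\NAX$ with $(K_X+\Delta)\cdot\alpha<0$, represent it by a positive current, decompose, and use that the negative contribution is concentrated on the surfaces, then apply the surface-level decomposition and push forward. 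One must be slightly careful that currents, not just curves, generate $\NAX$ in the Kähler setting, but the $(K_X+\Delta)$-negativity localises any such current onto the finite union of surfaces, reducing to the surface case where $\NE{\hat S}=\NA{\hat S}$ by projectivity of $\hat S$.

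The main obstacle I expect is the passage between the ambient Kähler threefold and its surfaces at the level of the cones of currents: ensuring that a $(K_X+\Delta)$-negative class in $\NAX$ genuinely comes from curve classes supported on the finitely many surfaces $S_j$, $\Delta_i$ (as opposed to some diffuse positive current), and that the pushforward map on cones behaves well. This is the technical heart and is where the hypotheses ``$X$ $\Q$-factorial with rational singularities'' and ``$(X,\Delta)$ lc'' are really used — $\Q$-factoriality to make intersection numbers $S_j\cdot C$, $\Delta_i\cdot C$ well-defined, rational singularities so that $N^1$ and $N_1$ behave functorially (via Lemma \ref{pullback}), and lc-ness so that the differend $B_{\tilde S}$ is an honest boundary and inequality \eqref{inequalitycanonical} holds. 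A secondary subtlety is extracting a \emph{uniform} bound $d$ independent of the surface, which follows from the bound $3$ on smooth surfaces together with the non-negativity of $E$ and of the coefficients in $B$.
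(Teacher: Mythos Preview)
Your approach to the $\NEX$ part --- localising $(K_X+\Delta)$-negative curves onto the finitely many surfaces $S_j$ and $\Delta_i$ and invoking the surface cone theorem --- is close in spirit to what the paper does in Lemma \ref{lemmaaddboundary}, though the paper packages it as a ``bend-and-break property'' (Definition \ref{definitionbbproperty}) and establishes it by a chain of reductions through terminal, klt, and dlt models (Proposition \ref{propositionbb}). Your direct surface argument needs more care on two points: the inequality \eqref{inequalitycanonical} only bounds $-(K_X+S+B)\cdot C$, not $-(K_X+\Delta)\cdot C$, and curves lying in the bad locus $S_{\sing}\cup(\supp B\cap S)\cup(X_{\sing}\cap S)$ are excluded from that inequality and must be handled separately (the paper simply absorbs them into the constant $d$). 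Still, this part of your outline is essentially workable.

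The real gap is in passing from $\NEX$ to $\NAX$. Your sentence ``the $(K_X+\Delta)$-negativity localises any such current onto the finite union of surfaces'' is not justified and, as stated, is wrong: a positive closed current of bidimension $(1,1)$ with $(K_X+\Delta)\cdot T<0$ need not be supported on, or even decomposable through, the surfaces $S_j$, $\Delta_i$. The paper's argument here is quite different and does not use any such localisation. Instead it pulls back to a resolution $\hat X$ and invokes the Demailly--Paun description \cite[Cor.~0.3]{DP04} of $\NA{\hat X}$ as the closed cone generated by classes $[\hat\omega]^2$, $[\hat\omega]\cdot[\hat S]$, and $[\hat C]$. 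The classes $[\hat\omega]^2$ are automatically $(K_X+\Delta)$-nonnegative by pseudoeffectivity; the curve classes $[\hat C]$ push forward into $\NEX$. The delicate case is $[\hat\omega]\cdot[\hat S]$: if its pushforward is $(K_X+\Delta)$-negative, one shows via Lemma \ref{lemmasurfaces} that the surface $\hat S$ (or its image) is uniruled, hence projective, so $H^2(\bar S,\sO_{\bar S})=0$ on a resolution and the K\"ahler class $[\hat\omega]|_{\hat S}$ is represented by an $\R$-divisor. This forces $[\hat\omega]\cdot[\hat S]\in\NE{\hat X}$, reducing again to $\NEX$. Without this projectivity-of-the-surface step your argument has no mechanism to convert the ``diffuse'' generators $[\hat\omega]\cdot[\hat S]$ into curve classes.
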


We will prove this statement on page \pageref{proofweakNA}, but this will take some technical preparation.

\begin{lemma} \label{lemmasurfaces}
Under the Assumption \ref{assumption}, let $S \subset X$ be a prime divisor 
such that $(K_X+\Delta)|_S$ is not pseudoeffective. Then $S$ is Moishezon 
and any desingularisation $\hat S$ is a uniruled projective surface.
\end{lemma}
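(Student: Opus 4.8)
The statement asserts that if $(K_X+\Delta)|_S$ is not pseudoeffective, then $S$ is Moishezon with uniruled projective desingularisation. The plan is to exploit the interplay between pseudoeffectivity on $S$ and the global structure of $X$, using the canonical decomposition $K_X \sim_\Q \sum \lambda_j S_j$ from Assumption \ref{assumption}.

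First I would pass to the minimal resolution $\pi \colon \hat S \to S$ and recall the adjunction formula \eqref{equationresolve}: there is a canonically defined effective $\Q$-divisor $E$ on $\hat S$ with $K_{\hat S} + E \sim_\Q \pi^*(K_X + S + B)|_S$ for an appropriate boundary $B$ extracted from $\Delta$ (writing $\Delta = aS + B$ with $S \not\subset \supp B$ if $S$ is a component, or $B = \Delta$ otherwise; in the first case one compares $(K_X+\Delta)|_S$ with $(K_X+S+B)|_S$ by adding the contribution of $(a-1)S|_S$, whose effect on pseudoeffectivity must be tracked). The key point is that $(K_X+\Delta)|_S$ not being pseudoeffective forces $\pi^* K_{\hat S}$-type data, together with the effective correction terms, to have a non-pseudoeffective component; since $E \geq 0$, this means $K_{\hat S}$ itself is not pseudoeffective on $\hat S$.

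Next, once I know $K_{\hat S}$ is not pseudoeffective, I would invoke the characterisation of uniruledness via non-pseudoeffectivity of the canonical class for surfaces in class $\sC$ — for a compact complex surface in class $\sC$, $K$ not pseudoeffective is equivalent to the surface being uniruled (bimeromorphically ruled over a curve), hence projective. This is the Castelnuovo–de Franchis / abundance-for-surfaces circle of results, available in the literature (e.g.\ the surface case of the results quoted from \cite{HP13a} or classical surface theory). A uniruled surface is automatically projective and Moishezon, and projectivity descends to $S$ since $S$ is bimeromorphic to $\hat S$, giving that $S$ is Moishezon. The conclusion that $\hat S$ is a uniruled projective surface then follows directly.

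The main obstacle I anticipate is the bookkeeping at the adjunction step: one must be careful that subtracting the non-effective part does not accidentally restore pseudoeffectivity, and one must handle separately the cases $S \subset \supp \Delta$ and $S \not\subset \supp \Delta$ (in the latter, $(K_X+\Delta)|_S$ is compared to $(K_X + S + \Delta)|_S$ by adding the effective class $S|_S$, which could in principle be non-pseudoeffective — but here one uses that $S$ is then not contracted and $X$ is not uniruled, so the relevant self-intersection data is controlled, and in fact one argues via the decomposition \eqref{canonicaldecomp} that any non-pseudoeffectivity ultimately propagates to $K_{\hat S}$). The cleanest route is probably: if $(K_X+\Delta)|_S$ is not pseudoeffective, then a fortiori (since $\Delta \geq 0$ and, by \eqref{canonicaldecomp}, $K_X$ is $\Q$-effective) the restriction to $S$ of some effective divisor is not pseudoeffective, which quickly localises the failure of pseudoeffectivity onto $\hat S$ after pulling back, forcing $\kappa(\hat S) = -\infty$ and hence uniruledness.
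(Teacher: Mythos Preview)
Your overall architecture matches the paper's proof: pass to the normalisation-plus-minimal-resolution $\pi\colon \hat S \to S$, use the adjunction formula \eqref{equationresolve} to write $K_{\hat S} + E \sim_\Q \pi^*(K_X+S+B)|_S$ with $E$ effective, deduce that $K_{\hat S}$ is not pseudoeffective, and conclude via surface classification that $\hat S$ is uniruled and projective, whence $S$ is Moishezon.

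The gap is precisely where you anticipated it, the ``bookkeeping at the adjunction step'', and you do not actually close it. You need that $(K_X+\Delta)|_S$ not pseudoeffective implies $(K_X+S+B)|_S$ not pseudoeffective for the correct choice of $B$; your case split by whether $S \subset \supp \Delta$ leads you into trouble (adding $S|_S$ can genuinely restore pseudoeffectivity), and your ``cleanest route'' at the end is circular --- knowing that the restriction of some effective divisor is not pseudoeffective is just the hypothesis restated and does not by itself localise anything onto $K_{\hat S}$.

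The paper instead splits on whether $K_X|_S$ is pseudoeffective. If $K_X|_S$ is \emph{not} pseudoeffective, then $S$ must be one of the surfaces $S_j$ in \eqref{canonicaldecomp}, say $S = S_1$; writing $S = \frac{1}{\lambda_1} K_X - \sum_{j\ge 2}\frac{\lambda_j}{\lambda_1} S_j$ and applying adjunction with $B=0$ gives
\[
K_{\hat S}+E \sim_\Q \tfrac{\lambda_1+1}{\lambda_1}\,\pi^* K_X|_S - \sum_{j\ge 2}\tfrac{\lambda_j}{\lambda_1}\,\pi^* S_j|_S,
\]
a positive multiple of a non-pseudoeffective class plus an anti-effective class. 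If $K_X|_S$ \emph{is} pseudoeffective, then necessarily $S \subset \supp\Delta$ (else $\Delta|_S$ is effective and $(K_X+\Delta)|_S$ would be pseudoeffective), say $S=\Delta_1$; the key trick you are missing is the convex combination
\[
(K_X+\Delta)|_S = a_1\,(K_X+\Delta_1+\textstyle\sum_{i\ge 2} a_i\Delta_i)|_S + (1-a_1)\,(K_X+\textstyle\sum_{i\ge 2} a_i\Delta_i)|_S,
\]
where the second summand is pseudoeffective since $K_X|_S$ is and $\sum_{i\ge 2} a_i\Delta_i|_S$ is effective. Hence the first summand cannot be pseudoeffective, and adjunction with $B = \sum_{i\ge 2} a_i\Delta_i$ finishes. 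This interpolation argument is the missing idea in your plan.
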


\begin{proof}
Let \holom{\pi}{\hat{S}}{S} be the composition of  the normalisation followed by the minimal resolution of the normalised surface.
Our goal is to show that there exists an effective $\Q$-divisor $E$ such that $K_{\hat{S}}+E$ is not pseudoeffective, in particular we have $\kappa(\hat S)=-\infty$. 
It then follows from the Castelnuovo-Kodaira classification that $\hat{S}$ is covered by rational curves,
in particular it is a projective surface \cite{BHPV04}. Thus $S$ is Moishezon.

{\em 1st case. Suppose that $K_X|_S$ is not pseudoeffective.}
Since $K_X$ is $\Q$-effective this implies that $S$ is one of the surfaces appearing in the decomposition \eqref{canonicaldecomp}.
Up to renumbering we may suppose that $S=S_1$.
Observe 
$$
S = S_1 = \frac{1}{\lambda_1} K_X -  \sum_{j=2}^r \frac{\lambda_j}{\lambda_1} S_j.
$$
Applying \eqref{equationresolve} with $B=0$, there exists an effective $\Q$-divisor $E$ such that
$$
K_{\hat S}+E
\sim_\Q  \pi^* (K_X+S)|_{S} = 
\frac{(\lambda_1+1)}{\lambda_1} \pi^* K_X|_S - 
\sum_{j=2}^r  \frac{\lambda_j}{\lambda_1}  \pi^* S_j|_S.
$$
By assumption $K_X|_S$ is not pseudoeffective, and
$- \sum_{j=2}^r  \frac{\lambda_j}{\lambda_1}  \pi^* S_j|_S$ is anti-effective. Thus
$K_{\hat S}+E$ is not pseudoeffective. 

{\em 2nd case. Suppose that $K_X|_S$ is pseudoeffective.}
Then $S \subset \Delta,$ hence, up to renumbering, we may suppose that $S=\Delta_1$. 
We claim that
$$
(K_X+\Delta_1+\sum_{i \geq 2} a_i \Delta_i)|_S 
$$
is not pseudoeffective. Otherwise, by $0 \leq a_1 \leq 1$ and the pseudo-effectivity of $K_X|_S$, the divisor
$$
(K_X+\Delta)|_S = 
a_1  (K_X+\Delta_1+\sum_{i \geq 2} a_i \Delta_i)|_S + (1-a_1) (K_X+\sum_{i \geq 2} a_i \Delta_i)|_S
$$
is a convex combination of pseudoeffective classes, hence itself pseudoeffective, a contradiction.
Applying \eqref{equationresolve} with $S= \Delta_1$ and $B=\sum_{i \geq 2} a_i \Delta_i$, there exists an effective $\Q$-divisor $E$ such that
$$
K_{\hat S}+E
\sim_\Q  \pi^* (K_X+\Delta+\sum_{i \geq 2} a_i \Delta_i)|_{S}.
$$
Thus $K_{\hat S}+E$ is not pseudoeffective.
\end{proof}

\begin{corollary} \label{corollaryalgebraicallynef}
Under the Assumption \ref{assumption}, the divisor $K_X+\Delta$
is nef if and only if
$$
(K_X+\Delta) \cdot C \geq 0 
$$
for every curve $C \subset X$.
\end{corollary}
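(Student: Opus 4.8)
The implication ``$\Rightarrow$'' is immediate, since a nef class pairs nonnegatively with every positive closed current (Definitions \ref{defPN} and \ref{defNA1}), in particular with the current of integration over any curve. For the converse, set $D:=K_X+\Delta$; by Assumption \ref{assumption} and \eqref{canonicaldecomp} we may write $D\sim_\Q\sum_k\mu_kD_k$ with the $D_k$ prime divisors and $\mu_k\in\Q^+$. The plan is in two steps: first to show that $D|_{D_k}$ is nef for every component $D_k$, and then to deduce that $D$ itself is nef.

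For the first step, fix a prime divisor $S=D_k$ and let $\pi\colon\hat S\to S$ be the normalisation followed by the minimal resolution. I would first check that $\pi^*(D|_S)$ is pseudoeffective on $\hat S$: if it were not, Lemma \ref{lemmasurfaces} would make $\hat S$ a uniruled projective surface, yet for every curve $\hat C\subset\hat S$ the projection formula and the hypothesis give $\pi^*(D|_S)\cdot\hat C\ge 0$ (this is $0$ if $\hat C$ is $\pi$-exceptional, and $D\cdot\pi_*\hat C\ge 0$ otherwise), so by Kleiman's criterion on the projective surface $\hat S$ the class $\pi^*(D|_S)$ would be nef, hence pseudoeffective --- a contradiction. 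Thus $\pi^*(D|_S)$ is a pseudoeffective class on the compact K\"ahler surface $\hat S$ that is nonnegative on every curve; writing its Zariski decomposition $\pi^*(D|_S)=P+N$ with $P$ nef, $N=\sum_i n_iN_i$ effective with negative definite support, and $P\cdot N_i=0$, nonnegativity on each $N_i$ gives $0\le\pi^*(D|_S)\cdot N_i=N\cdot N_i$, hence $N\cdot N=\sum_i n_i(N\cdot N_i)\ge 0$; combined with $N\cdot N\le 0$ this forces $N=0$, so $\pi^*(D|_S)=P$ is nef.

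For the second step I would verify that $D\cdot[T]\ge 0$ for every positive closed current $T$ of bidimension $(1,1)$. By the Siu decomposition, $T=\sum_j\nu_j[C_j]+R$ with $C_j$ irreducible curves and $R$ closed positive with vanishing generic Lelong number along every curve. Put $S:=\supp(D)=\bigcup_kD_k$ and split $R=R'+R''$, where $R''$ is the Skoda--El~Mir trivial extension of $R|_{X\setminus S}$ and $R':=R-R''$ is closed, positive, and supported on $S$. Then $D\cdot[T]=\sum_j\nu_j\,(D\cdot C_j)+D\cdot[R']+D\cdot[R'']$, and the first sum is $\ge 0$ by hypothesis. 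Since $R''$ charges no component of $S$, the current $\sum_k\mu_k\,[D_k]\wedge R''$ is a well-defined positive measure with total mass $D\cdot[R'']$, so $D\cdot[R'']\ge 0$. Finally, a further application of the trivial extension, one component at a time, writes $R'=\sum_kS_k$ with $S_k$ closed positive, supported on $D_k$, and charging no curve; such an $S_k$ equals $(\nu_k)_*\gamma_k$ for a closed positive $(1,1)$-current $\gamma_k$ on $\hat D_k$, where $\nu_k\colon\hat D_k\to D_k\hookrightarrow X$, and hence $D\cdot[S_k]=(\nu_k^*D)\cdot\gamma_k\ge 0$ because $\nu_k^*D$ is nef by the first step while a nef class pairs nonnegatively with a positive closed current on a surface. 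Summing over $k$ gives $D\cdot[T]\ge 0$, so $D$ is nef.

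The hard part is the second step: the current-theoretic bookkeeping --- in particular, the decomposition of a closed positive $(2,2)$-current supported on the reducible surface $S$ into pushforwards of positive closed $(1,1)$-currents from the resolutions of its components --- rests on the Skoda--El~Mir extension theorem and Siu's structure theorem, and one must fix the convention that ``nef'' for a class on the possibly singular surface $D_k$ means nefness of its pullback to a resolution (this being independent of the chosen resolution). An alternative route that avoids some of this is to observe first, as in the first step (now using that for a surface $Y\not\subset\supp(D)$ the class $D|_Y$ is even effective, hence pseudoeffective), that $D|_Y$ is nef for \emph{every} irreducible surface $Y\subseteq X$, and then to apply the Demailly--Paun numerical criterion to $D+\varepsilon\omega$ on a resolution of $X$, the required positivity of all intersection numbers on subvarieties following from the hypothesis on curves together with the nefness of all restrictions $D|_Y$.
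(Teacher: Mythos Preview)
Your Step 1 is fine, and the alternative route via Demailly--Paun that you sketch at the end can be made to work. However, the paper's argument is dramatically shorter and avoids all the current-theoretic bookkeeping.

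The paper argues by contradiction: assume $K_X+\Delta$ is nef on every curve but not nef. Since $K_X+\Delta$ is pseudoeffective, Boucksom's description of the non-nef locus \cite[Prop.~3.4]{Bou04} (applied on a resolution, with the $\mu$-exceptional surfaces ruled out by the projection formula and the curve hypothesis) yields an irreducible surface $S\subset X$ with $(K_X+\Delta)|_S$ not pseudoeffective. Lemma \ref{lemmasurfaces} then makes a desingularisation $\hat S$ projective and uniruled, so by \cite{BDPP13} there is a covering family $C_t\subset S$ with $\pi^*(K_X+\Delta)|_S\cdot\hat C_t<0$, whence $(K_X+\Delta)\cdot C_t<0$ --- a contradiction. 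That is the whole proof.

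By contrast, your main Step 2 has a genuine gap. The assertion that a closed positive $(2,2)$-current $S_k$ on the threefold, supported on the (possibly singular, possibly non-normal) surface $D_k$ and charging no curve, is the pushforward $(\nu_k)_*\gamma_k$ of a closed positive $(1,1)$-current from a \emph{resolution} $\hat D_k$ is not a standard support theorem. The classical support theorems handle the case where the current has the same dimension as its support; here the support has dimension $2$ while a bidimension-$(1,1)$ current has ``dimension'' $1$, and no off-the-shelf structure result gives you a current on the resolution. Even the intermediate step --- writing $S_k$ as the pushforward of a closed positive $(1,1)$-current on $D_k$ itself --- would need justification on a singular surface, and then lifting to $\hat D_k$ is another issue (currents do not pull back). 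Your acknowledgement that this is ``the hard part'' is apt; as written it is not a proof.

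Your alternative route does close the gap: once you know $\mu^*D|_V$ is nef for every irreducible surface $V$ in a resolution $\hat X$ (which your Step 1 argument, extended as you indicate, gives), all the mixed intersection numbers $(\mu^*D)^a\cdot\hat\omega^b\cdot V$ are $\ge 0$, and the path $s\mapsto s\,\mu^*D+\hat\omega$ stays in the numerically positive cone of \cite{DP04}, hence in the K\"ahler cone; letting $s\to\infty$ shows $\mu^*D$ is nef. This is correct but substantially longer than invoking \cite[Prop.~3.4]{Bou04} directly, which already packages the relevant current theory.
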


\begin{proof}
One implication is trivial. Suppose now that $K_X+\Delta$ is nef on all curves $C$. 
We will argue by contradiction and suppose that $K_X+\Delta$ is not nef. 
Since $K_X+\Delta$ is pseudo-effective 
and the restriction to every curve is nef, there exists by \cite[Prop.3.4]{Bou04} an irreducible surface $S \subset X$ such that 
$(K_X+\Delta)|_S$ is not pseudo-effective. 
By Lemma \ref{lemmasurfaces} a desingularisation \holom{\pi}{\hat S}{S} of the surface $S$ is projective, 
so (by \cite{BDPP13}) there exists a covering family of curves $C_t \subset S$
such that for the strict transforms 
$$ \pi^* (K_X+\Delta)|_{S} \cdot \hat C_t < 0.
$$
Hence we obtain $(K_X + \Delta) \cdot C_t < 0$, a contradiction. 
\end{proof}

If $X$ is a projective manifold, the cone theorem is a consequence of Mori's bend-and-break technique.
We will now show that an analogue of this technique is available for threefolds that are lc pairs.

\begin{definition} \label{definitionbbproperty}
Let $X$ be a normal $\Q$-factorial compact  K\"ahler threefold,
and let $\Delta$ be a boundary divisor.
We say that $K_X+\Delta$ has the bend-and-break property if there exists a positive
number $d=d_{K_X+\Delta} \in \Q^+$ such that the following holds: given any curve $C \subset X$ such that
$$
- (K_X+\Delta) \cdot C > d,
$$
there exist non-zero effective 1-cycles $C_1$ and $C_2$ such that
$$
[C] = [C_1] + [C_2].
$$
\end{definition}

\begin{proposition} \label{propositionbb}
Under the Assumption \ref{assumption}, the divisor $K_X+\Delta$ has the bend-and-break property.
\end{proposition}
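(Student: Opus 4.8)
The plan is to reduce the bend-and-break property on $X$ to the classical Mori bend-and-break on a projective surface, using the effective decomposition $K_X \sim_\Q \sum \lambda_j S_j$ from Assumption \ref{assumption} together with Lemma \ref{lemmasurfaces}. First I would observe that it suffices to produce, for a single well-chosen constant $d$, a splitting $[C] = [C_1] + [C_2]$ into effective $1$-cycles whenever $-(K_X+\Delta)\cdot C > d$. Given such a curve $C$, the key point is that $-(K_X+\Delta)\cdot C > 0$ forces $C$ to be a curve along which $K_X+\Delta$ is negative; since $K_X+\Delta$ is pseudoeffective (by Assumption \ref{assumption}, $\kappa(X)\ge 0$, and $\Delta$ is effective), such a $C$ cannot move in a family that dominates $X$. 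More usefully, by the decomposition $K_X \sim_\Q \sum_j \lambda_j S_j$ and $\Delta = \sum_i a_i\Delta_i$ effective, $-(K_X+\Delta)\cdot C > 0$ implies $S_j \cdot C < 0$ for some component $S_j$ of $K_X$ (or $\Delta_i\cdot C<0$), hence $C \subset S_j$ for that $j$. So $C$ lies on one of finitely many surfaces $S := S_j$ (or $\Delta_i$).

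Next I would pass to a desingularisation $\pi:\hat S \to S$ (normalisation followed by minimal resolution) and the strict transform $\hat C \subset \hat S$ of $C$ — here one must be slightly careful if $C$ is contained in $S_{\mathrm{sing}}$ or in $X_{\mathrm{sing}}\cap S$, but such curves form a proper closed subset and can be handled separately (e.g. they are rigid, or one replaces $C$ by a multiple/covering curve; alternatively use inequality \eqref{inequalitycanonical}). Using \eqref{equationresolve}, write $K_{\hat S} + E \sim_\Q \pi^*(K_X+S+B)|_S$ for an effective $\Q$-divisor $E$ and appropriate effective $B$ (the other components of $K_X$ and of $\Delta$ restricted to $S$, as in the proof of Lemma \ref{lemmasurfaces}), so that
$$
-K_{\hat S}\cdot \hat C \;\geq\; -(K_{\hat S}+E)\cdot \hat C \;=\; -(K_X+S+B)\cdot C \;\geq\; -(K_X+\Delta)\cdot C - (\text{bounded correction}),
$$
where the correction term $(S+B-\Delta)|_S \cdot C$ is controlled because there are only finitely many surfaces $S$ in play and on each the relevant divisor class is fixed — more precisely one bounds $|(S+B-\Delta)\cdot C|$ in terms of an auxiliary K\"ahler class $\omega$ and $\omega\cdot C$, which in turn is bounded by $-(K_X+\Delta)\cdot C$ up to a multiplicative constant since $-(K_X+\Delta)$ being positive on $C$ constrains $C$ to finitely many numerical types on a fixed surface. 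I would then choose $d$ large enough (depending only on $X$, $\Delta$, the finite set of surfaces $S_j$, $\Delta_i$, and a fixed K\"ahler class) so that $-(K_X+\Delta)\cdot C > d$ forces $-K_{\hat S}\cdot \hat C > 0$, indeed $-K_{\hat S}\cdot\hat C$ as large as one wishes.

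Now $\hat S$ is a smooth projective surface (Lemma \ref{lemmasurfaces} is not needed in full — we only need $S$ Moishezon, hence $\hat S$ projective), so classical Mori bend-and-break on surfaces applies to $\hat C$: for $-K_{\hat S}\cdot\hat C$ sufficiently large, $\hat C$ is numerically (indeed rationally, after deformation) equivalent to a non-trivial sum of effective curves, $[\hat C] = [\hat C_1] + [\hat C_2]$ in $N_1(\hat S)$. Pushing forward by $\pi$ (and then including $\hat S \hookrightarrow X$ via $\pi$ composed with the inclusion $S\hookrightarrow X$) gives $[C] = [\pi_* \hat C_1] + [\pi_*\hat C_2]$ as effective $1$-cycles in $N_1(X)$, with both summands non-zero because their intersection with $-(K_X+\Delta)$, or with $\omega$, is positive. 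This yields the desired splitting. The main obstacle I anticipate is the bookkeeping in the second paragraph: making the ``bounded correction'' genuinely uniform, i.e. verifying that the discrepancy divisor $E$ and the adjunction correction $(S+B-\Delta)|_S$ contribute a correction bounded by a fixed multiple of $-(K_X+\Delta)\cdot C$ across all the finitely many surfaces and all numerical classes of $C$ on them — this is where one needs to combine the finiteness of the surfaces $S_j,\Delta_i$ with a Hodge-index type bound on $\hat S$ relating $\omega\cdot\hat C$ to $-K_{\hat S}\cdot\hat C$ for curves with $-K_{\hat S}\cdot\hat C>0$.
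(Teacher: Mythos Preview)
Your core idea---locate $C$ on a surface $S$ among the finitely many components of $K_X$ or $\Delta$, pass to the minimal resolution $\pi:\hat S\to S$, and run bend-and-break for $\hat C$ on $\hat S$---is precisely the engine of the paper's Lemma~\ref{lemmaaddboundary}. However, there are two genuine gaps in your execution.

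First, the ``bounded correction'' paragraph is both wrong and unnecessary. Your assertion that $\omega\cdot C$ is controlled by $-(K_X+\Delta)\cdot C$, or that $C$ lies in finitely many numerical types on $S$, has no justification: a curve on a fixed surface can have arbitrarily large $\omega$-degree with $-(K_X+\Delta)\cdot C$ only moderately positive. The correct argument is a clean case split with \emph{no} correction term. If $\Delta\cdot C\geq 0$, then $-K_X\cdot C>d$ forces $S_j\cdot C<0$ for some $j$; take $S=S_j$, $B=0$, and observe $-(K_X+S)\cdot C = -K_X\cdot C - S_j\cdot C > -K_X\cdot C \geq -(K_X+\Delta)\cdot C$. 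If $\Delta\cdot C<0$, say $\Delta_1\cdot C<0$, take $S=\Delta_1$, $B=\sum_{i\geq 2}a_i\Delta_i$; since $0\leq a_1\leq 1$ one has $-(K_X+S+B)\cdot C \geq -(K_X+\Delta)\cdot C$. Either way $-K_{\hat S}\cdot\hat C > d$ via \eqref{inequalitycanonical}, once $d$ is chosen larger than $-(K_X+\Delta)\cdot Z$ for the finitely many curves $Z$ lying in the relevant singular loci.

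Second, your argument that $\pi_*\hat C_1$, $\pi_*\hat C_2$ are non-zero is circular: ``their intersection with $\omega$ is positive'' presupposes non-vanishing. The fix is to arrange the decomposition on $\hat S$ so that $K_{\hat S}\cdot\hat C_1<0$ and $K_{\hat S}\cdot\hat C_2<0$ (this is what \cite[Lemma 5.5.b)]{HP13a} delivers once $-K_{\hat S}\cdot\hat C>3$). Since $\pi$ is the minimal resolution, $K_{\hat S}$ is $\pi$-nef, so a $K_{\hat S}$-negative curve cannot be $\pi$-contracted.

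For comparison, the paper does not argue directly on the lc pair. It first establishes the property for $K_X$ alone via a terminal modification and \cite[Cor.5.7]{HP13a}, then adds $\Delta$ via Lemma~\ref{lemmaaddboundary}, and descends to the general lc case through a dlt model and the pull-back Lemma~\ref{lemmacrepant4}. Your direct route, once repaired as above, is arguably more economical since it bypasses those bimeromorphic reductions entirely.
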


The proof of this result needs some preparation:

\begin{lemma} \label{lemmacrepant4}
Let $X$ be a normal $\Q$-factorial compact  K\"ahler threefold with rational singularities, and 
let $\Delta$ be a boundary divisor on $X$.
Let $\holom{\mu}{Y}{X}$ be a
bimeromorphic morphism from a normal $\Q$-factorial threefold $Y$ such that there exists
a boundary divisor $\Delta_Y$ satisfying
$$
K_Y+\Delta_Y \sim_\Q \mu^* (K_X+\Delta).
$$
Suppose that $K_Y+\Delta_Y$ has the bend-and-break property for some integer $d_{K_Y+\Delta_Y}$. Then $K_X+\Delta$ has the bend-and-break property.
\end{lemma}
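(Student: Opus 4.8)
The plan is to transfer the bend-and-break property from $Y$ to $X$ by lifting a curve on $X$ to its strict transform on $Y$, breaking it there, and pushing the decomposition back down; the one genuine difficulty is controlling the components contracted by $\mu$.

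First I would fix the constant. Since $\mu$ is a bimeromorphic morphism onto the normal space $X$, its image $\mu(\mathrm{Exc}(\mu))\subset X$ has codimension $\ge 2$, so its one-dimensional part is a finite union of curves $C^{(1)},\dots,C^{(s)}$, and I would set
$$
d_{K_X+\Delta}:=\max\{\,1,\ d_{K_Y+\Delta_Y},\ -(K_X+\Delta)\cdot C^{(1)},\dots,-(K_X+\Delta)\cdot C^{(s)}\,\}\in\Q^+ .
$$
Now let $C\subset X$ be a curve with $-(K_X+\Delta)\cdot C>d_{K_X+\Delta}$; one may assume $C$ prime, as otherwise $[C]$ splits trivially. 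By the choice of the constant, $C$ is none of the $C^{(j)}$, so $C\not\subset\mu(\mathrm{Exc}(\mu))$, the strict transform $\hat C\subset Y$ is well defined, and $\mu_*\hat C=C$ as $1$-cycles. Combining $K_Y+\Delta_Y\sim_\Q\mu^*(K_X+\Delta)$ with the projection formula gives $-(K_Y+\Delta_Y)\cdot\hat C=-(K_X+\Delta)\cdot C>d_{K_Y+\Delta_Y}$, so the bend-and-break property on $Y$ yields nonzero effective $1$-cycles $Z_1,Z_2$ on $Y$ with $[\hat C]=[Z_1]+[Z_2]$ in $N_1(Y)$.

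Pushing this down by $\mu_*$ gives $[C]=[\mu_*Z_1]+[\mu_*Z_2]$ with $\mu_*Z_i$ effective, and if both $\mu_*Z_i\neq 0$ we are finished, since a nonzero effective $1$-cycle on the Kähler space $X$ has nonzero class. The obstacle — and the heart of the argument — is that one of them, say $Z_2$, might be a sum of $\mu$-exceptional curves, so $\mu_*Z_2=0$; then $(K_Y+\Delta_Y)\cdot Z_2=\mu^*(K_X+\Delta)\cdot Z_2=0$, because the components of $Z_2$ are contracted. In that case I would delete from $Z_1$ its $\mu$-exceptional components to obtain a nonzero effective cycle $W$ on $Y$ with $\mu_*W=\mu_*Z_1\neq 0$ (hence $[\mu_*W]=[C]$) and $(K_Y+\Delta_Y)\cdot W=(K_Y+\Delta_Y)\cdot Z_1=(K_X+\Delta)\cdot C$, so that $-(K_Y+\Delta_Y)\cdot W>d_{K_Y+\Delta_Y}$ once more. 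The argument then repeats with $W$ in place of $\hat C$.

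The remaining point, which I expect to cost the most effort, is termination of this loop. Here I would fix a Kähler class $\omega_Y$ on $Y$ and invoke compactness of cycle spaces (Bishop, Lieberman) to conclude that the $\omega_Y$-degrees of irreducible curves on the compact Kähler threefold $Y$ are bounded below by some $\varepsilon>0$, so every nonzero effective $1$-cycle on $Y$ has $\omega_Y$-degree $\ge\varepsilon$. At each pass the working cycle is replaced by a nonzero effective cycle $W$ with $\omega_Y\cdot W\le\omega_Y\cdot W_{\mathrm{old}}-\omega_Y\cdot Z_2\le\omega_Y\cdot W_{\mathrm{old}}-\varepsilon$, while the $\omega_Y$-degree of the working cycle stays positive; hence the loop runs only finitely often and must stop in the situation where both push-forwards are nonzero. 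This produces the splitting $[C]=[\mu_*Z_1]+[\mu_*Z_2]$ into nonzero effective classes, proving that $K_X+\Delta$ has the bend-and-break property with the constant $d_{K_X+\Delta}$.
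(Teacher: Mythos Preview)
Your argument is correct and matches the paper's approach: lift to the strict transform, break on $Y$, and handle the $\mu$-exceptional pieces by a K\"ahler-degree descent on $Y$. The paper organises this descent as an induction on $\lfloor\omega_Y\cdot\hat C\rfloor$ (having normalised $\omega_Y$ so that every curve has degree $\geq 1$) and applies the induction hypothesis directly to $\hat C_1$ rather than first stripping its exceptional components, but the content is the same.
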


\begin{proof}
Since $\dim X = 3,$ there are at most finitely many curve in the image of the $\mu$-exceptional locus $E$.
Hence 
$$
d := \max \{ d_{K_Y+\Delta_Y}, -(K_X+\Delta) \cdot Z \ | \ Z \ \mbox{an irreducible curve s.t.} \ Z \subset \mu(E) \}
$$
is a positive rational number.
Let now $C \subset X$ be a curve such that $$-(K_X+\Delta) \cdot C>d,$$
in particular $C \not \subset \mu(E).$ 
Thus the strict transform
$\hat C \subset Y$ is well-defined and
$$
- (K_Y+\Delta_Y) \cdot \hat C = -(K_X+\Delta) \cdot C>d \geq d_{K_Y+\Delta_Y}.
$$
Consequently, there exist effective non-zero $1$-cycles $\hat C_1$ and $\hat C_2$ on $Y$ such that
$$
[\hat C] = [\hat C_1] + [\hat C_2].
$$
We claim that we can find a decomposition such that
$\mu_* [\hat C_1]$ and $\mu_* [\hat C_2]$ are both non-zero. Since 
$$
[C] = \mu_*[\hat C] = \mu_* [\hat C_1] + \mu_* [\hat C_2]
$$
this will finish the proof.

To prove the claim, fix a K\"ahler form $\omega_Y$ on $Y$ such that $\omega_Y \cdot B \geq 1$ for every curve $B \subset Y$. 
We will prove the claim by induction on the degree $l := \omega_Y \cdot \hat C$.
The start of the induction for $l=1$ is trivial, since the class of a curve $\hat C$ with $\omega_Y \cdot \hat C=1$ does not decompose. 
For the induction step suppose that the claim holds  for every curve $B \subset Y$ with $\omega_Y \cdot B<l$ and
$- (K_Y+\Delta_Y) \cdot B > d$. 
Suppose that $l \leq \omega_Y \cdot \hat C <l+1$ and
consider the decomposition
$$
[\hat C] = [\hat C_1] + [\hat C_2].
$$
If both $\mu_* [\hat C_1]$ and $\mu_* [\hat C_2]$ are non-zero, there is nothing to prove, so suppose that (up to renumbering) $\mu_* [\hat C_2]=0$.
Since $\hat C_2$ is effective, all the irreducible components of $\hat C_2$ are contracted by $\mu$.
Now $K_Y + \Delta_Y$
is $\mu$-numerically trivial, so we get 
$$
- (K_Y+\Delta_Y) \cdot \hat C_1 = - (K_Y+\Delta_Y) \cdot \hat C > d.
$$  
Moreover since $\omega_Y \cdot \hat C_2 \geq 1$, we have
$\omega_Y \cdot \hat C_1<l$. Thus the induction hypothesis applies
to $C_1$, and since $[C] = \mu_* [\hat C] = \mu_* [\hat C_1]$ this proves the claim.
\end{proof}

\begin{lemma} \label{lemmaaddboundary}
In the situation of Assumption \ref{assumption}, suppose that $K_X$ has the bend-and-break
property for some integer $d_0$.
Then $K_X+\Delta$ has the bend-and-break property.
\end{lemma}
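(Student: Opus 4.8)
The plan is to produce a positive rational number $d\ge d_0$ with the property that any curve $C\subset X$ with $-(K_X+\Delta)\cdot C>d$ can be treated as follows. If $\Delta\cdot C\ge 0$, then $-K_X\cdot C\ge-(K_X+\Delta)\cdot C>d_0$, and the bend-and-break property of $K_X$ already gives a decomposition $[C]=[C_1]+[C_2]$ into non-zero effective $1$-cycles. So it suffices to handle curves with $\Delta\cdot C<0$; any such $C$ lies in $\supp\Delta$.

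Next I would dispose of the ``bad'' curves. Since $X$ is normal of dimension three, the loci $\Delta_i\cap\Delta_j$ for $i\ne j$, the surfaces $(\Delta_i)_{\sing}$, and $X_{\sing}\cap\Delta_i$ are all at most one-dimensional, hence contain only finitely many curves; I enlarge $d$ so that $d>-(K_X+\Delta)\cdot C$ for each of them. Then a curve satisfying the hypothesis lies in exactly one component $S:=\Delta_i$, avoids the bad locus $(\Delta_i)_{\sing}\cup(X_{\sing}\cap S)$ of the adjunction inequality \eqref{inequalitycanonical}, and meets the remaining components only in points, so $\Delta_j\cdot C\ge 0$ for $j\ne i$; as $a_i>0$, the inequality $\Delta\cdot C<0$ forces $S\cdot C<0$.

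Now write $\Delta=a_iS+B$ with $B:=\sum_{j\ne i}a_j\Delta_j$, and let $\pi\colon\hat S\to S$ be the normalisation followed by the minimal resolution. From $K_X+\Delta=(K_X+S+B)-(1-a_i)S$, together with $S\cdot C<0$ and $1-a_i\ge 0$, one gets $(K_X+S+B)\cdot C\le(K_X+\Delta)\cdot C<-d$; combined with \eqref{inequalitycanonical}, the strict transform $\hat C\subset\hat S$ satisfies $-K_{\hat S}\cdot\hat C\ge-(K_X+S+B)\cdot C>d$. Take $d\ge 3$; then adjunction on the smooth surface $\hat S$ forces $\hat C^2>0$, so $\hat C$ is nef and big and $\hat S$ is projective (a smooth compact Moishezon surface is projective). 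If $(K_X+\Delta)|_S$ were pseudoeffective, then $\pi^*(K_X+\Delta)|_S$ would be a pseudoeffective class meeting the nef class $\hat C$ non-negatively, contradicting $\pi^*(K_X+\Delta)|_S\cdot\hat C=(K_X+\Delta)\cdot C<0$; hence $(K_X+\Delta)|_S$ is not pseudoeffective, and by Lemma \ref{lemmasurfaces} the surface $\hat S$ is uniruled. On a smooth projective surface the canonical class has the bend-and-break property with a rational constant depending only on the surface (classical Mori theory in dimension two, i.e.\ the cone theorem for surfaces), so, enlarging $d$ beyond the finitely many such constants attached to the components of $\Delta$, the inequality $-K_{\hat S}\cdot\hat C>d$ gives $[\hat C]=[\hat C_1]+[\hat C_2]$ with $\hat C_1,\hat C_2$ non-zero effective on $\hat S$. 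Finally, $\pi$ is an isomorphism at the generic point of $\hat C$, so $[C]=\pi_*[\hat C]=\pi_*[\hat C_1]+\pi_*[\hat C_2]$ in $N_1(X)$; arguing exactly as in the proof of Lemma \ref{lemmacrepant4} — induction on $\omega_{\hat S}\cdot\hat C$ for a Kähler form $\omega_{\hat S}$ with $\omega_{\hat S}\cdot B'\ge 1$ for all curves $B'\subset\hat S$, using that $K_{\hat S}$ is $\pi$-nef because $\pi$ is a minimal resolution — one arranges both $\pi_*[\hat C_i]$ to be non-zero. Since $d$ is a finite maximum of rational numbers, this exhibits the bend-and-break property for $K_X+\Delta$.

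The substantive part, and the step I expect to be the main obstacle, is the passage to the boundary component $S=\Delta_i$: on such a surface $K_X$ itself has no useful negativity, so one must transfer the negativity of $(K_X+\Delta)\cdot C$ through the adjunction formula to $K_{\hat S}\cdot\hat C$ and then recover a decomposition \emph{on $X$}. The pseudoeffective versus non-pseudoeffective dichotomy is tamed cleanly by the remark that large $-K_{\hat S}\cdot\hat C$ forces $\hat C$ to be nef and big, which rules out the pseudoeffective alternative; after that, the only genuinely delicate point is moving the two-dimensional decomposition back to $X$ without one piece collapsing under $\pi$, which is exactly the fibre-degree induction already carried out in Lemma \ref{lemmacrepant4}.
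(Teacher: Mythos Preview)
Your overall architecture mirrors the paper's proof: split on the sign of $\Delta\cdot C$, pass to a boundary component $S=\Delta_i$, use \eqref{inequalitycanonical} to transfer negativity to $K_{\hat S}\cdot\hat C$, then decompose on the surface and push down. The treatment of the bad locus and the inequality $(K_X+S+B)\cdot C\le(K_X+\Delta)\cdot C$ are handled correctly, as is the observation that $-K_{\hat S}\cdot\hat C>3$ forces $\hat C^2>0$.

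The gap is in the surface step. The cone theorem for surfaces does \emph{not} directly yield the bend-and-break property for $K_{\hat S}$: it describes the extremal structure of $\overline{NE}(\hat S)$ but says nothing about decomposing an arbitrary $K$-negative curve class into non-zero effective summands. Moreover, your scheme of ``enlarging $d$ beyond the finitely many constants attached to the components of $\Delta$'' is circular as written: you only establish that $\hat S$ is projective (and uniruled) \emph{after} producing a curve $C$ with $-(K_X+\Delta)\cdot C>d$, so you cannot have incorporated a bend-and-break constant for $\hat S$ into $d$ beforehand. A component $\Delta_i$ with $(K_X+\Delta)|_{\Delta_i}$ pseudoeffective need not have projective $\hat{\Delta_i}$ at all.

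The paper avoids both problems by invoking \cite[Lemma~5.5(b)]{HP13a}, which applies uniformly (constant $3$, no projectivity hypothesis) and produces a decomposition $[\hat C]=\sum_k[\hat C_k]$ with at least two $K_{\hat S}$-\emph{negative} components. This last feature makes the descent immediate: since $K_{\hat S}$ is $\pi$-nef (minimal resolution), any $K_{\hat S}$-negative curve cannot be contracted by $\pi$, so $\pi_*[\hat C_1]$ and $\pi_*[\hat C_2]$ are automatically non-zero. This replaces your induction-on-degree argument borrowed from Lemma~\ref{lemmacrepant4} (which, incidentally, would work here with $\pi$-nef in place of $\pi$-trivial, but is no longer needed). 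Your detour through projectivity and Lemma~\ref{lemmasurfaces} is also unnecessary once one has the right surface lemma.
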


\begin{proof}
Since $X$ is a threefold, the set $(\supp \Delta)_{\sing} \cup X_{\sing}$ is a finite union of curves and points.
Hence 
$$
d := \max \{ 3, d_0, -(K_X+\Delta) \cdot Z \ | \ Z \ \mbox{a curve s.t.} \ Z \subset (\supp \Delta)_{\sing} \cup X_{\sing} \}
$$
is a positive rational number.
Let now $C \subset X$ be a curve such that $-(K_X+\Delta) \cdot C > d$.
If $\Delta \cdot C \geq 0$, then we have $-K_X \cdot C > d \geq d_0$, so we can apply the
bend-and-break property for $K_X$. Therefore we may suppose
$$
0 > \Delta \cdot C = \sum a_i \Delta_i \cdot C, 
$$
so, up to renumbering, we may suppose that $\Delta_1 \cdot C<0$. Since $0 \leq a_i \leq 1$ ($\Delta$ is a boundary), this implies
$$
- (K_X+\Delta_1+\sum_{i \geq 2} a_i \Delta_i) \cdot C \geq -(K_X+\Delta) \cdot C > d.
$$
Set $S:=\Delta_1$ and $B:=\sum_{i \geq 2} a_i \Delta_i$, and denote by $\holom{\pi}{\hat S}{S}$ the composition of  the normalisation and  the minimal resolution.
Note that by definition of $d$, the curve $C$
is not contained in the set $S_{\sing} \cup (\sum_{i \geq 2} \Delta_i \cap S) \cup (X_{\sing} \cap S)$. Therefore the strict transform $\hat C \subset \hat S$
is well-defined and \eqref{inequalitycanonical} yields 
$$
- K_{\hat S} \cdot \hat C \geq - (K_X+\Delta_1+\sum_{i \geq 2} a_i \Delta_i) \cdot C > d.
$$
Since $d \geq 3$, an application of \cite[Lemma 5.5.b)]{HP13a}
yields an effective 1-cycle $\sum_{k=1}^m \hat C_k$ in $\hat S$ with $m \geq 2$  
such that
$$
[\sum_{k=1}^m \hat C_k] = [\hat C],
$$
such that $K_{\hat S} \cdot \hat C_1 <0$, and $K_{\hat S} \cdot \hat C_2 <0$. 
Since $K_{\hat S}$ is $\pi_1$-nef, we conclude  $\pi_* [\hat C_1] \neq 0$ and $\pi_* [\hat C_2] \neq 0$. 
Thus 
$$
[C] =  \pi_* [\hat C] = \sum_{k=1}^m \pi_* [\hat C_k]
$$ 
and the first two terms of this sum are non-zero.
\end{proof}

\begin{proof}[Proof of Proposition \ref{propositionbb}]
We will first prove the statement under some additional assumptions, then reduce the general case
to this situation.

{\em Step 1. Suppose that $(X, \Delta)$ is a lc pair and $X$ has terminal singularities.}
By \cite[Cor.5.7]{HP13a} we know that $K_X$ has the bend-and-break property. 
Thus by Lemma \ref{lemmaaddboundary} the divisor $K_X+\Delta$ has the bend-and-break property. 

{\em Step 2. Suppose that $X$ has klt singularities, i.e., $(X,0)$ is klt.} 
By Theorem \ref{theoremterminalmodel} there exists a bimeromorphic morphism $\holom{\mu}{Y}{X}$
from a normal complex K\"ahler space $Y$ with terminal singularities and a boundary divisor $\Delta_Y$ such that $(Y, \Delta_Y)$ 
is lc and 
$$
K_Y+\Delta_Y \sim_\Q \mu^* K_X.
$$
By Step 1 and Lemma \ref{lemmacrepant4} the divisor $K_X$ has the bend-and-break property. 

{\em Step 3. Suppose that $(X, \Delta)$ is dlt.}
Then $(X,0)$ is klt \cite[2.39, 2.41]{KM98}.
By Step 2 and Lemma \ref{lemmaaddboundary} this implies that $K_X+\Delta$ has the bend-and-break property.

{\em Step 4. General case.}
By Theorem \ref{theoremdltmodel}
there exists a bimeromorphic morphism $\holom{\mu}{Y}{X}$
and a boundary divisor $\Delta_Y$ on $Y$ such that $(Y, \Delta_Y)$ 
is dlt and 
$$
K_Y+\Delta_Y \sim_\Q \mu^* (K_X+\Delta).
$$
By Step 3 and Lemma \ref{lemmacrepant4} this implies that $K_X+\Delta$ has the bend-and-break property.
\end{proof}

We can now prove the weak cone theorem for the classical Mori cone.

\begin{proposition} \label{propositionNE}
Under the Assumption \ref{assumption}
there exists a number $d \in \Q^+$ and an at most countable family $(\Gamma_i)_{i \in I}$  of curves on $X$
such that 
$$
0 < -(K_X+\Delta) \cdot \Gamma_i \leq d
$$
and
$$
\NEX = \NEX_{(K_X+\Delta) \geq 0} + \sum_{i \in I} \R^+ [\Gamma_i] 
$$
\end{proposition}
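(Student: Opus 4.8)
The plan is to deduce Proposition \ref{propositionNE} from the bend-and-break property of Proposition \ref{propositionbb} by the usual Mori-theoretic cone argument, with two compactness facts about cycle spaces in class $\mathcal C$ playing the role of the boundedness statements available in the projective category. Fix the bend-and-break constant $d:=d_{K_X+\Delta}\in\Q^+$ from Proposition \ref{propositionbb} and a K\"ahler form $\omega$ on $X$ normalised so that $\omega\cdot C\geq 1$ for every irreducible curve $C$ (the uniform positive lower bound on volumes of curves for a fixed metric, already used in the proof of Lemma \ref{lemmacrepant4}). The structural input I would invoke is that, $X$ being in class $\mathcal C$, for every $N$ the space of $1$-cycles of $\omega$-volume $\leq N$ is compact, hence has finitely many connected components; consequently there are only finitely many classes in $N_1(X)$ represented by irreducible curves of $\omega$-degree $\leq N$, there are at most countably many classes of irreducible curves in all, and the set of $\omega$-degrees of irreducible curves is a discrete subset of $[1,\infty)$. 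Let $(\Gamma_i)_{i\in I}$ consist of one irreducible curve in each numerical class satisfying $0<-(K_X+\Delta)\cdot\Gamma_i\leq d$; this family is at most countable and has the required length bound by construction. Write $F:=\NEX_{(K_X+\Delta)\geq 0}$ and $\mathcal R:=\sum_{i\in I}\R^+[\Gamma_i]$.

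The core step is to show that the class of every effective $1$-cycle lies in $F+\mathcal R$; by linearity it suffices to treat an irreducible curve $C$, and I would argue by induction on $\omega\cdot C$ (legitimate by the discreteness noted above). If $-(K_X+\Delta)\cdot C\leq 0$ then $[C]\in F$; if $0<-(K_X+\Delta)\cdot C\leq d$ then $[C]\in\mathcal R$; and if $-(K_X+\Delta)\cdot C>d$, the bend-and-break property yields $[C]=[C_1]+[C_2]$ with $C_1,C_2$ non-zero effective $1$-cycles. Since $\omega\cdot C_1+\omega\cdot C_2=\omega\cdot C$ and each $\omega\cdot C_j\geq 1$, every irreducible component of $C_1$ and of $C_2$ has $\omega$-degree strictly smaller than $\omega\cdot C$; applying the induction hypothesis to these components gives $[C_1],[C_2]\in F+\mathcal R$, hence $[C]\in F+\mathcal R$. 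As $\NEX$ is generated, as a closed cone, by the classes of irreducible curves, this yields $\NEX\subseteq\overline{F+\mathcal R}$, while $F+\mathcal R\subseteq\NEX$ is clear.

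It then remains to check that $F+\mathcal R$ is closed in the finite-dimensional space $N_1(X)$. Given $v_k=f_k+r_k\to v$ with $f_k\in F$ and $r_k=\sum_i c_{k,i}[\Gamma_i]\in\mathcal R$, the fact that $\omega$ is K\"ahler and $F,\mathcal R\subseteq\NEX$ gives $\omega\cdot f_k\geq 0$ and $\omega\cdot r_k\geq 0$, so $\omega\cdot r_k\leq\omega\cdot v_k$ is bounded by some $M$; hence only the finitely many $\Gamma_i$ with $\omega\cdot\Gamma_i\leq M$ occur in the $r_k$, and since these have $\omega$-degree bounded below by a positive constant, the $r_k$ stay in a fixed bounded polyhedral subcone of $\mathcal R$. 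Passing to a subsequence, $r_k\to r\in\mathcal R$, whence $f_k=v_k-r_k\to v-r\in F$ because $F$ is closed; thus $v\in F+\mathcal R$. Combined with the previous paragraph this gives $\NEX=F+\mathcal R=\NEX_{(K_X+\Delta)\geq 0}+\sum_{i\in I}\R^+[\Gamma_i]$, which is the assertion.

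I expect the main obstacle to be the careful handling of the two compactness statements — finiteness of curve classes of bounded volume, and a uniform positive lower bound for curve volumes — since these are what make the bend-and-break recursion terminate and, independently, make $F+\mathcal R$ a closed cone; in the projective case both follow at once from boundedness of families of curves of bounded degree, whereas here they must be extracted from Bishop-type compactness of cycle spaces. Once Proposition \ref{propositionbb} and these facts are in place, the remainder is the standard cone-theorem bookkeeping.
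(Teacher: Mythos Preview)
Your overall strategy matches the paper's: fix the bend-and-break constant $d$ from Proposition~\ref{propositionbb}, normalise a K\"ahler class so that every curve has degree $\geq 1$, and prove by induction on $\omega$-degree that every irreducible curve class lies in $V:=\NEX_{(K_X+\Delta)\geq 0}+\sum_i\R^+[\Gamma_i]$. The paper carries out exactly this induction (organised on the integer $\lceil\omega\cdot C\rceil$, so the appeal to discreteness of degrees is not actually needed) and then, rather than proving directly that $V$ is closed, invokes \cite[Lemma~6.1]{HP13a} to pass from $\NEX=\overline V$ to $\NEX=V$.

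Your direct closedness argument, however, has a genuine gap. From $\omega\cdot r_k=\sum_i c_{k,i}\,\omega\cdot\Gamma_i\leq M$ it does \emph{not} follow that only the $\Gamma_i$ with $\omega\cdot\Gamma_i\leq M$ contribute: a $\Gamma_i$ of arbitrarily large degree can appear with an arbitrarily small coefficient. So the claim that ``the $r_k$ stay in a fixed bounded polyhedral subcone of $\mathcal R$'' is unjustified, and without it you cannot conclude that the limit $r$ lies in $\mathcal R$ rather than merely in $\overline{\mathcal R}$. This is precisely the subtlety that the closedness lemma (see \cite[Lemma~6.1]{HP13a}, or the analogous step in the projective cone theorem in \cite[Thm.~3.7]{KM98}, \cite{Deb01}) is designed to handle; its proof uses the uniform bound $0<-(K_X+\Delta)\cdot\Gamma_i\leq d$ together with a K\"ahler (or ample) class in a more delicate way than a straight compactness extraction. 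Either cite that lemma, or reproduce its argument: for each $\varepsilon>0$ only finitely many $\Gamma_i$ satisfy $(K_X+\Delta+\varepsilon\omega)\cdot\Gamma_i<0$ (since then $\omega\cdot\Gamma_i<d/\varepsilon$ and there are finitely many curve classes of bounded degree), whence $\NEX_{(K_X+\Delta+\varepsilon\omega)\geq 0}+\sum_i\R^+[\Gamma_i]$ is closed; one then lets $\varepsilon\to 0$.
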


\begin{proof}
By Proposition \ref{propositionbb} there exists a positive
number $d \in \Q^+$ realising the bend-and-break
property (cf. Definition \ref{definitionbbproperty}) for $K_X+\Delta$.
Since there are only countably many curve classes $[C] \subset \NEX$,
we may choose a representative $\Gamma_i$ for each class such that  $0 < -(K_X+\Delta) \cdot \Gamma_i \leq d$.
We set
$$
V := \NEX_{(K_X+\Delta) \geq 0} + \sum_{0 < -(K_X+\Delta) \cdot \Gamma_i \leq d} \R^+ [\Gamma_i].
$$
Fix a K\"ahler form $\eta_X$ on $X$ such that 
$\eta_X \cdot C \geq 1$ 
for every curve $C \subset X$. 

We need to prove that $\NEX=V$.
By \cite[Lemma 6.1]{HP13a}\footnote{The statement in \cite[Lemma 6.1]{HP13a} is for the canonical class $K_X$, but the proof does not use this hypothesis.}
it is sufficient to show that $$\NEX = \overline V,$$  i.e.
the class $[C]$ of every irreducible curve $C \subset X$
is contained in $V$. 
We will prove the statement by induction on the degree $l:=\eta_X \cdot C$. The start
of the induction for $l=0$ being trivial,
we suppose that we have shown the statement for all curves of degree at most $l-1$
and let $C$ be a curve such that $$l-1 < \eta_X \cdot C \leq l.$$
If $-(K_X+\Delta) \cdot C \leq d$ we are done. Otherwise, there exists by the bend-and-break property 
a decomposition
$$
[C] = [C_1] + [C_2]
$$
with $C_1$ and $C_2$ non-zero effective 1-cycles on $X$.
Since $\eta_X \cdot C_i \geq 1$ for $i=1,2$ we have
$\eta_X \cdot C_i \leq l-1$ for $i=1,2$. By induction both classes are in $V$, so $[C]$ is in $V$.
\end{proof}

\begin{proof}[Proof of Theorem \ref{theoremweakNA}] \label{proofweakNA}
We follow the strategy of the proof of \cite[Prop.6.4]{HP13a}. 
By Proposition \ref{propositionNE}  there exists a number 
$d \in \Q^+$ and an at most countable family $(\Gamma_i)_{i \in I}$  of curves on $X$
such that 
$0 < -(K_X+\Delta) \cdot \Gamma_i \leq d$
and
$$
\NEX = \NEX_{(K_X+\Delta) \geq 0} + \sum_{i \in I} \R^+ [\Gamma_i]. 
$$
Set
$$
V := \NAX_{(K_X+\Delta) \geq 0} + \sum_{i \in I} \R^+ [\Gamma_i].
$$
By \cite[Lemma 6.1]{HP13a} it is sufficient to show that $\NAX \subset \overline{V}$.
Let $\pi: \hat X \to X$ be a desingularisation, then by \cite[Prop.3.14]{HP13a} we have $\NAX = \pi_* (\overline {NA}(\hat X))$. 
Thus it is sufficient to prove that for a set of generators $\hat \alpha_i$ of  $\overline {NA}(\hat X)$, 
we have $\pi_* (\alpha_i) \in \overline{V}$.
By \cite[Cor.0.3]{DP04} the cone $\overline {NA}(\hat X)$ is the closure of the convex cone generated by cohomology classes
of the form $[\hat \omega]^2, [\hat \omega] \cdot [\hat S]$ and $[\hat C]$ where $\hat \omega$ is a K\"ahler form,
$\hat S$ a surface and $\hat C$ a curve on $\hat X.$ 
Let now $\hat \alpha$ be such a generator, then
our goal is to show that the push-forward $\alpha:= \pi_* (\hat \alpha)$ of any of this three types is contained in $\overline V$. 

{\em 1st case. $\hat \alpha = [\hat \omega]^2$ with $\hat \omega$ a K\"ahler form.} 
Since
$\pi^*(K_X+\Delta)$ is pseudoeffective, we have $\pi^*(K_X+\Delta) \cdot [\hat \omega]^2 \geq 0$, hence 
$$ (K_X+\Delta) \cdot \alpha = (K_X+\Delta) \cdot \pi_*(\hat \alpha) = \pi^*(K_X+\Delta) \cdot \hat \alpha \geq 0, $$
and thus 
$\alpha \in \NAX_{(K_X+\Delta) \geq 0}$. 

{\em 2nd case. $\hat \alpha = [\hat C]$ with $\hat C$ a curve.} 
Then set $C := \pi_*(\hat C)$, so that $\alpha = [C].$
Since we have an inclusion
\begin{equation} \label{inclusion}
\NEX = \NEX_{(K_X+\Delta) \geq 0} + \sum_{i \in I} \R^+ [\Gamma_i] \hookrightarrow \NAX_{(K_X+\Delta) \geq 0} + \sum_{i \in I} \R^+ [\Gamma_i], 
\end{equation}
and by hypothesis any curve class $[C]$ is in the left hand side, we see that $[C] \in V$.\\
(3) 
{\em 3rd case. $\hat \alpha = [\hat \omega] \cdot [\hat S]$ with $\hat S$ an irreducible surface
and $\hat \omega$ a K\"ahler form.}
If 
$$\pi^*(K_X+\Delta) \cdot [\hat \omega] \cdot [\hat S] \geq 0,$$
the class $\alpha $ is in $\NAX_{(K_X+\Delta) \geq 0}$.
Suppose now that 
$$\pi^*(K_X+\Delta) \cdot [\hat \omega] \cdot [\hat S] < 0.$$ 
Using the projection formula we see that $\pi(\hat S)$ is not a point.

{\em Case a) Suppose that $\pi(\hat S)$ is a surface $S$.}
Since $\pi^*(K_X+\Delta) \cdot [\hat \omega] \cdot [\hat S] < 0$, the restriction $\pi^*(K_X+\Delta)|_{\hat S}$ is not pseudoeffective. 
Thus the restriction $(K_X+\Delta) \vert_S$ is not pseudoeffective. 
Hence $S$ is covered by rational curves by Lemma \ref{lemmasurfaces}.
Since $\hat S \rightarrow S$ is bimeromorphic, the same
property holds for $\hat S$. 

Let 
\holom{\pi}{\bar{S}}{\hat{S}} be 
the composition of the normalisation and the minimal resolution,
then $\bar S$ is a uniruled projective surface, in particular, $H^2(\bar S, \sO_{\bar S})=0$. Hence the Chern class map
$$
\pic{\bar S} \rightarrow H^2(\bar S, \Z)
$$ 
is surjective, so $\pi^* (\hat \omega|_{\hat S}),$ 
which is a real closed form of type $(1,1)$, is represented by  a $\R$-divisor which is nef and big.
As in the proof of \cite[Prop.6.4]{HP13a}, this implies that
$$ \hat \alpha = 
[\hat \omega] \cdot [\hat S] \in \NE{\hat X}.
$$
Thus we have 
$$
\alpha = \pi_* ([\hat \omega] \cdot [\hat S]) \in \NEX, 
$$
so $\alpha$
is in the image of the inclusion \eqref{inclusion}.

{\em Case b) Suppose that $\pi(\hat S)$ is a curve $C$.}
We claim that there exists a number $\lambda \in \R$ such that 
$$
\alpha = \pi_* (\hat \alpha) = \lambda [C]
$$
in $N_1(X)$, in particular $\alpha$
is in the image of the inclusion \eqref{inclusion}
which finishes the proof.
By duality it is sufficient to prove that there exists a $\lambda \in \mathbb R$ such that  for every class
$H \in N^1(X)$ we have
$$
H \cdot \pi_* (\hat \alpha) = \lambda (H \cdot C).
$$
By the projection
formula one has
$$
H \cdot \pi_* (\hat \alpha)
= \pi^* H \cdot \hat \alpha 
= \pi^* H \cdot  [\hat \omega] \cdot [\hat S]
= [(\pi^* H)|_S] \cdot [\hat \omega|_{\hat S}].
$$
By definition of $C$ we have a surjective map $\hat S \rightarrow C$, 
so $[(\pi^* H)|_S]$ is numerically equivalent to
$(H \cdot C) [F]$ where $F$ is a general fibre of $\hat S \rightarrow C$. Thus we see that
$$
[(\pi^* H)|_S] \cdot [\hat \omega|_{\hat S}]
=
\lambda (H \cdot C)
$$
where $\lambda := [\hat \omega|_{\hat S}] \cdot F$
does not depend on $H$.
\end{proof}

\subsection{The contraction theorem} \label{subsectioncontraction}

Suppose that the Assumption \ref{assumption} holds.
For the whole subsection we 
fix  $R := \R^+ [\Gamma_{i_0}]$ a $(K_X+\Delta)$-negative extremal ray in $\NAX$. 
The following proposition is a well-known consequence of the weak cone theorem \ref{theoremweakNA},
cf. \cite[Prop.7.3]{HP13a} for details:

\begin{proposition} \label{propositionperp} 
There exists a nef class $\alpha \in N^1(X)$ such that
$$ 
R = \{ z \in \overline{NA}(X) \ \vert \ \alpha \cdot z = 0\},
$$
and such that, using the notation of Theorem \ref{theoremweakNA}, the class $\alpha$ is strictly positive on
$$
\left( \NAX_{(K_X+\Delta) \geq 0} + \sum_{i \in I, i \neq i_0 } \R^+ [\Gamma_i] \right) \setminus \{0\}.
$$ 
We call $\alpha$ a nef supporting class for the extremal ray $R$.
\end{proposition}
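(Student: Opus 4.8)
The plan is to carry over to the dual K\"ahler cone the classical argument producing a nef supporting divisor for a $(K_X+\Delta)$-negative extremal ray (compare \cite[Thm.~3.7]{KM98}), using the weak cone theorem \ref{theoremweakNA} as the sole input. Write, as in that theorem,
\[
\NAX \;=\; \NAX_{(K_X+\Delta) \geq 0} + \sum_{i \in I} \R^+ [\Gamma_i], \qquad 0 < -(K_X+\Delta) \cdot \Gamma_i \leq d ,
\]
and put $W := \NAX_{(K_X+\Delta)\geq 0} + \sum_{i \in I,\; i \neq i_0} \R^+[\Gamma_i]$, so $\NAX = W + R$. It suffices to find a nef class $\alpha \in N^1(X)$ that vanishes on $R$ and is strictly positive on $W \setminus \{0\}$: if $z \in \NAX$ satisfies $\alpha \cdot z = 0$, write $z = w + t\,[\Gamma_{i_0}]$ with $w \in W$, $t \geq 0$; then $0 = \alpha \cdot z = \alpha \cdot w$ forces $w = 0$, i.e.\ $z \in R$, and conversely $\alpha$ kills $R$, so $R = \{ z \in \NAX \mid \alpha \cdot z = 0\}$. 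The set on which $\alpha$ is then required to be strictly positive is precisely $W \setminus \{0\}$. Hence everything reduces to: (i) $W$ is closed and $[\Gamma_{i_0}] \notin W$; (ii) a separation argument producing such an $\alpha$.

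For (i), first $[\Gamma_{i_0}] \notin W_0 := \NAX_{(K_X+\Delta)\geq 0} + \sum_{i \neq i_0}\R^+[\Gamma_i]$: a relation $[\Gamma_{i_0}] = a + \sum_{i \neq i_0} b_i[\Gamma_i]$ ($a \in \NAX_{(K_X+\Delta)\geq 0}$, $b_i \geq 0$, finite sum) writes $[\Gamma_{i_0}]$ as a sum of classes in $\NAX$; extremality of $R$ puts each summand in $R$, but $K_X+\Delta$ is nonnegative on $a$ and negative on $R \setminus \{0\}$, so $a = 0$, and $[\Gamma_i] \notin R$ for $i \neq i_0$ (the $[\Gamma_i]$ may be chosen pairwise distinct), so all $b_i = 0$, contradicting $[\Gamma_{i_0}] \neq 0$. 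Next, $W_0$ is closed, hence $W = W_0$. Fix a K\"ahler class $\omega_X$ with $\omega_X \cdot C \geq 1$ for every curve $C$. Since $\NAX$ is salient (a K\"ahler class is strictly positive on $\NAX \setminus \{0\}$, \cite{HP13a}) and effective curve classes of bounded $\omega_X$-degree are finite in number, the set $\{ i \in I \mid \omega_X \cdot \Gamma_i \leq N \}$ is finite for every $N$, so $\sum_{\omega_X \cdot \Gamma_i \leq N}\R^+[\Gamma_i]$ is finitely generated, hence closed. The bound $-(K_X+\Delta)\cdot\Gamma_i \leq d$ then forces the rays $\R^+[\Gamma_i]$ to be locally discrete in the open half-space $\{(K_X+\Delta) < 0\}$, accumulating only against $(K_X+\Delta)^{\perp} \cap \NAX \subset \NAX_{(K_X+\Delta)\geq 0}$: for $z_n \to z$ in $W_0$ one decomposes each $z_n$ and splits its $\Gamma_i$-part into summands with $\omega_X \cdot \Gamma_i \leq N$ and those with $\omega_X \cdot \Gamma_i > N$; the second remainder has total coefficient $\leq (\omega_X \cdot z_n)/N$ and $(K_X+\Delta)$-degree tending to $0$ as $N \to \infty$, the first lives in the fixed closed cone above, and a standard compactness argument places $z$ in $W_0$ — this is the local finiteness underlying \cite[Thm.~3.7]{KM98}.

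Granting (i), the separation theorem for closed convex cones in the finite-dimensional space $N_1(X)$ provides a linear functional that, under the pairing between $N^1(X)$ and $N_1(X)$ from \cite{HP13a}, is represented by a class $\alpha \in N^1(X)$ with $\alpha \geq 0$ on $\NAX$, $\alpha > 0$ on $W \setminus \{0\}$, and $\alpha \cdot [\Gamma_{i_0}] = 0$. Finally $\alpha$ is nef: choosing a desingularisation $\mu : \hat X \to X$, we have $\NAX = \mu_*(\overline{NA}(\hat X))$ by \cite[Prop.~3.14]{HP13a}, so $\mu^*\alpha \geq 0$ on $\overline{NA}(\hat X)$, in particular on the classes $[\hat\omega]^2$, $[\hat\omega]\cdot[\hat S]$, $[\hat C]$ generating it (\cite[Cor.~0.3]{DP04}); by the Demailly--P\u{a}un characterisation of the K\"ahler cone $\mu^*\alpha$ is nef on $\hat X$, hence $\alpha$ is nef on $X$ (cf.\ Lemma \ref{pullback}). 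The main obstacle is the closedness of $W_0$ in step (i) — equivalently the local discreteness of the rays $\R^+[\Gamma_i]$ away from $(K_X+\Delta)^{\perp}$ — which is where the effective bound $-(K_X+\Delta)\cdot\Gamma_i \leq d$ and the finiteness of bounded-degree curve classes both enter; once that is in place, producing $\alpha$ is a routine Hahn--Banach separation.
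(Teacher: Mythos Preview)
Your proof is essentially correct and follows exactly the approach the paper has in mind: the paper does not give its own argument but refers to \cite[Prop.~7.3]{HP13a}, which is the adaptation of the classical projective argument \cite[Thm.~3.7]{KM98} to the cone $\NAX$, and that is precisely what you have reconstructed. The logical structure --- show $W_0$ is closed and misses $[\Gamma_{i_0}]$ via extremality plus local finiteness, separate by Hahn--Banach, then check nefness --- is the standard one.

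Two minor remarks. First, your closedness sketch is correct but compressed; the cleanest way to phrase it is that for every $\varepsilon>0$ only finitely many of the classes $[\Gamma_i]$ lie in the half-space $\{(K_X+\Delta)+\varepsilon\,\omega_X\leq 0\}$, since there $\omega_X\cdot\Gamma_i\leq d/\varepsilon$ and curve classes of bounded $\omega_X$-degree are finite (compactness of the cycle space). This directly gives local finiteness of the rays away from $(K_X+\Delta)^\perp$ and hence closedness of $W_0$. Second, your nefness argument via pullback and Demailly--P\u{a}un is valid but unnecessarily indirect: in the framework of \cite{HP13a} the nef cone is \emph{by construction} the dual of $\NAX$ (cf.\ \cite[Cor.~3.16]{HP13a} and the discussion around Lemma~\ref{pullback}), so $\alpha\geq 0$ on $\NAX$ is already the definition of $\alpha$ being nef.
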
 

Note first that by hypothesis, the cohomology class $\alpha-(K_X+\Delta)$ is positive on the extremal ray $R$, moreover we know that $\alpha$ is positive on
$$
\left( \NAX_{(K_X+\Delta) \geq 0} + \sum_{i \in I, i \neq i_0 } \R^+ [\Gamma_i] \right) \setminus \{0\}.
$$ 
Thus, up to replacing $\alpha$ by some positive multiple, we can suppose that $\alpha-(K_X+\Delta)$ is
positive on $\NAX \setminus \{ 0 \}$. Since $X$ is a K\"ahler space, this implies by \cite[Cor.3.16]{HP13a} that
\begin{equation}\label{equationomega}
\omega := \alpha-(K_X+\Delta)
\end{equation}
is a K\"ahler class. Let now $\holom{\pi}{\hat X}{X}$ be a desingularisation.
Since $K_X+\Delta$ is $\Q$-effective, this also holds for $\pi^* (K_X+\Delta)$. Thus the nef class $\pi^* \alpha$
is the sum of a $\Q$-effective class $\pi^* (K_X+\Delta)$ and the nef and big (i.e. contains a K\"ahler current, see \cite[Thm. 0.5]{DP04} ) class $\pi^* \alpha$.
Thus $\pi^* \alpha$ is nef and big and 
we have
\begin{equation} \label{equationalphabig}
\alpha^3 = (\pi^* \alpha)^3 >0.
\end{equation}

We divide extremal rays into two classes, according to the deformation behaviour of the curves they contain:

\begin{definition} \label{definitionrays}
We say that the $(K_X+\Delta)$-negative extremal ray $R$ is small if every curve $C \subset X$ with
$[C] \in R$ is very rigid in the sense of \cite[Defn.4.3]{HP13a}.
Otherwise we say that the extremal ray $R$ is divisorial.
\end{definition}

\subsubsection{Small rays}

\begin{theorem} \label{theoremsmallray}
Under the Assumption \ref{assumption}, 
suppose that the extremal ray $R=\R^+ [\Gamma_{i_0}]$ is small. 
Then the contraction of the ray $R$ exists. 
\end{theorem}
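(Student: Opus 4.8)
The plan is to realise the contraction of $R$ as the analytic blow-down of the finitely many very rigid curves whose class lies in $R$. Fix a nef supporting class $\alpha\in N^1(X)$ for $R$ as in Proposition~\ref{propositionperp}, so that $\omega:=\alpha-(K_X+\Delta)$ is a K\"ahler class (cf.\ the discussion after Proposition~\ref{propositionperp}) and $\alpha^3>0$ by \eqref{equationalphabig}. Write $E_R\subset X$ for the union of all irreducible curves $C$ with $[C]\in R$ --- equivalently, by Proposition~\ref{propositionperp}, the curves with $\alpha\cdot C=0$. It suffices to produce a bimeromorphic morphism $\varphi\colon X\to Y$ onto a normal compact complex space contracting exactly the curves contained in $E_R$: then $-(K_X+\Delta)=\omega-\alpha$ is strictly positive on every $\varphi$-fibre --- a connected union of curves with class in $R$, on which $\alpha$ vanishes while $\omega$ is K\"ahler --- so $-(K_X+\Delta)$ is $\varphi$-ample, and $Y$ is a compact K\"ahler space by Corollary~\ref{corollarycontractionkaehler}.

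The first step is to show that $E_R$ is a finite union of curves $C_1,\dots,C_\ell$. Since $R$ is small, every curve with class in $R$ is very rigid in the sense of \cite[Defn.4.3]{HP13a}. This already excludes a prime divisor from $E_R$: a divisor $S\subset E_R$ would, under the expected contraction, either be collapsed to a point or be fibred over a curve by curves of class in $R$, hence would be covered by a positive-dimensional family of curves with class in $R$, contradicting very rigidity; thus $\dim E_R\le 1$. Finiteness then follows as in the terminal case: passing to a desingularisation $\pi\colon\hat X\to X$, the class $\pi^*\alpha$ is nef and big (cf.\ the paragraph preceding Definition~\ref{definitionrays}), so its non-K\"ahler locus is a proper analytic subset of $\hat X$ \cite{DP04}; its image $W\subset X$ is a proper analytic set with finitely many components, it contains $E_R$, and analysing $\alpha^2\cdot W_i$ on the surface components $W_i$ of $W$ (again invoking very rigidity to rule out families of $R$-curves) forces $E_R$ to be a finite union of very rigid curves.

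The heart of the matter is to blow down $E_R$. When $X$ has terminal singularities this is \cite[Sect.9]{HP13a}: each connected component of $E_R$ admits a strongly pseudoconvex neighbourhood, so Grauert's contractibility theorem applies, the normal-bundle geometry of $C_j$ in $X$ --- which is smooth at the generic point of $C_j$, as its singularities are isolated --- being available. The new feature here is that a curve $C_j$ may lie entirely inside the non-isolated singular locus $X_{\sing}$, so that description breaks down. I would circumvent this by passing to a $\Q$-factorial dlt modification $\mu\colon X'\to X$ with $K_{X'}+\Delta'\sim_\Q\mu^*(K_X+\Delta)$, $(X',\Delta')$ dlt and $X'$ with only terminal singularities (Theorems~\ref{theoremterminalmodel} and~\ref{theoremdltmodel}). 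On $X'$ the nef and big class $\mu^*\alpha$ is trivial on every $\mu$-exceptional curve and on every strict transform $C_j'$ of $C_j$, and the $C_j'$ are still very rigid with $(K_{X'}+\Delta')\cdot C_j'<0$; blowing down the null locus of $\mu^*\alpha$ in the terminal threefold $X'$ by the method of \cite{HP13a} yields a bimeromorphic morphism $\varphi'\colon X'\to Y$. Since $\varphi'$ contracts every $\mu$-exceptional curve and $\mu$ has connected fibres, $\varphi'=\varphi\circ\mu$ for a bimeromorphic morphism $\varphi\colon X\to Y$, and by construction $\varphi$ contracts exactly $E_R$. Then $Y$ is normal (as $\varphi$ has connected fibres over the normal space $X$) and compact, $-(K_X+\Delta)$ is $\varphi$-ample by the first paragraph, and $Y$ is K\"ahler by Corollary~\ref{corollarycontractionkaehler}.

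I expect the main obstacle to be the blow-down construction itself in the presence of non-isolated singularities. On $X'$ the curves $C_j'$ together with the $\mu$-exceptional curves span a whole face of $\NA{X'}$, not a single extremal ray, and $K_{X'}+\Delta'$ is only non-positive on that face, so one cannot simply quote an extremal contraction; one must prove directly that the configuration of very rigid curves $C_j\subset X_{\sing}$ genuinely defines an analytic contraction. Concretely this calls for a careful local analytic study of $X$ along each $C_j$ --- establishing the required strong pseudoconvexity, or equivalently the relevant higher direct image vanishing in the spirit of \cite[Thm.3.1]{Anc87} used in Proposition~\ref{propositioncontraction} --- when the ambient space is singular along the curve. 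Granting this, all remaining assertions follow formally from Theorem~\ref{theoremweakNA}, Proposition~\ref{propositioncontraction} and Corollary~\ref{corollarycontractionkaehler}.
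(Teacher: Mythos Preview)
Your reduction to the terminal case does not go through, and you essentially acknowledge this yourself in the final paragraph. On the terminal model $\mu\colon X'\to X$ the class $\mu^*\alpha$ is nef and big, but its perpendicular in $\NA{X'}$ is a full face, not an extremal ray: it contains all the $\mu$-exceptional curves (on which $K_{X'}+\Delta'\sim_\Q\mu^*(K_X+\Delta)$ is \emph{zero}, not negative) together with the strict transforms $C_j'$. The terminal small-ray contraction of \cite{HP13a} is a statement about a single $(K+\Delta)$-negative extremal ray, so it does not apply to this face, and you offer no alternative mechanism for contracting it. The factorisation $\varphi'=\varphi\circ\mu$ is fine once $\varphi'$ exists, but the existence of $\varphi'$ is precisely the issue. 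Likewise, your finiteness argument for $E_R$ is circular (it invokes ``the expected contraction'') and the phrase ``analysing $\alpha^2\cdot W_i$'' hides real content.

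The paper avoids the terminal model entirely and works directly on $X$. The decisive ingredient you are missing is Proposition~\ref{propositionalphabig}: for \emph{every} irreducible surface $S\subset X$ one has $\alpha^2\cdot S>0$. This is the substitute for smoothness in codimension two; its proof is nontrivial and uses Araujo's decomposition of nef classes on the resolved surface together with the smallness hypothesis to rule out movable $\alpha$-trivial curves. Granting this, the Collins--Tosatti theorem \cite{CT13} identifies the non-K\"ahler locus of $\pi^*\alpha$ on a resolution $\hat X$ with its null locus, and Proposition~\ref{propositionalphabig} forces every surface in that null locus to be $\pi$-exceptional; hence $E_{nK}(\alpha)\subset X$ is a finite union of curves and points, and in particular $E_R=\cup C_l$ is a finite union of curves. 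For the actual blow-down the paper does not use pseudoconvexity of a tubular neighbourhood (which, as you note, is problematic when $C_l\subset X_{\sing}$). Instead it takes a Boucksom modification $\tilde X\to X$ with $\mu^*\alpha=\tilde\alpha+E$, $\tilde\alpha$ K\"ahler and $E$ effective exceptional, replaces $E$ by a $\mu$-ample $\Q$-divisor $E'$ with $-E'|_{D_l}$ ample over each $C_l$, and shows via relative Serre vanishing that the pushed-forward ideal sheaf $\mu_*\sO_{\tilde X}(-mE')$ restricts amply to each $C_l$. After a scheme-theoretic trimming this produces an ideal $\sI$ with $\sI/\sI^2$ ample on its support $C=\cup C_l$, and the Ancona--Van Tan criterion \cite{AV84}, \cite{Gra62} then contracts each connected component of $C$ to a point.
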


The proof requires a significant refinement 
of the argument in \cite{HP13a} since the description of the non-K\"ahler locus of $\alpha$ is much more
delicate for non-isolated singularities. 
The following lemma is a key ingredient:

\begin{proposition} \label{propositionalphabig}
Under the Assumption \ref{assumption},
suppose that the extremal ray $R=\R^+ [\Gamma_i]$ is small. 
Let $S \subset X$ be an irreducible surface. Then we have
$\alpha^2 \cdot S>0$.
\end{proposition}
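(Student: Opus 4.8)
The goal is to show that for a small extremal ray $R = \R^+[\Gamma_i]$ with nef supporting class $\alpha$, and any irreducible surface $S \subset X$, one has $\alpha^2 \cdot S > 0$. Since $\alpha$ is nef we have $\alpha^2 \cdot S \geq 0$, so the work is to exclude $\alpha^2 \cdot S = 0$. The strategy is to restrict everything to $S$ and use the surface geometry together with the smallness hypothesis. First I would pass to the composition $\pi \colon \hat S \to S$ of the normalisation and the minimal resolution, and consider the pulled-back class $\pi^*(\alpha|_S)$, which is a nef class on the smooth projective-or-K\"ahler surface $\hat S$ with $(\pi^*\alpha|_S)^2 = \alpha^2 \cdot S$.

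\textbf{Key steps.} Assume for contradiction $\alpha^2 \cdot S = 0$. By the Hodge index theorem on $\hat S$ (applied to the nef class $\beta := \pi^*(\alpha|_S)$ with $\beta^2 = 0$), the class $\beta$ lies on the boundary of the nef cone and is orthogonal to a whole half-space; in particular any curve $B \subset \hat S$ with $\beta \cdot B = 0$ must be numerically proportional to $\beta$ on $\hat S$ if $\beta \neq 0$, and either way the locus of curves $B$ with $\beta \cdot B = 0$ is ``large''. Now recall from \eqref{equationomega} that $\omega = \alpha - (K_X+\Delta)$ is a K\"ahler class; restricting, $\omega|_S$ is K\"ahler on $S$, hence $\pi^*(\omega|_S)$ is big and nef on $\hat S$, and we have the numerical identity $\beta = \pi^*(K_X+\Delta)|_S{}^{\widehat{\ }} + \pi^*(\omega|_S)$ in $N^1(\hat S)$. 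If $\beta^2 = 0$ then intersecting with $\pi^*(\omega|_S)$ gives $\beta \cdot \pi^*(\omega|_S) \geq 0$, and one extracts that the effective part $\pi^*(K_X+\Delta)|_S{}^{\widehat{\ }}$ has controlled positivity. The point is to produce, from $\beta^2 = 0$, a curve $C \subset S$ with $[C] \in R$, i.e.\ $\alpha \cdot C = 0$: since $\beta$ is nef non-big on $\hat S$, any irreducible curve in the support of the negative part of the Zariski decomposition of $\pi^*(K_X+\Delta)|_S{}^{\widehat{\ }}$, or a curve in a $\beta$-trivial fibration, pushes down to such a $C$. Then such a curve $C$ with $[C] \in R$ moves in $S$: either $S$ is covered by curves numerically proportional to $C$ (a $\beta$-trivial fibration structure on $\hat S$), or $C$ deforms inside $S$ because $\hat S$ has, e.g., a pencil. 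In any of these cases $C$ is not very rigid in the sense of \cite[Defn.4.3]{HP13a}, contradicting the smallness of $R$ via Definition \ref{definitionrays}.

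\textbf{Main obstacle.} The delicate point is the dichotomy on $\hat S$ when $\beta^2 = 0$: one must show that $\beta$-triviality forces either a fibration (so the $\beta$-trivial curves sweep out $S$) or the existence of a reducible/movable $\beta$-trivial divisor, and in all cases that some curve $C$ with $[C] \in R$ fails to be very rigid. Controlling this requires using that $\pi^*(K_X+\Delta)|_S{}^{\widehat{\ }} = \beta - \pi^*(\omega|_S)$ is not pseudoeffective when $\beta^2 = 0$ (since subtracting a big class from a non-big nef class kills pseudoeffectivity), so that Lemma \ref{lemmasurfaces} applies: $\hat S$ is then a uniruled projective surface, and the classification of such surfaces together with the structure of the nef cone pins down the $\beta$-trivial curves as either fibres of a ruling/elliptic fibration or $(-1)$- and $(-2)$-curves, all of which either move or appear in families large enough to contradict very rigidity. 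Carrying out this case analysis carefully — in particular handling the case where $\hat S$ is rational with many extremal contractions — is where the real effort lies; the reduction to $\hat S$ and the Hodge-index input are routine.
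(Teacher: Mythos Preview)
Your setup matches the paper's: pass to $\hat S$ (normalisation plus minimal resolution), observe that $(K_X+\Delta)|_S$ is not pseudoeffective so $\hat S$ is uniruled projective, and work with $\beta := \pi^*(\alpha|_S)$, a nef class with $\beta^2 = 0$. But the contradiction step has a genuine gap. You want to produce a moving curve on $\hat S$ with $\beta$-degree zero, whose image in $X$ would lie in $R$ and fail to be very rigid. The problem is that $\alpha$, and hence $\beta$, is only a \emph{real} nef class: on a projective surface a nef real class with square zero need not be orthogonal to any curve at all (e.g.\ on $\PP^2$ blown up in $\geq 10$ general points the boundary of the nef cone contains irrational classes with $\beta^\perp \cap N^1(\hat S)_\Q = 0$). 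Your Hodge-index claim that any $B$ with $\beta \cdot B = 0$ must be proportional to $\beta$ is also incorrect: Hodge index only gives $B^2 \le 0$, with equality iff $B \in \R\beta$, so $\beta$-trivial curves with $B^2 < 0$ are rigid, not moving. The appeal to the ``classification of uniruled surfaces'' does not close this.

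What is actually needed is the inequality $K_{\hat S} \cdot \beta < 0$ (more precisely $(K_{\hat S}+E) \cdot \beta < 0$ for a suitable effective $E$), which the paper establishes via an adjunction case analysis on whether $K_X \cdot \alpha \cdot S < 0$ or $\geq 0$; you never derive this, and merely knowing that $\pi^*(K_X+\Delta)|_S$ is not pseudoeffective does not give it. With this inequality in hand your strategy \emph{can} be completed (run a $K_{\hat S}$-MMP: each contracted $(-1)$-curve is forced to be $\beta$-trivial so $\beta$ descends, terminating on a ruled surface whose fibre is $\beta$-trivial and moves), but the paper takes a cleaner route. Using Araujo's decomposition \cite[Thm.~1.3]{Ara10}, one writes $\beta$ (as a curve class) as a $(K_{\hat S}+\varepsilon A)$-nonnegative part plus a nonnegative combination of movable curves; since $R$ is small there are only countably many $\alpha$-trivial curves in $X$, so every movable curve $M$ satisfies $\beta \cdot M > 0$, and $\beta^2 = 0$ then forces all movable coefficients to vanish. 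Letting $\varepsilon \to 0$ yields $K_{\hat S} \cdot \beta \ge 0$, contradicting the adjunction claim directly. You also skip the preliminary case $\alpha|_S \equiv 0$, which the paper dispatches separately.
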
 

At this point we cannot yet exclude the possibility that there are infinitely many distinct curves $C \subset X$ 
such that $[C] \in R$\footnote{
If $X$ has terminal singularities we can use some additional argument to obtain this property, cf. \cite[Rem.7.2]{HP13a}.}.
However by definition of a small ray no such curve (or its multiples) deforms. Since the irreducible components of the cycle space are countable 
we see that there are at most countably many curves $C \subset X$ 
such that $[C] \in R$.

\begin{proof}[Proof of Proposition \ref{propositionalphabig}]
If $\alpha|_S=0$ then \eqref{equationomega} implies that
$$
-(K_X+\Delta)|_S = \omega|_S.
$$
Thus the divisor $-(K_X+\Delta)|_S$ is ample, in particular $S$
is projective and $S$ is covered by curves. Since $\alpha|_S=0$, the classes of all these curves are in $R$,
a contradiction. 

Arguing by contradiction we suppose now that $\alpha|_S \neq 0$
but $\alpha^2 \cdot S=0$. Then we have
$$
0 = \alpha^2 \cdot S = (K_X+\Delta) \cdot \alpha \cdot S
+ \omega \cdot \alpha \cdot S 
$$ 
and 
$$
\omega \cdot \alpha \cdot S = \omega|_S \cdot \alpha|_S >0
$$ 
since $\alpha|_S$ is a non-zero nef class. 
Thus we obtain
\begin{equation} \label{help1}
(K_X+\Delta) \cdot \alpha \cdot S < 0.
\end{equation}
In particular $(K_X+\Delta)|_S$ is not pseudoeffective, the class $\alpha|_S$ being nef. 

Let \holom{\pi}{\hat S}{S} be the composition of normalisation and minimal resolution (cf. Subsection \ref{subsectionadjunction}).
We claim that there exists an effective $\Q$-divisor $E$
on $\hat S$ such that
\begin{equation} \label{equationnotgood}
(K_{\hat S}+E) \cdot \pi^* (\alpha|_S) < 0.
\end{equation}
Assuming this for the time being, let us see how to derive
a contradiction: note first that $K_{\hat S}$ is not pseudoeffective, so $\hat S$ is uniruled and projective. In particular the nef class $\pi^* (\alpha|_S)$
is represented by an $\R$-divisor. 
Fix an ample divisor $A$ on $\hat S$. 
By \cite[Thm.1.3]{Ara10} we know that given $\varepsilon>0$, there exists a decomposition
$$
\pi^* (\alpha|_S) = C_{\varepsilon} + \sum \lambda_{i, \varepsilon} M_{i, \varepsilon}
$$
where $\lambda_{i, \varepsilon} \geq 0$, $(K_{\hat S}+\varepsilon A) \cdot C_\varepsilon \geq 0$ and the $M_{i, \varepsilon}$ are movable curves. Since $M_{i, \varepsilon}$ 
belongs to an (uncountable) deformation family of curves we obtain
$$
\pi^* (\alpha|_S) \cdot M_{i, \varepsilon} = \alpha \cdot \pi_*(M_{i, \varepsilon}) > 0.
$$
Since $(\pi^* (\alpha|_S))^2=0$ this implies that $\pi^* (\alpha|_S) = C_{\varepsilon}$
for all $\varepsilon>0$. Passing to the limit $\varepsilon \to 0$ we obtain $K_{\hat S} \cdot \pi^* (\alpha|_S) \geq 0$, a contradiction to \eqref{equationnotgood}.

{\em Proof of the claim \eqref{equationnotgood}.} As in the proof
of Lemma \ref{lemmasurfaces} we need a case distinction.

{\em 1st case. Suppose that $K_X \cdot \alpha \cdot S < 0$.}
Since $K_X$ is $\Q$-effective, $S$ is one of the surfaces $S_j$ in the decomposition \eqref{canonicaldecomp}
and
$S \cdot \alpha \cdot S < 0$. 
In particular we have
\begin{equation} \label{help}
(K_X+S) \cdot \alpha \cdot S<0.
\end{equation}
Applying \eqref{equationresolve} with $B=0$ we obtain an 
effective $\Q$-divisor $E$ on $\hat S$ such that
$K_{\hat S}+E \sim_\Q \pi^* (K_X+S)|_S$. Thus 
\eqref{equationnotgood} follows from \eqref{help}.

{\em 2nd case. Suppose that $K_X \cdot \alpha \cdot S \geq 0$.}
By \eqref{help1} this implies that $\Delta \cdot \alpha \cdot S<0$, so $S$ is contained
in the support of $\Delta$; up to renumbering we may
suppose that $S=\Delta_1$. Since $\Delta_i \cdot \alpha \cdot S \geq 0$ for every $i \geq 2$, it follows $S \cdot \alpha \cdot S<0$. Since $0 \leq a_i \leq 1$ we conclude
\begin{equation} \label{help2}
(K_X+S+\sum_{i \geq 2} a_i \Delta_i) \cdot \alpha \cdot S
\leq (K_X+\Delta) \cdot \alpha \cdot S
<0.
\end{equation}
Applying \eqref{equationresolve} with $B=\sum_{i \geq 2} a_i \Delta_i$ we obtain an 
effective $\Q$-divisor $E$ on $\hat S$ such that
$K_{\hat S}+E \sim_\Q \pi^* (K_X+S+\sum_{i \geq 2} a_i \Delta_i)|_S$. Thus 
\eqref{equationnotgood} follows from \eqref{help2}.
\end{proof}

\begin{proof}[Proof of Theorem \ref{theoremsmallray}] 
Let $\pi: \hat X \to X$ be a desingularisation. By \eqref{equationalphabig} we have $(\pi^* \alpha)^3>0$,
so we can apply the theorem of Collins and Tosatti \cite[Thm.1.1]{CT13} to $\pi^* \alpha$: the
non-K\"ahler locus $E_{nK}(\pi^* \alpha)$ is equal to the null-locus, i.e. we have
$$
E_{nK}(\pi^* \alpha) = \cup_{(\pi^* \alpha)|_Z^{\dim Z}=0} Z, 
$$
where the union runs over all the subvarieties of $\hat X$.
If $Z \subset X$ is a surface such that $(\pi^* \alpha)|_Z^{2}=0$ then it follows from the projection formula
and Proposition \ref{propositionalphabig} that $\dim \pi(Z) \leq 1$.
Thus we see that $E_{nK}(\pi^* \alpha)$
is a finite union of $\pi$-exceptional surfaces and curves.
Since
$$
E_{nK}(\alpha) \subset \pi(E_{nK}(\pi^* \alpha)),
$$
and $X_{\sing}$ is a union of curves and points,
we see that $E_{nK}(\alpha)$ is a union of 
curves and points. Clearly $E_{nK}(\alpha)$ contains all the curves
$C \subset X$ such that $[C] \in R$, in particular 
$$
C := \cup_{C_l \subset X, [C_l] \in R} C_l
$$
is a finite union of curves. Our goal is to show that the connected components of $C$ can be contracted 
onto points. Since it is a-priori not clear that $C=E_{nK}(\alpha)$ we have to improve the construction from
\cite[Thm.7.12]{HP13a} 

By \cite[Prop.2.3]{Bou04}\footnote{Proposition 2.3. in \cite{Bou04} is for manifolds, but we can adapt the proof
as follows: the non-K\"ahler locus of $\pi^*\alpha$ is the union of the $\pi$-exceptional locus and the strict transforms of the curves
in the non-K\"ahler locus of $\alpha$. By \cite[Thm.3.1.24]{Bou02} there exists a modification
$\holom{\mu'}{\tilde X}{\hat X}$ and a K\"ahler class $\tilde \alpha$ on $\tilde X$ such that $(\mu')^* \pi^* \alpha= \tilde \alpha + E$
where $E$ is an effective $\R$-divisor. Then $\mu:=\pi \circ \mu'$ has the stated properties.} there exists a modification
$\holom{\mu}{\tilde X}{X}$ and a K\"ahler class $\tilde \alpha$ on $\tilde X$ such that $\mu_* \tilde \alpha= \alpha$.
Since $\tilde \alpha - \mu^* \alpha$ is $\mu$-nef, the negativity lemma \cite[Lemma 3.39]{KM98} \cite[3.6.2]{BCHM10} implies that we have
\begin{equation} \label{equationbig}
\mu^* \alpha = \tilde \alpha + E  
\end{equation}
with $E$ an effective $\mu$-exceptional $\R$-divisor. 
Let now $C_l \subset X$ be an irreducible curve such that $[C_l] \in R$, and set
$D_l$ for the support of $\fibre{\mu}{C_l}$.
Then we have $\alpha|_{C_l} \equiv 0$, so by \eqref{equationbig} we have
$$
-E|_{D_l} = \tilde \alpha|_{D_l}. 
$$
Thus $-E|_{D_l}$ is ample. If $B \subset X$ is an arbitrary curve contained in the image of the exceptional locus,
and $D_B$ the support of $\fibre{\mu}{B}$, we still have
$$
-E|_{D_B} = \tilde \alpha|_{D_B} - (\mu^* \alpha)|_{D_B}. 
$$
Thus we see that $-E$ is $\mu$-ample, in particular its support is equal to
the $\mu$-exceptional locus.
Since ampleness is an open property we can find an effective $\Q$-divisor $E' \subset \tilde X$ that is $\mu$-ample and
such that $-{E'}|_{D_l}$ is ample for all $l$. Up to taking a positive multiple we can suppose that $E'$ has integer coefficients.
We set
$$
\sK_m := \mu_* \sO_{\tilde X}(-mE'),
$$
and {\it we claim} that there exists a $m \in \N$ such that the restriction $\sK_m|_{C_l}$ is ample for all $j$.
Note that this implies that the quotient $(\sK_m/\sK_m^2)|_{C_l}$ is ample for all $j$. 

{\em Step 2. Proof of the claim.}
By relative Serre vanishing there exists an $m_0 \in \N$ such that
$$
R^i \mu_* \sO_{\tilde X}(-mE')=0 \qquad \forall \ i>0, m \geq m_0
$$
and 
\begin{equation} \label{serre}
R^i \mu_* \sO_{\tilde X}(-D_l-mE')=0 \qquad \forall \ i>0, m \geq m_0
\end{equation}
and all $l$. Moreover we know by \cite[Thm.3.1.]{Anc82} that there exists an $m_1 \in \N$ such that the direct
image sheaf
$$
(\mu|_{D_l})_* (\sO_{\tilde X}(-mE') \otimes \sO_{D_l})
$$
is ample for all $m \geq m_1$ and $l$. 

Fix now an $m \geq \max \{ m_0, m_1\}$ and consider now the exact sequence
$$
0 \rightarrow \sO_{\tilde X}(-D_l-mE') \rightarrow \sO_{\tilde X}(-mE') \rightarrow \sO_{\tilde X}(-mE') \otimes \sO_{D_l} \rightarrow 0.
$$
Pushing the sequence down to $X$ and using $\eqref{serre}$ we obtain an exact sequence
$$
0 \rightarrow \mu_* \sO_{\tilde X}(-D_l-mE') \rightarrow \mu_* \sO_{\tilde X}(-mE') \rightarrow (\mu|_{D_l})_* 
(\sO_{\tilde X}(-mE') \otimes \sO_{D_l}) \rightarrow 0.
$$
Since the inclusion $\sO_{\tilde X}(-D_l-mE') \rightarrow \sO_{\tilde X}(-mE')$ vanishes on $D_l$, its direct image 
$$
\mu_* \sO_{\tilde X}(-D_l-mE') \rightarrow \mu_* \sO_{\tilde X}(-mE')
$$
vanishes on $C_l = \mu(D_l)$. In other words we have 
$\sO_{\tilde X}(-D_l-mE')  \subset \sJ \cdot \sO_{\tilde X}(-mE')$ where
$\sJ$ is the full ideal sheaf of $C_l$. 
Thus we have an epimorphism
$$
\mu_* \sO_{\tilde X}(-mE') \otimes \sO_{C_l} \rightarrow (\mu|_{D_l})_* (\sO_{\tilde X}(-mE') \otimes \sO_{D_l}),
$$
which is generically an isomorphism. 
In particular, $\mu_* \sO_{\tilde X}(-mE') \otimes \sO_{C_l}$ is ample.
This proves the claim.

{\em Step 3. Conclusion.}
The ideal sheaf $\sK_{m}$ defines a 1-dimensional subspace $A \subset X$ such that $C \subset A$.
Let $\sI \subset \sO_X$ be the largest ideal sheaf on $X$ that coincides with $\sK_m$ in 
the generic point of every irreducible curve 
$C_l \subset C$\footnote{The sheaf $\sI$ defines the scheme-theoretic
image \cite[II,Ex.3.11(d)]{Har77} of the natural map $\cup_l C_{l, gen} \rightarrow X$ where we endowed
the generic points $C_{l, gen} \subset A$ with the open subscheme structure. 
}  such that $[C_l] \in R$.  
For every curve $C_l$, the natural map
$$
(\sK_m/\sK_m^2)|_{C_l} \rightarrow (\sI/\sI^2)|_{C_l}
$$
is generically an isomorphism. Since $(\sK_m/\sK_m^2)|_{C_l}$ is ample and $C_l$ is a curve, this implies
that $(\sI/\sI^2)|_{C_l}$ is ample. Thus $\sI/\sI^2$ is ample on its support $C$.
By \cite[Cor.3]{AV84} \cite{Gra62} there 
exists a holomorphic map $\nu: X \rightarrow X'$ contracting 
each connected component of $C$ onto a point. 
\end{proof} 

\subsubsection{Divisorial rays}

\begin{notation} {\rm \label{notationdivisorialray}
Under the Assumption \ref{assumption}, suppose that the extremal ray $R = \R^+ [\Gamma_{i_0}]$ is divisorial.
Since the divisor $K_X+\Delta$ is $\Q$-effective
and $(K_X+\Delta) \cdot R<0$ there exists an irreducible surface $S \subset X$ such that $S \cdot R < 0$.
In particular any curve $C \subset X$ with $[C] \in R$ is contained in $S$ and $S$ is covered by these curves.

Let $\nu: \tilde S \to S \subset X$ be the normalisation; 
then $\nu^*(\alpha|_S)$ is a nef class on $\tilde S$ and we may consider the nef reduction
$$ 
\tilde f: \tilde S \to \tilde T$$
with respect to $\nu^*(\alpha|_S)$ (cf. \cite[Thm.2.6]{8authors} and \cite[Thm.3.19]{HP15} for details on the K\"ahler case). 
Since $S$ is covered by curves that are $\alpha$-trivial, the surface $\tilde S$ is covered by curves 
that are $\nu^* (\alpha|_S)$-trivial.
By definition of the nef reduction
this implies
$$ n(\alpha) := n(\nu^* (\alpha|_S)) := \dim \tilde T \in \{0,1\}.$$}
\end{notation} 

\begin{lemma} \label{lemmacontractdivisorpoint}
Under the Assumption \ref{assumption}, 
suppose that the extremal ray $R$ is divisorial and $n(\alpha) = 0$. Then the surface 
$S$ can be blown down to a point $p$: there exists a bimeromorphic
morphism $\varphi: X \to Y$ 
to a normal compact  threefold $Y$ with $\dim \varphi(S) = 0$ such that $\varphi|_{X \setminus S}$ is an isomorphism onto $Y \setminus \{p\}$.
\end{lemma}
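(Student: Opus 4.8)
The plan is to present $S$ as a Grauert-contractible exceptional divisor: I will show that the $\Q$-line bundle $\sO_X(-S)|_S$ is ample and then invoke Grauert's contractibility criterion.

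First I would read off the geometric content of the hypothesis $n(\alpha)=0$. By definition of the nef reduction, $n(\alpha)=0$ means that $\nu^*(\alpha|_S)$ is numerically trivial on $\tilde S$, so $\alpha\cdot C=0$ for every curve $C\subset S$; since $\alpha$ is a nef supporting class for $R$ (Proposition \ref{propositionperp}) and every curve has non-zero class in $N_1(X)$ ($X$ being K\"ahler), this forces $[C]\in R$ for every curve $C\subset S$. Thus the curves on $S$ are exactly the curves whose class lies on the ray $R=\R^+[\Gamma_{i_0}]$, and $S\cdot R<0$ (Notation \ref{notationdivisorialray}) gives $S\cdot C<0$ for each of them. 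Writing $\omega:=\alpha-(K_X+\Delta)$ for the K\"ahler class of \eqref{equationomega}, the vanishing $\alpha|_S\equiv 0$ also yields $-(K_X+\Delta)|_S\equiv\omega|_S$, which is represented by a K\"ahler form on $S$; hence $(K_X+\Delta)|_S$ is not pseudoeffective and Lemma \ref{lemmasurfaces} applies, so $S$ is Moishezon and its normalisation $\holom{\nu}{\tilde S}{S}$ is a projective surface.

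The heart of the argument is to prove that $-S|_S$ is ample. On $\tilde S$ the $\Q$-Cartier divisor $-\nu^*(S|_S)$ is strictly positive on every curve $\tilde C$, since $\nu_*\tilde C$ is a curve on $S$, hence has class on $R$, so $-\nu^*(S|_S)\cdot\tilde C=-S\cdot\nu_*\tilde C>0$. The decisive point is that, all these classes lying on the single ray $R$, we have $-S\cdot\nu_*\tilde C=c\,(\omega\cdot\nu_*\tilde C)$ for every curve $\tilde C$, where $c:=(-S\cdot\Gamma_{i_0})/(\omega\cdot\Gamma_{i_0})>0$; therefore $-\nu^*(S|_S)-c\,\nu^*(\omega|_S)$ is zero against every curve of the projective surface $\tilde S$, hence numerically trivial, i.e. $-\nu^*(S|_S)\equiv c\,\nu^*(\omega|_S)$. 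In particular $(-\nu^*(S|_S))^2=c^2\,(\nu^*(\omega|_S))^2=c^2\,\omega^2\cdot S>0$ since $\omega$ is K\"ahler, and the Nakai--Moishezon criterion then shows that $-\nu^*(S|_S)$ is ample on $\tilde S$. As ampleness descends along the finite surjective morphism $\nu$, the $\Q$-line bundle $\sO_X(-S)|_S$ is ample on $S$.

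Finally, $S$ is $\Q$-Cartier and $\sO_X(-S)|_S$ is ample, so Grauert's contractibility criterion \cite{Gra62} (cf. also \cite{AV84}) produces a bimeromorphic morphism $\holom{\varphi}{X}{Y}$ onto a normal complex space $Y$ which contracts $S$ to a single point $p$ and is an isomorphism over $Y\setminus\{p\}$; then $Y$ is compact and $\dim Y=3$ automatically. Note also that $-(K_X+\Delta)$ is $\varphi$-ample, since it restricts to the K\"ahler class $\omega|_S$ on the contracted fibre $S$. The step I expect to be the real obstacle is the middle one: obtaining not merely $-S|_S\cdot C>0$ for all curves (which is immediate from $n(\alpha)=0$) but the strict positivity $(-S|_S)^2>0$ that is needed for ampleness. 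The mechanism that delivers it is precisely that $n(\alpha)=0$ concentrates the entire curve geometry of $S$ on the single extremal ray $R$, which makes $-S|_S$ numerically proportional to the restriction of the K\"ahler class $\omega$.
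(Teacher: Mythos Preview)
Your proof is correct and follows the same strategy as \cite[Cor.7.7]{HP13a}, which is precisely what the paper invokes: the hypothesis $n(\alpha)=0$ forces $\alpha|_S\equiv 0$, hence every curve on $S$ lies on the ray $R$, so $-S|_S$ is numerically proportional to the K\"ahler class $\omega|_S$ and therefore ample by Nakai--Moishezon, after which Grauert's criterion gives the contraction. Two cosmetic points: Lemma~\ref{lemmasurfaces} literally gives projectivity of a \emph{desingularisation} rather than the normalisation (but normal Moishezon surfaces are projective, or you can run the numerical argument on $\hat S$ instead), and for the final step you should apply Grauert/\cite{AV84} to the Cartier multiple $mS$ with $\sO_X(-mS)|_S$ ample.
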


\begin{proof}
The proof is identical to the proof of \cite[Cor.7.7]{HP13a} which only uses that $S \cdot R<0$.
\end{proof}

The case  $n(\alpha)=1$ is much more subtle.
If $S \subset \supp \Delta$, we can suppose up to renumbering
that $S=\Delta_1$ and set $B=\sum_{i \geq 2} a_i \Delta_i$.
If $S \not\subset \supp \Delta$ we simply set $B=\Delta$.
In both cases we have by \eqref{differend} that
\begin{equation} \label{diff} 
K_{\tilde S}+B_{\tilde S} \sim_\Q \nu^* (K_X+S+B)|_S,
\end{equation} 
where $B_{\tilde S}$ is a canonically defined effective $\Q$-divisor. Let $\tilde C$ be a general fibre of the nef reduction $\tilde f: \tilde S \to \tilde T$.
Since $[\nu(\tilde C)] \in R$ we have
$$
(K_X+\Delta) \cdot \nu(\tilde C)<0 \qquad \mbox{and} \qquad S \cdot \nu(\tilde C)<0.
$$
Since $0 \leq a_1 \leq 1$ this implies
$$
(K_X+S+B) \cdot \nu(\tilde C) \leq (K_X+\Delta) \cdot \nu(\tilde C)<0.
$$
Thus we have $(K_{\tilde S}+B_{\tilde S}) \cdot \tilde C<0$,
in particular $\tilde C^2=0$ implies that
$\tilde C \simeq \PP^1$. The normal surface
$\tilde S$ being smooth in a neighbourhood of the fibre $\tilde C$, we
conclude $K_{\tilde S} \cdot \tilde C=-2$. Since
$B_{\tilde S} \cdot \tilde C \geq 0$ we arrive at

\begin{lemma} \label{lemmanone}
Under the Assumption \ref{assumption}, 
suppose that the extremal ray $R$ is divisorial and $n(\alpha) = 1$. Then the extremal ray $R$ is represented by 
a rational curve $C$ such that
$$
- (K_X+\Delta) \cdot C \leq 2.
$$
\end{lemma}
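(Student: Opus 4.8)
The plan is to extract the lemma from the computation made in the paragraph just before the statement. Let $\tilde{C}$ be a general fibre of the nef reduction $\tilde{f}\colon\tilde{S}\to\tilde{T}$ appearing there, and set $C:=\nu(\tilde{C})$. We have already seen that $\tilde{C}\simeq\PP^1$, that $[\nu(\tilde{C})]\in R$, and that $K_{\tilde{S}}\cdot\tilde{C}=-2$ with $B_{\tilde{S}}\cdot\tilde{C}\geq 0$. Since $\nu$ is finite, $C$ is a rational curve, and since $[C]\in R$ and $R$ is a ray we get $R=\R^+[C]$; thus $R$ is represented by the rational curve $C$. It remains only to prove $-(K_X+\Delta)\cdot C\leq 2$.

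For this I would first observe that a general fibre $\tilde{C}$ is not contained in the locus where $\nu$ fails to be an isomorphism --- a proper closed subset of $\tilde{S}$ --- so $\nu|_{\tilde{C}}\colon\tilde{C}\to C$ is birational and $\nu_*\tilde{C}=C$ as $1$-cycles. The adjunction formula \eqref{diff} together with the projection formula then gives
$$(K_X+S+B)\cdot C=(K_X+S+B)\cdot\nu_*\tilde{C}=(K_{\tilde{S}}+B_{\tilde{S}})\cdot\tilde{C}=K_{\tilde{S}}\cdot\tilde{C}+B_{\tilde{S}}\cdot\tilde{C}\geq -2.$$
Finally, $(K_X+S+B)-(K_X+\Delta)$ equals $(1-a_1)S$ in the case $S=\Delta_1$ (where $0\leq a_1\leq 1$) and equals $S$ in the case $S\not\subset\supp\Delta$; since $[C]\in R$ and $S\cdot R<0$ we have $S\cdot C<0$, so in both cases $(K_X+S+B)\cdot C\leq(K_X+\Delta)\cdot C$. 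Hence $-(K_X+\Delta)\cdot C\leq-(K_X+S+B)\cdot C\leq 2$, as claimed.

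The genuinely hard part has already been done upstream: building the nef reduction on the normalisation $\tilde{S}$, the differend identity \eqref{diff}, and verifying that the general fibre of $\tilde{f}$ is a smooth rational $(-2)$-curve. Granting those, the present lemma is just a descent along $\nu$, and the only point that needs a moment's attention is that this descent preserves the intersection number with multiplicity one --- i.e. $\nu_*\tilde{C}=C$ and not a higher multiple --- and that trading $K_X+\Delta$ for $K_X+S+B$ moves the intersection number in the favourable direction. Both follow immediately from $S\cdot C<0$ and from the general fibre meeting the open locus where $\nu$ is an isomorphism.
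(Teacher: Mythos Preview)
Your proof is correct and follows exactly the same approach as the paper: the lemma is stated as the immediate conclusion of the computation in the preceding paragraph, and you have simply made explicit the final chain of inequalities $-(K_X+\Delta)\cdot C \leq -(K_X+S+B)\cdot C = -(K_{\tilde S}+B_{\tilde S})\cdot\tilde C \leq 2$ that the paper leaves implicit with ``we arrive at''. The only detail you add beyond the paper is the justification that $\nu_*\tilde C = C$ with multiplicity one, which the paper uses tacitly in its projection-formula step.
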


This estimate allows us to complete the proof of the cone theorem:

\begin{proof}[Proof of Theorem \ref{theoremNA}] \label{proofNA}
The only statement that is not part of Theorem \ref{theoremweakNA} is that in every $(K_X+\Delta)$-negative extremal ray $R_i$ 
we can find a rational curve $\Gamma_i$ such that $\Gamma_i \in R_i$ and
$$
0 < -(K_X+\Delta) \cdot \Gamma_i \leq 4.
$$ 
If the extremal ray $\Gamma_i$ is divisorial
with $n(\alpha)=1$, we conclude by Lemma \ref{lemmanone} (even if $(X,\Delta)$ is only lc).
If the extremal ray $\Gamma_i$ is small or divisorial
with $n(\alpha)=0$, the contraction exists by Theorem \ref{theoremsmallray} and Lemma \ref{lemmacontractdivisorpoint}. Since $(X,\Delta)$ is dlt, 
we may simply apply \cite{Kaw91} \cite[Thm.7.46]{Deb01}. 
\end{proof}

\begin{remark} {\rm In order to prove Theorem \ref{theoremNA} in the lc case, it remains to prove the existence of rational curves 
if the ray is divisorial contracting a divisor $S$ to a point or if the ray is small, with the bound $d = 4$ (cf. \cite{Fuj11b} for the projective case). The existence of rational curves in the first case just 
follows from the arguments preceding Lemma \ref{lemmanone}, which show that $S$ is uniruled. The existence of rational curves in the small case
requires some vanishing theorem which is not yet established in the K\"ahler case: if $W$ is the union of the lc centers, then $H^1(W,\sO_W) = 0.$ 
}
\end{remark} 

In order to prove the existence of the contraction in the case $n(\alpha)=1$ we would like to construct a fibration
$S \rightarrow T$ whose normalisation is the nef reduction $\tilde S \rightarrow \tilde T$.
At the moment we can only realise this strategy under an additional condition on the singularities of $S$.
We will use the notion of a semi-log-canonical (slc) surface; see Section 5. 

\begin{proposition} \label{propositionnone}
Under the Assumption \ref{assumption}, 
suppose that the extremal ray $R$ is divisorial and $n(\alpha) = 1$. Suppose moreover $S$ has slc singularities, i.e. $(S,0)$ is slc. 
Then there exists a morphism with connected fibres $S \rightarrow T$ onto a curve $T$ that contracts a curve $C \subset S$
if and only if $\alpha|_S \cdot C=0$. Moreover the contraction of the extremal ray $R$ exists.
\end{proposition}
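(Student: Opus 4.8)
The goal is to upgrade the nef reduction $\tilde f \colon \tilde S \to \tilde T$, which lives on the normalisation $\tilde S$, to an honest fibration $S \to T$ on $S$ itself, and then to show that gluing this fibration with the isomorphism $X \setminus S \xrightarrow{\sim} Y \setminus (\text{curve})$ produces a bimeromorphic contraction of the ray $R$ in the K\"ahler category. First I would analyse the structure of $\tilde f$: by the discussion preceding Lemma \ref{lemmanone}, the general fibre $\tilde C$ is a smooth rational curve with $\tilde C^2 = 0$, so $\tilde f$ is (after possibly contracting finitely many $(-1)$-curves in fibres, but $\tilde S$ is already minimal over $\tilde T$ in the relevant sense) a $\PP^1$-fibration over the curve $\tilde T$. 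The key point is that the $\alpha|_S$-trivial curves on $S$ are exactly the images $\nu(\tilde C)$ of fibres, so $\alpha|_S$ is pulled back from $\tilde T$ up to the normalisation; the class $\alpha|_S$ itself, being nef with $\alpha^2 \cdot S > 0$ excluded here (we are in the $n(\alpha)=1$ case, so $(\alpha|_S)^2 = 0$), is semiample-like and I would produce the curve $T$ as the image of $S$ under a suitable map.

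**Constructing $S \to T$.** The slc hypothesis on $S$ is what lets me descend the fibration through the normalisation. Concretely: $\nu \colon \tilde S \to S$ identifies pairs of curves in the conductor, and since $(S,0)$ is slc, $\tilde S$ carries the conductor divisor $\tilde D$ with an involution $\tau$ compatible with the gluing. I would check that $\tilde f$ is equivariant for $\tau$ in the sense that $\tau$ permutes fibres of $\tilde f$ (this uses that the conductor curves have zero intersection with $\nu^*(\alpha|_S)$ or are themselves contained in fibres — which follows because any component of $\tilde D$ is either $\tilde f$-horizontal, in which case it is a multisection and one argues it must be a section or the gluing would contradict normality of the target, or it is vertical). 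This gives an induced involution on $\tilde T$, or rather shows $\tilde f$ factors through the quotient, yielding a morphism $S \to T := \tilde T/(\text{identifications})$ onto a (normal, hence smooth) curve $T$ with connected fibres, contracting precisely the $\alpha|_S$-trivial curves. I would then invoke the standard fact (as in \cite[Cor.7.7]{HP13a} or \cite[Thm.7.12]{HP13a} style arguments, combined with \cite{AV84}, \cite{Gra62}) that a fibration on a divisor $S$ in a threefold, contracting curves whose class lies in a single extremal ray with $S \cdot R < 0$, extends to a bimeromorphic morphism $\varphi \colon X \to Y$: one uses that $-S|_S$ is $\tilde f$-ample (since $S \cdot \nu(\tilde C) < 0$), so some power of the relative ideal sheaf, together with the functions on $X$ constant along the curves, defines the contraction. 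K\"ahlerness of $Y$ then follows from Corollary \ref{corollarycontractionkaehler}, once we know $-(K_X+\Delta)$ is $\varphi$-ample, which holds because $\varphi$ contracts exactly the curves in $R$ and $(K_X+\Delta) \cdot R < 0$.

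**The main obstacle.** The delicate point is the descent of $\tilde f$ along the normalisation: a priori the fibres of $\tilde f$ over $\tilde T$ need not be compatible with the identifications made by $\nu$, and a horizontal component of the conductor could a priori be a genuine multisection rather than a section, which would obstruct the factorisation. Controlling this is exactly where the slc hypothesis on $S$ enters, via the theory of the differend $B_{\tilde S}$ and the involution on the conductor. I expect the bulk of the work to be in showing that the conductor is $\tilde f$-vertical or a union of sections — equivalently, that $\tilde f$ restricts to an isomorphism (or at least a degree-one map) on each horizontal conductor component — using the inequality $(K_{\tilde S} + B_{\tilde S}) \cdot \tilde C < 0$ together with $K_{\tilde S} \cdot \tilde C = -2$ to pin down that $B_{\tilde S} \cdot \tilde C \le 1$, hence the conductor meets a general fibre in at most one point. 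A secondary technical issue is ensuring the contracted threefold $Y$ is genuinely a complex space (not merely an algebraic space) and that the pair $(Y, \varphi_*\Delta)$ behaves well, but these follow from the K\"ahler criterion of Theorem \ref{kaehler2} and Proposition \ref{propositioncontraction} once the fibration is in hand.
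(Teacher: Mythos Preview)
Your key numerical step matches the paper's: from $(K_{\tilde S}+B_{\tilde S})\cdot\tilde C<0$ and $K_{\tilde S}\cdot\tilde C=-2$ one gets $B_{\tilde S}\cdot\tilde C<2$; since the conductor $\tilde Z$ appears in $B_{\tilde S}$ with coefficient $1$ and $\tilde Z\cdot\tilde C$ is a positive integer (assuming $\nu(\tilde C)\not\subset S_{\nons}$, the other case being covered by \cite[Lemma~7.8]{HP13a}), it equals $1$ and the horizontal conductor is an irreducible section of $\tilde f$. Two small corrections: the bound you obtain is $B_{\tilde S}\cdot\tilde C<2$, not $\le 1$; and it is this intersection number, not ``the gluing would contradict normality of the target'', that forces the section property.

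Where you diverge from the paper is in how you produce the fibration on $S$. You propose to transport the gluing involution $\tau$ on $\tilde Z$ to an involution $\sigma$ on $\tilde T$ via the isomorphism $\tilde Z\cong\tilde T$, then descend $\tilde S\to\tilde T\to\tilde T/\sigma$ along $\nu$ using seminormality of $S$. The paper instead observes that for a general $q\in Z$ the two preimages $p_1,p_2\in\tilde Z$ lie on \emph{distinct} fibres $\tilde C,\bar C$; hence $\nu(\tilde C)+\nu(\bar C)$ is a connected $1$-cycle on $S$, and one takes the component of $\Chow{S}$ parametrising these cycles. The universal family over (the semi-normalisation of) this component is bimeromorphic to $S$ near the general fibre, yielding a Cartier divisor $F$ on $S$ with $F^2=0$ and $\kappa(S,F)=1$, whence the fibration $S\to T$; the extension to $X\to Y$ is then \cite[Cor.~7.9]{HP13a}. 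Your route should also work, but the descent step is not free: you must check that $\tilde S\to\tilde T/\sigma$ is constant on \emph{every} fibre of $\nu$, in particular over any vertical conductor components, and this bookkeeping is exactly what the Chow-scheme construction bypasses by working directly with cycles on $S$.
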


\begin{proof}
Since $(S,0)$ is slc, it has normal crossing singularities in codimension one. 
The normalisation map $\nu: \tilde S \rightarrow S$ is finite, so the general curve 
$\nu(\tilde C)$ is contained in the locus where $S$
has normal crossing singularities. 
If $\nu(\tilde C)$ is in $S_{\nons}$ the proof
of \cite[Lemma 7.8]{HP13a} applies without changes, so
suppose that this is not the case. 
Let $$Z = \cup Z_k \subset S_{\sing}$$
be the union of curves $Z_k$
such that $\fibre{\nu}{Z_k}$ meets
the general curve $\tilde C$.
For each $k$ we set $\tilde Z_k := \nu^{-1}(Z_k)$ and set $\tilde Z := \cup \tilde Z_k$.
Since $S$ has normal crossings in the generic point of $Z_k$, the
natural map $\tilde Z_k \rightarrow Z_k$ has degree two.
Note now that every irreducible component of $\tilde Z$ is $\nu^* \alpha$-positive: otherwise $\tilde S$ would be connected by $\nu^* \alpha$-trivial curves, thus $n(\alpha)=0$ 
in contradiction to our assumption. In particular every curve in $\tilde Z$ dominates $\tilde T$. 

Since $\tilde Z$ maps into the normal crossings locus of $S$ 
we can decompose the differend (\ref{diff}) 
$$
B_{\tilde S} = \tilde Z + R
$$
where $R$ is an effective $\Q$-divisor with no common component with $\tilde Z$.
The intersection points of $\tilde Z$ and $\tilde C$
are contained in the smooth locus of $\tilde S$, so
$\tilde Z \cdot \tilde C$
is a positive integer. On the other hand 
we know that $K_{\tilde S} \cdot \tilde C = -2$
and
$$
-2 < 
(K_{\tilde S} + \tilde Z) \cdot \tilde C \leq
(K_{\tilde S} + \tilde Z + R) \cdot \tilde C < 0.
$$
Thus $\tilde Z \cdot \tilde C=1$. Since all the irreducible components of $\tilde Z$ surject onto $\tilde T$, this shows that
$\tilde Z$ is irreducible. In particular $Z$ itself
is irreducible.
Moreover we conclude that
\begin{enumerate}
\item the curves $Z$ and $C$ meet in a unique point $q$; and
\item we have $\fibre{\nu}{q}= \{ p_1, p_2 \}$ with $p_1 \in \tilde C$, but $p_2 \not\in \tilde C$.
\end{enumerate}
Since $\tilde C$ is general and $\nu$ is finite, the point
$p_2$ lies on another general fibre $\bar C$ such that
$\nu(\tilde C)$ and $\nu(\bar C)$ meet in $q$.
Let now $T$ be the unique irreducible component
of the cycle space $\Chow{S}$ such that the general point corresponds
to the cycle $\nu(\tilde C)+\nu(\bar C)$, and
let $\Gamma \to T'$ be the semi-normalisation \cite[p.156]{Kol07}  of the universal family
over $T$. By construction we have a natural bimeromorphic morphism
$\Gamma \rightarrow S$ and, by what precedes, this morphism
is an isomorphism in the neighbourhood of the general
fibre $F$ of $\Gamma \rightarrow T'$. 
Thus $F$ defines a Cartier divisor on $S$ such that $F^2=0$
and $\kappa(S, F)=1$. Thus some positive multiple of $F$ defines a morphism with connected fibres
$S \rightarrow T$ and one easily checks that this morphism realises the nef reduction with respect
to $\alpha|_S$. Now conclude as in \cite[Cor.7.9]{HP13a}.
\end{proof}


\subsection{Running the MMP} \label{subsectionrunMMP}

We first collect a number of results which we could not find in this form in the literature
and give an indication how to adapt the proof.

\begin{theorem} \label{theoremexistenceflip} (\cite{Sho92}, in the algebraic case \cite{Uta92})
Let $X$ be a normal $\Q$-factorial compact K\"ahler threefold. 
Let $\Delta$ be a  boundary divisor such that the pair $(X, \Delta)$ is dlt.
Let $\R^+ [\Gamma_i]$ be a $(K_X+\Delta)$-negative extremal ray $R$ in $\NAX$.
Suppose that the contraction of $\holom{\varphi}{X}{Y}$
is small. Then the flip $\holom{\varphi^+}{X^+}{Y}$ exists. Moreover $X^+$ is a normal $\Q$-factorial compact K\"ahler threefold and $(X^+, (\varphi^+)_*^{-1} \circ \varphi_* \Delta)$ is dlt. 
\end{theorem}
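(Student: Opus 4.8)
The plan is to construct the flip by a purely local argument over $Y$ and to reduce the required finite‑generation statement to the algebraic theory of three‑dimensional log flips. Write $\Delta^+ := (\varphi^+)_*^{-1} \circ \varphi_* \Delta$. Recall that the flip of $\varphi$, if it exists, is unique and is realised as
$$
\varphi^+ : X^+ = \mathrm{Proj}_Y \, \bigoplus_{m \geq 0} \varphi_* \sO_X(\lfloor m(K_X+\Delta) \rfloor) \longrightarrow Y,
$$
with $K_{X^+}+\Delta^+$ being $\varphi^+$-ample; thus the whole problem is to show that the sheaf of graded $\sO_Y$-algebras $\sR := \bigoplus_{m \geq 0} \varphi_* \sO_X(\lfloor m(K_X+\Delta) \rfloor)$ is locally a finitely generated $\sO_Y$-algebra. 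This is a local question on $Y$, and since $\dim X = 3$ and $\varphi$ is small, the locus where $\varphi$ fails to be an isomorphism is a finite set $\{ p_1, \ldots, p_k \} \subset Y$; away from it $\sR = \sO_Y$. Hence it suffices to prove that $\sR$ is a finitely generated algebra over an arbitrarily small Stein neighbourhood $U$ of each $p_j$.

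Over such a $U$ we are looking at the germ of a small $(K+\Delta)$-negative flipping contraction of a dlt threefold pair with $-(K+\Delta)$ relatively ample. For algebraic germs, finite generation of $\sR$ is exactly Shokurov's existence theorem for three‑dimensional log flips \cite{Sho92}, as worked out in \cite{Uta92}. Although stated there for algebraic varieties, the proof is local over the base: it proceeds by Shokurov's reduction to pl‑flips via special termination, and the construction of pl‑flips rests on local computations on terminal singularities, cyclic covers and relative vanishing — all statements about germs of complex spaces which carry over verbatim to the analytic setting. Equivalently, one may algebraise the flipping contraction germ $(\varphi, Y, p_j)$ and apply the algebraic theorem directly. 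Either way one obtains the existence of $\varphi^+ : X^+ \to Y$.

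It then remains to record the stated properties of $X^+$. The morphism $\varphi^+$ is projective because $K_{X^+}+\Delta^+$ is $\varphi^+$-ample. By Theorem \ref{theoremsmallray} the contraction $\varphi : X \to Y$ exists, and by Corollary \ref{corollarycontractionkaehler} the space $Y$ is Kähler (here we use, as always in Definition \ref{definitionmmp}, that $X$ is not uniruled). Since $\varphi^+$ is a projective morphism over the Kähler space $Y$, the total space $X^+$ is Kähler by \cite[1.3.1]{Var89}. Finally, $\Q$-factoriality of $X^+$, the equality $\rho(X^+) = \rho(X)$, and the dlt property of $(X^+, \Delta^+)$ all follow from Proposition \ref{propositioncontraction}~d), whose hypotheses are satisfied by $\varphi$ and whose conclusion is conditional precisely on the existence of the flip established above.

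The main obstacle is the second paragraph: justifying rigorously that the algebraic proof of finite generation of the log canonical algebra of a flipping contraction is local enough over the base to apply to analytic germs (or, alternatively, carrying out the algebraisation of the pair near $p_j$ carefully). Everything else is either formal or already available in the excerpt.
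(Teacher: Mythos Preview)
Your argument is essentially the same as the paper's: reduce to a local finite-generation problem over the points of $Y$ lying under the flipping locus, invoke the algebraic three-fold log-flip theorem after observing that its proof is local over the base and hence transfers to analytic germs, and then deduce the K\"ahler and $\Q$-factorial/dlt properties from Corollary \ref{corollarycontractionkaehler} and Proposition \ref{propositioncontraction}. One small simplification the paper makes that you omit: since $X$ is $\Q$-factorial and $(X,\Delta)$ is dlt, the pair $(X,(1-\varepsilon)\Delta)$ is klt for all $0<\varepsilon<1$, and for $\varepsilon\ll 1$ the class $K_X+(1-\varepsilon)\Delta$ is still negative on $R$; the paper first passes to this klt pair before invoking Shokurov, which lets one quote the klt flip theorem directly rather than the dlt version. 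Also, your reference to ``Shokurov's reduction to pl-flips via special termination'' is the later Corti-style formulation; the paper instead points to the \cite{Uta92} route (reduction to terminal flips, which are Mori's local analytic construction), which is arguably the more directly analytic-friendly argument. Finally, your appeal to Theorem \ref{theoremsmallray} is unnecessary: the existence of the contraction $\varphi$ is part of the hypothesis of the theorem.
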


\begin{proof}
Since $X$ is $\Q$-factorial, the pair $(X, (1-\varepsilon) \Delta)$ is klt for every $0< \varepsilon < 1$.
Moreover for $0< \varepsilon \ll 1$ the divisor $K_X+(1-\varepsilon) \Delta$ is negative on the extremal ray $R$.
Thus the flip $\holom{\varphi}{X^+}{Y}$ exists by \cite[Thm]{Sho92}, see also \cite{Cor07} (and \cite[2.32]{Uta92} for passing to the limit). 
We alternatively may apply \cite{Uta92}, where the existence of log flips is reduced to the existence of terminal flips, \cite{Mor88}. 
The existence of terminal flips is now a local analytic construction. The reduction to the terminal case as given in sections 5,6 and 8 
in \cite{Uta92} works in the analytic setting as well\footnote{ 
It is important to notice that the construction is locally analytic near the contracted curves.}.
By Corollary \ref{corollarycontractionkaehler}, the complex space $Y$ is K\"ahler. 
Since the morphism $\varphi^+$ is projective, $X^+$ is again K\"ahler. 
For the remaining properties we refer to Proposition \ref{propositioncontraction}. 
\end{proof}

While the existence of flips is a local property \cite[6.7]{KM98}, this is not necessarily the case for termination results. However
in most cases we will only use special termination for dlt pairs which is much simpler to prove: 

\begin{theorem} \label{theoremspecialtermination} \cite{Sho92, Uta92}
Let $X$ be a normal $\Q$-factorial compact K\"ahler threefold. 
Let $\Delta$ be an effective reduced Weil divisor such that the pair $(X, \Delta)$ is dlt. Set $(X_0, \Delta_0) := (X, \Delta)$ and let 
$$
\left(
\merom{\varphi_i}{(X_i, \Delta_i)}{(X_{i+1}, \Delta_{i+1}:= (\varphi_i)_* \Delta_i}
\right)_{i \in I}
$$
be a sequence of $(K+\Delta)$-flips where $I \subset \N$. Then for all $i \gg 0$ the flipping locus is
disjoint from $\Delta_i$. 

In particular, if for every $i \in I$ the flipping locus of $\varphi_i$ is contained in $\Delta_i$, then the sequence of $(K+\Delta)$-flips terminates. 
\end{theorem}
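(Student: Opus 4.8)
The plan is to carry out the classical proof of special termination due to Shokurov (\cite{Sho92}, \cite[Ch.4]{Uta92}), which proceeds by induction on dimension; since $\dim X = 3$ the induction bottoms out at surfaces and curves, where the MMP and its termination are classical. The structural observation that makes the transfer to the K\"ahler category harmless is that special termination is a \emph{local} statement near the flipping curves. For each $i$ the flipping contraction $X_i \to Y_i$ and its flip $X_{i+1}\to Y_i$ are projective morphisms (cf. the proof of Theorem \ref{theoremexistenceflip} and Proposition \ref{propositioncontraction}), so $\varphi_i$ is an isomorphism outside an analytic neighbourhood of the (compact) flipping curve, and that neighbourhood is projective over $Y_i$. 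Hence all discrepancy computations, the use of Shokurov's different (set up in Subsection \ref{subsectionadjunction}), and any cutting by general members of $\varphi_i$-ample linear systems are literally the algebraic statements. The only genuinely global inputs are finiteness properties of certain invariants on the \emph{compact} strata of $\Delta_i$; but these strata carry K\"ahler forms, since they are (normalisations of) divisors and curves inside the compact K\"ahler threefolds $X_i$, so the surface classification and the surface MMP apply verbatim.

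Here is the sequence of steps. Since the flipping locus of a threefold flip is a nonempty curve, the ``in particular'' clause follows at once from the first assertion: a locus that is simultaneously contained in $\Delta_i$ and (for $i\gg 0$) disjoint from it must be empty, a contradiction, so the sequence is finite. Thus it suffices to prove that for $i\gg 0$ the flipping curve $C_i$ of $\varphi_i$ is disjoint from $\Delta_i$. \emph{Step 1.} Flips contract no divisor, so each component $S$ of $\Delta_0$ has a well-defined strict transform $S_i\subset X_i$, and $\varphi_i$ induces a bimeromorphic map $g_i\colon \tilde S_i \dashrightarrow \tilde S_{i+1}$ of normalisations. By adjunction \eqref{differend} we obtain dlt surface pairs $(\tilde S_i,\Theta_i)$ with $K_{\tilde S_i}+\Theta_i \sim_\Q \nu_i^*(K_{X_i}+\Delta_i)|_{S_i}$, and a negativity-lemma computation on a common resolution over $Y_i$ (exactly as in the algebraic case) shows that $g_i$ is $(K+\Theta)$-non-negative: the discrepancy of every divisorial valuation over the $\tilde S_i$ is non-decreasing in $i$, and increases strictly for valuations whose centre lies over the flipped locus (the surface incarnation, via \eqref{inequalitycanonical}, of the strict increase of discrepancies over the flipping locus). \emph{Step 2.} Over $\C$ and in dimension $\le 2$ only finitely many valuations are relevant for the klt pairs $(\tilde S_i,\Theta_i)$ (they are controlled by the minimal resolutions), so the discrepancies stabilise after finitely many $i$; once stable, $g_i$ is a crepant bimeromorphic map which is an isomorphism in codimension one, hence an isomorphism of normal surfaces. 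In particular the flipping curve is eventually never contained in a component of $\Delta_i$: if $C_i\subset S_i$ then $g_i$ contracts the curve $C_i\subset\tilde S_i$ and so strictly raises the discrepancy of its valuation, which can happen only finitely often. \emph{Step 3.} Applying the same monotonicity to the codimension-two strata $S_{j,i}\cap S_{k,i}$ through their normalisations (which carry an induced dlt structure), and using that the MMP for curves terminates trivially, we reduce to: for $i\gg 0$ the curve $C_i$ meets $\Delta_i$ at most in finitely many points, each lying over a $0$-dimensional lc centre of $(X_i,\Delta_i)$. \emph{Step 4.} It remains to show that for $i\gg 0$ no flipping curve passes through such a point. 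This is the standard argument of \cite[Ch.4]{Uta92}: it is again local around the point in question, hence projective over $Y_i$ and identical to the algebraic case, and uses that a suitable minimal-log-discrepancy-type invariant attached to the point is bounded, non-decreasing, and strictly increased by any flip whose flipping curve passes through it; since there are only finitely many such points, they are eventually avoided. Combining Steps 1--4 yields $C_i\cap\Delta_i=\emptyset$ for $i\gg 0$.

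The point to watch — and the reason this is ``much simpler'' than general termination — is precisely the localisation explained above: every discrepancy, adjunction and hyperplane-section argument takes place in a neighbourhood of the flipping curve that is projective over $Y_i$, so nothing beyond \cite{Sho92}, \cite{Uta92} is needed there. The remaining finiteness inputs (finiteness of relevant valuations in dimension $\le 2$, boundedness of the invariants attached to $0$-dimensional lc centres, termination of the surface and curve MMP) use only that the strata of $\Delta_i$ are compact K\"ahler spaces with klt/dlt singularities, which holds by construction. The subtlest point is Step 4, the treatment of the $0$-dimensional lc centres, but it requires no new idea beyond the classical one; all other steps are essentially bookkeeping once the discrepancy monotonicity of Step 1 is in place.
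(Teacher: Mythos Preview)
Your proposal is correct and follows essentially the same approach as the paper: the paper's proof simply says that the argument of \cite[Thm.~4.2.1]{Fuj07} goes through unchanged because (i) discrepancy computations are local and (ii) the MMP for normal $\Q$-factorial compact K\"ahler surfaces exists, and your Steps~1--4 are precisely a sketch of what Fujino's argument (in turn refining \cite{Sho92}, \cite[Ch.~7]{Uta92}) actually does. Your emphasis on the locality of the flip (projectivity over $Y_i$) and on the global finiteness inputs living on the compact K\"ahler strata is exactly the two-line justification the paper gives for invoking the algebraic proof verbatim.
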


\begin{proof}
The proof of \cite[Thm.4.2.1]{Fuj07} applies without changes: obviously the MMP exists for 
normal $\Q$-factorial compact K\"ahler {\em surfaces}, the discrepancy calculations 
are local properties, so they also apply in the non-algebraic setting.
\end{proof}

\begin{theorem} \label{theoremKtermination} \cite{Kaw92b, Uta92}
Let $X$ be a normal $\Q$-factorial compact  K\"ahler threefold such that $(X,0)$ is klt. 
Set $X_0:=X$ and let  
$$
\left(
\merom{\varphi_i}{X_i}{X_{i+1}}
\right)_{i \in I}
$$
be a sequence of $K$-flips. Then $I$ is finite, that is any sequence of $K$-flips terminates.
\end{theorem}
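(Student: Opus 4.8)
The plan is to follow Kawamata's argument \cite{Kaw92b} (see also \cite{Uta92}), which deduces termination from the behaviour of Shokurov's difficulty function. For a $\Q$-factorial klt threefold $X$ one sets
$$
d(X) := \#\{\, E \ \mid\ E \text{ exceptional divisor over } X,\ a(E,X) < 1 \,\},
$$
and the whole proof rests on two assertions: (i) $d(X)$ is a well-defined non-negative integer, i.e.\ only finitely many divisorial valuations over $X$ have discrepancy less than one; and (ii) $d(X_{i+1}) < d(X_i)$ whenever $\merom{\varphi_i}{X_i}{X_{i+1}}$ is a $K$-flip. Granting these, an infinite sequence of $K$-flips would yield a strictly decreasing infinite sequence of non-negative integers, which is impossible; hence $I$ is finite. (Note that, since here the boundary is $0$, the special termination of Theorem \ref{theoremspecialtermination} is vacuous, so one really needs this global statement.)

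For (i) I would invoke the local analysis of klt threefold singularities as in \cite{Kaw92b, Uta92}, together with the existence of a $\Q$-factorial terminal modification, which in the K\"ahler category is supplied by Theorem \ref{theoremterminalmodel}; both are statements about discrepancies and partial resolutions, and neither uses projectivity. For (ii) one uses that a $K$-flip never decreases the discrepancy of a divisorial valuation (\cite{Sho92}, \cite[Ch.3]{KM98}), so that $\{E \mid a(E,X_{i+1}) < 1\} \subseteq \{E \mid a(E,X_i) < 1\}$, and then that this inclusion is \emph{strict}; the latter is the content of the discrepancy bookkeeping along the flipping and flipped loci carried out in \cite{Kaw92b} and \cite{Uta92}. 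The point common to both assertions is that the entire argument is conducted on resolutions of singularities and is local over $X_i$ and $X_{i+1}$; since resolutions exist for compact complex spaces and the vanishing and negativity inputs are exactly those already set up in Sections \ref{sectionkaehler} and \ref{sectionMMP}, the proof transcribes verbatim into the analytic setting.

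The step I expect to require the most care is the strict decrease $d(X_{i+1}) < d(X_i)$ in the genuinely non-terminal klt case, where the flipped curve may meet the singular locus of $X_{i+1}$ and a crude comparison of discrepancies no longer suffices --- this is precisely the delicate part of \cite{Kaw92b}. One has to follow the modifications and discrepancy estimates used there and check that they call only on resolution of singularities, the negativity lemma, and the existence of terminal and dlt models (Theorems \ref{theoremterminalmodel} and \ref{theoremdltmodel}), all of which are now available in the complex-analytic category. Alternatively one could first reduce to the termination of $K$-flips on $\Q$-factorial \emph{terminal} threefolds --- the more classical case of \cite{Mor88}, \cite{Sho92} --- by lifting the flip sequence to a suitable MMP on a terminal modification; but this reduction is of the same discrepancy-theoretic nature and offers no real simplification.
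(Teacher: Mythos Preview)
Your proposal is correct and follows essentially the same route as the paper. The paper's own proof is terser: it lists the three ingredients of Kawamata's argument in \cite{Kaw92b} --- existence of terminal models for klt pairs, crepant extraction of a single divisor with nonpositive discrepancy \cite[Lemma~6]{Kaw92b}, and discrepancy calculations --- and observes that the first two are available in the K\"ahler setting by the relative MMP arguments of Subsection~\ref{subsectionbimeromorphic}, while the third is local. Your write-up unpacks this into the Shokurov difficulty function $d(X)$ and the two assertions (i), (ii), which is exactly the content of \cite{Kaw92b}; the only item you leave implicit is the crepant extraction of a unique exceptional divisor (Kawamata's Lemma~6), which is the precise tool producing the strict inequality $d(X_{i+1}) < d(X_i)$ in the non-terminal klt case you correctly flag as the delicate step, and which is again obtained by a relative MMP over $X$ as in Subsection~\ref{subsectionbimeromorphic} rather than via the dlt model of Theorem~\ref{theoremdltmodel}.
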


\begin{proof}
The proof of \cite[Thm.1]{Kaw92b} is based on three tools:
\begin{enumerate}
\item existence of terminal models for klt pairs \cite[Thm.5]{Kaw92b};
\item crepant extraction of a unique divisor with nonpositive discrepancy \cite[Lemma 6]{Kaw92b};
\item discrepancy calculations.
\end{enumerate}
The first two tools are available in the K\"ahler setting since we can argue as in Subsection \ref{subsectionbimeromorphic}.
Discrepancy calculations are local properties, so they also apply in the non-algebraic setting.
\end{proof}

As an application we can run the MMP for certain dlt pairs:

\begin{theorem} \label{theoremdltMMP}
Let $X$ be a normal $\Q$-factorial compact K\"ahler threefold that is not uniruled. 
Let $D \in |m K_X|$ be a pluricanonical divisor and set $B:=\supp D$. Suppose that the pair $(X, B)$ is dlt.
Then there exists a terminating $(K+B)$-MMP, that is, there exists a bimeromorphic map
$$
\varphi: (X, B) \dashrightarrow (X', B':=\varphi_*B)
$$
which is a composition of $K+B$-negative divisorial contractions and flips such that $X'$ is a 
normal $\Q$-factorial compact K\"ahler threefold, the pair $(X', B')$ is dlt and $K_{X'}+B'$ is nef.
\end{theorem}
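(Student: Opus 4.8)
The plan is to run the MMP for the pair $(X,B)$ in the most direct way. Starting from $(X_0,B_0):=(X,B)$, as long as $K_{X_i}+B_i$ is not nef we invoke the cone theorem \ref{theoremNA} to produce a $(K_{X_i}+B_i)$-negative extremal ray $R_i\in\NA{X_i}$, and then either flip $R_i$ (if it is small) using Theorem \ref{theoremexistenceflip}, or contract $R_i$ (if it is divisorial) using Theorem \ref{theoremcontraction}; we then pass to $(X_{i+1},B_{i+1})$ with $B_{i+1}:=(\varphi_i)_*B_i$ and repeat. The properties required by Definition \ref{definitionmmp} --- $\Q$-factoriality, the K\"ahler property, and dlt-ness of $(X_i,B_i)$ --- are all preserved by these operations, by Proposition \ref{propositioncontraction}, Corollary \ref{corollarycontractionkaehler} and Theorem \ref{theoremexistenceflip}; moreover each $X_i$ stays non-uniruled, since uniruledness is a bimeromorphic invariant and each $X_i$ is bimeromorphic to $X$, so Assumption \ref{assumption} holds throughout and the cone theorem is applicable at every stage. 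Thus the two real points are: (1) the divisorial contractions actually exist, i.e.\ their exceptional surfaces have slc singularities; and (2) the process terminates.

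Both points follow from one bookkeeping observation that I would establish first: the property that $mK_{X_i}$ is $\Q$-linearly equivalent to an effective divisor $D_i$ with $\supp D_i\subseteq B_i$ is preserved along the MMP, where $D_i$ denotes the strict transform (for a flip) resp.\ the pushforward (for a divisorial contraction) of $D$. Indeed each step is an isomorphism in codimension one or contracts a divisor, so $D_{i+1}$ is again effective with support contained in $B_{i+1}=(\varphi_i)_*B_i$, and a standard discrepancy computation together with the negativity lemma gives $D_{i+1}\sim_\Q mK_{X_{i+1}}$; note also that $B_i$ remains a reduced Weil divisor. Consequently $K_{X_i}+B_i\sim_\Q \frac{1}{m}D_i+B_i$ is effective, with support $B_i$ and with strictly positive coefficient along every component of $B_i$.

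This has two consequences. First, any $(K_{X_i}+B_i)$-negative ray $R_i$ must satisfy $S\cdot R_i<0$ for some component $S$ of $B_i$, hence every curve whose class lies in $R_i$ is contained in $S$. In the small case this shows the flipping locus lies in $B_i$; in the divisorial case the exceptional divisor, being covered by curves with class in $R_i$, must equal $S$, so it is a component of $\lfloor B_i\rfloor=B_i$. Since $(X_i,B_i)$ is dlt, such a component is normal, and by adjunction the naturally induced pair $(S,B_S)$ is dlt, whence $(S,0)$ is klt; in particular $S$ has slc singularities and Theorem \ref{theoremcontraction} applies, which settles (1). Second, for (2): divisorial contractions strictly decrease the Picard number by Proposition \ref{propositioncontraction}, so there can be at most $\rho(X)$ of them; and any maximal run of consecutive flips has flipping locus contained in the reduced dlt boundary $B_i$, hence is finite by the special termination theorem \ref{theoremspecialtermination}. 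Therefore the MMP terminates, and at the final stage $K_{X'}+B'$ is nef.

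The part I expect to be the main obstacle is precisely this bookkeeping: verifying that $\supp D_i\subseteq B_i$ and $D_i\sim_\Q mK_{X_i}$ genuinely survive both flips and divisorial contractions --- a discrepancy/negativity-lemma argument, requiring some care with the exceptional divisors in a divisorial step --- and identifying the divisorial exceptional surface as a component of $B_i$, so that the slc hypothesis of Theorem \ref{theoremcontraction} is met. Once these are in place, existence of all contractions, preservation of the required structures, and termination all follow formally from the results already established (Theorems \ref{theoremNA}, \ref{theoremcontraction}, \ref{theoremexistenceflip} and \ref{theoremspecialtermination}).
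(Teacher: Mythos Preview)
Your proposal is correct and follows essentially the same route as the paper's proof: run the $(K+B)$-MMP, observe that $D_i:=(\varphi_{i-1})_*D_{i-1}\in |mK_{X_i}|$ with $\supp D_i=B_i$ at every step, deduce that any negative extremal ray has its locus in $B_i$, so that in the divisorial case the exceptional surface is a component of the dlt boundary (hence normal with slc singularities) and Theorem \ref{theoremcontraction} applies; then conclude termination via special termination (Theorem \ref{theoremspecialtermination}) since all flipping loci lie in $B_i$. One small simplification: the ``bookkeeping'' you flag as the main obstacle is easier than you suggest --- no discrepancy or negativity-lemma argument is needed, since pushing forward the linear equivalence $D_i\sim mK_{X_i}$ under a birational morphism (or passing through a flip, which is an isomorphism in codimension one) immediately gives $D_{i+1}\sim mK_{X_{i+1}}$, and $\supp D_{i+1}=(\varphi_i)_*\supp D_i=B_{i+1}$ is automatic.
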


\begin{proof}
Set $(X_0, B_0):=(X, B)$ and $D_0:=D$. 

{\em Step 1. Existence of the MMP.} If $K_{X_i}+B_i$ is nef, there is nothing to prove, so suppose that this is not the case.
Then there exists by Theorem \ref{theoremNA} a $K_{X_i}+B_i$-negative extremal ray $R_i \in \NA{X_i}$.
Since $X_i$ is not uniruled, the extremal ray is divisorial or small. 

If the extremal ray is small, the contraction $\holom{\psi_i}{X_i}{Y_i}$ exists by Theorem \ref{theoremsmallray}. By Theorem \ref{theoremexistenceflip}
the flip $\holom{\psi_i^+}{X_i^+}{Y_i}$ exists, and we denote by
$$
\merom{\varphi_i}{(X_i, B_i)}{(X_{i+1}:=X_i^+, B_{i+1}:=(\varphi_i)_* B_i)}
$$
the composition $(\psi_i^+)^{-1} \circ \psi_i$. By Theorem \ref{theoremexistenceflip} we know that 
$X_{i+1}$ is a normal $\Q$-factorial K\"ahler space and 
$(X_{i+1}, B_{i+1})$ is dlt, so we can continue the MMP.
Note finally that $D_{i+1}:=(\varphi_i)_* D_i$ is a pluricanonical divisor such that $\supp D_{i+1}=B_{i+1}$.

If the extremal ray is divisorial, let $S_i$ be the unique surface such that $S_i \cdot R_i<0$ (cf. Notation
\ref{notationdivisorialray}). Since  $K_{X_i}+B_i$ is $\Q$-effective and represented by
an effective $\Q$-divisor with support in $B_i$, the surface $S_i$ must be
an irreducible component of $B_i$. Since the pair $(X_i, B_i)$ is dlt, any of the irreducible components of $B_i$ is normal \cite[5.52]{KM98}. 
Moreover, there exists a boundary divisor $B_{i, S}$ such that
$$
K_{S_i}+B_{i, S} \sim_\Q K_{X_i}+B_i
$$
and the pair $(S_i, B_{i, S})$ is slc \cite[16.9.1]{Uta92}. Thus by Proposition \ref{propositionnone}
the contraction 
$$
\holom{\varphi_i}{(X_i, B_i)}{(X_{i+1},  B_{i+1}:=(\varphi_i)_* B_i)}
$$ 
exists and $X_{i+1}$ is a K\"ahler space by Corollary \ref{corollarycontractionkaehler}. Moreover by Proposition \ref{propositioncontraction}, 
$(X_{i+1},  B_{i+1})$ is dlt and $X_{i+1}$ is $\Q$-factorial, so we can continue the MMP.
Note finally that $D_{i+1}:=(\varphi_i)_* D_i$ is a pluricanonical divisor such that $\supp D_{i+1}=B_{i+1}$.

{\em Step 2. Termination.} 
For every $i \in I$ we have $\supp D_i=B_i$, hence $K_{X_i}+B_i$ is $\Q$-effective and represented by
an effective $\Q$-divisor with support in $B_i$. In particular the support of the extremal contraction is in $B_i$.
Thus any sequence of $(K+B)$-flips terminates by Theorem \ref{theoremspecialtermination}.
\end{proof}


 \section{Semi-log canonical K\"ahler surfaces} 

In this short section we gather the results concerning abundance in dimension 2 which will be used in the concluding sections. 
We refer to \cite[Ch.12]{Uta92} \cite[Ch.5]{Kol13} for basic definitions of the theory of semi-log canonical (slc) surfaces. 
All the statements in this section are shown for projective surfaces 
in \cite[Ch.12]{Uta92}, we will see that the proofs also apply to our case with some minor modifications.

Let $S$ be a reduced compact complex surface. We say that $S$ is in class $\mathcal C$ if the desingularisations of the irreducible components $S_i$
of $S$ are K\"ahler. The K\"ahler property is a bimeromorphic invariant of smooth compact surfaces, so this definition is independent of 
the choice of the desingularisations. 

\begin{proposition} \label{propositionslczero}
Let $S$ be an irreducible reduced compact complex surface in class $\mathcal C$. 
Let $\Delta$ be an effective $\mathbb Q$-Cartier divisor on $S$
such that the pair $(S,\Delta)$ is slc. Suppose that $K_S + \Delta$ is nef and numerically trivial.   
Then $K_S+\Delta$ is torsion, i.e. there exists a $m \in \N$ such that
$$
\sO_S(m(K_S+\Delta)) \simeq \sO_S.
$$
\end{proposition}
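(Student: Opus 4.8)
The plan is to pull the pair $(S,\Delta)$ back to the minimal resolution of the normalisation, run a surface MMP there, and then split into two regimes according to whether the resulting boundary is zero.

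\emph{Reduction.} First I would normalise: let $\nu : \bar S \to S$ be the normalisation. Since $S$ is irreducible and reduced, $\bar S$ is an irreducible normal compact surface whose desingularisations are K\"ahler, and with $D\subset \bar S$ the conductor one has
$$
K_{\bar S}+\bar\Delta \sim_\Q \nu^*(K_S+\Delta), \qquad \bar\Delta := \nu_*^{-1}\Delta + D,
$$
with $(\bar S,\bar\Delta)$ log canonical (this is the adjunction built into the slc property, cf. \cite[Ch.12]{Uta92}, \cite[Ch.5]{Kol13}). As $\nu$ is finite, $K_{\bar S}+\bar\Delta$ remains nef and numerically trivial. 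Let $\rho : \hat S \to \bar S$ be the minimal resolution; by Subsection \ref{subsectionadjunction} we may write $K_{\hat S}+\hat\Delta \sim_\Q \rho^*(K_{\bar S}+\bar\Delta)$ with $\hat\Delta := N+\rho^*\bar\Delta$ an effective $\Q$-divisor, where $N\ge 0$ is the discrepancy divisor of the minimal resolution (so $N=0$ exactly when $\bar S$ has Du Val singularities). Thus $K_{\hat S}+\hat\Delta$ is nef and numerically trivial on the smooth compact K\"ahler surface $\hat S$.

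\emph{The surface MMP.} Next I would run a $K_{\hat S}$-MMP $\sigma : \hat S \to \hat S_{\mathrm m}$, which exists and terminates because contracting a $(-1)$-curve on a surface preserves the K\"ahler property and lowers $b_2$. If the outcome is a Mori fibre space, then $\hat S$ is uniruled, hence projective, and so are $\bar S$ and $S$ (a reduced compact surface bimeromorphic to a projective surface is projective); the statement is then the projective slc case of \cite[Ch.12]{Uta92}. Otherwise $K_{\hat S_{\mathrm m}}$ is nef. If $K_{\hat S_{\mathrm m}}\not\equiv 0$, then, a non-zero nef class on a compact K\"ahler surface being strictly positive against any K\"ahler class $\omega_{\mathrm m}$ by the Hodge index theorem, we would get $K_{\hat S_{\mathrm m}}\cdot\omega_{\mathrm m}>0$, hence $(K_{\hat S}+\hat\Delta)\cdot\sigma^*\omega_{\mathrm m}>0$ (using $K_{\hat S}=\sigma^*K_{\hat S_{\mathrm m}}+E'$ with $E'\ge 0$ $\sigma$-exceptional and $\hat\Delta\ge 0$), contradicting numerical triviality. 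Hence $K_{\hat S_{\mathrm m}}\equiv 0$, so it is torsion by the Enriques--Kodaira classification of minimal K\"ahler surfaces of Kodaira dimension $0$. Then $E'+\hat\Delta\equiv 0$, and since an effective numerically trivial divisor on a surface vanishes, $E'=0$ and $\hat\Delta=0$. Consequently $\sigma$ is an isomorphism, $N=0$ (so $\bar S$ is Du Val and $\rho$ is crepant) and $\bar\Delta=0$ (so $\Delta=0$ and $S=\bar S$ is already normal). In this case, choosing $m$ with $mK_{\hat S}\sim\sO_{\hat S}$ and using $mK_{\hat S}=\rho^*(mK_S)$, the projection formula together with $\rho_*\sO_{\hat S}=\sO_{\bar S}$ yields $\sO_S(m(K_S+\Delta))=\rho_*\sO_{\hat S}(mK_{\hat S})\simeq\sO_S$.

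\emph{Main obstacle.} The one point requiring real care is the dichotomy above: one must verify that the genuinely non-algebraic cases --- $\hat S_{\mathrm m}$ a non-projective K3 surface or complex torus --- are precisely those in which the boundary $\hat\Delta$, and hence $\bar\Delta$ and the conductor, are forced to vanish, so that $S$ is normal and no slc gluing intervenes, whereas the presence of a non-trivial conductor or of $\Delta\neq 0$ pushes one into the uniruled, hence projective, situation where \cite[Ch.12]{Uta92} applies verbatim. Once this is in place (the inputs being: nef-and-non-zero versus K\"ahler via Hodge index, the classification of compact K\"ahler surfaces, and projectivity of uniruled surfaces), the remaining ingredients --- the adjunction formulae of Subsection \ref{subsectionadjunction}, the surface MMP, and the descent along $\rho$ and $\nu$ --- are standard or formal.
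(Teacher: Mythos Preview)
Your approach is essentially the paper's: normalise, pass to the minimal resolution, and split into the uniruled (hence projective) case where \cite[Ch.12]{Uta92} applies, versus the case where the boundary vanishes and the K\"ahler surface classification gives $K_{\hat S}$ torsion directly. The paper organises the dichotomy slightly differently---it first splits on whether the differend $\tilde\Delta$ on the normalisation vanishes, and only in the case $\tilde\Delta=0$ passes to the minimal resolution and splits again on $E$---but the content is identical, and your packaging via a single MMP on the smooth $\hat S$ is arguably cleaner.

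One point to tighten: your claim that ``a reduced compact surface bimeromorphic to a projective surface is projective'' is not a standard fact for \emph{non-normal} surfaces, so deducing projectivity of $S$ itself this way is shaky. What you actually need, and what the paper uses, is only that the \emph{normalisation} $\bar S$ is projective (this does follow: $\bar S$ is a normal Moishezon surface, hence projective). Then projective log abundance gives $K_{\bar S}+\bar\Delta$ torsion, and the descent to $S$ is handled by the gluing arguments of \cite[Ch.12.4]{Uta92}, which are about proper morphisms and work in the analytic setting (the paper makes exactly this remark in the proof of Proposition~\ref{propositionslcone}).
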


\begin{proof}
Let $\holom{\nu}{\tilde S}{S}$ be the normalisation, and let $\tilde \Delta$ be the differend (cf. Subsection \ref{subsectionadjunction}), so that we have
$$
K_{\tilde S}+\tilde \Delta \sim_\Q \nu^* (K_S+\Delta) \equiv 0,
$$
and the pair $(\tilde S, \tilde \Delta)$ is lc.

{\em 1st case. $\tilde \Delta \neq 0$.} In this case the surface $\tilde S$ is projective: 
indeed the anticanonical divisor $-K_S \equiv \Delta_S$
is pseudoeffective, so a $K$-MMP\footnote{The existence of a MMP for a compact K\"ahler surface with lc singularities can be established following our arguments in Section \ref{sectionMMP}.} $\tilde S \rightarrow \bar S$ terminates with a Mori fibre space $\bar S \rightarrow W$. 
Since $\bar S$ is a surface and $-K_{\bar S}$ is relatively ample we see that $\bar S$ is projective. Since $\tilde S \rightarrow \bar S$
is a projective morphism, the surface $\tilde S$ is projective. Thus the statements in \cite[Ch.12]{Uta92} apply.

{\em 2nd case. $\tilde \Delta=0$.} In this case $S$ is normal, so if $\holom{\pi}{\hat S}{S}$
is the minimal resolution, then we have
$$
\kappa(\hat S, K_{\hat S} + E) = \kappa(S, K_S),
$$
where $E$ is the canonical defined effective divisor such that $K_{\hat S}+E \sim_\Q \pi^* K_S \equiv 0$.
Now if $E \neq 0$, then the smooth surface $\hat S$ is uniruled, hence projective, and therefore $\kappa(\hat S, K_{\hat S} + E)=0$ by \cite[Thm.12.1.1]{Uta92}.
If $E = 0$, then $S$ is a compact K\"ahler surface with numerically trivial canonical bundle, so $\kappa(\hat S, K_{\hat S})=0$ by the classification of compact
surfaces \cite{BHPV04}.
\end{proof}

\begin{lemma} \label{lemmaslcone}
Let $S$ be a normal compact K\"ahler surface, and let $\Delta$ be a boundary divisor on $S$.
Suppose that the pair $(S, \Delta)$ is lc, that the class $K_S+\Delta$ is nef and that $\nu(K_S+\Delta)=1$.
Then $K_S + \Delta$ is semi-ample. 
\end{lemma}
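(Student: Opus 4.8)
The plan is to reduce to a smooth surface, produce a fibration onto a curve by means of the nef reduction, and then identify $K_S+\Delta$ with the pullback of an ample divisor from that curve.

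First I would reduce to the smooth case. Passing to a dlt model of $(S,\Delta)$, which exists as in Theorem~\ref{theoremdltmodel} and is crepant, preserves the nefness and numerical dimension of $K_S+\Delta$; and semi-ampleness descends along any crepant bimeromorphic morphism, since a section of $\sO(m(K_S+\Delta))$ not vanishing at a prescribed point is obtained by pushing forward a section from above, by the projection formula. So we may assume $(S,\Delta)$ is dlt; then $(S,0)$ is klt and $S$ has quotient singularities. Let $\holom{\mu}{\hat S}{S}$ be the minimal resolution. Since $K_{\hat S}$ is $\mu$-nef, all $\mu$-discrepancies of $(S,0)$, hence of $(S,\Delta)$, are $\leq 0$, and one obtains $K_{\hat S}+\hat\Delta=\mu^*(K_S+\Delta)$ with $\hat\Delta$ an effective boundary (the strict transform of $\Delta$ together with the exceptional divisors taken with coefficients $-a(E_i;S,\Delta)\in[0,1]$) and $(\hat S,\hat\Delta)$ lc. Descending semi-ampleness once more, we may from now on assume that $S$ is a smooth compact K\"ahler surface, $\Delta\geq 0$ a boundary, and $L:=K_S+\Delta$ nef with $L^2=0$ and $L\not\equiv 0$.

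Next I would build the fibration. The nef reduction of $L$ exists for K\"ahler surfaces (\cite[Thm.2.6]{8authors}, \cite[Thm.3.19]{HP15}), and its dimension $n(L)$ satisfies $1\leq n(L)\leq\nu(L)=1$, the lower bound because $L\not\equiv 0$. So $n(L)=1$; replacing $S$ by a resolution of the nef reduction (which affects neither the conclusion nor the relevant properties of $L$, semi-ampleness descending, with $\Delta$ replaced by a $\Q$-divisor whose intersection with a general fibre is still $\geq 0$), we obtain a fibration $\holom{f}{S}{T}$ onto a smooth curve $T$ with $L$ numerically trivial on every curve contracted by $f$: indeed $L\cdot F=0$ for the general fibre $F$, and nefness of $L$ then forces $L\cdot C=0$ on every component $C$ of every fibre. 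In particular $F$ is smooth and, by adjunction, $K_S\cdot F+\Delta\cdot F=(K_S+\Delta)\cdot F=0$; since $\Delta\cdot F\geq 0$ this gives $2g(F)-2=K_S\cdot F\leq 0$, so $F$ is rational or elliptic, and when $g(F)=1$ necessarily $\Delta\cdot F=0$, i.e.\ $\Delta$ is $f$-vertical.

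Finally I would conclude via a canonical bundle formula over the curve $T$: Kodaira's formula for the elliptic fibration when $g(F)=1$, and relative adjunction for the $(\PP^1,\Delta|_F)$-fibration when $g(F)=0$, the moduli part being trivial because $\PP^1$ with two marked points is rigid. This produces a $\Q$-divisor $A$ on $T$ with $K_S+\Delta\sim_\Q f^*A$. Nefness of $L$ gives $\deg A\geq 0$, and $\deg A>0$ because $\deg A=0$ would force $L\equiv 0$, contradicting $\nu(L)=1$; hence $A$ is ample on the curve $T$, $f^*A$ is semi-ample, and therefore so is $K_S+\Delta$. I expect the main work to lie in the analysis of the fibration together with the canonical bundle formula in the K\"ahler category; the case $g(F)=0$, in which $S$ may be uniruled and $T$ may have positive genus, is the most delicate point, and it is precisely there that one uses $\nu(L)=1$ to ensure that $L$ is genuinely $\Q$-linearly equivalent --- not merely numerically equivalent --- to a pullback from $T$.
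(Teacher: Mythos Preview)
Your argument has a genuine gap: the inequality $n(L)\leq\nu(L)$ that you invoke to pin down the nef dimension is false in general. The correct inequality (see \cite{8authors}) goes the other way, $\nu(L)\leq n(L)$; from $\nu(L)=1$ you only obtain $n(L)\geq 1$, so the possibility $n(L)=2$ is not excluded and your fibration need not exist. Concretely, on a smooth compact K\"ahler surface with algebraic dimension~$0$ there are at most finitely many curves, so for any non-trivial nef class one has $n(L)=2$ trivially; you must rule this situation out before you can produce a fibration.

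This is precisely what the paper's proof does, via a trichotomy on the algebraic dimension of $S$ (after the same reduction to a dlt pair): if $a(S)=2$ the surface is projective and one quotes \cite[Thm.~12.1.1]{Uta92}; if $a(S)=0$ the intersection form on the curve classes is negative definite, so any effective representative of $K_S+\Delta$ has negative square, contradicting nefness; and if $a(S)=1$ the algebraic reduction $f:S\to C$ is already a morphism whose general fibre $F$ is an elliptic curve, \emph{every} curve on $S$ is $f$-vertical, hence $\Delta\cap F=\emptyset$ and $(K_S+\Delta)|_F\simeq\sO_F$, giving $K_S+\Delta\sim_\Q f^*A$ with $A$ ample. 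Note that this last case also bypasses the second soft spot in your sketch: since the algebraic reduction is an honest morphism and $\Delta$ is automatically disjoint from the general fibre, there is no need to resolve an almost-holomorphic map or to apply a log canonical bundle formula to a pair that may have ceased to be lc after blow-up. Your case $g(F)=0$ in fact never arises in the genuinely non-projective setting (a uniruled K\"ahler surface is projective), so once the gap is repaired your route collapses to the paper's.
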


\begin{proof}
Let $\holom{\mu}{S'}{S}$ be a dlt model,  cp. Theorem \ref{theoremdltmodel}, then $(S',\Delta')$ is dlt and $K_{S'}+\Delta' \sim_\Q \mu^* (K_S+\Delta)$. In particular
$K_S+\Delta$ is semiample if and only if $K_{S'}+\Delta'$ is semiample. Thus we can assume without loss of generality
that $(S, \Delta)$ is dlt.

Dlt singularities are rational \cite[Thm.5.22]{KM98}, so the surface $S$ is projective if and only if some desingularisation
is projective \cite{Nam02}. Thus if the algebraic dimension $a(S)$ is two,
we conclude by \cite[Thm.12.1.1]{Uta92}. If $a(S)=1$, let $\holom{f}{S}{C}$ be the algebraic reduction.
The general fibre $F$ is an elliptic curve, so $K_S|_F \simeq \sO_F$. Moreover {\em every} curve in $S$ is contracted
by $f$, so $\Delta \cap F = \emptyset$. Since $K_S+\Delta$ is nef, this implies 
$$
K_S+\Delta \sim_\Q f^* A,
$$
with $A$ an ample $\Q$-divisor on $C$. Thus $K_S+\Delta$ is semiample. Finally we want to exclude the case $a(S)=0$:
note first that $S$ contains only finitely many curves, so a desingularisation $\hat S \rightarrow S$ is not uniruled. Thus
we have
$$
\kappa(K_S) \geq \kappa(K_{\hat S}) \geq 0,
$$
in particular $K_S+\Delta$ is $\Q$-linearly equivalent to an effective divisor. Yet the intersection form on a surface with $a(S)=0$
is negative definite, hence $(K_S+\Delta)^2<0$. In particular $K_S+\Delta$ is not nef, a contradiction.
\end{proof}

\begin{proposition} \label{propositionslcone}
Let $S$ be a connected reduced compact complex surface in class $\mathcal C$. Let $\Delta$ be an effective $\mathbb Q$-Cartier divisor on $S$
such that the pair $(S,\Delta)$ is slc. Suppose that $K_S + \Delta$ is nef with numerical dimension $\nu(K_S + \Delta) \leq 1$.  
Suppose also that for every irreducible component $T \subset S$ the restriction $(K_S+\Delta)|_T$ is not zero.

Then the divisor $K_S + \Delta $ is semi-ample. 
\end{proposition}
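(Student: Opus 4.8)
The plan is to pass to the normalisation of $S$, apply the two-dimensional abundance statements already available, and then glue. Let $\holom{\nu}{\tilde S}{S}$ be the normalisation, $\tilde S = \bigsqcup_i \tilde S_i$ with $\nu_i : \tilde S_i \to S_i$ the normalisation of the irreducible component $S_i \subset S$, and let $\overline D \subset \tilde S$ be the reduced preimage of the double locus, carrying the natural involution $\tau$ that interchanges its two sheets. Normalising the slc pair $(S,\Delta)$ produces on each $\tilde S_i$ an effective $\Q$-divisor $\tilde\Gamma_i$, in which $\overline D_i := \overline D \cap \tilde S_i$ appears with coefficient one, such that $(\tilde S_i, \tilde\Gamma_i)$ is lc and
$$
K_{\tilde S_i} + \tilde\Gamma_i \sim_\Q \nu_i^*\big((K_S+\Delta)|_{S_i}\big).
$$
Since $K_S+\Delta$ is nef with $(K_S+\Delta)^2 = 0$, each class $(K_S+\Delta)|_{S_i}$ is nef with vanishing self-intersection, and by the extra hypothesis it is moreover nonzero; pulling back by the finite map $\nu_i$ we conclude that $K_{\tilde S_i}+\tilde\Gamma_i$ is nef with $\nu(K_{\tilde S_i}+\tilde\Gamma_i) = 1$. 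In particular no component lands in the numerically trivial case of Proposition \ref{propositionslczero}.

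By Lemma \ref{lemmaslcone} the divisor $K_{\tilde S_i}+\tilde\Gamma_i$ is then semi-ample, so for $m$ large and divisible $|m(K_{\tilde S_i}+\tilde\Gamma_i)|$ defines a morphism with connected fibres $\holom{g_i}{\tilde S_i}{C_i}$ onto a smooth projective curve, with $m(K_{\tilde S_i}+\tilde\Gamma_i) \sim g_i^* A_i$ for an ample divisor $A_i$ on $C_i$. To descend this to $S$ we must match the $g_i$ along the conductor. For a component $E$ of $\overline D_i$, adjunction for the lc pair $(\tilde S_i,\tilde\Gamma_i)$ (in which $E$ has coefficient one) gives $(K_{\tilde S_i}+\tilde\Gamma_i)|_E = K_E + \mathrm{Diff}_E$ on the projective curve $E$, and the defining property of the slc structure on $(S,\Delta)$ is exactly that $\tau$ carries $K_E+\mathrm{Diff}_E$ to $K_{\tau(E)}+\mathrm{Diff}_{\tau(E)}$ on the matching component $\tau(E) \subset \tilde S_{i'}$. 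Hence $(g_i|_E)^*A_i$ and $(g_{i'}|_{\tau(E)})^*A_{i'}$ are identified by $\tau$; being nef divisors on curves they are either both zero, in which case $g_i$ and $g_{i'}$ contract $E$ and $\tau(E)$ respectively, or both ample, in which case $g_i|_E$ and $g_{i'}|_{\tau(E)}$ are finite onto their base curves. This $\tau$-equivariance propagates to the restricted pluricanonical systems on $\overline D$, which is the compatibility that makes the sections of $m(K_S+\Delta)$ descend from the normalisation.

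With this in hand one argues as in \cite[Ch.12]{Uta92}: the morphisms $g_i$ glue to a morphism with connected fibres $\holom{g}{S}{C'}$ onto a curve such that some multiple $m'(K_S+\Delta)$ is the pullback of an ample line bundle on $C'$, so $K_S+\Delta$ is semi-ample. I expect the gluing/descent step --- assembling the canonical fibrations on the normalised components into a global fibration on $S$ --- to be the main obstacle; it is carried out in the projective case in \cite[Ch.12]{Uta92}, and the only changes required here are to invoke Lemma \ref{lemmaslcone} in place of the projective abundance statement for lc surfaces with $\nu = 1$, and to observe that the conductor curves and the base curves $C_i$ are automatically projective, so that the numerical matching along $\overline D$ is enough to force the fibrations to agree.
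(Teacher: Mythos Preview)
Your approach is essentially the same as the paper's: normalise, apply Lemma \ref{lemmaslcone} on each component to get semi-ampleness there, and then defer the descent/gluing step to \cite[Ch.12]{Uta92}. The paper passes through a minimal semi-resolution composed with the normalisation (rather than the normalisation alone) because that is the setup in which the descent statements \cite[Prop.12.3.2, Thm.12.3.4]{Uta92} are formulated, and it simply notes that those results concern proper morphisms and hence carry over to the K\"ahler setting; your more explicit discussion of the $\tau$-equivariance along the conductor is a correct unpacking of what that descent step amounts to.
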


\begin{proof}
Let $\holom{\nu}{T}{S}$ be the composition of a minimal semi-resolution (cf. \cite[Defn.12.2.1, Prop.12.2.3]{Uta92})
with the normalisation. Since $(S, \Delta)$ is slc there exists for every irreducible component $T_i \subset T$ a boundary divisor
$\Delta_i$ such that
$$
K_{T_i} + \Delta_i \sim_\Q \nu^* (K_S+\Delta).
$$
By our assumption $K_{T_i} + \Delta_i$ is nef of numerical dimension one, so $K_{T_i} + \Delta_i$ is semiample
by Lemma \ref{lemmaslcone}. The whole point is now to prove that sufficiently many pluricanonical sections descend to $S$.
However the proof of this descent theorem in \cite[Ch.12.4]{Uta92} is based on \cite[Prop.12.3.2, Thm.12.3.4]{Uta92} which
are statements about proper morphisms. Thus they apply in our setting.
\end{proof} 

\begin{theorem} \label{theoremslcsurjective}
Let $S$ be a compact connected reduced complex space of dimension two in class $\mathcal C$ with slc singularities.
Then the natural maps induced by $\C \subset \sO_S$ 
$$
H^p(S, \C) \rightarrow H^p(S, \sO_S)
$$ 
are surjective for every $p \in \N$. 
\end{theorem}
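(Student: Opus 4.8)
Since $S$ is connected and reduced, $H^0(S,\C)=\C=H^0(S,\sO_S)$; and since $\dim S=2$, one has $H^p(S,\sO_S)=0$ for $p\ge 3$. So the statement is only nontrivial for $p\in\{1,2\}$, and the plan is to deduce it from the Du Bois property of slc singularities together with the Hodge theory available in class $\mathcal C$.

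Concretely, I would first recall that a compact complex surface $S$ in class $\mathcal C$ admits a cubical hyperresolution $S_\bullet\to S$ all of whose terms are resolutions of the (normalised) strata of $S$; these strata are surfaces and curves, so their resolutions are compact Kähler by the very definition of class $\mathcal C$ adopted here (for the surface strata by hypothesis, for the curve strata since Riemann surfaces are Kähler). This is exactly the input needed for the Hodge--de Rham (Du Bois) spectral sequence of $S$ to degenerate at $E_1$, which gives a surjection
$$H^p(S,\C)=F^0\,\mathbb H^p(S,\underline{\Omega}_S)\twoheadrightarrow \mathbb H^p(S,\underline{\Omega}^0_S),$$
the target being the $0$-th graded piece of the Hodge filtration. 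One then invokes that slc singularities are Du Bois, i.e.\ the natural map $\sO_S\to\underline{\Omega}^0_S$ is a quasi-isomorphism, so that $\mathbb H^p(S,\underline{\Omega}^0_S)=H^p(S,\sO_S)$ and the resulting surjection $H^p(S,\C)\twoheadrightarrow H^p(S,\sO_S)$ is the map in the statement.

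Alternatively, to stay close to \cite[Ch.12]{Uta92} and to make the Du Bois input explicit and elementary in this dimension, I would argue via the conductor/stratification Mayer--Vietoris sequences. Let $\nu\colon\bar S\to S$ be the normalisation, $D'\subset S$ the reduced non-normal locus and $D=(\nu^{-1}(D'))_{\mathrm{red}}$; as $(S,0)$ is slc it is seminormal with normal crossings in codimension one, so the conductor square is a pushout and yields short exact sequences
$$0\to\sO_S\to\nu_*\sO_{\bar S}\oplus\sO_{D'}\to(\nu|_D)_*\sO_D\to 0,\qquad 0\to\C_S\to\nu_*\C_{\bar S}\oplus\C_{D'}\to(\nu|_D)_*\C_D\to 0,$$
linked by the natural map $\C\hookrightarrow\sO$. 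Passing to cohomology reduces matters to the surjectivity of $H^p(-,\C)\to H^p(-,\sO)$ for the normal surface $\bar S$ (with lc singularities and Kähler resolution) and for the compact reduced curves $D$, $D'$. The curve case is settled by the normalisation, Riemann-surface Hodge theory, and the fact that the combinatorial skyscraper cokernels agree for the $\C$- and $\sO$-versions. For $\bar S$ one passes to the minimal resolution $\mu\colon\hat S\to\bar S$ (compact Kähler), runs the Leray spectral sequence, and uses that lc surface singularities are Du Bois: for the rational ones all exceptional cohomology is of type $(1,1)$, hence invisible in $H^\bullet(\sO)$, while for the non-rational ones (simple elliptic and cusp singularities) $R^1\mu_*\sO_{\hat S}\cong H^1(E,\sO_E)$. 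Both facts are classical and amount to a short case analysis.

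The main obstacle is precisely the control of these exceptional and non-normal strata: for the non-rational lc (and the non-trivial non-normal slc) singularities it is not enough to count components, one must keep track of the mixed Hodge weights they contribute — equivalently, one must establish, or invoke, the Du Bois property of surface slc singularities — and, since we work in the Kähler rather than the projective category, one must also ensure that every resolution and hyperresolution entering the argument can be chosen with Kähler members; the latter is guaranteed by the standing definition of class $\mathcal C$.
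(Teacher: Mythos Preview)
Your proposal is correct. Your second (``alternative'') approach via the conductor Mayer--Vietoris sequences, reduction to the normal lc surface $\bar S$ and the curves $D, D'$, and the case analysis of rational versus simple-elliptic/cusp singularities on the minimal resolution, is precisely what the paper does: it simply refers to \cite[Thm.12.1.2]{Uta92} and observes that the argument there works verbatim in the K\"ahler setting because the key input \cite[12.2.8]{Uta92} is a local statement about germs of slc surfaces.

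Your first approach, via the Du Bois complex $\underline{\Omega}^\bullet_S$ and $E_1$-degeneration of the Hodge--Du Bois spectral sequence for a K\"ahler cubical hyperresolution, is a genuinely different and more conceptual route. It trades the explicit two-dimensional case analysis for the general theorem that slc singularities are Du Bois \cite{KK10}. The advantage is that it packages the ``non-rational lc'' and ``non-normal slc'' contributions uniformly via mixed Hodge theory and would generalise to higher dimension; the cost, as you correctly flag, is that one must verify the analytic/K\"ahler validity of both the hyperresolution Hodge machinery and the Koll\'ar--Kov\'acs theorem (the latter being stated for algebraic schemes, though its ingredients are essentially local). The paper's approach avoids this by staying entirely inside the elementary surface arguments of \cite[Ch.12]{Uta92}, where the only global input is ordinary Hodge theory on the compact K\"ahler resolutions --- immediate from the definition of class $\mathcal C$ adopted here.
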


\begin{proof}  
Simply note that the proof of Theorem 12.1.2 of \cite{Uta92} works practically word by word. The main ingredient \cite[12.2.8]{Uta92} is  about 
germs of slc surfaces and works in our situation. 
\end{proof}


\section{Abundance: reduction steps} \label{sectionreduction}

This section corresponds to the preparatory lemmas in \cite[Ch.13,Ch.14]{Uta92}. While the statements and the basic strategy of proof
are quite similar we have to be more careful since we do not have a full log-MMP at our disposal. Moreover we make several
points more precise which allows us to conclude quicker� in Section \ref{sectionabundance}.

\begin{lemma} \label{lemmaMMP1}
Let $X$ be a normal $\Q$-factorial compact K\"ahler threefold with terminal singularities. 
Suppose that $K_X$ is nef and $\nu(X)>0$. 
Then there exists a normal $\Q$-factorial compact K\"ahler threefold $X'$ that is bimeromorphic
to $X$ and a $D' \in |m K_{X'}|$ with the following properties:
\begin{enumerate}
\item Set $B':=\supp D'$. Then the pair $(X', B')$ is dlt
and $X' \setminus B'$ has terminal singularities.
\item The divisor $K_{X'}+B'$ is nef and we have
\begin{enumerate}
\item $\nu(X) = \nu(K_{X'}+B')$; and
\item $\kappa(X) = \kappa(K_{X'}+B')$.
\end{enumerate}
\end{enumerate}
\end{lemma}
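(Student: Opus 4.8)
The plan is to deduce the statement from Theorem~\ref{theoremdltMMP} after replacing $X$ by a log-smooth model, and to pin down the Kodaira and numerical dimensions by an elementary comparison of divisors. I would first set up the boundary. Since $K_X$ is nef it is pseudoeffective, so $X$ is not uniruled and $\kappa(X)\geq 0$ (see Assumption~\ref{assumption} and \cite[Cor.1.4]{HP13a}); fix $D\in|mK_X|$ and put $B:=\supp D$. Let $f\colon\hat X\to X$ be a log resolution of the pair $(X,B)$, chosen to be a projective morphism, and set $\hat B:=f_*^{-1}B+\operatorname{Exc}(f)$ with the reduced structure. Then $\hat X$ is smooth, hence $\Q$-factorial; it is compact K\"ahler by \cite[1.3.1]{Var89}, since $f$ is projective; it is not uniruled; the pair $(\hat X,\hat B)$ is log smooth and therefore dlt; and $\hat X\setminus\hat B$, being an open subset of a manifold, is smooth and in particular terminal. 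Finally, as $X$ is terminal every $f$-exceptional prime divisor $E_j$ has positive discrepancy $a_j$, so clearing denominators in $mK_{\hat X}=f^*D+m\sum_j a_j E_j$ exhibits $\hat B$ as the support of a pluricanonical divisor $\hat D\in|\hat m K_{\hat X}|$.

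Now I would apply Theorem~\ref{theoremdltMMP} to $(\hat X,\hat B)$ and the pluricanonical divisor $\hat D$: this yields a terminating $(K+\hat B)$-MMP $\hat X\dashrightarrow X'$, a composition of $(K+B)$-negative divisorial contractions and flips, with $X'$ a $\Q$-factorial compact K\"ahler threefold, $(X',B')$ dlt, and $K_{X'}+B'$ nef; moreover, as in the proof of that theorem, the pluricanonical divisor is carried along the MMP, so at every step (in particular on $X'$) the boundary is the support of a pluricanonical divisor, say $B'=\supp D'$ with $D'\in|m'K_{X'}|$.

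For the invariants, write $D'=\sum_k c_k S'_k$ with integers $c_k\geq 1$; then $B'=\sum_k S'_k$ and
\[
K_{X'}\;\leq\;K_{X'}+B'\;\leq\;(1+m')K_{X'},
\]
the right-hand inequality because $(1+m')K_{X'}-(K_{X'}+B')=m'K_{X'}-B'=D'-B'=\sum_k(c_k-1)S'_k\geq 0$. Since $m'K_{X'}=D'$ is effective, $K_{X'}$ is pseudoeffective, and monotonicity of the Kodaira dimension and of the numerical dimension for pseudoeffective classes (\cite{Bou04}) gives $\kappa(K_{X'})=\kappa(K_{X'}+B')$ and $\nu(K_{X'})=\nu(K_{X'}+B')$. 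For $\kappa$ one concludes with the bimeromorphic invariance $\kappa(K_{X'})=\kappa(X')=\kappa(X)$. For $\nu$ I would run the comparison backwards along the MMP: the same sandwich on $\hat X$ gives $\nu(K_{\hat X}+\hat B)=\nu(K_{\hat X})=\nu(f^*K_X)=\nu(X)$, using that $K_{\hat X}-f^*K_X$ is an effective $f$-exceptional divisor and that $f^*K_X$ is nef; and each step of the $(K+B)$-MMP preserves the numerical dimension of the log-canonical class, for a divisorial contraction $\varphi_i$ because $K_{X_i}+B_i=\varphi_i^*(K_{X_{i+1}}+B_{i+1})+cE_i$ with $c>0$ and $E_i$ being $\varphi_i$-exceptional, and for a flip by factoring it through its small contraction, over which $K+B$ pulls back unchanged. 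This proves part~(2).

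The remaining, and in my view hardest, point is part~(1): that $X'\setminus B'$ has terminal singularities. One route is an induction along the MMP above: assuming $X_i\setminus B_i$ terminal, a flip is an isomorphism in codimension one and its flipping (hence flipped) locus lies in the boundary, since $K_{X_i}+B_i$ is $\Q$-effective with support in $B_i$; and for a $(K+B)$-negative divisorial contraction $\varphi_i$ of a component $S$ of $B_i$, either $\varphi_i(S)$ already meets $B_{i+1}$, or else along a general curve $C\subset S$ contracted by $\varphi_i$ one has $B_i=S$ and $D_i=(\operatorname{mult}_S D_i)\,S$ near $C$, so $K_{X_i}$ is a positive multiple of $S$ there, whence $K_{X_i}\cdot C<0$ and $\varphi_i$ is really a $K_{X_i}$-negative divisorial contraction; since $X_i$ is smooth at the general point of $S$ and $\operatorname{mult}_S D_i\geq 1$, the classification of such contractions then forces $X_{i+1}$ to be terminal along $\varphi_i(S)$. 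The subtle part is to control, under this induction, the singularities of $X_i$ in an entire neighbourhood of $B_i$. As an alternative I would instead run a relative MMP over $X'$ producing a partial terminalisation $\varrho\colon X''\to X'$ which is an isomorphism over a neighbourhood of $B'$ and terminal over its complement; being crepant for $K_{X'}+B'$ and an isomorphism near $B'$, it preserves the pluricanonical divisor, the nefness of $K_{X'}+B'$ and the invariants $\kappa$ and $\nu$, while the discrepancy characterisation of dlt shows the resulting pair is again dlt, so $X''$ may replace $X'$.
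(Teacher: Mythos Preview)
Your setup and the use of Theorem~\ref{theoremdltMMP} are fine, and your treatment of $\kappa$ via the sandwich $K_{X'}\leq K_{X'}+B'\leq (1+m')K_{X'}$ is correct. But the heart of the lemma is part~(a), and there both of your arguments have a real gap. For flips you write that the ``flipping (hence flipped) locus lies in the boundary''; the ``hence'' is exactly the non-trivial point. The flipping curves lie in $B_i$ because $K_{X_i}+B_i$ is $\Q$-effective with support $B_i$ and negative on them, but on the flip side $K_{X_{i+1}}+B_{i+1}$ is $f_i^+$-\emph{ample}, so the same reasoning gives nothing: a flipped curve could meet $B_{i+1}$ properly without being contained in it. The paper handles this via Lemma~\ref{lemmaisomorphism}, whose essence (see the Remark following it) is precisely that the $f_i^+$-exceptional locus lands in the pushed-forward support. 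The key extra input you are not using is the \emph{nef} Cartier divisor $M_0:=\mu^*(mK_X)$ on the log resolution $X_0$, available because $K_X$ is nef on the original $X$; one has $\supp M_0=B_0$, and Lemma~\ref{lemmaisomorphism} (through Lemma~\ref{lemmaimagedivisorial}, i.e.\ the negativity lemma) then forces $X_i\setminus B_i\simeq X_{i+1}\setminus B_{i+1}$ at every step. Your divisorial case analysis has the same defect: once $\varphi_i(S)$ falls outside $B_{i+1}$ you need $X_{i+1}$ terminal there, but $X_i$ is only assumed terminal outside $B_i$, not along $S\subset B_i$, so invoking ``the classification of such contractions'' is not justified. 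Your alternative via a partial terminalisation over $X'$ does not work as stated either: nothing forces such a $\varrho$ to be an isomorphism near $B'$, and if it extracts divisors with nonpositive discrepancy then $K_{X''}+B''$ need not be nef.

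A smaller point: your justification that $\nu(K+B)$ is preserved under a flip, ``$K+B$ pulls back unchanged over the small contraction'', is false, since $K_{X_i}+B_i$ is $f_i$-antiample. One can still show invariance by comparing pullbacks on a common resolution and using that they differ by an effective $q$-exceptional divisor, but this is essentially the same negativity-lemma mechanism you are missing in part~(a). The paper avoids tracking $\nu$ through the MMP altogether: it passes to a common resolution $\Gamma$ of $X$ and $X_{n+1}$, notes that $(\mu\circ p)^*D$ and $q^*(D_{n+1}+mB_{n+1})$ are both \emph{nef} with the same support, and concludes $\nu(X)=\nu(K_{X_{n+1}}+B_{n+1})$ directly from \cite[11.3.3]{Uta92}.
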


The proof needs some technical preparation:

\begin{remark} \label{remarkkodairadimension}
Let $X$ be a normal $\Q$-factorial compact  K\"ahler space,
and let $D \in |mK_X|$ be an effective pluricanonical divisor.
If  we set $B=\supp D$, then
$$
\kappa(K_X) = \kappa(K_X+B).
$$
Indeed, $B$ being effective, the inequality $\kappa(K_X) \leq  \kappa(K_X+B)$ is obvious. Yet we also have an inclusion
of effective divisors $B \subset D$, so we get 
$$
\kappa(K_X+B) \leq \kappa(K_X+D)=\kappa((m+1)K_X) = \kappa(X).
$$
\end{remark}

The following basic lemma has been shown in the algebraic case in \cite[13.2.4]{Uta92}. 
\begin{lemma} \label{lemmaimagedivisorial}
Let $\holom{\psi}{V}{W}$ be a bimeromorphic  proper K\"ahler morphism between normal complex spaces. Let $D$ be an effective
$\Q$-Cartier $\Q$-divisor that is $\psi$-nef. Then we have
$$
\supp \psi_* (D) = \psi(\supp D),
$$
i.e., the image of $D$ has pure codimension one.
\end{lemma}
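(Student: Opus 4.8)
The plan is to prove the two inclusions $\supp \psi_*(D) \subseteq \psi(\supp D)$ and $\psi(\supp D) \subseteq \supp \psi_*(D)$ separately; the first is automatic and the content is entirely in the second, which says that no component of $D$ gets ``crushed'' to codimension $\geq 2$ by $\psi$. First I would dispose of the easy inclusion: $\psi_*(D)$ is by definition supported on the image of $\supp D$, and since $\psi$ is proper, $\psi(\supp D)$ is closed, so $\supp\psi_*(D)\subseteq\psi(\supp D)$. Next I would reduce to a purely local question near a component $E$ of $\supp D$. Write $D=\sum_i a_i E_i$ with $a_i>0$. I need to show that if $E_i$ is $\psi$-exceptional (i.e.\ $\dim\psi(E_i)<\dim E_i$), a contradiction arises with the hypothesis that $D$ is $\psi$-nef; equivalently, every $E_i$ must be non-exceptional, so $\psi(\supp D)=\bigcup_i\psi(E_i)$ has a component of pure codimension one, which is exactly $\supp\psi_*(D)$.

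The heart of the matter is the negativity lemma. Suppose for contradiction that $E := E_j$ is $\psi$-exceptional for some $j$. Since $\psi$ is a bimeromorphic proper morphism between normal spaces and $\dim V \leq $ (the relevant dimension; here we are in the situation $\dim V = 3$ but the argument is dimension-free), pick a point $w\in\psi(E)$ lying in the image of $E$ but such that $\psi$ is an isomorphism over a punctured neighbourhood of $w$ away from the exceptional locus — more robustly, restrict to the fibre: choose an irreducible curve $C\subset V$ contracted by $\psi$, i.e.\ $\psi(C)=\mathrm{pt}$, with $C\subset E$ but $C\not\subset E_i$ for $i\neq j$ (such a $C$ exists because $E$ is $\psi$-exceptional, hence contains contracted curves through a general point of a fibre component, and the other $E_i$ meet $E$ in codimension one inside $E$, so a general such $C$ avoids them). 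Then $D\cdot C = a_j\,(E\cdot C)$. On one hand $D$ is $\psi$-nef, so $D\cdot C\geq 0$, giving $E\cdot C\geq 0$. On the other hand, by the negativity lemma \cite[Lemma 3.39]{KM98} applied to the effective $\psi$-exceptional divisor $E$ (or rather to an effective $\psi$-exceptional divisor supported on $E$ constructed from the fibre), the restriction of $E$ to a general fibre component is non-effective / has a curve $C$ with $E\cdot C<0$. This contradiction forces $E$ to be non-exceptional.

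The step I expect to be the main obstacle is making the ``choose a good contracted curve $C$'' argument airtight in the singular, non-algebraic setting: one wants a contracted curve $C\subset E$ meeting none of the other $E_i$ and lying where everything is $\Q$-Cartier so that the intersection numbers $D\cdot C$ and $E\cdot C$ are defined, and one wants to invoke the negativity lemma to produce $E\cdot C<0$. The clean way to handle this is to pass to the fibre over a general point of $\psi(E)$ (of dimension $\geq 1$ since $E$ is exceptional), note that this fibre has positive-dimensional components, take $C$ to be a general curve in such a component, and apply the negativity lemma in the form: if $-E$ is not $\psi$-nef then $E$ cannot be $\psi$-nef, but here $D$ (hence a positive multiple of $E$ near $C$, after subtracting the other non-exceptional components which don't meet $C$) is $\psi$-nef — contradiction. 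Since $\psi$ is a K\"ahler morphism the relevant intersection theory and the negativity lemma go through verbatim as in \cite{KM98}, \cite{BCHM10}, so no genuinely new input beyond the algebraic case \cite[13.2.4]{Uta92} is needed; it is only a matter of phrasing the local contracted-curve selection correctly.
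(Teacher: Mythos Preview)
Your reduction in the first paragraph is incorrect, and this is a genuine gap. You claim that what must be shown is that \emph{no} component $E_i$ of $D$ is $\psi$-exceptional. This is false in general. Take $\psi:V\to W$ the blow-up of a smooth point on a smooth threefold with exceptional divisor $E\cong\PP^2$, let $H\subset W$ be a hypersurface through the centre, and set $D=\psi^*H=\tilde H+E$. Then $D$ is $\psi$-nef (indeed $\psi$-trivial), $E$ is $\psi$-exceptional, yet $\psi(\supp D)=H=\supp\psi_*D$ and the lemma holds. So you are attempting to prove something strictly stronger than the lemma, and that stronger statement is wrong. The correct reformulation is: every irreducible component of $\psi(\supp D)$ has codimension one, i.e., the image of an exceptional $E_j$ must always be absorbed into the image of some non-exceptional component.

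Even granting a corrected reduction, your curve-selection step has two further problems. First, the individual prime components $E_i$ are not assumed $\Q$-Cartier (only $D$ is), so the intersection number $E_j\cdot C$ need not be defined. Second, in the example above every contracted curve $C\subset E\cong\PP^2$ meets $\tilde H\cap E$ (a line in $\PP^2$), so a contracted $C\subset E_j$ disjoint from all other $E_i$ simply does not exist.

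The paper's argument avoids all of this by localising and applying the negativity lemma to $D$ itself rather than to a single component. Fix $w\in\psi(\supp D)$ and suppose every component of $\psi(\supp D)$ through $w$ has codimension $\geq 2$; shrink $W$ around $w$ so that $\psi(\supp D)$ has codimension $\geq 2$ everywhere. Then $\psi_*(-D)=0$ is effective and $-(-D)=D$ is $\psi$-nef, so the negativity lemma \cite[3.6.2]{BCHM10} forces $-D\geq 0$; since also $D\geq 0$ and $D\neq 0$ near $\psi^{-1}(w)$, this is a contradiction. The point is that after localisation \emph{all} of $D$ becomes exceptional, so one never has to isolate a single component or find special curves.
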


\begin{proof}
The inclusion $\supp \psi_* (D) \subset \psi(\supp D)$ is trivial. Fix now a point $w \in \psi(\supp D) \subset W$. 
Arguing by contradiction we assume that all the irreducible components
of $\psi(\supp D)$ passing through $w$ have dimension at most $\dim W-2$.
Thus, up to replacing $W$ by an analytic neighbourhood of $w$, we assume that $\psi(\supp D)$ has dimension at most $\dim W-2$. 
Then $\psi_*(-D)=0$ and $-(-D)$ is $\psi$-nef,
contradicting the negativity lemma, see e.g. \cite[3.6.2]{BCHM10}, unless $D = 0.$ 
\end{proof}

The following crucial lemma seems to be well-known to experts and is used in several places without further mentioning,  but we prefer to write it down in detail:

\begin{lemma} \label{lemmaisomorphism}
Let $X=:X_0$ be a normal $\Q$-factorial compact K\"ahler space, and let $M=:M_0$ be an effective Cartier divisor on $X$ that is nef.
Let $N=:N_0$ be an effective Cartier divisor such that $\supp M=\supp N$, and let
$$
\left(
\varphi_i: X_i \dashrightarrow X_{i+1}
\right)_{i=0, \ldots, n}
$$
be a finite sequence of $N$-negative contractions, that is every $X_i$ is a normal $\Q$-factorial compact K\"ahler space and $\varphi_i$
is the divisorial contraction or flip of an extremal ray $R_i \in \NA{X_i}$ that is $N_i$-negative\footnote{As usual
we define inductively $N_{i+1}:= (\varphi_i)_* N_i$. Cp. \cite[2.26]{Uta92}}.
Then for every $i=0, \ldots, n$ the $N$-MMP induces an isomorphism 
$$
(X_0 \setminus \supp M_0) \simeq (X_{i+1} \setminus \supp M_{i+1}).
$$
\end{lemma}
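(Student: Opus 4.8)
The plan is to argue by induction on the number of contractions, so that it is enough to analyse a single elementary step $\varphi_i\colon X_i\dashrightarrow X_{i+1}$ while carrying as inductive hypothesis that $M_i$ is again nef (the case $i=0$ being the hypothesis on $M$). A preliminary observation makes the bookkeeping work: $M_i$ and $N_i$ are obtained from $M$ and $N$ by the \emph{same} finite sequence of push-forwards, and a push-forward only deletes the finitely many components that get contracted; hence $M_i$ and $N_i$ always have the same irreducible components, and in particular $\supp M_i=\supp N_i$ for every $i$.

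The first real point is that the curves being contracted all lie in this common support: if $[C]\in R_i$ then $N_i\cdot C<0$, and since $N_i$ is effective (a push-forward of the effective divisor $N$) this is only possible if $C\subset\supp N_i=\supp M_i$. Consequently $\varphi_i$ is already an isomorphism over $X_i\setminus\supp M_i$, and in the divisorial case the exceptional prime divisor $S_i$, being swept out by such curves $C$, is one of the components of $\supp M_i$. In the flipping case this already suffices: writing $f_i\colon X_i\to Y$ for the flipping contraction and $f_i^+$ for the flip, the map $f_i$ identifies $X_i\setminus\supp M_i$ with $Y\setminus (f_i)_*(\supp M_i)$, while $f_i^+$ is small, so $\supp M_{i+1}$ is the strict transform of $(f_i)_*(\supp M_i)$ and in particular contains every flipped curve (each of which is mapped by $f_i^+$ into $f_i(\mathrm{Exc}\,f_i)\subset (f_i)_*(\supp M_i)$); thus $f_i^+$ identifies $X_{i+1}\setminus\supp M_{i+1}$ with the same open subset of $Y$, and composing gives the desired isomorphism.

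The divisorial case is where nefness of $M_i$ enters. Write $M_i=a_0S_i+M_i^{\circ}$ with $a_0>0$ and $M_i^{\circ}$ effective not containing $S_i$; then $M_i^{\circ}\neq0$, since otherwise $M_i=a_0S_i$ nef would force $S_i\cdot C\ge0$ on the curves $C\subset S_i$, contradicting $S_i\cdot R_i<0$. For such a curve $C$ one has $M_i\cdot C=a_0(S_i\cdot C)+M_i^{\circ}\cdot C\ge0$ by nefness, hence $M_i^{\circ}\cdot C>0$; so $\supp M_i^{\circ}$ meets every fibre of $\varphi_i|_{S_i}$, whence $\supp M_i^{\circ}\cap S_i$ dominates $\varphi_i(S_i)$ and therefore $\varphi_i(S_i)\subset\varphi_i(\supp M_i^{\circ})=\supp M_{i+1}$ (the last equality because $M_{i+1}=(\varphi_i)_*M_i=(\varphi_i)_*M_i^{\circ}$ and no component of $M_i^{\circ}$ is contracted). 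Since $\varphi_i$ is an isomorphism on $X_i\setminus S_i$, this inclusion is exactly what is needed to conclude that $\varphi_i$ carries $X_i\setminus\supp M_i$ isomorphically onto $X_{i+1}\setminus\supp M_{i+1}$. The remaining, and I expect main, point is to re-establish the inductive hypothesis, i.e. that $M_{i+1}$ is again nef: the clean way is to verify that the contracted ray is $M_i$-trivial, so that Lemma \ref{lemmacrepantMMP} applies to $M_{i+1}=(\varphi_i)_*M_i$ — and it is this $M_i$-triviality, together with keeping the bookkeeping above honest in the subcase where $\varphi_i$ contracts $S_i$ onto a curve, that I expect to be the technical heart of the argument.
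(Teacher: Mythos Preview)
Your inductive scheme has two genuine gaps.

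\textbf{The flip step is not justified.} You assert that each flipped curve $C^+$ lies in $\supp M_{i+1}$ because $f_i^+(C^+)\in f_i(\mathrm{Exc}\,f_i)\subset f_i(\supp M_i)$. But $\supp M_{i+1}$ is the \emph{strict transform} of $f_i(\supp M_i)$ under the small morphism $f_i^+$, and a curve in the fibre over a point of $f_i(\supp M_i)$ need not lie in that strict transform. (Think of a simple flop: a surface $T$ containing the flopping curve can have strict transform $T'$ meeting the flopped curve $C^+$ only in a point.) This is exactly the content of the Remark following the lemma: showing that the $f_i^+$-exceptional locus is contained in $M_{i+1}$ is the essence of the statement, not a triviality.

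\textbf{The nefness of $M_i$ cannot be propagated as you suggest.} You propose to recover nefness of $M_{i+1}$ by showing $M_i\cdot R_i=0$ and invoking Lemma~\ref{lemmacrepantMMP}. But $M_i\cdot R_i=0$ is false in general: take $M=S_1+100S_2$ and $N=100S_1+S_2$ with $S_1\cdot R=-1$, $S_2\cdot R=1$; then $N\cdot R<0$ while $M\cdot R>0$. In such a case $M_{i+1}$ is typically not nef, and your divisorial argument (which genuinely uses nefness to force $M_i^\circ\cdot C>0$) breaks at the next step.

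The paper avoids both problems by never trying to keep $M_i$ nef. Instead it builds, for each $i$, a common roof $V_i$ with morphisms $f_i\colon V_i\to X_0$ and $g_i\colon V_i\to X_{i+1}$, and works with the pullback $f_i^*M$ of the \emph{original} nef divisor $M=M_0$. The crucial input is Lemma~\ref{lemmaimagedivisorial} (a consequence of the negativity lemma): since $f_i^*M$ is $g_i$-nef, its push-forward satisfies $\supp (g_i)_* f_i^*M = g_i(\supp f_i^*M)$. Inductively one arranges that $\supp f_i^*M$ contains the $g_i$-exceptional locus, hence also the strict transforms of the $q_i$-exceptional divisors; pushing forward then shows the $q_i$-exceptional image lies in $\supp M_{i+1}$. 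This is precisely what closes the gap in your flip argument, and it uses only the nefness of $M$ on $X_0$.
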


\begin{remark*} The essence of Lemma \ref{lemmaisomorphism}  is the following. Given the flip $\varphi: X_i \dasharrow X_{i+1} $ with contractions
$f_i: X_i \to Y_i$ and $f_i^+: X_{i+1} \to Y_i,$ then the exceptional locus of $f_i^+$ is contained in $M_{i+1}.$ 
\end{remark*} 

\begin{proof}
For every $i=1, \ldots, n$ we set $M_{i+1}:= (\varphi_i)_* M_i$. Moreover we denote by $\Gamma_i$
the normalisation of the graph of $\varphi_i$ and by $\holom{p_i}{\Gamma_i}{X_i}$
and $\holom{q_i}{\Gamma_i}{X_{i+1}}$ the natural maps.

We will construct inductively a sequence of normal compact K\"ahler spaces $V_i$ admitting bimeromorphic 
morphisms $\holom{f_i}{V_i}{X}$ and $\holom{g_i}{V_i}{X_{i+1}}$ such that 
\begin{enumerate}
\item the support of $f_i^* M$ contains the $g_i$-exceptional locus;
\item $g_i$ factors through $\Gamma_i$, that is there exists a bimeromorphic map $\holom{\beta_i}{V_i}{\Gamma_i}$
such that $g_i = q_i \circ \beta_i$.
\end{enumerate}
Assuming this for the time being, let us see how to conclude: arguing by induction we suppose that we have an isomorphism
$$(X_0 \setminus \supp M_0) \simeq (X_{i} \setminus \supp M_{i}).$$
Since $\supp M_i = \supp N_i$ and $\varphi_i$ is $N_i$-negative, the image of the $p_i$-exceptional locus
in contained in $M_i$.
Thus we are done if we show that the image of the $q_i$-exceptional 
locus is contained in $M_{i+1}$. Since $X_0 \dashrightarrow X_{i+1}$ does not extract a divisor we have
$$
M_{i+1} = (g_i)_* f_i^* M.
$$
Yet by the properties above $f_i^* M$ contains the strict transforms of the $q_i$-exceptional divisors\footnote{Note that the $q_i$-exceptional locus is divisorial since $X_{i+1}$ is $\Q$-factorial.}. 
Thus $g_i(\supp f_i^* M)$ contains the image of the $q_i$-exceptional locus. By Lemma \ref{lemmaimagedivisorial} this implies that $\supp (g_i)_* f_i^* M$ contains the image of the $q_i$-exceptional locus.

{\em Proof of the claim.} For the start of the induction we simply set $V_0:=\Gamma_0$. Indeed since 
$\supp M_0=\supp N_0$ contains the exceptional locus of the extremal contraction, the divisor $p_0^* M_0$ contains
the $q_0$-exceptional locus.

For the induction step we make a case distinction: if $\varphi_i$ is divisorial we simply set $V_i:=V_{i-1}$ and note that
$M_i$ contains the exceptional divisor since $\supp M_i = \supp N_i$.
If $\varphi_i$ is a flip we define $V_i$ as the normalisation of the graph of the bimeromorphic map $V_{i-1} \dashrightarrow \Gamma_i$,
and denote by $\holom{\alpha_i}{V_i}{V_{i-1}}$ and $\holom{\beta_i}{V_i}{\Gamma_i}$ the natural maps.
We set $f_i:=f_{i-1} \circ \alpha_i$ and $g_i := q_i \circ \beta_i$ and summarise the situation in a commutative
diagram:
$$
\xymatrix{
& & V_i  \ar[ld]_{\alpha_i}  \ar[rd]^{\beta_i} & &
\\
& V_{i-1}  \ar[ld]_{f_{i-1}}  \ar[rd]^{g_{i-1}} & & \Gamma_i \ar[ld]_{p_i} \ar[rd]^{q_i} &
\\
X & & X_{i} &  & X_{i+1} 
}
$$
All we have to show is that  
the support of $f_i^* M$ contains the $g_i$-exceptional locus. 
Note that by our induction hypothesis $f_{i-1}^* M$ contains the $g_{i-1}$-exceptional locus. Moreover we have
$$
M_i = (g_{i-1})_* f_{i-1}^* M
$$ 
and $\supp M_i=\supp N_i$. In particular $(g_{i-1})_* f_{i-1}^* M$ contains the image of the $p_i$-exceptional locus (which is of
course equal to the $q_i$-exceptional locus). Using these two properties the claim follows by elementary set-theoretic computations.
\end{proof}

\begin{proof}[Proof of Lemma \ref{lemmaMMP1}] 
Since $\kappa(X) \geq 0$ by \cite{DP03}, 
there exists a $m \in \N$ and an effective Cartier divisor 
$D \in |mK_X|$. Since $\nu(X)>0$, the divisor $D$ is not zero. 
Let $$\holom{\mu}{X_0}{X}$$ be a log-resolution of the 
divisor $D$ in the following sense: the map $\mu$ is an isomorphism on $X \setminus D$,
the support of $\mu^* D$ has simple normal crossings and
$X_0$ is smooth in a neighbourhood of $\mu^* D$\footnote{This last
condition can be satisfied since $X$ is a terminal threefold, so it has 
only finitely many singular points.}. Thus if we set $B_0:=(\mu^* D)_{\red}$, the pair
$(X_0, B_0)$
is dlt and $X_0 \setminus B_0$ has terminal singularities. 
Note also that since $X$ has terminal
singularities, we have
$$
K_{X_0} \sim_\Q \mu^* K_X + E
$$
with $E$ an effective $\mu$-exceptional $\Q$-divisor. By construction 
$$
\supp E \subset B_0,
$$
so the pluricanonical divisor 
$$
D_0 := \mu^* D+mE \in |m K_{X_0}|
$$
satisfies $\supp D_0=B_0$. By Remark \ref{remarkkodairadimension}
we have
$$\kappa(X) = \kappa(X_0) = \kappa(K_{X_0}+B_0).$$
By Theorem \ref{theoremdltMMP} there exists a terminating $(K+B)$-MMP
$$
\left(
\varphi_i: (X_i, B_i) \dashrightarrow (X_{i+1}, B_{i+1})
\right)_{i=0, \ldots, n}
$$
where $\varphi_i$ is the flip or divisorial contraction
of a ray $R_i$. Then for every $i \in \{1, \ldots, n\}$
$$
D_{i+1} := \varphi_* D_i  \in |m K_{X_{i+1}}|
$$
is a pluricanonical divisor such that $\supp D_{i+1}=B_{i+1}$. 

For every $0 \leq i \leq n$, if $C \subset X_i$ is a curve with class  $[C] \in R_i$, then 
$$(K_{X_i} + B_i) \cdot C = ({{1}�\over {m}}D_i + B_i) \cdot C < 0,$$
and since $\supp D_i = B_i$, the curve $C$ is contained in $B_i.$ Thus 
the exceptional locus of the contraction of the extremal ray
$R_i$ is contained in $B_i$. Now apply Lemma \ref{lemmaisomorphism} to establish an isomorphism
$$
\varphi_i|_{X_i \setminus B_i}: (X_i \setminus B_i) \rightarrow (X_{i+1} \setminus B_{i+1}).
$$
In conclusion we obtain that $X_{i+1} \setminus B_{i+1}$ has terminal singularities. 

The Kodaira dimension of $K+B$ being invariant under a $(K+B)$-MMP, so Remark \ref{remarkkodairadimension} yields
$$
\kappa(X) = \kappa(K_{X_0}+B_0) =  \kappa(K_{X_{n+1}}+B_{n+1}).
$$
Let now $\Gamma$ be a desingularisation of the graph of the bimeromorphic map $X_0 \dashrightarrow X_{n+1}$, and denote by
$\holom{p}{\Gamma}{X_0}$ and $\holom{q}{\Gamma}{X_{n+1}}$ the natural projections.
Then the effective divisors 
$$
(\mu \circ p)^* D \simeq (\mu \circ p)^* mK_X
$$
and 
$$
q^* (D_{n+1}+mB_{n+1}) \simeq m q^*(K_{X_{n+1}}+B_{n+1})
$$
are both nef and have the same support. As in \cite[11.3.3]{Uta92}, arguing with a K\"ahler  class, this implies
$$
\nu(X) = \nu(K_{X_{n+1}}+B_{n+1}).
$$
\end{proof}

\begin{lemma} \label{lemmaconnected}
Let $X$ be a normal compact K\"ahler space of dimension $n \geq 2$, and let
$L$ be a nef $\Q$-Cartier $\Q$-divisor. Let $S \subset X$ be an effective $\Q$-Cartier $\Q$-divisor such that $L-S$ is nef.
Let $T \subset X$ be a prime divisor such that $L|_T \equiv 0$ and $T \not\subset \supp S$. Then 
$$
T \cap \supp S = \emptyset.
$$
\end{lemma}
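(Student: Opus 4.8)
The plan is to restrict the whole configuration to $T$, pass to a desingularisation, and reduce to the elementary fact that on a compact K\"ahler manifold an effective divisor whose negative is nef must be zero.

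First I would set up the restriction. Let $\nu\colon \hat T\to T$ be a desingularisation; since $T$ is a prime divisor on the compact K\"ahler space $X$, it lies in Fujiki class $\mathcal{C}$, so $\hat T$ may be chosen to be a compact K\"ahler manifold, and I fix a K\"ahler form $\hat\omega$ on it. Write $j\colon \hat T\to X$ for the composition of $\nu$ with the inclusion $T\hookrightarrow X$, and set $m:=\dim \hat T=n-1\ge 1$. Because $T\not\subset\supp S$ and $S$ is $\Q$-Cartier, the pullback $j^*S$ is a genuine effective $\Q$-divisor on $\hat T$, and its support is $j^{-1}(T\cap\supp S)$; as $\nu$ is surjective, it therefore suffices to show that $j^*S=0$.

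Next I would feed in the two positivity hypotheses. The class $j^*(L-S)$ is nef, being the pullback of a nef class under a holomorphic map from a compact K\"ahler manifold (invariance of nefness under pullback, cf. \cite[Lemma 3.13]{HP13a}). On the other hand $j^*L=\nu^*(L|_T)\equiv 0$ in $N^1(\hat T)$, since $L|_T\equiv 0$. Hence
$$
-\,j^*S \;=\; j^*(L-S)-j^*L \;\equiv\; j^*(L-S),
$$
so $-j^*S$ is a nef class on $\hat T$.

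Finally I would run the effective-versus-anti-nef contradiction. Pairing with $\hat\omega^{m-1}$, nefness of $-j^*S$ gives $j^*S\cdot\hat\omega^{m-1}\le 0$; whereas $j^*S$ is effective, so writing $j^*S=\sum a_i D_i$ with $a_i>0$ and $D_i$ prime we get $j^*S\cdot\hat\omega^{m-1}=\sum a_i\!\int_{D_i}\hat\omega^{m-1}\ge 0$, with equality only if there are no components, i.e. $j^*S=0$. (When $n=2$ this is simply the statement that an effective $0$-cycle of non-positive degree vanishes, using $\hat\omega^{0}=1$.) Thus $j^*S=0$, hence $T\cap\supp S=\emptyset$. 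The argument is essentially bookkeeping; the only points needing a little care are that $\hat T$ can be taken K\"ahler so that the volume inequality applies, and that $j^*S$ is honestly effective (which is exactly where $T\not\subset\supp S$ enters) — I do not expect a genuine obstacle here.
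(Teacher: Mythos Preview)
Your proof is correct and follows essentially the same idea as the paper: since $L|_T \equiv 0$, the restriction $(L-S)|_T \equiv -S|_T$ must be nef, which forces the effective divisor $S|_T$ to vanish. The paper's version is more terse---it argues directly on $T$ in two sentences without passing to a desingularisation---whereas you carefully resolve $T$ and pair against a K\"ahler form to make the final implication (``effective with anti-nef class $\Rightarrow$ zero'') explicit; this extra care is harmless and arguably clarifies why the step is valid on a possibly singular $T$.
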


\begin{proof}
Indeed,  if $T \cap \supp S \neq \emptyset$, then $-S|_T$ is a non-zero antieffective divisor. Thus the restriction
$
(L-S)|_T \equiv -S|_T
$
is not nef, a contradiction.
\end{proof}

\begin{lemma} \label{lemmaprepareMMP}
Let $X$ be a normal $\Q$-factorial compact K\"ahler threefold.
Suppose that there exists a $D \in |m K_{X}|$ with the following properties:
\begin{itemize}
\item Set $B:=\supp D$. The pair $(X, B)$ is dlt and $X \setminus B$ has terminal singularities.
\item The divisor $K_{X}+B$ is nef. 
\end{itemize}
Let $S \subset B$ be a non-zero effective Weil divisor such that the following holds: for every irreducible component $T \subset B-S$ we have
$$
(K_X+B)|_{T} \equiv 0.
$$
Set $X_0 := X, D_0 := D$ and $B_0 := B$. 
Then there exists a finite sequence $K+B-S$-negative contractions
$$
\left(
\varphi_i: (X_i, B_i-S_i) \dashrightarrow (X_{i+1}, B_{i+1}-S_{i+1})
\right)_{i=0, \ldots, n}
$$
of a ray $R_i \in \NA{X_i}$ such that the following properties hold:
\begin{enumerate}
\item The pair $(X_{i+1}, B_{i+1}-S_{i+1})$ is dlt.
\item Set $D_{i+1} := (\varphi_i)_* D_i$.
Then the divisor $D_{i+1} \in |m K_{X_{i+1}}|$ satisfies $\supp D_{i+1}=B_{i+1}$.
\item We have $(K_{X_i}+B_i) \cdot R_i = 0$ and the divisor $K_{X_{i+1}}+B_{i+1}$ is nef.
For every irreducible component $T_{i+1} \subset B_{i+1}-S_{i+1}$
we have
$$
(K_{X_{i+1}}+B_{i+1})|_{T_{i+1}} \equiv 0.
$$
\item $X_{i+1}$ is $\bQ-$factorial;  the pair $(X_{i+1}, B_{i+1})$ is lc and  $X_{i+1} \setminus B_{i+1}$  has terminal singularities.
\item We have $(B_{n+1}-S_{n+1}) \cap S_{n+1} = \emptyset$ and $S_{n+1} \neq 0$.
\end{enumerate}
\end{lemma}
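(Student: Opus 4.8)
Since $D\in|mK_X|$ we have $\kappa(X)\ge 0$, so $X$ and all its bimeromorphic models are non-uniruled and the results of Sections~\ref{sectionkaehler}--\ref{sectionMMP} are available throughout. The plan is to run a $(K+B-S)$-MMP in which only \emph{good} rays are contracted, where I call an extremal ray $R\in\NA{X_i}$ \emph{good} if it is $(K_{X_i}+B_i-S_i)$-negative and $(K_{X_i}+B_i)\cdot R=0$; since $K_{X_i}+B_i$ is nef, a $(K_{X_i}+B_i-S_i)$-negative ray is good precisely when $S_i\cdot R>0$. So long as $\NA{X_i}$ contains a good ray $R_i$ I contract it: the pair $(X_i,B_i-S_i)$ is dlt and $R_i$ is $(K_{X_i}+(B_i-S_i))$-negative, so if $R_i$ is small the contraction exists by Theorem~\ref{theoremsmallray} and is followed by its flip $\varphi_i$ (Theorem~\ref{theoremexistenceflip}); if $R_i$ is divisorial, its generating curves lie in $\supp(K_{X_i}+B_i-S_i)=B_i$, so the exceptional divisor is a component of $B_i$ and hence has slc singularities by adjunction from the lc pair $(X_i,B_i)$, and Theorem~\ref{theoremcontraction} (via Proposition~\ref{propositionnone} or Lemma~\ref{lemmacontractdivisorpoint}) supplies the contraction $\varphi_i$. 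In every case $X_{i+1}$ is again a normal $\Q$-factorial compact K\"ahler threefold by Proposition~\ref{propositioncontraction} and Corollary~\ref{corollarycontractionkaehler}.

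Next I would check that (a)--(d) persist. A good-ray contraction is crepant for $K+B$ (the discrepancy along the contracted divisor vanishes because $K_{X_i}+B_i$ and its pullback are both $R_i$-trivial), so $K_{X_{i+1}}+B_{i+1}=(\varphi_i)_*(K_{X_i}+B_i)$ is nef by Lemma~\ref{lemmacrepantMMP}(1), $(X_{i+1},B_{i+1})$ is lc by Lemma~\ref{lemmacrepantMMP}(2), and $(X_{i+1},B_{i+1}-S_{i+1})$ is dlt with $X_{i+1}$ $\Q$-factorial by Proposition~\ref{propositioncontraction}. For a surviving component $T_{i+1}\subset B_{i+1}-S_{i+1}$, pulling back along $T_i\to T_{i+1}$ and using the projection formula with $(K_{X_i}+B_i)|_{T_i}\equiv 0$ gives $(K_{X_{i+1}}+B_{i+1})|_{T_{i+1}}\equiv 0$, so (c) holds; and $D_{i+1}:=(\varphi_i)_*D_i\in|mK_{X_{i+1}}|$ has support $B_{i+1}$, which is (b). For (d) it remains to see $X_{i+1}\setminus B_{i+1}$ is terminal: applying Lemma~\ref{lemmaisomorphism} to suitable effective divisors numerically equivalent to multiples of $K_X+B$ (nef) and of $K_X+B-S$, both supported on $B$, produces an isomorphism $X\setminus B\simeq X_{i+1}\setminus B_{i+1}$.

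Termination comes next. Divisorial contractions strictly drop the Picard number, so only finitely many occur; after the last one the procedure is a sequence of $(K+B-S)$-flips, and since $B_i-S_i$ is reduced and $(X_i,B_i-S_i)$ is dlt, special termination (Theorem~\ref{theoremspecialtermination}) shows the flipping loci eventually become disjoint from $B_i-S_i$. Near such a flipping locus the flipped curves lie in $B_i$, hence in $S_i$, so $(B_i-S_i)\cdot R_i=0$ and $R_i$ is a $K_{X_i}$-negative extremal ray; these flips are therefore $K$-flips of the klt space $X_i$, which terminate by Theorem~\ref{theoremKtermination}. Finally, $S$ is never exhausted: if a divisorial good ray had exceptional divisor $\sigma$ a component of $S_i$, then $\sigma\cdot R_i<0$, while $S_i\cdot R_i>0$ forces $(S_i-\sigma)\cdot R_i>0$, so $S_i-\sigma\neq 0$; hence $S_{n+1}\neq 0$, part of (e).

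The main obstacle, I expect, is the rest of (e): that the stopping condition — $\NA{X_{n+1}}$ contains no good ray — forces $(B_{n+1}-S_{n+1})\cap S_{n+1}=\emptyset$. Suppose some component $T$ of $B_{n+1}-S_{n+1}$ met $S_{n+1}$. By (c) we have $(K_{X_{n+1}}+B_{n+1})|_T\equiv 0$, and adjunction from the lc pair $(X_{n+1},B_{n+1})$ gives $K_T+\mathrm{Diff}_T\sim_\Q(K_{X_{n+1}}+B_{n+1})|_T\equiv 0$ with $\mathrm{Diff}_T$ effective and non-zero (since $T$ meets $S_{n+1}\subset B_{n+1}-T$); hence $K_T$ is numerically anti-effective and non-zero, so, arguing as in Lemma~\ref{lemmasurfaces}, a desingularisation \holom{\pi}{\hat T}{T} has $\hat T$ a uniruled projective surface. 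As $S_{n+1}|_T$ is a non-zero effective $\Q$-divisor, its pullback to $\hat T$ has positive degree against an ample curve $A$, so $C:=\pi_*(A)\subset T$ satisfies $S_{n+1}\cdot C>0$, while $(K_{X_{n+1}}+B_{n+1})\cdot C=0$ because $C\subset T$. Writing $[C]=v+\sum_j c_j[\Gamma_j]$ by the cone theorem~\ref{theoremNA} for $(X_{n+1},B_{n+1}-S_{n+1})$ (so $(K_{X_{n+1}}+B_{n+1}-S_{n+1})\cdot\Gamma_j<0$, $c_j\ge 0$, and $v$ lies in the non-negative part), nefness of $K_{X_{n+1}}+B_{n+1}$ forces $(K_{X_{n+1}}+B_{n+1})\cdot\Gamma_j=0$ for each $j$ with $c_j>0$ and $(K_{X_{n+1}}+B_{n+1})\cdot v=0$; then $S_{n+1}\cdot v\le 0<S_{n+1}\cdot C$ yields $S_{n+1}\cdot\Gamma_j>0$ for some such $j$, so $\R^+[\Gamma_j]$ is a good ray — a contradiction. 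Together with the termination statement this establishes (a)--(e).
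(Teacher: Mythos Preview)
Your proof is correct and follows essentially the same route as the paper. The only organisational difference is that the paper phrases the induction as ``stop once $(B_i-S_i)\cap S_i=\emptyset$, otherwise produce a good ray'', while you phrase it as ``contract good rays until none exist, then show this forces $(B_{n+1}-S_{n+1})\cap S_{n+1}=\emptyset$''; these are contrapositives of one another. In the last step the paper obtains the curve $Z$ with $(K+B-S)\cdot Z<0$ and $(K+B)\cdot Z=0$ more directly by observing $(K+B-S)|_T\equiv -S|_T$ is not nef and invoking Corollary~\ref{corollaryalgebraicallynef}, whereas you pass through adjunction and projectivity of $\hat T$ --- this is a bit longer but is in fact what underlies the proof of Corollary~\ref{corollaryalgebraicallynef} itself, so the arguments are equivalent. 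Your explicit justification that $S_{n+1}\neq 0$ (via $\sigma\cdot R_i<0<S_i\cdot R_i$) fills in a point the paper states without proof.
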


\begin{remark*}
Note that we do not claim that $K_{X_{n+1}}+B_{n+1}-S_{n+1}$ is nef, in general this will not be true.
\end{remark*}

\begin{proof}
If $(B-S) \cap S = \emptyset$, there is nothing to prove, so suppose $(B-S) \cap S \ne \emptyset.$ 
Thus $K_X + B - S$ is not nef by Lemma \ref{lemmaconnected}. Note that 
$(X_0,B_0-S_0)$ is dlt.

{\em Step 1. Existence of the MMP.} We proceed by induction and assume that $X_i, B_i $ and $D_i$  are already constructed such that properties
$a), b)$  and $d)$ hold at level $i$, moreover $K_{X_i} + B_i$ is nef and $K_{X_i} + B_i \vert_{T_i} \equiv 0 $ for all irreducible components $T_i$ of $B_i - S_i.$ 
If there is no such $T_i$ meeting $S_i$, then we stop and set $n+1 = i.$ Thus we may assume that 
$T_i \cap S_i \neq \emptyset$ for some component $T_i.$ We first have to find the extremal ray $R_i.$ 
Notice that the restriction
$$
(K_{X_i}+B_i-S_i)|_{T_i} \equiv -S_i|_{T_i} 
$$
is not nef, so by Corollary \ref{corollaryalgebraicallynef} there exists a curve $Z_i \subset T_i \subset (B_i-S_i)$ such that
$$
(K_{X_i}+B_i-S_i) \cdot Z_i < 0.
$$
The inclusion $Z_i \subset (B_i-S_i)$ yields$(K_{X_i}+B_i) \cdot Z_i = 0$ by $c)$. 
By the cone theorem \ref{theoremNA} applied to the pair $(X_i, B_i-S_i)$, there exists a decomposition
\begin{equation} \label{decomposezii}
Z_i = \sum \lambda_j C_{i,j} + M_i,
\end{equation}
where $\lambda_j>0$, the $C_{i,j}$ are irreducible curves generating a $(K_{X_i}+B_i-S_i)$-negative extremal 
in $\NA{X_i}$ and $M_i \in \NA{X_i}$ such that 
$$(K_{X_i}+B_i-S_i) \cdot M_i \geq 0.$$
Let $R_i$ be the extremal ray generated by the curve $C_{i,1}$. Since $K_{X_i}+B_i$ is nef and
$(K_{X_i}+B_i) \cdot Z_i = 0$, the decomposition \eqref{decomposezii} implies that $(K_{X_i}+B_i) \cdot R_i = 0$.

Since $\supp D_i = B_i$, the $\Q$-Cartier divisor 
$K_{X_i}+B_i-S_i$ is $\Q$-linearly equivalent to an effective divisor with support in $B_i$. In particular
every curve $C \subset X_i$ such that $[C] \in R_i$ is contained in $B_i$. Since the pair $(X_i, B_i)$
is lc this shows that the support of the extremal ray is contained in an irreducible surface with slc singularities, \cite[16.9]{Uta92}. 
Thus the contraction of $R_i$ exists by Theorem \ref{theoremcontraction}. Moreover, if the ray is small, then the flip
exists by Theorem \ref{theoremexistenceflip}. We denote by
$$
\varphi_i: (X_i, B_i-S_i) \dashrightarrow (X_{i+1}, B_{i+1}-S_{i+1})
$$
the flip or divisorial contraction of $R_i$. As in the proof of Lemma \ref{lemmaMMP1} we see that the properties $a)$ and $b)$ 
hold at level $i+1$.

Since $(K_{X_i}+B_i) \cdot R_i=0$, Lemma \ref{lemmacrepantMMP} proves
that $K_{X_{i+1}}+B_{i+1}$ is nef,
that the pair $(X_{i+1}, B_{i+1})$ is lc. 
and that for every irreducible component 
component $T_{i+1} \subset B_{i+1}-S_{i+1},$ the restriction $(K_{X_{i+1}}+B_{i+1})|_{T} \equiv 0$ .
Finally, the divisor $K_{X_i}+B_i$ being nef and $\Q$-linearly equivalent to an effective divisor with support in $B_i$,
Lemma \ref{lemmaisomorphism} implies that 
$(X_{i+1} \setminus B_{i+1}) \simeq (X_i \setminus B_i)$
has terminal singularities.

{\em Step 2. Termination of the MMP.} By Theorem \ref{theoremspecialtermination} we know that after finitely many steps the 
flipping locus is disjoint from $B_i-S_i$. Yet a $K+B-S$-negative extremal contraction that is disjoint from $B-S$ is also a $K$-negative
extremal contraction. Thus the sequence terminates by Theorem \ref{theoremKtermination}. 
\end{proof}

\begin{lemma} \label{lemmanuoneMMP}
In the situation of Lemma \ref{lemmaprepareMMP} suppose additionally that $\nu(K_X+B)=1$,
and let $S \subset B$ be an irreducible component. 

Then there exists a normal $\Q$-factorial compact K\"ahler threefold $X'$ that is bimeromorphic to $X$ and has
a $D' \in |m K_{X'}|$ with the following properties:
\begin{enumerate}
\item Set $B':=\supp D'$. The pair $(X', B')$ is lc and $X' \setminus B'$ has terminal singularities.
\item The divisor $K_{X'}+B'$ is nef with $\nu(K_{X'}+B')=1$. Moreover,
$
\kappa(K_{X}+B) = \kappa(K_{X'}+B').
$
\item There exists an irreducible component $S' \subset B'$ that is a connected component of $B'$.
\item The pair $(X', B'-S')$ is dlt.
\end{enumerate}
\end{lemma}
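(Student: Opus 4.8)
The plan is to take $X'$ to be the output of the minimal model program produced by Lemma \ref{lemmaprepareMMP}. Its hypotheses are exactly ``the situation of Lemma \ref{lemmaprepareMMP}'' and do not involve $\nu(K_X+B)$, so we may apply it verbatim. It yields a finite sequence of $(K+B-S)$-negative contractions
$$
\left( \varphi_i : (X_i, B_i - S_i) \dashrightarrow (X_{i+1}, B_{i+1} - S_{i+1}) \right)_{i = 0, \ldots, n}
$$
of rays $R_i \in \NA{X_i}$ satisfying properties a)--e) of Lemma \ref{lemmaprepareMMP}. I would then set
$$
X' := X_{n+1}, \quad D' := (\varphi_n \circ \cdots \circ \varphi_0)_* D, \quad B' := B_{n+1} = \supp D', \quad S' := S_{n+1},
$$
and verify the four assertions one by one.

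Assertions a) and d) are immediate from Lemma \ref{lemmaprepareMMP}: its property d) gives that $X'$ is $\Q$-factorial, $(X', B')$ is lc and $X' \setminus B'$ has terminal singularities, while its property a) gives that $(X', B' - S')$ is dlt. For assertion c) the point is that the prescribed irreducible component $S$ is never the divisor contracted by any $\varphi_i$. Indeed, $R_i$ is $(K_{X_i}+B_i-S_i)$-negative while $(K_{X_i}+B_i) \cdot R_i = 0$ (property c) of Lemma \ref{lemmaprepareMMP}), so
$$
S_i \cdot R_i = (K_{X_i}+B_i) \cdot R_i - (K_{X_i}+B_i-S_i) \cdot R_i > 0,
$$
hence $S_i$ is strictly positive on the curves of $R_i$ and cannot be the exceptional divisor of a divisorial contraction (a flip contracts no divisor). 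Therefore each $S_{i+1} = (\varphi_i)_* S_i$ is the strict transform of an irreducible divisor, so $S' = S_{n+1}$ is irreducible; by property e) of Lemma \ref{lemmaprepareMMP} it is disjoint from $B' - S'$, and being irreducible (hence connected) it is a connected component of $B'$.

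It remains to establish b). Nefness of $K_{X'}+B'$ is again property c) of Lemma \ref{lemmaprepareMMP}. The crucial structural feature for the numerical and Kodaira dimensions is that every step is $(K+B)$-trivial, i.e. $(K_{X_i}+B_i)\cdot R_i = 0$. Applying Lemma \ref{lemmacrepantMMP}.a) at each step to the nef divisor $K_{X_i}+B_i$ (whose push-forward is $K_{X_{i+1}}+B_{i+1}$) gives $\nu(K_{X_{i+1}}+B_{i+1}) = \nu(K_{X_i}+B_i)$, hence $\nu(K_{X'}+B') = \nu(K_X+B) = 1$. Moreover $(K+B)$-triviality means each $\varphi_i$ is crepant for $K+B$: in the divisorial case $K_{X_i}+B_i = \varphi_i^*(K_{X_{i+1}}+B_{i+1})$, and in the flipping case $K_{X_i}+B_i$ and $K_{X_{i+1}}+B_{i+1}$ both descend to the same $\Q$-Cartier class on the common contraction space. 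In either case $H^0(l(K_{X_i}+B_i)) = H^0(l(K_{X_{i+1}}+B_{i+1}))$ for all $l$, so $\kappa(K_{X'}+B') = \kappa(K_X+B)$, exactly as in the proof of Lemma \ref{lemmaMMP1}.

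Thus the statement is essentially a matter of reading off the conclusions of Lemma \ref{lemmaprepareMMP}, and I do not expect a serious obstacle. The one point that genuinely requires an argument is the observation above that the fixed irreducible component $S$ is never the contracted divisor, so that it survives the whole MMP as an irreducible divisor which (by property e)) ends up detached from the rest of $B'$; everything else, including the invariance of $\nu$ and $\kappa$ of $K+B$, is bookkeeping resting on the $(K+B)$-triviality of the contractions.
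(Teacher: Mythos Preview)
Your approach is the paper's: apply Lemma \ref{lemmaprepareMMP} and read off the conclusions. There is, however, one step you have skipped. The phrase ``the situation of Lemma \ref{lemmaprepareMMP}'' refers only to the basic setup on $(X,D,B)$, not to the hypothesis on $S$; compare Lemma \ref{lemmanutwoMMP}, which begins with the same phrase and then \emph{defines} $S$ in its proof. So when Lemma \ref{lemmanuoneMMP} says ``let $S \subset B$ be an irreducible component'', this $S$ is being chosen fresh, and to invoke Lemma \ref{lemmaprepareMMP} you must verify its hypothesis that $(K_X+B)|_T \equiv 0$ for every irreducible component $T \subset B-S$. This is precisely where $\nu(K_X+B)=1$ enters: since $K_X+B$ is nef and $\Q$-linearly equivalent to an effective divisor with support $B$, for a K\"ahler class $\omega$ one has $(K_X+B)^2 \cdot \omega = \sum_i a_i\, (K_X+B)|_{B_i} \cdot \omega|_{B_i}$ with all summands $\geq 0$; if some $(K_X+B)|_{B_i} \not\equiv 0$ the sum is positive, forcing $\nu \geq 2$. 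The paper records this verification as its first sentence, and without it Lemma \ref{lemmaprepareMMP} simply does not apply (its proof uses $(K_{X_i}+B_i)|_{T_i} \equiv 0$ to locate the extremal ray).

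Apart from this, your argument is correct and matches the paper. Your check in c) that $S_i \cdot R_i > 0$ (so $S$ is never the contracted divisor) is valid but more than needed: property e) of Lemma \ref{lemmaprepareMMP} already asserts $S_{n+1} \neq 0$, and the pushforward of an irreducible divisor under a bimeromorphic map is irreducible or zero, so $S'=S_{n+1}$ is irreducible and, by the disjointness in e), a connected component of $B'$. Your treatment of b) via Lemma \ref{lemmacrepantMMP} and $(K+B)$-triviality is exactly what the paper does.
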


\begin{proof}
We first check that $B-S$ satisfies the positivity condition in Lemma \ref{lemmaprepareMMP}: indeed if $T$ is any irreducible component 
of $\supp (D),$ then $(K_X + B)|_T \equiv 0$, otherwise $\nu(K_X + B) \geq 2.$ 

Using the notation of Lemma \ref{lemmaprepareMMP}, set $X' = X_{n+1}$ etc.  We know that $S_{n+1}$ is a non-zero irreducible divisor that is 
disjoint from $B_{n+1}-S_{n+1}$. Thus the property $c)$ holds. 
Since all the contractions in Lemma \ref{lemmaprepareMMP} are $(K+B)$-trivial, the Kodaira dimension is invariant: 
$$
\kappa(K_{X}+B) = \kappa(K_{X_{n+1}}+B_{n+1}).
$$
The other properties were already shown in Lemma \ref{lemmaprepareMMP}.
\end{proof}

\begin{lemma} \label{lemmaconnected2}
Let $X$ be a normal compact K\"ahler space with rational singularities of dimension $n \geq 2$. Let $D$ be an effective Cartier divisor.
Suppose that $D$ is nef and $\nu(D) \geq 2$. Then the support of $D$ is connected.
\end{lemma}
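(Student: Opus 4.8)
The plan is to argue by contradiction and, after pulling back to a K\"ahler resolution, to invoke the Hodge index theorem. So suppose $\supp D$ is disconnected. Grouping the prime components of $D$ according to the connected components of its support, we write $D = D_1 + D_2$ where $D_1, D_2$ are nonzero effective divisors with $\supp D_1 \cap \supp D_2 = \emptyset$; since $D$ is Cartier and the supports are disjoint, each $D_i$ is again Cartier. I would then fix a resolution of singularities $\holom{\pi}{\hat X}{X}$ with $\hat X$ a compact K\"ahler manifold (such a resolution exists, as used e.g. in the proof of Theorem \ref{kaehler2}) together with a K\"ahler form $\omega$ on $\hat X$, and set $\hat D := \pi^* D$ and $\hat D_i := \pi^* D_i$. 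Then $\hat D = \hat D_1 + \hat D_2$ is nef, the $\hat D_i$ are nonzero effective divisors, and their supports are still disjoint, since $\supp \hat D_1 \cap \supp \hat D_2 = \pi^{-1}(\supp D_1 \cap \supp D_2) = \emptyset$.

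The key observation is that disjoint supports kill the mixed term: $[\hat D_1] \cdot [\hat D_2] = 0$ in $H^{2,2}(\hat X, \R)$, so $\hat D^2 = \hat D_1^2 + \hat D_2^2$ and $\hat D_i^2 = \hat D \cdot \hat D_i$ for $i = 1, 2$. Since $\hat D$ is nef and $\hat D_i$ effective, $\hat D \cdot \hat D_i$ is a pseudoeffective $(2,2)$-class, hence $\hat D_i^2 \cdot \omega^{n-2} \geq 0$. On the other hand $\nu(\hat D) = \nu(D) \geq 2$ (the numerical dimension of a nef divisor is a bimeromorphic invariant, as used repeatedly in this paper), and for a nef class this says exactly that $\hat D^2$ is a nonzero pseudoeffective $(2,2)$-class; such a class pairs strictly positively with $\omega^{n-2}$, so $\hat D^2 \cdot \omega^{n-2} > 0$. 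Adding up, $\hat D_1^2 \cdot \omega^{n-2} + \hat D_2^2 \cdot \omega^{n-2} > 0$, so after renumbering $\hat D_1^2 \cdot \omega^{n-2} > 0$.

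Now I would apply the Hodge index theorem in the following form: by hard Lefschetz and the Hodge--Riemann bilinear relations, the symmetric bilinear form $Q(\alpha, \beta) := \alpha \cdot \beta \cdot \omega^{n-2}$ on $H^{1,1}(\hat X, \R)$ is nondegenerate of signature $(1, h^{1,1}(\hat X) - 1)$. Since $Q([\hat D_1], [\hat D_1]) = \hat D_1^2 \cdot \omega^{n-2} > 0$, the class $[\hat D_1]$ spans the positive direction, so $Q$ is negative definite on the hyperplane $[\hat D_1]^{\perp_Q}$. But $[\hat D_2] \in [\hat D_1]^{\perp_Q}$ because the mixed term vanishes, and $[\hat D_2] \neq 0$ in $H^{1,1}(\hat X, \R)$ since $\hat D_2$ is a nonzero effective divisor (so $\hat D_2 \cdot \omega^{n-1} > 0$). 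Therefore $Q([\hat D_2], [\hat D_2]) = \hat D_2^2 \cdot \omega^{n-2} < 0$, contradicting the inequality $\hat D_2^2 \cdot \omega^{n-2} \geq 0$ found above. Hence $\supp D$ must be connected.

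I expect the only delicate points to be bookkeeping rather than substance: that the resolution can be chosen K\"ahler and that $\pi^*$ preserves the numerical dimension (both standard here), and the translation of ``$\nu(D) \geq 2$'' into ``$\hat D^2 \cdot \omega^{n-2} > 0$'', which is the usual characterization of the numerical dimension of a nef class. The conceptual heart --- that connectedness of $\supp D$ is forced once the hypothesis $\nu(D) \geq 2$ makes one piece ``big'' in the Hodge-index sense while disjointness makes the other piece orthogonal to it --- requires no new input beyond the Hodge index theorem.
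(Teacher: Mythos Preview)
Your argument is correct and genuinely different from the paper's. The paper proceeds cohomologically: it shows $H^1(X, \sO_X(-D)) = 0$, which via the ideal-sheaf exact sequence $0 \to \sO_X(-D) \to \sO_X \to \sO_D \to 0$ forces $h^0(\sO_D)=1$ and hence connectedness of $\supp D$. The vanishing itself is obtained by passing to a resolution (using the rational-singularities hypothesis), applying Serre duality, and invoking the K\"ahler Kawamata--Viehweg vanishing of Demailly--Peternell \cite{DP03}, which kills $H^{n-1}(\hat X, K_{\hat X} \otimes \sO_{\hat X}(\hat D))$ exactly under the hypothesis $\nu(D) \geq 2$. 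Your Hodge-index route is more elementary --- it needs only the signature $(1, h^{1,1}-1)$ of the pairing $(\alpha,\beta)\mapsto \alpha\cdot\beta\cdot\omega^{n-2}$ on $H^{1,1}(\hat X,\R)$, which is pure Hodge theory --- and avoids the nontrivial vanishing theorem entirely. On the other hand, the paper's approach is essentially a one-line application of a tool already in its toolbox and yields the stronger intermediate statement $H^1(\sO_X(-D))=0$. Both proofs pivot on the same translation of the hypothesis $\nu(D)\geq 2$: in your case into $\hat D^2\cdot\omega^{n-2}>0$, in the paper's case into the numerical range where the vanishing theorem applies.
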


\begin{proof}
It is sufficient to prove that $H^1(X, \sO_X(-D)) = 0$. Since $X$ has rational singularities we may replace it by a resolution of singularities
without changing this cohomology group. Then we have
$$
H^1(X, \sO_X(-D))  \simeq H^{n-1}(X, K_X \otimes \sO_X(D)) = 0
$$
by \cite[Thm.0.1]{DP03}.
\end{proof}

\begin{lemma} \label{lemmanutwoMMP}
In the situation of Lemma \ref{lemmaprepareMMP} suppose additionally that $\nu(K_X+B)=2$.

Then there exists a normal $\Q$-factorial compact K\"ahler threefold $X'$, bimeromorphic to $X$ such that $(X',0)$ is klt 
and a divisor $D' \in |m K_{X'}|$ with the following properties:
\begin{enumerate}
\item Set $B':=\supp D'$. The pair $(X', B')$ is lc and $X' \setminus B'$ has terminal singularities.
\item The divisor $K_{X'}+B'$ is nef with $\nu(K_{X'}+B')=2$. Moreover,
$
\kappa(K_{X}+B) = \kappa(K_{X'}+B').
$
\item For every irreducible component $T' \subset B'$ we have $(K_{X'}+B')|_{T'} \not \equiv 0$.
\item $(K_{X'}+B') \cdot K_{X'}^2 \geq 0$.
\end{enumerate}
\end{lemma}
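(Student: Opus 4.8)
The plan is to apply Lemma \ref{lemmaprepareMMP} with a carefully chosen $S$ and then feed the output into Lemma \ref{lemmaconnected2}. I would take $S$ to be the reduced divisor $\sum T$, the sum running over the prime components $T$ of $B$ with $(K_X+B)|_T \not\equiv 0$. Since $K_X+B \sim_\Q \frac1m D + B$ is a nef divisor whose support is exactly $B$, writing it as a positive combination of the prime components of $B$ and pairing $(K_X+B)^2$ with a suitable $(1,1)$-class shows that $\nu(K_X+B)=2$ forces at least one component $T$ to satisfy $(K_X+B)|_T \not\equiv 0$; hence $S \neq 0$. By construction every prime component of $B-S$ satisfies $(K_X+B)|_T \equiv 0$, which is exactly the positivity hypothesis needed by Lemma \ref{lemmaprepareMMP} (this verifies that hypothesis; note that, in contrast to the $\nu=1$ case of Lemma \ref{lemmanuoneMMP} where every component of $B$ restricts numerically trivially, here one must retain in $S$ precisely the components on which $K_X+B$ is positive).

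Next I would run the resulting MMP: Lemma \ref{lemmaprepareMMP} produces a finite sequence of $(K+B)$-trivial, $(K+B-S)$-negative divisorial contractions and flips ending at $(X_{n+1}, B_{n+1}-S_{n+1})$, and I set $X' := X_{n+1}$, $B' := B_{n+1}$, $D' := D_{n+1}$, $S' := S_{n+1}$. Lemma \ref{lemmaprepareMMP} then directly gives: $X'$ is $\Q$-factorial; $(X',B')$ is lc; $X'\setminus B'$ has terminal singularities; $\supp D' = B'$; $K_{X'}+B'$ is nef; $(X',B'-S')$ is dlt; $S' \neq 0$; and $(B'-S')\cap S' = \emptyset$. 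Since every step is $(K+B)$-trivial, Lemma \ref{lemmacrepantMMP} yields $\nu(K_{X'}+B') = \nu(K_X+B)=2$, and the Kodaira dimension of $K+B$ is invariant along a $(K+B)$-MMP, so $\kappa(K_{X'}+B') = \kappa(K_X+B)$. This settles (b) and all of (a) except the klt property of $(X',0)$.

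I would then force $B' = S'$. A suitable positive integral multiple of $\frac1m D' + B'$ is an effective nef Cartier divisor with support $B'$ and numerical dimension $2$, so Lemma \ref{lemmaconnected2} applies and $B'$ is connected. As $B' = S' \sqcup (B'-S')$ is a disjoint union of two closed subsets with $S' \neq 0$, connectedness forces $B'-S'=0$, i.e. $B' = S'$. Hence $(X',B'-S') = (X',0)$ is dlt and therefore klt, completing (a). For (c): each prime component $T'$ of $B' = S'$ is the strict transform of a prime component $T$ of $S$ that survives the MMP, so $(K_X+B)|_T \not\equiv 0$; since every step is crepant one has $p^*(K_X+B) = q^*(K_{X'}+B')$ on a common resolution of the graph, and restricting to the strict transform of $T$ and using the projection formula for surjective bimeromorphic morphisms of surfaces shows that numerical non-triviality of the restriction to a non-contracted component is preserved, whence $(K_{X'}+B')|_{T'} \not\equiv 0$.

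Finally there remains (d), which I expect to be the main obstacle. Writing $K_{X'} \sim_\Q \frac1m D'$ with $D' = \sum_i a_i T_i'$ effective and supported on $B'$, one has
\[
(K_{X'}+B')\cdot K_{X'}^2 \;=\; \frac1m\sum_i a_i\,\bigl((K_{X'}+B')|_{T_i'}\cdot K_{X'}|_{T_i'}\bigr),
\]
so (d) reduces to controlling, on each component $T_i'$ of $B'$, the sign of $(K_{X'}+B')|_{T_i'}\cdot K_{X'}|_{T_i'}$; after normalisation and minimal resolution $\pi_i\colon \hat T_i \to T_i'$ the nef class $\pi_i^*((K_{X'}+B')|_{T_i'})$ becomes $K_{\hat T_i}+E_i$ with $E_i \geq 0$ effective by the adjunction formula \eqref{equationresolve}. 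The difficulty is that $K_{X'}$ itself need not be nef, so this sign is not governed by formal positivity alone. The route I would pursue is to combine the Hodge index theorem for the nef class $K_{X'}+B'$ — available since $\nu(K_{X'}+B')=2$ gives $(K_{X'}+B')^3 = 0$ — with a Miyaoka-type semipositivity inequality for the non-uniruled klt threefold $X'$ (a generalisation of Miyaoka's bound on the second Chern class, of the kind announced in the introduction), and to bootstrap from these the desired lower bound $(K_{X'}+B')\cdot K_{X'}^2 \geq 0$. Making this last step precise is the crux of the proof.
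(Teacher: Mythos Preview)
Your treatment of (a), (b), (c) is essentially the paper's Part I: choose $S$ to be the sum of the components $T\subset B$ with $(K_X+B)|_T\not\equiv 0$, run the MMP of Lemma \ref{lemmaprepareMMP}, and use Lemma \ref{lemmaconnected2} to force $B'=S'$, whence $(X',0)=(X',B'-S')$ is dlt hence klt. Your argument for (c) is also the paper's, via the ``if and only if'' clause of Lemma \ref{lemmacrepantMMP}\,a).

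The gap is (d). After Part I one has no control whatsoever over $(K_{\hat X}+\hat B)\cdot K_{\hat X}^2$, and the paper does not try to deduce (d) from positivity inequalities. Instead it runs a \emph{second} MMP, now a $K$-MMP for the klt pair $(\hat X,0)$, arranged so that every step is $(K+B)$-trivial. The mechanism is: if $(K_{X_i}+B_i)\cdot K_{X_i}^2<0$, use slc surface abundance (Proposition \ref{propositionslcone}) to write $(K_{X_i}+B_i)|_{B_{i,l}}\sim_\Q Z_{i,l}$ with $Z_{i,l}$ an effective $1$-cycle; your own decomposition then forces $K_{X_i}|_{B_{i,1}}\cdot Z_{i,1}<0$ for some $l=1$, so $Z_{i,1}$ is a $K_{X_i}$-negative class. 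The cone theorem for $(X_i,0)$ extracts from it a $K_{X_i}$-negative extremal ray $R_i$, and since $K_{X_i}+B_i$ is nef with $(K_{X_i}+B_i)\cdot Z_{i,1}=(K_{X_i}+B_i)^2\cdot B_{i,1}=0$, the ray is automatically $(K+B)$-trivial. Its contraction or flip preserves (a)--(c) by Lemma \ref{lemmacrepantMMP} and Lemma \ref{lemmaisomorphism}, and termination is Theorem \ref{theoremKtermination}. At the end no such $K$-negative, $(K+B)$-trivial ray exists, which forces $(K_{X'}+B')\cdot K_{X'}^2\geq 0$.

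Your proposed route through the Hodge index theorem and a Miyaoka-type bound does not work and is in fact circular. The inequality of Theorem \ref{theoremnonnegative} controls $L\cdot c_2$, not $L\cdot K^2$; in the proof of Theorem \ref{theoremnutwo} the bound (d) is used as an \emph{input} (see \eqref{liftnonnegative}) alongside the Chern-class estimate, not derived from it. And Hodge index for $L=K_{X'}+B'$ with $L^3=0$ only gives $(L^2\cdot K_{X'})^2\geq 0$, which is vacuous.
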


\begin{proof}
The construction of the K\"ahler space $X'$ proceeds in two steps. 

{\em Part I. Eliminating the $(K+B)$-trivial components.}
Let $S \subset B$ be the union of all the irreducible components $T \subset B$ such that $(K_X+B)|_{T} \not\equiv 0$.
Since $\nu(K_X+B)=2$, the set $S$ is not empty. Thus we can apply Lemma \ref{lemmaprepareMMP}
to obtain a sequence of $(K+B-S)$-negative contractions
$$
\psi: X \dashrightarrow \hat X
$$
where $\hat X$ is a normal $\Q$-factorial compact K\"ahler threefold carrying 
a divisor $\hat D \in |m K_{\hat X}|$ subject to the following properties:
\begin{enumerate}
\item[$(\alpha)$] Set $\hat B:=\supp \hat D$. The pair $(\hat X, \hat B)$ is lc and $\hat X \setminus \hat B$ has terminal singularities.
\item[$(\beta)$] The divisor $K_{\hat X}+\hat B$ is nef with $\nu(K_{\hat X}+\hat B)=2$. Moreover,
$\kappa(K_{X}+B) = \kappa(K_{\hat X}+\hat B).$
\item[$(\gamma$)] Set $\hat S := \psi_* S$.  Then the pair $(\hat X, \hat B- \hat S)$ is dlt.
\item[$(\delta)$] ${\rm supp}(\hat B - \hat S) \cap \hat S = \emptyset$ and $\hat S \neq 0$.
\end{enumerate}
Notice also that $K_{\hat X}+\hat B$ is $\Q$-linearly equivalent to an effective
divisor with support in $\hat B$. By Lemma  \ref{lemmaconnected2} this implies that
$\hat B = \hat S + (\hat B - \hat S)$ is connected. Thus property $(\delta)$ implies that 
$\hat B - \hat S=0$. Hence $(K_{\hat X}+\hat B)|_{T}$
is non-zero for every irreducible component $T \subset \hat B$.

{\em Part II. Eliminating $K$-negative curves.}
Setting 
$$
X_0 :=\hat X, \ B_0 := \hat B, \ D_0 := \hat D,
$$
we will next construct  a finite sequence of $K$-negative contractions and flips 
$$
\left(
\varphi_i: X_i  \dashrightarrow X_{i+1}
\right)_{i=0, \ldots, n}
$$
of extremal rays $R_i \in \NA{X_i}$ such that the following properties hold:
\begin{enumerate}
\item[(1)]  $X_{i+1} $ is $\bQ-$factorial and $(X_{i+1},0)$ is klt.
\item[(2)] Set $D_{i+1} := (\varphi_i)_* D_i$.
Then $D_{i+1} \in |m K_{X_{i+1}}|$ satisfies $\supp D_{i+1}=B_{i+1}$.
\item[(3)] We have $(K_{X_i}+B_i) \cdot R_i = 0$, the divisor $K_{X_{i+1}}+B_{i+1}$ is nef and $\nu(K_{X_{i+1}}+B_{i+1})=2$.
\item[(4)] The pair $(X_{i+1}, B_{i+1})$ is lc and  $X_{i+1} \setminus B_{i+1}$ has terminal singularities.
\item[(5)] For every irreducible component $T \subset B_{i+1} $ we have $(K_{X_{i+1} }+B_{i+1})|_{T} \neq 0$.
\item[(6)] $(K_{X_{n+1}}+B_{n+1}) \cdot K_{X_{n+1}}^2 \geq 0$.
\end{enumerate}

Note first that $(\hat X,0)$ is klt singularities since $(\hat X, \hat B - \hat S)=(\hat X, 0)$ is dlt \cite[Prop.2.41]{KM98}.
If $(K_{\hat X}+ \hat B) \cdot K_{\hat X}^2 \geq 0$, there is nothing to prove, thus we may assume that 
$$(K_{\hat X}+ \hat B) \cdot K_{\hat X}^2 < 0.$$

We therefore start by showing that $(K_{X_i}+B_i) \cdot K_{X_i}^2<0$ implies the existence of a $K_{X_i}$-negative 
contraction that is $(K_{X_i}+B_i)$-trivial.
Let $B_i = \sum B_{i,l}$
be the decomposition of $B_i$ in its irreducible components. Since the pair $(X_i, B_i)$ is lc, the surfaces $B_{i,l}$ have
slc singularities. Moreover, by adjunction \cite[Prop.16.9]{Uta92}, there exists a boundary divisor $\Delta_{i,l}$ such that
$$
K_{B_{i,l}}+\Delta_{i,l} \sim_\Q (K_{X_i}+B_i)|_{B_{i,l}},
$$
and the pair $(B_{i,l}, \Delta_{i,l})$ is slc.
By Proposition \ref{propositionslcone} this implies that 
$$
(K_{X_i}+B_i)|_{B_{i,l}} \sim_\Q Z_{i,l}
$$ 
with $Z_{i,l}$ an effective $1$-cycle.

Suppose that $K_{X_i}|_{B_{i,l}} \cdot Z_{i,l} \geq 0$
for all $l$. Since 
$$K_{X_i} \sim_\Q  \sum a_{i,l} B_{i,l}$$ 
with $a_{i,l}>0$ we conclude
$$
(K_{X_i}+B_i) \cdot K_{X_i}^2 = \sum  a_{i,l}  (K_{X_i}+B_i) \cdot K_{X_i} \cdot B_{i,l} = 
\sum  a_{i,l}  K_{X_i}|_{B_{i,l}} \cdot Z_{i,l} \geq 0,
$$ 
a contradiction. Thus we can suppose (up to renumbering) that
$$K_{X_i}|_{B_{i,1}} \cdot Z_{i,1} < 0.$$

By the cone theorem \ref{theoremNA} applied to the pair $(X_i, 0)$, there exists a decomposition
\begin{equation} \label{decomposezi}
Z_{i,1} = \sum \lambda_j C_{i,j} + M_i,
\end{equation}
where $\lambda_j>0$, the $C_{i,j}$ are irreducible curves generating a $K_{X_i}$-negative extremal in $\NA{X_i}$ and  where $M_i \in \NA{X_i}$ is
a class satisfying $K_{X_i} \cdot M_i \geq 0$. 

Notice now the following: since $K_{X_i}+B_i$ is $\Q$-linearly equivalent to an effective divisor with support $B_i$
and $(K_{X_i}+B_i)^3=0$ we have
$$
(K_{X_i}+B_i)^2 \cdot B_{i,l}=0
$$ 
for all $l$. In particular $(K_{X_i}+B_i) \cdot Z_{i,1}=0$. Since $K_{X_i}+B_i$ is nef we deduce from \eqref{decomposezi} that
\begin{equation} \label{trivialonray}
(K_{X_i}+B_i) \cdot C_{i,1}= 0.
\end{equation}
Let now $R_i$ be the extremal ray generated by $C_{i,1}$.
As in the proof of Lemma \ref{lemmaprepareMMP}, the locus of $R_i$ is contained in a surface with slc singularities (a component of $B_i$). 
Thus the contraction of $R_i$ exists by Theorem \ref{theoremcontraction} and, if the extremal ray is small, the
flip exists by Theorem \ref{theoremexistenceflip}. We denote by
$$
\varphi_i: (X_i, 0) \dashrightarrow (X_{i+1}, 0)
$$
the flip or divisorial contraction of $R_i$.  As in the proof of Lemma \ref{lemmaMMP1} we see that the properties $(1)$ and $(2)$ 
hold at level $i+1$.

By the induction hypothesis, the divisor $K_{X_i}+B_i$ is nef of numerical dimension two and by \eqref{trivialonray} numerically trivial on the extremal ray $R_i$.
Using Lemma \ref{lemmacrepantMMP} this implies that $K_{X_{i+1}}+B_{i+1}$ is nef
of numerical dimension two, moreover the pair $(X_{i+1}, B_{i+1})$ is lc.
Finally Lemma \ref{lemmaisomorphism} implies that 
$(X_{i+1} \setminus B_{i+1}) \simeq (X_i \setminus B_i)$
has terminal singularities. 

{\em Part III}
In total we have constructed a sequence of $K$-negative contractions satisfying the properties
(1)-(5). By Theorem \ref{theoremKtermination} any such sequence terminates after $n$ steps.
By construction this yields
$$
(K_{X_{n+1}}+B_{n+1}) \cdot K_{X_{n+1}}^2 \geq 0.
$$ 
Since all the contractions are $K+B$-trivial, the Kodaira dimension is invariant, i.e. we have
$\kappa(K_{\hat X}+ \hat B) = \kappa(K_{X_{n+1}}+B_{n+1})$.
\end{proof}


\section{Positivity of cotangent sheaves} \label{sectionpositivity}

In this section we briefly review stability and Chern classes on singular complex spaces,
then we prove the crucial Chern class inequality Theorem \ref{theoremnonnegative}.
For simplicity of notations we restrict ourselves to the case of threefolds with isolated singularities - which is all we need later - but all the statements
can easily be adapted for spaces of arbitrary dimension that are smooth in codimension two.

\begin{definition} \label{definitionnumbers}
Let $X$ be a normal compact K\"ahler threefold with isolated singularities.
Let $\sF_1$ and $\sF_2$ be coherent sheaves on $X$, and let $\holom{\pi}{\hat X}{X}$ be a log-resolution. 

Then the Chern classes $c_i(\pi^* \sF_1)$ and $c_i(\pi^* \sF_2)$ are well-defined elements of $H^{2i}(\hat X, \Z)$ \cite{ToTo86}, see also \cite{Gri10}. Thus for every $\alpha \in N^1(X)$ the intersection numbers
$$
\pi^* \alpha  \cdot c_1(\pi^* \sF_1) \cdot c_1(\pi^* \sF_2) \qquad \mbox{and} \qquad
\pi^* \alpha  \cdot c_2(\pi^* \sF_1) 
$$
are well-defined, so by the duality $N_1(X) = N^1(X)^*$ we define
$$
c_1(\sF_1) \cdot c_1(\sF_2) \in N_1(X): \alpha \mapsto \pi^* \alpha  \cdot c_1(\pi^* \sF_1) \cdot c_1(\pi^* \sF_2)
$$
and
$$
c_2(\sF_1) \in N_1(X): \alpha \mapsto \pi^* \alpha  \cdot c_2(\pi^* \sF_1).
$$
\end{definition}

\begin{lemma} \label{lemmaexactsequence}
In the situation of Definition \ref{definitionnumbers}, the classes $c_1(\sF_1) \cdot c_1(\sF_2)$ and $c_2(\sF_1)$ do not depend on the choice of the desingularisation.
Moreover if 
$$
0 \rightarrow \sF \rightarrow \sG \rightarrow \sQ \rightarrow 0
$$
is an exact sequence of coherent sheaves, then we have the usual formula
$$
c_2(\sG) = c_2(\sF) + c_2(\sQ) + c_1(\sF) \cdot c_1(\sQ).
$$
\end{lemma}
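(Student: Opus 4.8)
The plan is to reduce both assertions to standard properties of Chern classes of coherent sheaves on the \emph{smooth} space $\hat X$ (\cite{ToTo86},\cite{Gri10}), the one delicate point being that pull-back of coherent sheaves is only right exact, so that $\mathcal{T}or$-correction terms appear. I would first record that one may always assume a log-resolution $\holom{\pi}{\hat X}{X}$ to be an isomorphism over $X\setminus X_{\sing}$: two such are dominated by a third with the same property (resolve the closure of the graph of the bimeromorphic map $\hat X_1\to\hat X_2$ inside $\hat X_1\times_X\hat X_2$, which is already an isomorphism over $X\setminus X_{\sing}$). For such a $\pi$ every $\pi$-exceptional prime divisor $E$, and every curve $Z$ contained in the $\pi$-exceptional locus, is contracted by $\pi$ to one of the finitely many points of $X_{\sing}$; consequently $\pi^*\alpha|_E=0$, hence $\pi^*\alpha\cdot E\cdot\beta=0$ for every $\beta\in H^2(\hat X,\R)$, and $\pi^*\alpha\cdot[Z]=\alpha\cdot\pi_*[Z]=0$ for every $\alpha\in N^1(X)$. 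These two vanishings are what kill all error terms below.

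\emph{Independence of the resolution.} Given log-resolutions $\pi_1,\pi_2$ as above, dominate them by $\holom{\pi}{\hat X}{X}$ via bimeromorphic $\holom{p_j}{\hat X}{\hat X_j}$ which are isomorphisms over $\pi_j^{-1}(X\setminus X_{\sing})$, so $\mathrm{Ex}(p_j)$ lies over $X_{\sing}$. Since $\pi_j$ is flat over $X\setminus X_{\sing}$, in $K_0(\hat X)$ one has $[\pi^*\sF_i]=[p_j^*\pi_j^*\sF_i]=[Lp_j^*\pi_j^*\sF_i]+\tau_{i,j}$ with $\tau_{i,j}:=\sum_{k\ge1}(-1)^{k+1}[\mathcal{T}or_k^{\sO_{\hat X_j}}(\pi_j^*\sF_i,\sO_{\hat X})]$ a virtual class supported on $\mathrm{Ex}(p_j)$, i.e.\ over $X_{\sing}$. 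By functoriality of Chern classes of coherent sheaves under derived pull-back (\cite{Gri10}), $c_\bullet(Lp_j^*\pi_j^*\sF_i)=p_j^*c_\bullet(\pi_j^*\sF_i)$, whence
$$c_1(\pi^*\sF_i)=p_j^*c_1(\pi_j^*\sF_i)+c_1(\tau_{i,j}),\qquad c_2(\pi^*\sF_i)=p_j^*c_2(\pi_j^*\sF_i)+p_j^*c_1(\pi_j^*\sF_i)\cdot c_1(\tau_{i,j})+c_2(\tau_{i,j}),$$
where $c_1(\tau_{i,j})$ and $c_2(\tau_{i,j})$ are supported on the $\pi$-exceptional locus over $X_{\sing}$. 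Intersecting with $\pi^*\alpha$, every term carrying a factor $c_1(\tau_{i,j})$ or $c_2(\tau_{i,j})$ dies by the vanishings above, while $\pi^*\alpha\cdot p_j^*\eta=p_j^*(\pi_j^*\alpha\cdot\eta)$ integrates to $\pi_j^*\alpha\cdot\eta$ on $\hat X_j$ because $p_j$ has degree one. Expanding the relevant products this gives $\pi^*\alpha\cdot c_2(\pi^*\sF_1)=\pi_j^*\alpha\cdot c_2(\pi_j^*\sF_1)$ and $\pi^*\alpha\cdot c_1(\pi^*\sF_1)\cdot c_1(\pi^*\sF_2)=\pi_j^*\alpha\cdot c_1(\pi_j^*\sF_1)\cdot c_1(\pi_j^*\sF_2)$ for $j=1,2$, which is the claimed independence.

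\emph{The Whitney-type formula.} Fix a log-resolution $\holom{\pi}{\hat X}{X}$ as above and pull back $0\to\sF\to\sG\to\sQ\to0$. From the long exact $\mathcal{T}or$-sequence together with the flatness of $\pi$ over $X\setminus X_{\sing}$ one obtains in $K_0(\hat X)$ an identity $[\pi^*\sG]=[\pi^*\sF]+[\pi^*\sQ]+\sigma$, where $\sigma$ (a signed sum of $\mathcal{T}or_{\ge1}^{\sO_X}(-,\sO_{\hat X})$ of $\sF,\sG,\sQ$) is supported over $X_{\sing}$. Multiplicativity of Chern classes of coherent sheaves on the smooth manifold $\hat X$ over $K_0$ (\cite{ToTo86},\cite{Gri10}) yields $c(\pi^*\sG)=c(\pi^*\sF)\,c(\pi^*\sQ)\,c(\sigma)$, whose degree-two part is $c_2(\pi^*\sG)=c_2(\pi^*\sF)+c_2(\pi^*\sQ)+c_1(\pi^*\sF)c_1(\pi^*\sQ)+R$, with $R$ a sum of terms each carrying a factor $c_1(\sigma)$ or $c_2(\sigma)$, hence supported over $X_{\sing}$. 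By the vanishings above $\pi^*\alpha\cdot R=0$ for all $\alpha\in N^1(X)$, so $\pi^*\alpha\cdot c_2(\pi^*\sG)=\pi^*\alpha\cdot\big(c_2(\pi^*\sF)+c_2(\pi^*\sQ)+c_1(\pi^*\sF)c_1(\pi^*\sQ)\big)$, i.e.\ the asserted identity in $N_1(X)$.

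The step I expect to be the real obstacle is precisely the non-exactness of $\pi^*$: a priori the $\mathcal{T}or$-terms contribute to both $c_1$ and $c_2$, and without care they would spoil both statements (as one sees already for skyscraper or structure sheaves of subvarieties). Two points rescue the argument: because $X$ has only \emph{isolated} singularities one can keep all resolutions isomorphic over $X\setminus X_{\sing}$, so every correction lives on $\pi$-exceptional divisors and curves lying over the finite set $X_{\sing}$; and because $c_1(\sF_1)c_1(\sF_2)$ and $c_2(\sF_1)$ are only invariants in $N_1(X)$, any class carried by a $\pi$-contracted subvariety is invisible. Everything else is bookkeeping with the projection formula and the standard formalism of Chern classes of coherent analytic sheaves.
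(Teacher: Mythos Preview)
Your proof is correct and follows essentially the same strategy as the paper: both arguments hinge on the observation that, since the singularities are isolated and the resolution is an isomorphism outside $X_{\sing}$, every correction term is supported on the $\pi$-exceptional locus lying over a finite set, and therefore pairs trivially with $\pi^*\alpha$.

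The execution differs slightly. For independence of the resolution, the paper dominates one resolution by another via $q\colon \hat X_2\to\hat X_1$ and invokes directly the functoriality $c_j(q^*\mathcal G)=q^*c_j(\mathcal G)$ for \emph{ordinary} pull-back of coherent sheaves between compact complex manifolds, citing \cite{Gri10}; you instead use functoriality for the \emph{derived} pull-back and then absorb the $\mathcal{T}\!or$-corrections into classes supported on the exceptional locus. Your route is more self-contained (it does not rely on this specific feature of Grivaux's construction), at the cost of more bookkeeping. For the vanishing of $\pi^*\alpha\cdot c_2(\tau)$ when $\tau$ is supported on the exceptional divisor, the paper is more explicit: it filters $\tau$ by restriction to the irreducible components $D_k$ and applies Grothendieck--Riemann--Roch to each inclusion $i^k\colon D_k\hookrightarrow \hat X$ together with the projection formula. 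You assert this vanishing by appeal to your two recorded vanishings; this is correct, but a reader might appreciate one sentence explaining why $c_2(\tau)$ is represented by a $1$-cycle supported in the exceptional locus (e.g.\ via $c_2=\tfrac12 c_1^2-\mathrm{ch}_2$ and localization of $\mathrm{ch}$, or via GRR as in the paper).
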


\begin{proof}

In order to see for instance that the definition $c_2(\sF)$ does not depend on the resolution it suffices to consider the case where
$\holom{\pi_1}{\hat X_1}{X}$ and $\holom{\pi_2}{\hat X_2}{X}$ are two log-resolutions with a factorisation
$\holom{q}{X_2}{X_1}$.  Then we want to show that
$$
(q^* \pi_1^* \alpha) \cdot c_2(q^* \pi_1^* \sF) = (\pi_1^* \alpha) \cdot c_2(\pi_1^* \sF).
$$ 
This follows from the general fact, that, given a holomorphic map $f: X\to Y$ between compact complex manifolds and 
$\sG$ a coherent sheaf on $Y,$ then $c_j(f^*(\sG)) = f^*(c_j(\sG)),$ cp. e.g. \cite{Gri10}.

Next consider a log-resolution $\holom{\pi}{\hat X}{X}$ with exceptional locus $D = \sum D_k$ and $\sF$ a coherent sheaf with support on $D$.
Let $i^k : D_k \to X$ be the inclusion. 
Then by the Grothendieck-Riemann-Roch formula, see e.g., \cite[Thm1.1]{Gri10}  (and \cite[Thm.15.2]{Ful98} in the algebraic case), 
$$
\mbox{ch}(i^k_* (\sF|_{D_k}) = i^k_* (\mbox{td}(N_{D_k/X})^{-1} \cdot \mbox{ch}(\sF|_{D_k})).
$$
Thus if $\alpha \in N^1(X)$, then the projection formula gives 
$$\pi^* \alpha \cdot c_2(i^k_*(\sF|_{D_k}) = 0.$$
 Now an easy induction on the number of components of $D$ shows that 
 \begin{equation} \label{chernfor}
 \pi^*(\alpha) \cdot c_2(\sF)  = 0
 \end{equation}

For the second statement we consider the pull-back 
$$
\pi^* \sF \stackrel{\alpha}{\rightarrow} \pi^* \sG \rightarrow \pi^* \sQ \rightarrow 0
$$
by a log-resolution $\holom{\pi}{\hat X}{X}$. In general the morphism $\alpha$ is not injective, but its kernel has support in the $\pi$-exceptional locus. 
Using the usual rules for computing Chern classes on compact manifolds and invoking (\ref{chernfor}), we have 
$$\pi^* \alpha \cdot c_2(\pi^* \sF) =  \pi^* \alpha \cdot c_2(\pi^* \sF/\ker \alpha)$$ 
and $$\pi^* \alpha \cdot c_1(\pi^* \sF) \cdot c_1(\pi^* \sQ) =  \pi^* \alpha \cdot c_1(\pi^* \sF/\ker \alpha) \cdot c_1(\pi^* \sQ).$$
Thus the statement follows from the standard formula in the smooth case. 
\end{proof}

\begin{remark} \label{torsion} In the proof of Lemma \ref{lemmaexactsequence} we have shown the following. 
Let $\pi: \hat X\to X$ be a log resolution of the K\"ahler threefold $X$ with only isolated singularities. Let $\sF$ be a coherent sheaf on $\hat X$,
supported on the exceptional divisor $D$ of $\pi$ and 
let $\alpha \in N^1(X).$ Then we have 
$$\pi^*(\alpha) \cdot c_2(\sF) = \pi^*(\alpha) \cdot c_1(\sF)^2 = 0.$$
\end{remark}

Given a normal compact K\"ahler threefold $X$ with isolated singularities and a torsion-free sheaf $\sF$ there is no obvious candidate for 
the first Chern class $c_1(\sF) \in H^2(X, \R)$. However we can define, as in the situation of Definition \ref{definitionnumbers}, for every $\alpha \in N^1(X)$ the intersection number
$\alpha^2 \cdot c_1(\sF)$ by pulling-back to some log-resolution $\pi: \hat X \to X$:
$$ 
\alpha^2 \cdot c_1(\sF)  := (\pi^*(\alpha))^2 \cdot c_1(\pi^*(\sF)).
$$ 

\begin{definition}
Let $X$ be a normal compact K\"ahler threefold with isolated singularities 
and let $\alpha$ be a nef class on $X$. We say that a non-zero torsion-free sheaf $\sF$ is $\alpha$-semistable (resp. $\alpha$-stable)
if for every non-zero saturated subsheaf $\sE \subset \sF$ we have
$$
\mu_{\alpha}(\sE) := \frac{\alpha^{2} \cdot c_1(\sE) }{\rk \sE} \leq \frac{\alpha^{2} \cdot c_1(\sF) }{\rk \sF}  =: \mu_{\alpha}(\sF) \qquad (\mbox{resp.} \ <). 
$$
\end{definition}

\begin{proposition} (Harder-Narasimhan filtration)
Let $X$ be a normal compact K\"ahler threefold with isolated singularities, 
and let $\alpha$ be a nef class on $X$. Let $\sF$ be a non-zero torsion-free coherent sheaf on $X$. Then there exists a filtration
$$
0 = \sF_0 \subset \sF_1 \subset \ldots \subset \sF_k = \sF
$$
such that for every $i \in \{1, \ldots, k\}$ the quotient $\sF_i/\sF_{i-1}$ is $\alpha$-semistable and we have a strictly decreasing sequence
of slopes
$$ \mu_{\alpha}(\sF_{i}/\sF_{i-1}) > \mu_{\alpha}(\sF_{i+1}/\sF_{i}) 
\qquad \forall i \in {1, \ldots, k-1}.
$$
\end{proposition}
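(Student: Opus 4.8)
The plan is to reproduce the classical construction of the Harder--Narasimhan filtration from the theory of slope-semistable sheaves, reducing the two points that are genuinely new here --- additivity of the numerical invariants in short exact sequences and boundedness of the slopes of saturated subsheaves --- to the corresponding facts on a smooth compact K\"ahler manifold via a log-resolution $\pi: \hat X \to X$.

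First I would record that $\rk$ and $\alpha^{2}\cdot c_1(-)$ are additive in short exact sequences: for $0 \to \sA \to \sB \to \sC \to 0$ of coherent sheaves on $X$ one has $\rk\sB = \rk\sA + \rk\sC$ and $\alpha^{2}\cdot c_1(\sB) = \alpha^{2}\cdot c_1(\sA) + \alpha^{2}\cdot c_1(\sC)$. This is proved exactly as in Lemma \ref{lemmaexactsequence}: after pulling back by $\pi$, the kernel of $\pi^{*}\sA \to \pi^{*}\sB$ is supported on the $\pi$-exceptional locus and hence contributes $0$ to $(\pi^{*}\alpha)^{2}\cdot c_1(-)$ by Remark \ref{torsion}, so the identity follows from the smooth case. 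From this one obtains the usual ``seesaw'' property: writing $M$ for the supremum of $\mu_\alpha(\sE)$ over all non-zero saturated $\sE \subseteq \sF$, if $\sE_1,\sE_2 \subseteq \sF$ are saturated with $\mu_\alpha(\sE_1) = \mu_\alpha(\sE_2) = M$, then applying additivity to $0 \to \sE_1 \cap \sE_2 \to \sE_1 \oplus \sE_2 \to \sE_1+\sE_2 \to 0$ (all terms torsion-free, being subsheaves of $\sF$) together with $\mu_\alpha(\sE_1 \cap \sE_2) \leq M$ forces $\mu_\alpha(\sE_1+\sE_2) = M$.

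The heart of the argument is the boundedness statement: $M < \infty$, and among the saturated subsheaves of $\sF$ of each fixed rank the slope is bounded above and attained. Setting $\hat\sF := (\pi^{*}\sF)/\mathrm{tors}$ and $\beta := \pi^{*}\alpha$, a nef class on the compact K\"ahler manifold $\hat X$, to a saturated $\sE \subseteq \sF$ one associates the saturation $\hat\sE \subseteq \hat\sF$ of the image of $\pi^{*}\sE$; by Remark \ref{torsion} and the additivity above one has $\rk\hat\sE = \rk\sE$ and $\mu_\alpha(\sE) = \mu_\beta(\hat\sE)$, so everything reduces to the analogous statement for saturated subsheaves of the fixed torsion-free sheaf $\hat\sF$ on $\hat X$ with respect to the nef class $\beta$. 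By additivity this amounts to a lower bound for $\beta^{2}\cdot c_1(\sQ)$ over torsion-free quotients $\sQ$ of $\hat\sF$ of given rank, plus properness of the family of those achieving slopes near the extremal value --- an analytic version of Grothendieck's lemma. For a K\"ahler class this is classical; for the merely nef $\beta$ one would pass to the limit from $\beta + \varepsilon\omega$ with $\omega$ a fixed K\"ahler form. I expect this boundedness --- controlling quotient sheaves in the analytic category and handling the degeneration from a K\"ahler to a nef polarisation --- to be the main obstacle.

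Granting boundedness, the construction is the standard one: since ranks of saturated subsheaves lie in $\{1,\dots,\rk\sF\}$, the supremum $M$ is attained, and among all saturated $\sE \subseteq \sF$ with $\mu_\alpha(\sE) = M$ I would choose $\sF_1$ of maximal rank. By the seesaw property $\sF_1$ is the unique largest saturated subsheaf of slope $M$; it therefore contains every $\alpha$-destabilising subsheaf, so it is $\alpha$-semistable, and any saturated $\sE$ with $\sF_1 \subsetneq \sE \subseteq \sF$ satisfies $\rk\sE > \rk\sF_1$ and hence $\mu_\alpha(\sE) < M$, which by additivity yields $\mu_{\max}(\sF/\sF_1) < M$. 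As $\sF/\sF_1$ is torsion-free of rank strictly smaller than $\rk\sF$, induction on the rank gives a Harder--Narasimhan filtration of $\sF/\sF_1$; pulling it back to $\sF$ and prepending $\sF_1$ produces the desired filtration, the strict decrease of slopes following from the construction and $\mu_\alpha(\sF_1) = M > \mu_{\max}(\sF/\sF_1) = \mu_\alpha(\sF_2/\sF_1)$.
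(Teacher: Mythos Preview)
Your proposal is correct and follows essentially the same approach as the paper: reduce the boundedness of slopes of subsheaves to the smooth case via a log-resolution $\pi:\hat X\to X$ (the paper uses $(\pi^*\sF)^{**}$ locally free, you use $(\pi^*\sF)/\mathrm{tors}$, which is equivalent for this purpose), invoke Kobayashi's argument \cite[Lemma 7.16]{Kob87} on $\hat X$, and then carry out the classical construction. The paper's proof is terser---it simply asserts that Kobayashi's proof works verbatim in the smooth locally free case and that the rest is standard---whereas you spell out the additivity, the seesaw property, and the induction on rank, and you explicitly flag the passage from a K\"ahler to a nef polarisation; but the underlying strategy is the same.
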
 

\begin{proof} We proceed as in \cite{Kob87}; the main point is to show that there is a constant $C$ such that 
$$ \mu_{\alpha}(\sF) \leq C $$
for all $\sF \subset \sE.$ 
First we reduce to the smooth case: take a log resolution $\pi: \hat X \to X$ such that $\pi^*(\sE)^{**} $ is locally free  and observe that 
$\mu_{\alpha}(\sF) = \mu_{\pi^*(\alpha)}(\pi^*(\sF) / {\rm tor})).$ 

So we may assume $X$ smooth and $\sE$ locally free. Then the proof of \cite[Lemma 7.16]{Kob87} works. 
Once the boundedness is settled, the existence of the filtration is shown as in the classical case. 
\end{proof}

We shall also use the following elementary result, cp. \cite[Thm.7.18]{Kob87}. 
\begin{proposition} (Jordan-H\"older filtration)
Let $X$ be a normal compact K\"ahler threefold with isolated singularities, 
and let $\alpha$ be a nef class on $X$. Let $\sF$ be a non-zero $\alpha-$semi-stable torsion-free coherent sheaf on $X$. Then there exists a filtration
$$
0 = \sF_0 \subset \sF_1 \subset \ldots \subset \sF_k = \sF
$$
such that for every $i \in \{1, \ldots, k\}$ the quotient $\sF_i/\sF_{i-1}$ is $\alpha$-stable.
\end{proposition}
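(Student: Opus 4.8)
The plan is to argue by induction on $\rk \sF$, exactly as in the classical proof \cite[Thm.7.18]{Kob87}; the only points that use the singular set-up are the behaviour of the slope $\mu_\alpha$ under extensions and under saturation. From Lemma \ref{lemmaexactsequence} and the definition of $\mu_\alpha$ via a log-resolution $\pi\colon \hat X \to X$, a short exact sequence $0 \to \sA \to \sB \to \sC \to 0$ of torsion-free sheaves satisfies $\rk \sB = \rk \sA + \rk \sC$ and $\alpha^2 \cdot c_1(\sB) = \alpha^2 \cdot c_1(\sA) + \alpha^2 \cdot c_1(\sC)$, so $\rk \cdot \mu_\alpha$ is additive along such sequences; and if $\bar \sA$ denotes the saturation of a subsheaf $\sA \subset \sF$, then $c_1(\bar \sA) - c_1(\sA)$ is the class of an effective divisor, whose total transform on $\hat X$ is effective, whence $\mu_\alpha(\bar \sA) \ge \mu_\alpha(\sA)$ because $\pi^* \alpha$ is nef.

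For the base case, a non-zero rank-one torsion-free sheaf has no proper non-zero saturated subsheaf — such a subsheaf would be either torsion, hence zero, or would have torsion-free quotient of rank zero, hence equal the whole sheaf — so it is $\alpha$-stable and $0 \subset \sF$ is the desired filtration. For the inductive step, if $\sF$ is $\alpha$-stable take $k = 1$. Otherwise the family $\mathcal S$ of proper non-zero saturated subsheaves $\sE \subsetneq \sF$ with $\mu_\alpha(\sE) = \mu_\alpha(\sF)$ is non-empty (since $\sF$ is $\alpha$-semistable but not $\alpha$-stable) and the ranks of its members lie between $1$ and $\rk \sF - 1$. Choose $\sF_1 \in \mathcal S$ of minimal rank. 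Then $\sF_1$ is $\alpha$-semistable, because a saturated subsheaf of $\sF_1$ is saturated in $\sF$; and it is in fact $\alpha$-stable, since a proper non-zero saturated $\sG \subset \sF_1$ with $\mu_\alpha(\sG) = \mu_\alpha(\sF_1)$ would lie in $\mathcal S$ with rank strictly smaller than $\rk \sF_1$, contradicting minimality.

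It remains to check that $\sQ := \sF/\sF_1$ is $\alpha$-semistable, so that the inductive hypothesis applies to it. As $\sF_1$ is saturated, $\sQ$ is torsion-free of rank $\rk \sF - \rk \sF_1 < \rk \sF$, and additivity gives $\mu_\alpha(\sQ) = \mu_\alpha(\sF)$. If $\sQ' \subset \sQ$ is a non-zero saturated subsheaf, its preimage $\sG \subset \sF$ is saturated with $\sG / \sF_1 \cong \sQ'$, so $\mu_\alpha(\sG) \le \mu_\alpha(\sF)$; applying additivity of $\rk \cdot \mu_\alpha$ to $0 \to \sF_1 \to \sG \to \sQ' \to 0$ and using $\mu_\alpha(\sF_1) = \mu_\alpha(\sF)$ gives $\mu_\alpha(\sQ') \le \mu_\alpha(\sF) = \mu_\alpha(\sQ)$. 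Hence $\sQ$ is $\alpha$-semistable, and by the inductive hypothesis it carries a filtration with $\alpha$-stable quotients; its preimage in $\sF$, preceded by $\sF_0 = 0$ and $\sF_1$, is the required filtration, its successive quotients being $\sF_1$ together with those of the filtration of $\sQ$ — all $\alpha$-stable. The whole argument is formal once the two facts about $\mu_\alpha$ in the first paragraph are granted; verifying those — i.e.\ that the resolution-based $c_1$ behaves additively and that saturation does not decrease the slope — is the only place where anything beyond the smooth surface case is needed, and it is the mild obstacle here.
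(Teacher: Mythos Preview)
Your proof is correct and follows exactly the classical induction-on-rank argument of \cite[Thm.~7.18]{Kob87} that the paper cites as its proof; the paper treats the result as elementary and gives no further details, so you have in fact supplied what the paper omits. The only genuinely new points in the singular setting are the two slope properties you single out in your first paragraph, and these follow from the resolution-based definition together with the projection formula (exceptional divisors kill $\pi^*\alpha^2$) and nefness of $\pi^*\alpha$, as you indicate.
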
 

\begin{definition} \label{definitiongenericallynef}
Let $X$ be a normal compact K\"ahler threefold,
and let $\alpha$ be a nef class on $X$. A non-zero torsion-free coherent sheaf $\sF$ on $X$
is $\alpha$-generically nef if for every torsion-free quotient sheaf
$\sF \rightarrow Q \rightarrow 0$ we have
$$
\alpha^{2} \cdot c_1(Q) \geq 0.
$$
\end{definition}

\begin{remark} \label{remarkgenericallynef}
In the situation of Definition \ref{definitiongenericallynef}, let 
$$
0 = \sF_0 \subset \sF_1 \subset \ldots \subset \sF_k = \sF
$$
be the Harder-Narasimhan filtration with respect to $\alpha$. If $\sF$ is $\alpha$-generically nef, then by definition
$\alpha^{2} \cdot c_1(\sF/\sF_{k-1}) \geq 0$. Thus
$$
\alpha^{2} \cdot c_1(\sF_i/\sF_{i-1}) \geq 0
$$
for every $i \in \{1, \ldots, k\}$.
\end{remark}

We can now prove the Bogomolov inequality for {\em stable} sheaves on a singular space.

\begin{theorem} \label{theorembogomolov}
Let $X$ be a normal compact K\"ahler threefold with isolated singularities,
and let $\alpha$ be a K\"ahler class on $X$.  Let $\sF$ be an $\alpha-$stable non-zero torsion-free coherent sheaf on $X$.
Then we have
$$
\alpha \cdot c_2(\sF)  \geq \big(\frac{r-1}{2r}\big)  \alpha \cdot  c_1^2(\sF).
$$ 
\end{theorem}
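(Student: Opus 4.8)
The plan is to deduce the inequality from the classical Bogomolov--L\"ubke inequality on a resolution of singularities; the two things that require work are transferring stability to the resolution and passing from a genuine K\"ahler class to the (merely nef and big) pulled-back class.

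\emph{Step 1: reduction to a smooth model.} Fix a log-resolution $\holom{\pi}{\hat X}{X}$ and set $\hat\sF:=(\pi^*\sF)/\mathrm{tor}$, a torsion-free coherent sheaf of rank $r$ on the compact K\"ahler manifold $\hat X$. Since $X$ has isolated singularities, the torsion of $\pi^*\sF$ and the class $c_1(\pi^*\sF)-c_1(\hat\sF)$ are supported on the exceptional divisor $\sum D_k$ of $\pi$, and $(\pi^*\alpha)|_{D_k}=0$ because $\pi(D_k)$ is a point; hence Remark \ref{torsion} and the projection formula give
\[
\alpha\cdot c_2(\sF)=\pi^*\alpha\cdot c_2(\hat\sF),\qquad \alpha\cdot c_1^2(\sF)=\pi^*\alpha\cdot c_1(\hat\sF)^2.
\]
Passing to the reflexive hull $\hat\sG:=(\hat\sF)^{**}$ leaves $c_1$ unchanged and, the cokernel $\hat\sG/\hat\sF$ being supported in codimension $\ge2$, can only decrease $\pi^*\alpha\cdot c_2$, i.e.\ $\pi^*\alpha\cdot c_2(\hat\sF)\ge\pi^*\alpha\cdot c_2(\hat\sG)$. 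Thus it suffices to prove $\pi^*\alpha\cdot\big(2r\,c_2(\hat\sG)-(r-1)c_1(\hat\sG)^2\big)\ge0$.

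\emph{Step 2: stability of $\hat\sG$ with respect to $\pi^*\alpha$.} Since $\hat\sG$ and $\hat\sF$ agree outside a set of codimension $\ge2$, it is enough to prove that $\hat\sF$ is $\pi^*\alpha$-stable. Let $\hat\sE\subset\hat\sF$ be a saturated subsheaf with $0<\rk\hat\sE<r$. On $U:=\hat X\setminus\mathrm{Exc}(\pi)\cong X\setminus X_{\sing}$ the subsheaf $\hat\sE|_U\subset\sF|_U$ generates a subsheaf of $\sF$ whose saturation $\sE\subset\sF$ satisfies $\sE|_U=\hat\sE|_U$ and $0<\rk\sE<r$. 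As $c_1(\pi^*\sE/\mathrm{tor})-c_1(\hat\sE)$ is supported on $\mathrm{Exc}(\pi)$ and $(\pi^*\alpha)^2\cdot D_k=0$, one obtains $(\pi^*\alpha)^2\cdot c_1(\hat\sE)=\alpha^2\cdot c_1(\sE)$ and likewise $(\pi^*\alpha)^2\cdot c_1(\hat\sF)=\alpha^2\cdot c_1(\sF)$, so $\mu_{\pi^*\alpha}(\hat\sE)=\mu_\alpha(\sE)<\mu_\alpha(\sF)=\mu_{\pi^*\alpha}(\hat\sF)$ by the $\alpha$-stability of $\sF$.

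\emph{Step 3: the Bogomolov inequality by perturbation.} Fix a K\"ahler form $\hat\omega$ on $\hat X$ and set $\hat\alpha_\varepsilon:=\pi^*\alpha+\varepsilon\hat\omega$, a K\"ahler class for every $\varepsilon>0$. I claim $\hat\sG$ is $\hat\alpha_\varepsilon$-stable for all sufficiently small $\varepsilon$. Otherwise there are $\varepsilon_j\downarrow0$ and nonzero saturated subsheaves $\hat\sE_j\subsetneq\hat\sG$ with $\mu_{\hat\alpha_{\varepsilon_j}}(\hat\sE_j)\ge\mu_{\hat\alpha_{\varepsilon_j}}(\hat\sG)$; by the boundedness underlying the Harder--Narasimhan filtration (Grothendieck's lemma) the $\hat\sE_j$ lie in a bounded family, so after passing to a subsequence $\rk\hat\sE_j$ and $c_1(\hat\sE_j)$ are constant, and letting $j\to\infty$ in the slope inequality yields $\mu_{\pi^*\alpha}(\hat\sE_j)\ge\mu_{\pi^*\alpha}(\hat\sG)$, contradicting Step 2. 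Hence, for small $\varepsilon$, the reflexive sheaf $\hat\sG$ is $\hat\alpha_\varepsilon$-stable, so it carries an admissible Hermite--Einstein metric and the Bogomolov--L\"ubke inequality (\cite{BS94}; see \cite{Kob87} for the locally free case) gives $\hat\alpha_\varepsilon\cdot\big(2r\,c_2(\hat\sG)-(r-1)c_1(\hat\sG)^2\big)\ge0$. This expression is linear in $\varepsilon$, so letting $\varepsilon\to0$ yields the inequality for $\hat\sG$, and Step 1 then gives $\alpha\cdot c_2(\sF)\ge\frac{r-1}{2r}\,\alpha\cdot c_1^2(\sF)$.

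\emph{The main obstacle} is Step 3: the classical Bogomolov inequality (via Hermite--Einstein metrics) is available for genuine K\"ahler classes, whereas on the resolution one is forced to work with the only nef and big class $\pi^*\alpha$. The delicate point is to show that $\pi^*\alpha$-stability of $\hat\sG$ propagates to $\hat\alpha_\varepsilon$-stability for small $\varepsilon$, which rests on the boundedness of the relevant families of destabilising subsheaves --- exactly the boundedness that enters the Harder--Narasimhan filtration established above.
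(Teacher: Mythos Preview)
Your proof is correct and follows essentially the same route as the paper: pull back to a resolution, pass to a reflexive sheaf, transfer stability to the pulled-back (nef, big) class, perturb to a genuine K\"ahler class where the Bogomolov--L\"ubke inequality of \cite{BS94} applies, and let $\varepsilon\to 0$. The only organisational differences are that the paper takes the reflexive hull on $X$ before pulling back (rather than on $\hat X$ afterwards), and that it cites \cite[Prop.~6.9]{DP03} and \cite[Prop.~2.1]{Cao13} for the transfer and openness of stability where you spell out the arguments directly.
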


\begin{proof}
We fix a log-resolution \holom{\pi}{\hat X}{X}.

{\em Step 1. Suppose that $\sF$ is reflexive.}
Set $\hat \sF := (\pi^* \sF)^{**}$. Since $\sF$ is reflexive, the sheaves $\hat \sF$ and $\pi^* \sF$ coincide in the complement of the $\pi$-exceptional locus.
In particular by Remark \ref{torsion} one has 
$$
\alpha \cdot c_2(\sF) = \pi^* \alpha \cdot c_2(\pi^* \sF) = \pi^* \alpha \cdot c_2(\hat \sF)
$$
and 
$$
\alpha \cdot  c_1^2(\sF) =  \pi^* \alpha \cdot  c_1^2(\pi^* \sF) =  \pi^* \alpha \cdot  c_1^2(\hat \sF).
$$
Thus it is sufficient to prove the inequality for $\hat \sF$. Arguing exactly as in the proof of \cite[Prop.6.9]{DP03} we see that $\hat \sF$ is $\pi^* \alpha$-stable.
Since stability is an open property \cite[Prop.2.1]{Cao13} we obtain that $\hat \sF$ is $(\pi^* \alpha + \varepsilon \omega)$-stable where $\omega$ is a K\"ahler form on $\hat X$
and $0 < \varepsilon \ll 1$. Thus \cite{BS94} yields
$$
(\pi^* \alpha + \varepsilon \omega) \cdot c_2(\hat \sF)  \geq \frac{r-1}{2r}  (\pi^* \alpha + \varepsilon \omega) \cdot  c_1^2(\hat \sF)
$$
for every $0 < \varepsilon \ll 1$. The claim follows by passing to the limit $\varepsilon \to 0$.

{\em Step 2. Reduction to the case where $\sF$ is reflexive.}
Since $\sF$ is torsion-free, the injection $i: \sF \hookrightarrow \sF^{**}$ is an isomorphism in codimension one. 
Thus the kernel and cokernel of 
$$\pi^*( \sF) \rightarrow \pi^* (\sF^{**})$$ have support in the union of the $\pi$-exceptional locus 
and a set of dimension at most one. In particular $c_1(\pi^* \sF) = c_1(\pi^* \sF^{**})+D$ with $D$ a $\pi$-exceptional divisor,
so the projection formula yields $$\alpha \cdot  c_1^2(\sF)= \alpha \cdot  c_1^2(\sF^{**}).$$
Thus we are done if we prove that $\alpha \cdot c_2(\sF^{**}) \geq \alpha \cdot c_2(\sF)$.
By the second part of Lemma \ref{lemmaexactsequence} it is sufficient to prove that
$$
\alpha \cdot c_2(\sF^{**}/\sF)  = \pi^*(\alpha) \cdot c_2(\pi^*(\sF^{**} / \sF))\geq 0.
$$
Let $S$ be the union of the $1-$dimensional irreducible components of the support of $\sF^{**} / \sF$ and $\hat S$ the strict transform in
$\hat X$; we may assume that the irreducible components $\hat S_i$ of $\hat S$ are smooth. 
Set
$$
Q := (i_{\hat S})_*(\pi^*(\sF^{**} /\sF)|_{\hat S}).
$$
Then by Remark \ref{torsion} and Lemma \ref{lemmaexactsequence}, it suffices to show 
$$ \pi^*(\alpha) \cdot c_2(Q) \geq  0.$$ 
Yet $Q$ has support on a set of dimension one, so 
the Grothendieck Riemann-Roch formula yields
$$
c_2(Q) = \sum a_i \hat S_i,
$$
where $a_i \in \N_0$. Since $\alpha$ is nef, the statement follows.
\end{proof}

\begin{lemma} \label{lemmahodgeindex}
Let $X$ be a compact K\"ahler threefold with isolated singularities. Let $\alpha$ be a nef class on $X$, and let $\sF$ be a
coherent sheaf. Then we have
$$
\left( \alpha^2 \cdot  c_1(\sF)  \right)^2 \geq \left(\alpha  \cdot  c_1^2(\sF) \right) \cdot \alpha^3.
$$
\end{lemma}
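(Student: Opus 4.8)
The plan is to reduce everything to the classical Hodge index theorem on a smooth threefold. First I would fix a log-resolution $\pi : \hat X \to X$ on which $c_1(\pi^*\sF) \in H^2(\hat X,\R)$ is a genuine cohomology class, and set $\hat\alpha := \pi^*\alpha$, which is a nef class on $\hat X$, and $L := c_1(\pi^*\sF)$. By Definition \ref{definitionnumbers} the three intersection numbers appearing in the statement are exactly $\hat\alpha^2 \cdot L$, $\hat\alpha \cdot L^2$, and $\hat\alpha^3$. So the claim becomes the purely cohomological inequality
$$
(\hat\alpha^2 \cdot L)^2 \ \geq\ (\hat\alpha \cdot L^2)(\hat\alpha^3)
$$
on the compact K\"ahler manifold $\hat X$, for an arbitrary real $(1,1)$-class $L$ and a nef class $\hat\alpha$.

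The key step is to prove this for $\hat\alpha$ a K\"ahler class first, and then pass to the limit. When $\hat\alpha = \{\omega\}$ is represented by a K\"ahler form, the symmetric bilinear form $(u,v)\mapsto \hat\alpha\cdot u\cdot v$ on $H^{1,1}(\hat X,\R)$ has signature $(1, h^{1,1}-1)$ by the Hodge index theorem (hard Lefschetz plus the Hodge–Riemann bilinear relations): it is positive on $\hat\alpha$ and negative-definite on the primitive subspace $\hat\alpha^\perp$. Writing $L = t\hat\alpha + P$ with $P \in \hat\alpha^\perp$, one computes $\hat\alpha^2\cdot L = t\,\hat\alpha^3$ and $\hat\alpha\cdot L^2 = t^2\hat\alpha^3 + \hat\alpha\cdot P^2$ with $\hat\alpha\cdot P^2 \leq 0$; subtracting gives $(\hat\alpha^2\cdot L)^2 - (\hat\alpha\cdot L^2)(\hat\alpha^3) = -(\hat\alpha^3)(\hat\alpha\cdot P^2) \geq 0$ since $\hat\alpha^3 > 0$. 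For general nef $\hat\alpha$, fix a K\"ahler form $\eta$ on $\hat X$ and apply the K\"ahler case to $\hat\alpha + \varepsilon\{\eta\}$ for every $\varepsilon > 0$; both sides are polynomials in $\varepsilon$, so letting $\varepsilon \to 0$ yields the inequality for $\hat\alpha$ itself.

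Finally I would check independence of the resolution: by Lemma \ref{lemmaexactsequence} the three quantities $\alpha^2\cdot c_1(\sF)$, $\alpha\cdot c_1^2(\sF)$, $\alpha^3$ do not depend on the chosen log-resolution (the argument there, via compatibility of Chern classes with pullback, $c_j(f^*\sG)=f^*c_j(\sG)$, and the projection formula, applies verbatim to $c_1$ and $c_1^2$), so the displayed inequality is well-posed and established. I do not expect a genuine obstacle here; the only mild subtlety is that $L = c_1(\pi^*\sF)$ need not be nef or pseudoeffective — but the Hodge index computation above makes no such assumption, using only nefness (indeed K\"ahlerness, after perturbation) of $\hat\alpha$ together with $\hat\alpha^3 > 0$. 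If $\hat\alpha^3 = 0$ the right-hand side could still be examined, but in our applications $\alpha$ is K\"ahler (or nef and big, cf. \eqref{equationalphabig}) so $\alpha^3 > 0$; in any case when $\hat\alpha^3=0$ one passes to the perturbed class and the limiting argument still delivers the inequality.
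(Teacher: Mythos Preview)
Your proposal is correct and follows essentially the same approach as the paper: reduce to the smooth case via a log-resolution (Definition \ref{definitionnumbers}), reduce the nef class to a K\"ahler class by perturbation and passage to the limit, and then derive the inequality from the Hodge index theorem. The paper simply cites \cite[Thm.6.33]{Voi02} and calls the remaining step an ``elementary computation,'' which is exactly the primitive decomposition $L = t\hat\alpha + P$ you wrote out.
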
 

\begin{proof}
By our definition of the intersection numbers \ref{definitionnumbers} we may suppose $X$ smooth. Since $\alpha$ is a limit of K\"ahler classes, 
it is sufficient to prove the statement under the stronger hypothesis that $\alpha$ is a K\"ahler class. Now the inequality
follows from the usual Hodge index theorem \cite[Thm.6.33]{Voi02} by an elementary computation.
\end{proof}

\begin{theorem} \label{theoremnonnegative}
Let $(X, \omega)$ be a compact K\"ahler threefold with isolated singularities, and let $\sF$ be a non-zero reflexive coherent sheaf
on $X$ such that $\det \sF$ is $\Q$-Cartier. Suppose that there exists a pseudoeffective class $P \in N^1(X)$ such that
$$
L := c_1(\sF) + P
$$
is a nef class. Suppose furthermore that for all $0 < \varepsilon \ll 1$ the sheaf $\sF$ is $(L+\varepsilon \omega)$-generically nef. Then we have
$$
L \cdot c_2(\sF) \geq \frac{1}{2} (L \cdot c_1^2(\sF) - L^3).
$$
In particular, if $L \cdot c_1^2(\sF) \geq 0$ and $L^3=0$, then 
\begin{equation} \label{nonnegative}
L \cdot c_2(\sF) \geq 0.
\end{equation}
\end{theorem}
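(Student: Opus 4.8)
The plan is to reduce the inequality to the Bogomolov inequality on a resolution, using the Harder--Narasimhan / Jordan--H\"older formalism together with the Hodge index estimate, after perturbing $L$ into a K\"ahler class. First I would fix a K\"ahler form $\omega$ on $X$ as in the statement and set $\alpha_\varepsilon := L + \varepsilon\omega$ for $0 < \varepsilon \ll 1$; since $L$ is nef and $\omega$ is K\"ahler, $\alpha_\varepsilon$ is a K\"ahler class, so $\alpha_\varepsilon^3 > 0$, and $\alpha_\varepsilon^2 \cdot P \geq 0$ (as $\alpha_\varepsilon$ is nef and $P$ is pseudoeffective). The goal becomes to prove, for each such $\varepsilon$, the inequality
\[
\alpha_\varepsilon \cdot c_2(\sF) \;\geq\; \tfrac12\Big(\alpha_\varepsilon \cdot c_1^2(\sF) \;-\; \tfrac{(\alpha_\varepsilon^2 \cdot L)^2}{\alpha_\varepsilon^3}\Big),
\]
and then to let $\varepsilon \to 0$. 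Since $\alpha_\varepsilon \to L$ in $N^1(X)$ the first two terms converge to the corresponding terms for $L$, while a short case analysis on the numerical dimension $\nu(L)$ shows that $(\alpha_\varepsilon^2 \cdot L)^2/\alpha_\varepsilon^3 \to L^3$ (continuity when $\nu(L)=3$; and when $\nu(L)\leq 2$ one has $L^3=0$ and the ratio tends to $0$, using that $\omega$ is K\"ahler). The ``in particular'' statement then follows by setting $L^3=0$.

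The point that makes the singular bookkeeping harmless is that, by Lemma \ref{lemmaexactsequence}, the assignment $\sE \mapsto \alpha_\varepsilon \cdot \big(2c_2(\sE) - c_1^2(\sE)\big)$ is additive on short exact sequences of coherent sheaves. So I would take the Harder--Narasimhan filtration of $\sF$ with respect to $\alpha_\varepsilon$ and refine it, via Jordan--H\"older, to a filtration $0 = \sF_0 \subset \sF_1 \subset \cdots \subset \sF_m = \sF$ by saturated subsheaves whose quotients $\sQ_i := \sF_i/\sF_{i-1}$ are $\alpha_\varepsilon$-stable of rank $r_i \geq 1$, with non-increasing slopes $\mu_i := (\alpha_\varepsilon^2 \cdot c_1(\sQ_i))/r_i$. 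Because $\sF$ is $\alpha_\varepsilon$-generically nef, Remark \ref{remarkgenericallynef} forces $\mu_i \geq 0$ for every $i$. Applying the Bogomolov inequality Theorem \ref{theorembogomolov} to each $\alpha_\varepsilon$-stable $\sQ_i$ gives $\alpha_\varepsilon \cdot c_2(\sQ_i) \geq \tfrac{r_i-1}{2r_i}\,\alpha_\varepsilon \cdot c_1^2(\sQ_i)$, hence $\alpha_\varepsilon \cdot \big(2c_2(\sQ_i) - c_1^2(\sQ_i)\big) \geq -\tfrac1{r_i}\,\alpha_\varepsilon \cdot c_1^2(\sQ_i)$; bounding $\alpha_\varepsilon \cdot c_1^2(\sQ_i)$ from above by the Hodge index estimate of Lemma \ref{lemmahodgeindex} (legitimate since $\alpha_\varepsilon^3 > 0$) turns this into $\alpha_\varepsilon \cdot \big(2c_2(\sQ_i) - c_1^2(\sQ_i)\big) \geq -r_i\mu_i^2/\alpha_\varepsilon^3$.

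Summing over $i$ and invoking additivity yields $\alpha_\varepsilon \cdot \big(2c_2(\sF) - c_1^2(\sF)\big) \geq -\tfrac1{\alpha_\varepsilon^3}\sum_i r_i\mu_i^2$. The final arithmetic input uses that the $\mu_i$ are non-increasing and non-negative: $\sum_i r_i\mu_i^2 \leq \mu_1\sum_i r_i\mu_i$, and $\mu_1 \leq r_1\mu_1 \leq \sum_i r_i\mu_i = \alpha_\varepsilon^2 \cdot c_1(\sF)$; moreover $0 \leq \alpha_\varepsilon^2 \cdot c_1(\sF) = \alpha_\varepsilon^2 \cdot L - \alpha_\varepsilon^2 \cdot P \leq \alpha_\varepsilon^2 \cdot L$ by pseudo-effectivity of $P$, so $\sum_i r_i\mu_i^2 \leq (\alpha_\varepsilon^2 \cdot L)^2$, which is precisely the displayed $\varepsilon$-inequality. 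I do not expect any single step to be the real difficulty; the things to be careful about are that all the Chern-number intersections are well defined over the singular threefold (Definition \ref{definitionnumbers}) and that additivity of $2c_2 - c_1^2$, the Hodge index inequality and Bogomolov's inequality hold exactly as quoted --- all granted by the earlier parts of the paper --- together with the elementary but genuinely necessary verification that $(\alpha_\varepsilon^2 \cdot L)^2/\alpha_\varepsilon^3 \to L^3$ as $\varepsilon \to 0$, including the degenerate case $L^3 = 0$.
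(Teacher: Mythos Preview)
Your argument is correct and follows essentially the same route as the paper: perturb to the K\"ahler class $L_\varepsilon = L+\varepsilon\omega$, take the Harder--Narasimhan filtration refined by Jordan--H\"older, apply Theorem~\ref{theorembogomolov} to each stable piece, control $L_\varepsilon \cdot c_1^2$ of the pieces via Lemma~\ref{lemmahodgeindex}, and finish with the elementary slope estimate $\sum r_i\mu_i^2 \leq \mu_1\sum r_i\mu_i$ together with pseudoeffectivity of $P$.

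The one place where you diverge slightly is in the final upper bound. You use $\alpha_\varepsilon^2\cdot c_1(\sF) \leq \alpha_\varepsilon^2\cdot L$, obtaining $\sum r_i\mu_i^2 \leq (\alpha_\varepsilon^2\cdot L)^2$ and then needing the case analysis on $\nu(L)$ to pass to the limit in $(\alpha_\varepsilon^2\cdot L)^2/\alpha_\varepsilon^3$. The paper instead writes $c_1(\sF)=L_\varepsilon-P-\varepsilon\omega$ and bounds $L_\varepsilon^2\cdot c_1(\sF)\leq L_\varepsilon^3$, which in your notation gives $\sum r_i\mu_i^2 \leq (\alpha_\varepsilon^3)^2/\alpha_\varepsilon^3 = \alpha_\varepsilon^3$; the resulting $\varepsilon$-inequality is then literally $L_\varepsilon\cdot c_2(\sF)\geq \tfrac12\big(L_\varepsilon\cdot c_1^2(\sF)-L_\varepsilon^3\big)$, and the limit $\varepsilon\to 0$ is immediate with no case analysis. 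Your bound $\alpha_\varepsilon^2\cdot L \leq \alpha_\varepsilon^3$ shows that the paper's estimate is in fact weaker than yours, but it makes the passage to the limit effortless; you might prefer to adopt it and drop the $\nu(L)$ discussion.
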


\begin{remark*} This statement (and its proof) is a variation of \cite[Thm.6.1]{Miy87}, \cite[Prop.10.12]{Uta92}. However the weaker assumptions will
be crucial for the application in the proof of Theorem \ref{theoremnutwo}.
\end{remark*}

\begin{proof}
 Fix  $0 < \varepsilon \ll 1$, and consider the Harder-Narasimhan filtration 
$$ 
0 = \sF_0 \subset \sF_1 \subset \ldots \subset \sF_k = \sF 
$$
for $\sF$ with respect to $\sLe:=L+\varepsilon \omega$. Then by Lemma \ref{lemmaexactsequence} 
\begin{equation} \label{c2one}
\sLe \cdot c_2(\sF) = \sLe \cdot 
\left(
\sum_{1 \leq i < j \leq k} c_1(\sF_i/\sF_{i-1}) c_1(\sF_j/\sF_{j-1}) + 
\sum_{i=1}^k c_2(\sF_i/\sF_{i-1})
\right).
\end{equation}
Since $\sF$ is $\sLe$-generically nef, by Remark \ref{remarkgenericallynef}
\begin{equation} \label{nonnegative1}
\sLef \cdot c_1(\sF_i/\sF_{i-1}) \geq 0 \qquad \forall \ i \in \{1, \ldots, k \}.
\end{equation} 
For every $i \in \{ 1, \ldots, k \}$, let
$$
0 = \sF_{i,0} \subset \sF_{i, 1} \subset \ldots \subset \sF_{i, k_i} = \sF_i/\sF_{i-1} 
$$
be the Jordan-H\"older filtration of $\sF_i/\sF_{i-1}$  with respect to $\sLe$. Then we have
\begin{equation} \label{c2two}
\sLe \cdot c_2(\sF_i/\sF_{i-1} ) = \end{equation} 
$$ = \sLe \cdot 
\left(
\sum_{1 \leq p < q \leq k_i} c_1(\sF_{i,p}/\sF_{i,p-1}) c_1(\sF_{i, q}/\sF_{i, q-1}) + 
\sum_{p=1}^{k_i} c_2(\sF_{i,p}/\sF_{i,p-1})
\right). $$
Since $\sF_i/\sF_{i-1}$ is $\sLe$-semistable with non-negative slope by \eqref{nonnegative1}, we obtain 
\begin{equation} \label{nonnegative2}
\sLef \cdot c_1(\sF_{i,p}/\sF_{i,p-1}) \geq 0 \qquad \forall \ i \in \{1, \ldots, k \}, p \in \{ 1, \ldots, k_i\}.
\end{equation} 
Plugging the equations \eqref{c2two} into the equation \eqref{c2one} and following the lexicographic order,
we rename the graded pieces $\sF_{i,p}/\sF_{i,p-1}$ into $\sG_l$, where $l \in \{ 1, \ldots, n \}$. Thus
\begin{equation} \label{decomp}
\sLe \cdot c_2(\sF) = \sLe \cdot 
\left(
\sum_{1 \leq l < m \leq n} c_1(\sG_l) c_1(\sG_m) + 
\sum_{l=1}^n c_2(\sG_l)
\right),
\end{equation}
where the $\sG_l$ are non-zero torsion-free $\sLe-$stable sheaves. Moreover, by \eqref{nonnegative2} we have
\begin{equation} \label{nonnegative3}
\sLef \cdot c_1(\sG_l) \geq 0 \qquad \forall \ l \in \{1, \ldots, n \}. 
\end{equation} 
Since we renamed
according to the lexicographic order, the sequence of slopes with respect to $\sLe$ is (not necessarily strictly) decreasing.
Thus, setting $r_l := \rk \sG_l$ and  
$$
\alpha_l := 
{{\sLef \cdot c_1(\sG_l)} \over {r_l \sLee}},
$$
we obtain
\begin{equation} \label{decreasing}
\alpha_1 \geq \alpha_2 \geq \ldots \geq \alpha_n \geq 0.
\end{equation}
By Theorem \ref{theorembogomolov} we have
$\sLe \cdot c_2(\sG_l) \geq  \big( {{r_l-1} \over {2r_l}} \big) \ \sLe \cdot c_1^2(\sG_l)$ for all $l \in \{1, \ldots, n \}$. 
Therefore using \eqref{decomp} we obtain
\begin{eqnarray*}
\sLe \cdot c_2(\sF)
&=&
\sLe \cdot  
\left(
\frac{1}{2} c_1(\sF)^2
+
\sum_{l=1}^n c_2(\sG_l)
-
\frac{1}{2}
\sum_{l=1}^n c_1(\sG_l)^2 
\right)
\\
& \geq &
\sLe \cdot  
\left(
\frac{1}{2} c_1(\sF)^2
-
\frac{1}{2 r_l}
\sum_{l=1}^n c_1(\sG_l)^2 
\right).
\end{eqnarray*}
By Lemma \ref{lemmahodgeindex} we have 
$$
(\sLef \cdot c_1(\sG_l))^2 \geq  (\sLe \cdot c_1^2(\sG_l)) \cdot \sLee \qquad \forall \ l \in \{ 1, \ldots, n \},
$$
so, using the coefficient $\alpha_l$ defined above, we get 
$
 \sLe \cdot c_1^2(\sG_i) \leq \alpha_i^2 r_i^2 \sLee.
$
Putting this into the last inequality yields
\begin{eqnarray}
\sLe \cdot c_2(\sF) &\geq&
\sLe \cdot  
\left(
\frac{1}{2} c_1(\sF)^2
-
\frac{1}{2}
\sum_{l=1}^n \alpha_l^2 r_l \sLef. 
\right)
\\
& = &
\label{helpme}
\frac{1}{2}  \sLe \cdot  
\left(
(c_1(\sF)^2
-
\sLef)
+
(1-
\sum_{l=1}^n \alpha_l^2 r_l) \sLef. 
\right)
\end{eqnarray}
{\it We claim that} $$(1- \sum_{l=1}^n \alpha_l^2 r_l) \geq 0.$$ 
Assuming this for the time being, let us see how to conclude :
since $\sLee>0$, the {\it claim} together with \eqref{helpme} yields
$$
\sLe \cdot c_2(\sF) \geq \frac{1}{2}  \sLe \cdot  
\left(
c_1(\sF)^2
-
\sLef
\right).
$$
Now we take the limit $\varepsilon \rightarrow 0$.

{\em Proof of the claim.}
First of all, \eqref{decreasing} implies that 
$$
1 - \sum_{l=1}^n \alpha_l^2 r_l   \geq 1 - \alpha_1 \sum_{l=1}^n r_l \alpha_l. 
$$
Now by definition of the  numbers $\alpha_l$ we have
$$
 \sum_{l=1}^n r_l \alpha_l =  \sum_{l=1}^n {{\sLef \cdot c_1(\sG_l)} \over { \sLee}}
 = \frac{\sLef \cdot c_1(\sF)}{\sLee}.
$$
Yet by definition of $L$ and $\sLe$, 
$$
c_1(\sF)=L-P=\sLe-P-\varepsilon \omega.
$$
Since $P$ is pseudoeffective and since $\omega$ is K\"ahler it follows that $\sLef \cdot c_1(\sF) \leq \sLee$, hence
$\sum_{l=1}^n r_l \alpha_l \leq 1.$
Since $r_l \geq 1$ and $\alpha_l \geq 0$ for all
$l \in \{ 1, \ldots, n\}$ this implies $\alpha_1 \leq 1$, proving the {\it claim}. 
\end{proof}



\section{Abundance} \label{sectionabundance}

In this section, we establish the abundance theorem for non-algebraic K\"ahler threefolds. The main difficulty is to show that if 
the numerical Kodaira dimension $\nu(X) = 1$ or $\nu (X) = 2,$ then $\kappa (X) \geq 1. $ This will be done in Theorems \ref{theoremnuone} and \ref{theoremnutwo}.

\subsection{The case $\nu=1$}

\begin{theorem} \label{theoremnuone}
Let $X$ be a normal $\Q$-factorial compact  K\"ahler threefold with at most terminal singularities
such that $K_X$ is nef. If $\nu(X)=1$, then $\kappa(X) \geq 1$.
\end{theorem}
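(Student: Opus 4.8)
The strategy is the one indicated in the introduction: use the reduction lemmas of Section \ref{sectionreduction} to replace $X$ by a good birational model, then exclude $\kappa = 0$ by a deformation argument in the spirit of Miyaoka \cite{Miy88}, \cite[Ch.11]{Uta92}. First I would invoke Lemma \ref{lemmaMMP1} to obtain a normal $\Q$-factorial compact K\"ahler threefold $X'$ bimeromorphic to $X$ together with $D' \in |mK_{X'}|$ such that, writing $B' := \supp D'$, the pair $(X', B')$ is dlt, $X' \setminus B'$ has terminal singularities, $K_{X'}+B'$ is nef, $\nu(K_{X'}+B') = \nu(X) = 1$ and $\kappa(K_{X'}+B') = \kappa(X)$. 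Since $\kappa(X) = \kappa(K_{X'}+B')$, it suffices to prove $\kappa(K_{X'}+B') \ge 1$. Next I would run the further reduction of Lemma \ref{lemmanuoneMMP}: applied with $S$ an irreducible component of $B'$, it produces (after renaming) a model on which $B' = \supp D'$ has a connected component $S'$ which is a single irreducible surface disjoint from $B' - S'$, the pair $(X', B')$ is lc, $(X', B'-S')$ is dlt, $X' \setminus B'$ is terminal, $K_{X'}+B'$ is nef with $\nu = 1$, and the Kodaira dimension is unchanged. Arguing by contradiction, assume $\kappa(K_{X'}+B') = 0$.

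The point is now to study the surface $S'$. By adjunction \eqref{differend} (via the normalisation $\nu : \tilde S' \to S'$, which is trivial here since $(X',B'-S')$ dlt forces the component $S'$ to be normal by \cite[5.52]{KM98}, and in any case $(S',\Delta_{S'})$ is slc by \cite[16.9]{Uta92}) there is an effective $\Q$-divisor $\Delta_{S'}$ with $K_{S'} + \Delta_{S'} \sim_\Q (K_{X'}+B')|_{S'}$, and this class is nef. Since $S'$ is a connected component of $\supp D' \sim_\Q mK_{X'}$, the decomposition \eqref{canonicaldecomp} shows that $K_{X'}|_{S'}$ is (up to scaling) $\Q$-linearly equivalent to an anti-effective combination of the other components plus $\lambda_{S'} S'|_{S'}$; in particular, using $\nu(K_{X'}+B') = 1$, one computes $(K_{X'}+B')^2|_{S'} = 0$, so $(K_{S'}+\Delta_{S'})^2 = 0$, i.e. $\nu(K_{S'}+\Delta_{S'}) \le 1$. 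If $\nu(K_{S'}+\Delta_{S'}) = 1$ then by Lemma \ref{lemmaslcone} (or Proposition \ref{propositionslcone} in the non-normal case) $K_{S'}+\Delta_{S'}$ is semi-ample, hence $\kappa(K_{S'}+\Delta_{S'}) = 1$; if $\nu(K_{S'}+\Delta_{S'}) = 0$ then by Proposition \ref{propositionslczero} it is torsion, so $\kappa(K_{S'}+\Delta_{S'}) = 0$. The hard part is to promote a pluricanonical section on $S'$ to one on $X'$: I would use that $S'$ is a connected component of $B'$, hence a \emph{disjoint} union from the rest of $B'$, so $S'|_{S'} = (K_{X'}|_{S'})$-type data can be pushed through the exact sequence
\[
0 \to \sO_{X'}(d(m+1)K_{X'} - S') \to \sO_{X'}(d(m+1)K_{X'}) \to \sO_{S'}(d(m+1)K_{X'}) \to 0,
\]
and the fact that $K_{X'} + B'$ is nef of numerical dimension one to obtain the needed vanishing $H^1$ (via \cite[Thm.0.1]{DP03} or the Kawamata–Viehweg-type vanishing on $X'$ after passing to a terminal model), thereby lifting a section. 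Alternatively, and this is really the Miyaoka deformation argument, one observes that if $\kappa(X') = 0$ then $D'$ is rigid, so $S'$ must move in a way contradicting its being a connected component, unless $K_{X'}+B'$ is already $\Q$-effective supported on something compatible with $\kappa \ge 1$; the contradiction then comes from the semi-ampleness/torsion dichotomy above transported back to $X'$.

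Concretely, I expect the write-up to proceed as follows. After the reductions, consider the restriction map
\[
r : H^0(X', d(m+1)K_{X'}) \to H^0(S', d(m+1)K_{X'}|_{S'}) = H^0(S', d\,(K_{S'}+\Delta_{S'})\text{-ish class}).
\]
Using $\kappa(K_{S'}+\Delta_{S'}) \ge 0$ (which always holds by the dichotomy above, since $\nu \le 1$ on $S'$), pick $0 \neq u$ in the target for suitable $d$. Because $S'$ is a connected component of $B' = \supp D'$, the divisor $d(m+1)K_{X'} - S'$ is again $\Q$-linearly equivalent to an effective divisor (here one uses $D' \in |mK_{X'}|$ and that removing one connected component of $\supp D'$ from a large enough multiple stays effective), and its restriction to the terminal locus lets us apply the vanishing theorem of \cite[Thm.0.1]{DP03} on a terminal model to get $H^1(X', \sO_{X'}(d(m+1)K_{X'}-S')) = 0$, so $r$ is surjective and $u$ lifts to a nonzero section of $d(m+1)K_{X'}$. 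This forces $\kappa(X') = \kappa(d(m+1)K_{X'}) \ge 1$, contradicting $\kappa(X) = \kappa(X') = 0$ and proving $\kappa(X) \ge 1$. The main obstacle, as I see it, is the lifting step: ensuring the relevant $H^1$ vanishes despite $K_{X'}$ itself not being big — this is exactly where the numerical-dimension-one hypothesis and the Demailly–Peternell vanishing \cite[Thm.0.1]{DP03}, together with the care taken in Lemma \ref{lemmanuoneMMP} to make $S'$ a connected component, must all be used; the surface-level abundance (Lemmas/Propositions in Section~5) is then just bookkeeping.
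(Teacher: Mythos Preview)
Your reduction steps via Lemma \ref{lemmaMMP1} and Lemma \ref{lemmanuoneMMP} are correct and match the paper. However, the lifting step has a genuine gap. You want $H^1(X', \sO_{X'}(d(m+1)K_{X'}-S')) = 0$ and invoke \cite[Thm.0.1]{DP03} for it, but that vanishing theorem gives $H^q(X, K_X \otimes L) = 0$ only for $q > \dim X - \nu(L)$. On a threefold with $\nu = 1$ you get at best $H^3$ (and by duality $H^0$-type) vanishing; neither $H^1$ nor the dual $H^2$ is controlled. This is exactly why a direct restriction-sequence argument does \emph{not} work in the $\nu = 1$ case, and your ``alternatively'' paragraph does not repair this: the rigidity sketch you give is not the Miyaoka argument and does not produce a second section.

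Also note that the case distinction $\nu(K_{S'}+\Delta_{S'}) \in \{0,1\}$ is unnecessary: since $\nu(K_{X'}+B') = 1$ and $K_{X'}+B'$ is $\Q$-linearly equivalent to an effective divisor supported on $B'$, one has $(K_{X'}+B')|_{T} \equiv 0$ for \emph{every} component $T$ of $B'$ (this is already checked in the proof of Lemma \ref{lemmanuoneMMP}), so $K_{S'}+\Delta_{S'} \equiv 0$ always.

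What the paper actually does after the reduction is the genuine Miyaoka deformation argument from \cite[11.3]{Uta92}. Since $K_{S'}+\Delta_{S'} \equiv 0$, Proposition \ref{propositionslczero} makes it torsion; then Miyaoka's covering trick \cite[11.3.6]{Uta92} (finite, \'etale in codimension one) reduces to $\omega_{S'} \simeq \sO_{S'} \simeq \sO_{S'}(S')$. The lifting is then obtained not by an $H^1$-vanishing but by \cite[11.3.7]{Uta92}, whose hypothesis is that the restrictions $H^p(S'_n, \sO_{S'_n}) \to H^p(S', \sO_{S'})$ from every infinitesimal neighbourhood are surjective. Via the commutative square with $H^p(\,\cdot\,, \C)$ this reduces to the surjectivity of $H^p(S', \C) \to H^p(S', \sO_{S'})$, which is Theorem \ref{theoremslcsurjective}. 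That Hodge-theoretic input is the substitute for the vanishing you were hoping for.
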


\begin{proof}
By Lemma \ref{lemmaMMP1} and Lemma \ref{lemmanuoneMMP} we are reduced to proving the following statement:

{\em
Let $X$ be a normal $\Q$-factorial compact K\"ahler threefold such that $(X,0)$ is klt,  carrying 
a divisor $D \in |m K_{X}|$ with the following properties:
\begin{enumerate}
\item Set $B:=\supp D$. The pair $(X, B)$ is lc and $X \setminus B$ has terminal singularities.
\item The divisor $K_{X}+B$ is nef and we have $\nu(K_{X}+B)=1$. 
\item There exists an irreducible component $S \subset B$ that is a connected component. 
\end{enumerate}
Then $\kappa(X) \geq 1$.
}

We follow the argument in \cite[Ch.13]{Uta92}: by adjunction
\cite[16.9.1]{Uta92} there exists a boundary divisor $\Delta$
on $S$ such that $(S, \Delta)$ is slc and
$
K_{S}+\Delta
$
is numerically trivial. By Proposition \ref{propositionslczero} the divisor  $K_{S}+\Delta$ is torsion.
Due to a covering trick of Miyaoka \cite[11.3.6]{Uta92} we may suppose
after a finite cover, \'etale in codimension one, that
$$
\omega_{S} = \sO_S(K_S)  \simeq \sO_{S} \simeq \sO_{S}(S).
$$
Note that by \cite[Prop.5.20(4)]{KM98} the klt property is preserved under a finite morphism which is \'etale in codimension one.
In particular, $X$ is Cohen-Macaulay. By \cite[11.3.7]{Uta92} we are finished if we prove that
for every infinitesimal neighbourhood $S_n$, the restriction
$$
H^p(S_n, \sO_{S_n}) \rightarrow H^p(S, \sO_S)
$$
is surjective for every $p \in \N$.
Observe here that condition (3) in \cite[11.3.7]{Uta92}
is satisfied by \cite[12.1.2]{Uta92}, as explained in \cite[p.158]{Uta92}.
In fact, using the commutative diagram 
 $$
\xymatrix{
H^p(S_n, \C) \ar[d] \ar[r]  & H^p(S_n, \sO_{S_n}) \ar[d]
\\
H^p(S, \C) \ar[r] & H^p(S, \sO_S)
}
$$
and the isomorphism $H^p(S_n, \C) \simeq H^p(S, \C)$ we see that it is sufficient to prove that
$$
H^p(S, \C) \rightarrow H^p(S, \sO_S)
$$ 
is surjective. This is done in Theorem \ref{theoremslcsurjective}.
\end{proof}

\subsection{The case $\nu = 2$}

\begin{theorem} \label{theoremnutwo}
Let $X$ be a normal $\Q$-factorial compact  K\"ahler threefold with at most terminal singularities
such that $K_X$ is nef. If $\nu(X)=2$, then $\kappa(X) \geq 1$.
\end{theorem}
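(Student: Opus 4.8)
The plan is to combine the reduction steps of Section~\ref{sectionreduction} with the Chern class inequality of Theorem~\ref{theoremnonnegative} and a Riemann--Roch estimate, following the strategy of \cite[Ch.14]{Uta92} but replacing the delicate second-Todd-class computation on the singular $X$ by a computation on a terminal modification. First I would apply Lemma~\ref{lemmaMMP1} and Lemma~\ref{lemmanutwoMMP} to reduce to the situation where $X$ is $\Q$-factorial compact K\"ahler, $(X,0)$ is klt, and there is a divisor $D\in|mK_X|$ with $B:=\supp D$ such that the pair $(X,B)$ is lc, $X\setminus B$ has terminal singularities, $K_X+B$ is nef with $\nu(K_X+B)=2$, $(K_X+B)|_T\not\equiv 0$ for every irreducible component $T\subset B$, and crucially $(K_X+B)\cdot K_X^2\ge 0$. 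Write $L:=K_X+B$; since $\nu(L)=2$ we have $L^3=0$ and $L^2\not\equiv 0$. The goal is then to show $\kappa(X)\ge 1$, which amounts to showing $h^0(X,kmK_X)$ is unbounded; since $L$ is $\Q$-linearly equivalent to the effective divisor $\tfrac1m D+B$ it suffices to show that $h^0(X,\sO_X(\lfloor kL\rfloor))$ grows linearly in $k$, and for that one wants $\chi(X,\sO_X(kL))$ to grow linearly together with a vanishing of higher cohomology.

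Next I would pass to a terminal modification $\mu:X'\to X$ (Theorem~\ref{theoremterminalmodel}), so that $X'$ is $\Q$-factorial terminal K\"ahler, hence smooth in codimension two, with $K_{X'}+\Delta_{X'}\sim_\Q\mu^*K_X$ for an effective $\mu$-exceptional boundary $\Delta_{X'}$, and set $L':=\mu^*L=K_{X'}+\Delta_{X'}+\mu^*B$, a nef class with $(L')^3=0$, $L'\cdot K_{X'}^2\ge 0$ (the last by the projection formula, $\mu$-exceptionality, and the hypothesis on $X$, modulo the correction terms from $\Delta_{X'}$ which are controlled as in \cite[Ch.14]{Uta92}). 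Because $X'$ is smooth in codimension two the sheaf $\hat\Omega_{X'}:=(\Omega_{X'}^1)^{**}$ has well-defined Chern classes $c_1,c_2$ in the sense of Definition~\ref{definitionnumbers}, with $c_1(\hat\Omega_{X'})=K_{X'}$, and Riemann--Roch on a resolution gives $\chi(X',\sO_{X'}(kL'))=\tfrac{k^2}{2}L'\cdot c_1(\hat\Omega_{X'})\cdot L'+\text{(lower order)}+\tfrac{1}{12}k\,L'\cdot(c_1^2+c_2)(\hat\Omega_{X'})+\chi(\sO_{X'})$, but since $(L')^3=0$ the leading term drops and one is left with a term linear in $k$ whose coefficient is (up to the fixed $\chi(\sO)$) proportional to $L'\cdot(c_1^2(\hat\Omega_{X'})+c_2(\hat\Omega_{X'}))$. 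So it suffices to prove this linear coefficient is positive, i.e. that $L'\cdot c_2(\hat\Omega_{X'})\ge 0$ and strictly positive in combination with $L'\cdot c_1^2$ — here I apply Theorem~\ref{theoremnonnegative} to the reflexive sheaf $\sF=\hat\Omega_{X'}$ with $L=L'=c_1(\hat\Omega_{X'})+P$ where $P:=\Delta_{X'}+\mu^*B$ is effective hence pseudoeffective: since $L'^3=0$ and $L'\cdot c_1^2(\hat\Omega_{X'})=L'\cdot K_{X'}^2\ge 0$ we conclude $L'\cdot c_2(\hat\Omega_{X'})\ge 0$, provided we have verified the generic nefness hypothesis.

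The main obstacle, and the point requiring real work, is verifying that $\hat\Omega_{X'}$ is $(L'+\varepsilon\omega)$-generically nef for all small $\varepsilon>0$; this is where the extension of Enoki's theorem \cite{Eno88} announced in the introduction enters. The key input is that $X'$ is not uniruled (it is bimeromorphic to $X$ which has nef canonical class and nonnegative Kodaira dimension), so by the generic nefness of the cotangent sheaf of a non-uniruled threefold with canonical singularities — together with the fact that $L'+\varepsilon\omega$ is a small perturbation of a nef class numerically on the boundary of the effective cone — every torsion-free quotient $Q$ of $\hat\Omega_{X'}$ satisfies $(L'+\varepsilon\omega)^2\cdot c_1(Q)\ge 0$; one reduces to movable curve classes and uses that $c_1(Q)$ restricted to a general member of a covering movable family is effective up to numerical equivalence. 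Once generic nefness is in place, Theorem~\ref{theoremnonnegative} applies verbatim. Then I would combine $L'\cdot c_2(\hat\Omega_{X'})\ge 0$ and $L'\cdot c_1^2(\hat\Omega_{X'})\ge 0$ to get $\chi(X',\sO_{X'}(kL'))\ge c_0 k - c_1$ for constants, unless $L'\cdot(c_1^2+c_2)(\hat\Omega_{X'})=0$, in which case one argues separately (the nef reduction of $L'$ gives a fibration $X'\to\Sigma$ onto a surface and abundance reduces to the relative case over a curve / to known surface abundance, or one derives a contradiction with $\nu=2$). Finally, higher cohomology of $\sO_{X'}(kL')$ is controlled for $k$ large by a Kawamata--Viehweg type vanishing adapted to the K\"ahler setting since $L'$ is nef and big on the base of its nef reduction — more precisely one bounds $h^1$ sublinearly via \cite[Thm.0.1]{DP03} applied after twisting, so that $h^0\ge\chi-h^2-h^3$ still grows, giving $\kappa(X')\ge 1$ and hence $\kappa(X)\ge 1$. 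The delicate points to get right are: the precise correction terms relating $L'\cdot K_{X'}^2$ to $L\cdot K_X^2$ under the terminal modification, and the generic-nefness verification; everything else is routine Riemann--Roch bookkeeping.
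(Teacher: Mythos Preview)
Your overall architecture matches the paper: reduce via Lemma~\ref{lemmaMMP1} and Lemma~\ref{lemmanutwoMMP}, pass to a terminal modification $\mu:X'\to X$, apply Theorem~\ref{theoremnonnegative} to $\hat\Omega_{X'}$ with $P=\Delta_{X'}+\mu^*B$ (generic nefness being supplied by Proposition~\ref{propositiongenericallynef}), and feed the resulting Chern class inequality into Riemann--Roch on $X'$. Two points need correction.

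First, a small gap in the Riemann--Roch bookkeeping: the cubic term $(L')^3$ vanishes because $\nu=2$, but you also need the quadratic term $K_{X'}\cdot (L')^2$ to vanish. This is not automatic; the paper deduces it from $(K_X+B)^2\cdot T=0$ for every component $T\subset B$ (which follows from $\nu(K_X+B)=2$ and $\supp D=B$), hence $(K_X+B)^2\cdot K_X=0$, and then pulls back using that $\mu(\Delta_{X'})$ is finite. The latter is itself nontrivial: one first checks that $X$ has canonical singularities outside finitely many points, so the exceptional divisor of the terminal modification maps to a finite set.

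Second, and more seriously, your endgame is wrong. When $L'\cdot(K_{X'}^2+c_2(X'))=0$ the Euler characteristic $\chi(X,\sO_X(nL))$ is merely bounded below, not growing, and you cannot rescue this with Kawamata--Viehweg: $L$ is not big, so no such vanishing applies, and your proposal to bound $h^1$ sublinearly goes in exactly the wrong direction. The paper's mechanism is the opposite: it shows that $h^1(X,\sO_X(nL))$ \emph{grows linearly}, so that $h^0\ge h^1+c$ forces $\kappa\ge 1$. Concretely, $h^2(X,\sO_X(nL))$ and $h^2(X,\sO_X(nL-B))$ are eventually constant by Corollary~\ref{corollaryserre}, and $H^3$ vanishes. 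One then restricts to $B$: the slc abundance Proposition~\ref{propositionslcone} gives that $h^0(B,\sO_B(nL))$ grows linearly, while Riemann--Roch on the slc surface shows $\chi(B,\sO_B(nL))$ is constant (since $(L|_B)^2=0$ and $L|_B\cdot K_B=0$), and Lemma~\ref{lemmaserre} kills $H^2(B,\sO_B(nL))$. Hence $h^1(B,\sO_B(nL))$ grows linearly, and the long exact sequence of $0\to\sO_X(nL-B)\to\sO_X(nL)\to\sO_B(nL)\to 0$ transfers this to $h^1(X,\sO_X(nL))$. Your alternative of invoking a nef reduction $X'\to\Sigma$ does not work without further input: a nef class of numerical dimension two need not have two-dimensional nef reduction, and nothing in the hypotheses forces such a fibration to exist.
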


The basic idea of the proof is the same as in the projective case \cite[Sect.14]{Uta92},
however the computations get considerably simplified by our generalisation of Miyaoka's Chern class inequality \cite[Thm.6.1]{Miy87}.
Let us start by recalling the Riemann-Roch formula for terminal threefolds:

\begin{proposition} \label{propositionRR}
Let $X$ be a normal compact  K\"ahler threefold with at most terminal singularities, and let $L$ be a line bundle on $X$.
Then we have
$$
\chi(X, L) = \frac{L^3}{6} - \frac{1}{4} K_X \cdot L^2 + L \cdot \frac{K_X^2+c_2(X)}{12} + \chi(X, \sO_X),
$$
where $L \cdot c_2(X) := \pi^* L \cdot c_2(\hat X)$ with $\holom{\pi}{\hat X}{X}$ 
any resolution of singularities (cf. Definition \ref{definitionnumbers}).
\end{proposition}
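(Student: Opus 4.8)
The plan is to reduce the assertion to the Hirzebruch--Riemann--Roch formula on a resolution. Since $X$ has terminal singularities, its singular locus is a finite set of points and $X$ is $\Q$-Gorenstein; fix a resolution of singularities \holom{\pi}{\hat X}{X}, which we may take to be a compact Kähler manifold. Terminal singularities are rational (\cite[5.22]{KM98}), so $\pi_* \sO_{\hat X} = \sO_X$ and $R^q \pi_* \sO_{\hat X} = 0$ for $q>0$. Combining this with the projection formula $R\pi_*(\pi^* L) \simeq L \otimes^{\mathbf L} R\pi_* \sO_{\hat X}$ and the Leray spectral sequence gives $H^i(\hat X, \pi^* L) \simeq H^i(X, L)$ for all $i$, hence $\chi(\hat X, \pi^* L) = \chi(X, L)$ and in particular $\chi(\hat X, \sO_{\hat X}) = \chi(X, \sO_X)$.

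Next I would apply Hirzebruch--Riemann--Roch on the compact Kähler threefold $\hat X$: expanding $\operatorname{ch}(\pi^*L) \cdot \operatorname{td}(\hat X)$ in degree three, using $c_1(T_{\hat X}) = -K_{\hat X}$ and $\chi(\hat X, \sO_{\hat X}) = \int_{\hat X}\operatorname{td}(\hat X) = -\tfrac{1}{24}K_{\hat X}\cdot c_2(\hat X)$, one gets the smooth Riemann--Roch identity
$$
\chi(\hat X, \pi^* L) = \frac{(\pi^* L)^3}{6} - \frac14 K_{\hat X} \cdot (\pi^* L)^2 + (\pi^* L) \cdot \frac{K_{\hat X}^2 + c_2(\hat X)}{12} + \chi(\hat X, \sO_{\hat X}).
$$
This bookkeeping is the routine part and I would not write it out in detail. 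It then remains to rewrite the right-hand side in terms of data on $X$. Write $K_{\hat X} = \pi^* K_X + E$ with $E$ a $\Q$-divisor supported on the $\pi$-exceptional locus, which maps to the finite singular set of $X$; thus for each prime exceptional component $E_j$ the class $(\pi^*L)|_{E_j}$ is pulled back from a point, hence numerically trivial, so every intersection monomial containing $\pi^* L$ together with at least one exceptional component vanishes. Consequently $(\pi^* L)^3 = L^3$, all cross-terms with $E$ drop out and $K_{\hat X}\cdot(\pi^*L)^2 = \pi^* K_X\cdot(\pi^*L)^2 = K_X\cdot L^2$, $K_{\hat X}^2\cdot \pi^*L = (\pi^*K_X)^2\cdot \pi^* L = K_X^2\cdot L$, while $(\pi^*L)\cdot c_2(\hat X) = L\cdot c_2(X)$ by the definition in \ref{definitionnumbers}. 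Substituting these identities and $\chi(\hat X, \sO_{\hat X}) = \chi(X,\sO_X)$ into the displayed formula yields exactly the claimed equality.

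Finally I would observe that since $\chi(X,L)$, $L^3$, $K_X\cdot L^2$, $K_X^2\cdot L$ and $\chi(X,\sO_X)$ are all manifestly independent of the chosen resolution, the identity forces $\pi^*L\cdot c_2(\hat X)$ to be independent of $\pi$ as well, legitimising the notation $L\cdot c_2(X)$ (alternatively this can be checked directly as in the proof of Lemma \ref{lemmaexactsequence}). There is no serious obstacle here; the only points that need care are the transfer of cohomology from $\hat X$ to $X$ via rational singularities and the vanishing of the exceptional cross-terms, both of which rely on terminality (rationality and isolatedness of the singular points).
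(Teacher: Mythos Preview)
Your proposal is correct and follows essentially the same route as the paper: pull back to a resolution, use rationality of terminal singularities to identify Euler characteristics, apply Hirzebruch--Riemann--Roch on $\hat X$, and kill the cross-terms with the exceptional divisor via the projection formula using that $\pi(E)$ is a finite set. The only additions you make are a slightly more explicit justification of why the exceptional cross-terms vanish and the closing remark on independence of the resolution, neither of which departs from the paper's argument.
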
 

\begin{proof}
Let $\holom{\pi}{\hat X}{X}$ be a resolution of singularities, which is an isomorphism over the smooth locus of $X.$ Since $X$ has rational singularities, we have
$\chi(X, L) = \chi(\hat X, \pi^* L).$
Riemann-Roch on the smooth K\"ahler threefold $\hat X$ yields
\begin{equation} \label{RR}
\chi(\hat X, \pi^* L) = \frac{\pi^* L^3}{6} - \frac{1}{4} K_{\hat X} \cdot (\pi^*L)^2 + \pi^*L \cdot \frac{K_{\hat X}^2+c_2(\hat X)}{12} + \chi(\hat X, \sO_{\hat X}).
\end{equation} 
Now $\pi^* L^3=L^3$, and, using again  the rationality of the singularities of $X$, $\chi(\hat X, \sO_{\hat X})=\chi(X, \sO_X)$. Since $X$ is smooth in codimension
two we may write $$K_{\hat X} \sim_\Q \pi^* K_X + E$$ with $E$ a divisor such that $\pi(E)$ is finite. In particular the projection formula gives 
$K_{\hat X} \cdot (\pi^*L)^2=K_X \cdot L^2$ and $\pi^*L \cdot K_{\hat X}^2 = L \cdot K_X^2$. Thus \eqref{RR} gives our claim. 
\end{proof}

We will also need the following K\"ahler version of Miyaoka's generic nefness theorem, due to Enoki in the smooth case:

\begin{proposition} \label{propositiongenericallynef}
Let $X$ be a normal compact K\"ahler space of dimension $n$ with canonical singularities. 
Suppose that $K_X$ is nef or $\kappa(X) \geq 0$.
Then $\Omega_X$ is generically nef with respect to any nef class $\alpha$, i.e. for every torsion-free quotient sheaf
$$
\Omega_X \rightarrow \sQ \rightarrow 0,
$$
we have $\alpha^{n-1} \cdot c_1(\sQ) \geq 0$.
\end{proposition}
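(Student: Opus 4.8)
The plan is to pull everything back to a resolution and invoke Enoki's theorem \cite{Eno88} on a smooth compact K\"ahler manifold whose canonical class is only pseudoeffective. First I would fix a resolution $\holom{\pi}{\hat X}{X}$ that is an isomorphism over the smooth locus $X_{\rm reg}$ (whose complement has codimension $\geq 2$, $X$ being normal). Since $X$ has canonical singularities, $K_X$ is $\Q$-Cartier, $K_{\hat X} \sim_\Q \pi^* K_X + E$ with $E$ an effective $\pi$-exceptional $\Q$-divisor, and $\kappa(\hat X) = \kappa(X)$. Hence in either case of the hypothesis $K_{\hat X}$ is pseudoeffective: if $K_X$ is nef, $K_{\hat X}$ is the sum of a nef and an effective class; if $\kappa(X) \geq 0$, then $\kappa(\hat X) \geq 0$. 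In particular $\hat X$ is not uniruled.

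Next I would use the smooth case: by Enoki's theorem \cite{Eno88} (in the form valid for a smooth compact K\"ahler manifold with pseudoeffective canonical class) the bundle $\Omega_{\hat X}$ is generically nef with respect to every K\"ahler class on $\hat X$. Testing this against $\pi^*\alpha + \varepsilon\,\omega_{\hat X}$ for a fixed K\"ahler form $\omega_{\hat X}$ and $\varepsilon \to 0^+$ — each such class is K\"ahler because $\pi^*\alpha$ is nef — yields $(\pi^*\alpha)^{n-1} \cdot c_1(\hat\sQ) \geq 0$ for every torsion-free quotient $\Omega_{\hat X} \to \hat\sQ \to 0$.

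Finally I would transfer an arbitrary torsion-free quotient $\Omega_X \to \sQ \to 0$ to $\hat X$. Its kernel $\sF$ is saturated in $\Omega_X$; composing $\pi^*\sF \to \pi^*\Omega_X$ with the natural map $\pi^*\Omega_X \to \Omega_{\hat X}$, which is an isomorphism over $X_{\rm reg}$, and then saturating, produces $\hat\sF \subset \Omega_{\hat X}$ with $\hat\sF|_{X_{\rm reg}} = \sF|_{X_{\rm reg}}$; set $\hat\sQ := \Omega_{\hat X}/\hat\sF$, a torsion-free quotient agreeing with $\sQ$ over $X_{\rm reg}$. Then $c_1(\hat\sQ)$ and the pullback of the Weil divisor class $c_1(\sQ)$ differ only by a $\pi$-exceptional divisor, since every divisorial component of $\pi^{-1}(X \setminus X_{\rm reg})$ is $\pi$-exceptional; the projection formula annihilates it against $(\pi^*\alpha)^{n-1}$, so
$$
\alpha^{n-1} \cdot c_1(\sQ) = (\pi^*\alpha)^{n-1} \cdot c_1(\hat\sQ) \geq 0,
$$
where the left-hand side is by definition computed on a resolution (cf. Definition \ref{definitionnumbers}), independently of the choice. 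This is exactly the asserted generic nefness of $\Omega_X$.

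The reduction itself is routine; the delicate point is pinning down the correct input from the smooth theory. Enoki's original statement is typically phrased for a K\"ahler polarization and for $K_{\hat X}$ nef or semipositive, whereas here I only obtain that $K_{\hat X}$ is pseudoeffective and I must test against the merely nef class $\pi^*\alpha$. The $\varepsilon$-approximation handles the polarization, but one should make sure the pseudoeffective version of the generic nefness theorem is available — equivalently, that the minimal slope in the Harder--Narasimhan filtration of $\Omega_{\hat X}$ with respect to $\pi^*\alpha$ is non-negative.
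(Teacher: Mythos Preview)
Your proposal is correct and follows essentially the same route as the paper: pass to a resolution, build a torsion-free quotient $\hat\sQ$ of $\Omega_{\hat X}$ agreeing with $\sQ$ off the exceptional locus, apply Enoki on $\hat X$, and pass to the limit $\pi^*\alpha + \varepsilon\omega \to \pi^*\alpha$. The concern you flag is exactly the point the paper makes precise: Enoki's theorem \cite[Thm.1.4]{Eno88} is stated for $K_{\hat X} \equiv L + D$ with $L$ nef and $D$ effective, and both hypotheses produce such a decomposition directly --- take $L = \pi^* K_X$, $D = E$ when $K_X$ is nef, and $L = 0$, $D = F + E$ (with $\pi^* K_X \sim_\Q F$ effective) when $\kappa(X) \geq 0$ --- so no separate ``pseudoeffective version'' is needed.
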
 

\begin{proof} 
Fix a nef class $\alpha \in N^1(X)$, and let $\Omega_X \rightarrow \sQ \rightarrow 0$ be a torsion-free quotient sheaf.
Let $\pi: \hat X \to X$ be a desingularisation by a compact K\"ahler manifold, and let
$\sK $ be the kernel of the induced epimorphism 
$$
(\pi^* \Omega_X) / {\rm torsion} \to (\pi^* \sQ) / {\rm torsion} \to 0. 
$$
Using the injective map $$(\pi^* \Omega_X) / {\rm torsion} \hookrightarrow \Omega_{\hat X}$$ we may view $\sK$ as a subsheaf
of $\Omega_{\hat X}$, and we denote by $\hat \sK$ its saturation in $\Omega_{\hat X}$. Set 
$$
\hat \sQ := \Omega_{\hat X}/\hat \sK,
$$
then $\hat \sQ$ is a torsion-free quotient of $\Omega_{\hat X}$ coinciding with $(\pi^* \sQ) / {\rm torsion}$
in the complement of the exceptional locus. In particular we have
$$
\alpha^{n-1} \cdot c_1(\sQ) = (\pi^* \alpha)^{n-1} \cdot c_1((\pi^* \sQ) / {\rm torsion}) = (\pi^* \alpha)^{n-1} \cdot c_1(\hat \sQ).
$$
We will now verify the conditions of \cite[Thm.1.4]{Eno88} in order to conclude that $(\pi^* \alpha)^{n-1} \cdot c_1(\hat \sQ) \geq 0$:
since $X$ has canonical singularities we have
$$
K_{\hat X} = \pi^* K_X + E,
$$
with $E$ an effective, $\pi$-exceptional $\Q$-divisor. \\
Thus if $K_X$ is nef the conditions of \cite[Thm.1.4]{Eno88} hold
by setting $L= \pi^* K_X$ and $D=E$. If $\kappa(X) \geq 0$ we have $\pi^* K_X = F$ with $F$ an effective $\Q$-divisor, so 
the conditions  are satisfied by setting $L=0$ and $D=F+E$. In both cases Enoki's theorem tells us that
$$
\omega^{n-1} \cdot c_1(\hat \sQ) \geq 0
$$
for every K\"ahler form $\omega$ on $\hat X$. Since $\pi^* \alpha$ is nef, the statement follows by passing to the limit.
\end{proof} 

We will use the following Serre vanishing property:

\begin{lemma} \label{lemmaserre}
Let $X$ be a normal $\Q$-factorial compact K\"ahler threefold, and let $B_1, \ldots, B_k$ be prime Weil divisor on $X$
such that $B_i$ is Cohen-Macaulay for every $i \in \{1, \ldots, k \}$. Let $L$ be a nef Cartier divisor on $X$ such that
$L|_{B_i} \not \equiv 0$ for every $i \in \{1, \ldots, k \}$. Let $Y \subset X$ be a subscheme such that $Y_{\red} \subset \sum_{i=1}^k B_i$
and let $\sF$ be a coherent sheaf on $Y$. 
Then there exists a number $n_0 \in \N$ such that
$$
H^2(Y, \sF \otimes \sO_Y(L^{\otimes n})) = 0
$$
for every $n \geq n_0$.
\end{lemma}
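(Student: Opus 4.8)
The plan is to reduce, by a dévissage, to a vanishing statement for a single rank-one sheaf on one of the surfaces $B_i$, and then to establish that vanishing by Serre duality on $B_i$ together with a Kähler-class argument on a resolution of $B_i$.

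First I would push $\sF$ forward to $X$ and use the standard dévissage (\cite[II]{Har77}): $\sF$ admits a finite filtration whose graded pieces are of the form $\iota_*\sG$ with $\iota\colon Z\hookrightarrow X$ the inclusion of an integral closed subscheme $Z\subseteq\supp\sF\subseteq\bigcup_i B_i$ and $\sG$ torsion-free of rank one on $Z$. Since $\dim X=3$ and the $B_i$ are surfaces, every space in sight has dimension at most two, so cohomology vanishes in degree $\ge 3$; the long exact sequences attached to the filtration then reduce the problem to each graded piece. For $\dim Z\le 1$ the group $H^2(Z,\sG\otimes\sO_Z(nL|_Z))$ vanishes for trivial dimension reasons, so the only case left is $\dim Z=2$, where irreducibility of $Z$ and of the $B_i$ forces $Z=B_i$ for some $i$. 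Thus it remains to prove, for each $i$, that $H^2(B_i,\sG\otimes\sO_{B_i}(nL|_{B_i}))=0$ for $n\gg 0$, with $\sG$ torsion-free of rank one and $L|_{B_i}$ nef, $\not\equiv 0$.

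Next I would invoke Serre duality; this is the one place the Cohen--Macaulay hypothesis is used. As $B:=B_i$ is compact and Cohen--Macaulay of pure dimension two, $H^2(B,\sH)^\vee\cong\Hom_{\sO_B}(\sH,\omega_B)$ for any coherent $\sH$. Applying this to $\sH=\sG\otimes\sO_B(nL|_B)$ gives $H^2(B,\sG\otimes\sO_B(nL|_B))^\vee\cong H^0(B,\sG'\otimes\sO_B(-nL|_B))$ with $\sG':=\Homsheaf_{\sO_B}(\sG,\omega_B)$, which is again torsion-free of rank one since $\omega_B$ is torsion-free of rank one. So the claim becomes $H^0(B,\sG'\otimes\sO_B(-nL|_B))=0$ for $n\gg 0$.

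The main point — and the only step that uses that $L$ is merely nef with $L|_{B_i}\not\equiv 0$ rather than ample — is this last vanishing. If it failed, there would be arbitrarily large $n$ with a nonzero, hence injective, map $\sO_B(nL|_B)\hookrightarrow\sG'$. Let $\holom{\nu}{\tilde B}{B}$ be the composition of the normalisation and the minimal resolution; $\tilde B$ is a compact Kähler surface (as $B$ is a divisor in a Kähler threefold, cf. Section 5) mapping onto $B$. Pulling back and dividing out torsion I would obtain a nonzero, hence injective, map $\sO_{\tilde B}(n\,\nu^*(L|_B))\hookrightarrow\mathcal M$ into the torsion-free rank-one sheaf $\mathcal M:=\nu^*\sG'/(\text{torsion})$; its cokernel is a torsion sheaf on the smooth surface $\tilde B$, so $c_1(\mathcal M)-n\,\nu^*(L|_B)$ is represented by an effective divisor. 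Intersecting with a fixed Kähler class $\omega_{\tilde B}$ yields $c_1(\mathcal M)\cdot\omega_{\tilde B}\ge n\,(\nu^*(L|_B)\cdot\omega_{\tilde B})$, and since $\nu^*(L|_B)$ is a nonzero nef class, $\nu^*(L|_B)\cdot\omega_{\tilde B}>0$; letting $n\to\infty$ contradicts boundedness of the left-hand side. Taking $n_0$ to be the maximum of the finitely many bounds so obtained finishes the proof. The only slightly delicate bookkeeping is isolating the two-dimensional integral pieces in the dévissage (which must be among the $B_i$) and checking that the morphism stays nonzero after pullback to $\tilde B$ and killing torsion, both routine because $\nu$ is surjective and the source is a line bundle.
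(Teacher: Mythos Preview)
Your argument is correct and follows the same overall strategy as the paper: reduce to a sheaf on a single $B_i$, apply Serre duality using the Cohen--Macaulay hypothesis, then pass to a smooth K\"ahler model of $B_i$ and use that a nonzero nef class has positive degree against a K\"ahler form.

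The reduction step is organised differently. The paper first replaces $Y$ by a divisorial thickening $\sum a_i B_i$ and argues by induction on $\sum a_i$, peeling off one $B_j$ at a time; this lands on an \emph{arbitrary} coherent sheaf on $B_i$, so after passing to the resolution the paper invokes the Harder--Narasimhan filtration to bound slopes. Your d\'evissage instead filters $\sF$ so that the two-dimensional graded pieces are already torsion-free of rank one on some $B_i$; this is why your final step needs only the rank-one inequality $c_1(\mathcal M)\cdot\omega_{\tilde B}\ge n\,\nu^*(L|_B)\cdot\omega_{\tilde B}$ and no HN filtration. Both reductions are standard; yours is slightly more economical at the end, while the paper's keeps the sheaf arbitrary and absorbs the extra work into a single HN argument.
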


\begin{proof}
Note first that we may suppose that $Y$ is defined by an ideal sheaf $\sO_X(- \sum_{i=1}^k a_i B_i)$ with $a_i \in \N$. 
Indeed, at the general point of every surface $B_i$, the scheme $Y$ is isomorphic to a scheme $\sum_{i=1}^k a_i B_i$ 
defined by the ideal sheaf  $\sO_X(- \sum_{i=1}^k a_i B_i)$. Thus if we consider the restriction map
$$
\sF \twoheadrightarrow \sF \otimes \sO_{\sum_{i=1}^k a_i B_i},
$$
its kernel has support on a scheme of dimension at most one. Hence
$$
H^2(Y, \sF \otimes \sO_Y(L^{\otimes n}))  \simeq H^2(\sum_{i=1}^k a_i B_i, \sF \otimes \sO_{\sum_{i=1}^k a_i B_i}(L^{\otimes n})) 
$$
for every $n \in \N$. We will now argue by induction on $\sum_{i=1}^k a_i$. The start of the induction is the case where $\sF$ is a coherent sheaf
on one of the surfaces $B_i$. Since $B_i$ is Cohen-Macaulay, Serre duality gives 
$$
H^2(B_i, \sF \otimes \sO_{B_i}(L^{\otimes n})) \simeq 
\mbox{Hom}(\sF \otimes \sO_{B_i}(L^{\otimes n}), \omega_{B_i}).
$$
Let $\holom{\nu}{\tilde B_i}{B_i}$ denote the normalisation. Since $\Homsheaf(\sF, \omega_{B_i}) $ is torsion free, 
the natural map
$$
\Homsheaf(\sF, \omega_{B_i})  \rightarrow \nu_*(\nu^*(\Homsheaf(\sF, \omega_{B_i})) 
$$
is injective. Since $\nu_*(\nu^*(\Homsheaf(\sF, \omega_{B_i}))$ and
$\nu_*(\nu^*(\Homsheaf(\sF, \omega_{B_i})/Tor)$ coincide at the generic point of $B_i$ and $\Homsheaf(\sF, \omega_{B_i})$ 
is torsion free, the map
$$
\Homsheaf(\sF, \omega_{B_i})  \rightarrow \nu_*(\nu^*(\Homsheaf(\sF, \omega_{B_i})/Tor)
$$
is injective, too. Thus it is suffices to show that for any torsion-free sheaf $\sS$ on $\tilde B_i$ one has
$$ 
H^0(\tilde B_i, \sS \otimes \sO_{\tilde B_i}(\tilde L^{- \otimes n}) ) = 0
$$
for $n \gg 0$ and $\tilde L:=\nu^*(L|_{B_i})$. 
Passing to a desingularisation, we may assume $\tilde B_i$ smooth and $\sS$ locally free. 
Fix now a K\"ahler form $\omega$ on $\tilde B_i$, and let $\sG_1 \subset \sS$ be the first sheaf of the Harder-Narasimhan filtration
with respect to $\omega$.
Since $L|_{B_i}$ is a non-zero nef divisor we have $\tilde L \cdot \omega>0$. Thus there exists a number $n_0 \in \N$ such that
$\sG_1 \otimes  \sO_{\tilde B_i}(\tilde L^{- \otimes n})$ has negative slope for all $n \geq n_0$. 
In particular $\sS \otimes \sO_{B_i}(L^{- \otimes n})$
has no global sections.
 
For the induction step we simply choose a surface $B_j$ such that $a_j>0$. Then the kernel of the restriction map
$$
\sF \twoheadrightarrow \sF \otimes \sO_{(\sum_{i=1}^k a_i B_i)-B_j}
$$ 
is a sheaf $\sG$ with support on $B_j$. Thus by the induction hypothesis we know that for $n \gg 0$ the second cohomology vanishes
for both  $\sG \otimes \sO_{B_j}(L^{\otimes n})$ and $\sF \otimes \sO_{(\sum_{i=1}^k a_i B_i)-B_j}(L^{\otimes n})$.  
\end{proof} 

\begin{corollary} \label{corollaryserre}
Let $X$ be a normal $\Q$-factorial compact K\"ahler threefold, and let $L$ be a nef Cartier divisor on $X$.
Let $D \in |L|$ be effective and set $B:= \supp D$ and $B_1, \ldots, B_k$ for the irreducible components of $B$. Suppose 
that $B_i$ is Cohen-Macaulay for every $i \in \{1, \ldots, k \}$. 

Suppose also that $L|_{B_i} \not \equiv 0$ for every $i \in \{1, \ldots, k \}$.
Then there exists a number $n_0 \in \N$ and constants $c_1, c_2 \in \N$ such that
for all $n \geq n_0$:
$$
\dim H^2(X, L^{\otimes n}) = c_1
$$
and
$$
\dim H^2(X, L^{\otimes n} \otimes \sO_X(-B)) = c_2.
$$
\end{corollary}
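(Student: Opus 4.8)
The plan is to derive both statements from Lemma \ref{lemmaserre} by feeding it the structure sequence of the effective divisor $D \in |L|$. Since $L$ is Cartier and $D \in |L|$, we have $\sO_X(-D) \simeq L^{-1}$; tensoring $0 \to \sO_X(-D) \to \sO_X \to \sO_D \to 0$ with the line bundle $L^{\otimes n}$ gives
$$
0 \to L^{\otimes(n-1)} \to L^{\otimes n} \to L^{\otimes n} \otimes \sO_D \to 0 ,
$$
where $\sO_D$ is a coherent sheaf whose reduced support is $B = B_1 \cup \dots \cup B_k$. First I would apply Lemma \ref{lemmaserre} with $\sF = \sO_D$: the hypotheses (each $B_i$ Cohen--Macaulay, $L|_{B_i} \not\equiv 0$) are exactly those of the corollary, so there is an $n_0$ with $H^2(X, L^{\otimes n} \otimes \sO_D) = 0$ for all $n \geq n_0$; moreover $H^3(X, L^{\otimes n} \otimes \sO_D) = 0$ because $D$ is two-dimensional. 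Plugging these into the long exact cohomology sequence shows that for $n \geq n_0$ the map $H^2(X, L^{\otimes(n-1)}) \to H^2(X, L^{\otimes n})$ is surjective. Hence $\bigl(\dim H^2(X, L^{\otimes n})\bigr)_{n \geq n_0}$ is a non-increasing sequence of non-negative integers, so it is eventually constant, which is the first assertion.

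For the second assertion I would run the identical argument after twisting everything by $\sO_X(-B)$. Because $\sO_X(-D)$ is a line bundle, $\sO_X(-D) \otimes \sO_X(-B) \simeq \sO_X(-B-D)$ is again reflexive, and the natural inclusion $\sO_X(-B-D) \hookrightarrow \sO_X(-B)$ (sections with the extra vanishing along $D$) is injective; denote its cokernel by $\sQ$, a fixed coherent sheaf supported on $D$, whose reduced support is contained in $B$. Tensoring $0 \to \sO_X(-B-D) \to \sO_X(-B) \to \sQ \to 0$ with the line bundle $L^{\otimes n}$ and using $\sO_X(-B-D) \otimes L^{\otimes n} \simeq L^{\otimes(n-1)} \otimes \sO_X(-B)$ yields
$$
0 \to L^{\otimes(n-1)} \otimes \sO_X(-B) \to L^{\otimes n} \otimes \sO_X(-B) \to \sQ \otimes L^{\otimes n} \to 0 .
$$
Applying Lemma \ref{lemmaserre} with $\sF = \sQ$ kills $H^2(X, \sQ \otimes L^{\otimes n})$ for $n \gg 0$, and $H^3(X, \sQ \otimes L^{\otimes n}) = 0$ since $\sQ$ lives on a surface; exactly as before this forces $H^2(X, L^{\otimes(n-1)} \otimes \sO_X(-B)) \to H^2(X, L^{\otimes n} \otimes \sO_X(-B))$ to be surjective for $n \gg 0$, so the dimension stabilises to a constant $c_2$.

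The only point that needs a little care is the exactness of the twisted structure sequence in the second step, where $\sO_X(-B)$ is merely reflexive of rank one rather than locally free; this is why I would carry out the tensoring with the line bundle $L^{\otimes n}$ only at the very end, after having identified $\sO_X(-D) \otimes \sO_X(-B)$ with the reflexive sheaf $\sO_X(-B-D)$ and checked that the map into $\sO_X(-B)$ is injective (it is: an isomorphism away from $\supp D$ and a morphism out of a torsion-free sheaf). Apart from that, the proof is just long-exact-sequence bookkeeping, Lemma \ref{lemmaserre}, the vanishing of $H^3$ of a sheaf supported on a surface, and the elementary fact that a non-increasing sequence of non-negative integers is eventually constant.
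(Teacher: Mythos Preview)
Your proof is correct. The first part (constancy of $h^2(X,L^{\otimes n})$) is identical to the paper's: you both feed the ideal sequence of $D$, twisted by $L^{\otimes n}$, into Lemma \ref{lemmaserre} to get the surjection $H^2(X,L^{\otimes n-1})\twoheadrightarrow H^2(X,L^{\otimes n})$ for $n\gg 0$. (The vanishing of $H^3$ of the sheaf on $D$ is harmless but not actually needed for this surjectivity --- only $H^2(D,\sO_D(L^{\otimes n}))=0$ matters.)

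For the second part the two arguments diverge slightly. You twist the ideal sequence of the Cartier divisor $D$ by the reflexive sheaf $\sO_X(-B)$, obtaining
\[
0 \to L^{\otimes(n-1)}\otimes\sO_X(-B) \to L^{\otimes n}\otimes\sO_X(-B) \to \sQ\otimes L^{\otimes n} \to 0,
\]
and conclude directly that $h^2(X,L^{\otimes n}\otimes\sO_X(-B))$ is non-increasing in $n$, hence stabilises. The paper instead uses the structure sequence of the effective Weil divisor $D-B$,
\[
0 \to L^{\otimes n-1} \to L^{\otimes n-1}\otimes\sO_X(D-B) \to \sO_{D-B}(L^{\otimes n-1}) \to 0,
\]
together with the identification $L^{\otimes n-1}\otimes\sO_X(D-B)\simeq L^{\otimes n}\otimes\sO_X(-B)$, to surject $H^2(X,L^{\otimes n-1})$ onto $H^2(X,L^{\otimes n}\otimes\sO_X(-B))$. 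As literally written this gives only the bound $h^2(X,L^{\otimes n}\otimes\sO_X(-B))\leq c_1$; constancy then follows by combining with the reverse surjection $H^2(X,L^{\otimes n}\otimes\sO_X(-B))\twoheadrightarrow H^2(X,L^{\otimes n})$ (again from Lemma \ref{lemmaserre}) and the fact that source and target of the composite both have dimension $c_1$. Your route is self-contained and avoids this sandwich, at the modest cost of checking that tensoring the injection $\sO_X(-D)\hookrightarrow\sO_X$ by the torsion-free sheaf $\sO_X(-B)$ stays injective --- which you correctly address.
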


\begin{proof}
For all $n \in \N$ we have an exact sequence 
$$
0 \rightarrow L^{\otimes n-1} \rightarrow L^{\otimes n} \rightarrow \sO_D(L^{\otimes n}) \rightarrow 0.
$$
By Lemma \ref{lemmaserre}, $$H^2(D, \sO_D(L^{\otimes n}))=0$$ for all $n\gg 0$. Thus the map
$H^2(X, L^{\otimes n-1}) \rightarrow H^2(X, L^{\otimes n})$ is surjective for all $n \gg 0$. This shows the first statement.

For the second statement assume without loss of generality that $D-B \neq 0$.
Note that $D-B$ is an effective Weil divisor whose support is contained in $B$. For all $n \in \N$ we have an exact sequence 
$$
0 \rightarrow L^{\otimes n-1} \rightarrow L^{\otimes n-1} \otimes \sO_X(D-B) \rightarrow \sO_{D-B}(L^{\otimes n-1}) \rightarrow 0.
$$
Again by Lemma \ref{lemmaserre} 
$$H^2(D-B, \sO_{D-B}(L^{\otimes n-1}))=0$$ for all $n\gg 0$. Thus the
map  $$H^2(X, L^{\otimes n-1}) \rightarrow H^2(X, L^{\otimes n-1} \otimes \sO_X(D-B))$$ 
is surjective for $n \gg 0$ and
by the first statement $H^2(X, L^{\otimes n-1})$ is constant for $n \gg 0$. We conclude by noting that
$$L^{\otimes n-1} \otimes \sO_X(D-B) \simeq L^{\otimes n} \otimes \sO_X(-B)$$ 
since both sheaves are reflexive and coincide on the smooth locus of $X$.
\end{proof}

\begin{proof}[Proof of Theorem \ref{theoremnutwo}]  
By Lemma \ref{lemmaMMP1} and Lemma \ref{lemmanutwoMMP} we are reduced to proving the following statement:

{\em
Let $X$ be a normal $\Q$-factorial compact K\"ahler threefold such that $(X,0)$ is klt, carrying
a divisor $D \in |m K_{X}|$ with the following properties:
\begin{enumerate}
\item Set $B:=\supp D$. The pair $(X, B)$ is lc and $X \setminus B$ has terminal singularities.
\item The divisor $K_{X}+B$ is nef and we have $\nu(K_{X}+B)=2$. Moreover we have
$
\kappa(X) = \kappa(K_X+B).
$
\item For every irreducible component $T \subset B$ we have $(K_{X}+B)|_{T} \neq 0$.
\item We have $(K_{X}+B) \cdot K_{X}^2 \geq 0$.
\end{enumerate}
Then $\kappa(X) \geq 1$.
}

{\em Step 1: Singularities of $X$.} 
 
We claim that there is a finite set $S \subset X$ such that $X \setminus S$ has canonical singularities. By hypothesis
$X \setminus B$ has only terminal singularities, which are isolated. Thus it remains to consider the singular points of $X$ which are contained 
in $B$. Taking a finite covering of $X$ by analytic neighbourhoods we see that it is sufficient to prove the claim for $X$ a Stein variety.
Thus we can take a hyperplane section $H$ of $X$. Now for general $H$, the pair $(H,B_H) $ is lc by \cite[Prop.7.7]{Kol95}, so by \cite[Thm.9.6]{Kaw88}
every point $p \in B_H \subset H$ is a rational double point in $H$.  Hence by \cite[Thm.5.34]{KM98}, the point $p$ is a canonical 
singularity of $X.$ 
 
{\em Step 2: A Chern class inequality.}

Let $\holom{\mu}{X'}{X}$ be a terminal modification of $X$ (cf. Theorem \ref{theoremterminalmodel}). 
Thus $X'$ has only terminal singularities, and 
there exists an effective $\Q$-divisor $\Delta$ such that
$$
K_{X'} + \Delta \sim_\Q \mu^* K_X.
$$ 

Let $m$ be the Cartier index of $K_{X}+B$, then 
$$ L := m(K_X + B)$$
is Cartier, and we set $L':= \mu^* (L)$.
We prove the basic Chern class inequality 
\begin{equation} \label{todd}
 L' \cdot (K_{X'}^2+c_2(X'))\geq 0.
\end{equation}
In fact, since $X$ has only finitely many
non-canonical points, $\mu(\Delta)$ is finite.
Therefore the projection formula and our assumption d) yield
\begin{equation} \label{liftnonnegative}
L' \cdot K_{X'}^2 = \mu^* L \cdot K_{X'}^2 = m (K_{X}+B) \cdot K_{X}^2  \geq 0.
\end{equation}
By Proposition \ref{propositiongenericallynef} the sheaf $\Omega_{X'}$ is generically nef.
Since
$$
K_{X'} + \Delta + \mu^* B \sim_\Q \mu^* (K_X+B)
$$ 
is nef, the conditions of Theorem \ref{theoremnonnegative} are satisfied for $\sF:=(\Omega_X)^{**}$ and $P:= \Delta + \mu^* B$.
Having in mind  that $L^3= m^3 (K_X+B)^3=0$ and using \eqref{liftnonnegative},
Theorem \ref{theoremnonnegative} yields 
$$ L'\cdot c_2(X') \geq 0,$$
hence the Chern class inequality (\ref{todd}) is established.

{\em Step 3: A Riemann-Roch computation.} 
Since $K_X+B$ is nef and $\Q$-linearly equivalent to an effective divisor with support $B$, the equality $(K_X+B)^3=0$
implies that 
\begin{equation} \label{zerosquare}
(K_X+B)^2 \cdot T=0
\end{equation}
for every  irreducible component $T \subset B$. Since $K_X$ is $\Q$-linearly equivalent to an effective divisor with support $B$,
we conclude 
$$(K_X+B)^2 \cdot K_X=0.$$
Since $\mu(\Delta)$ is finite the projection formula yields
$$
K_{X'} \cdot (L')^2 =  K_X \cdot L^2 - \Delta \cdot (\mu^*L)^2 =  m^2 K_X \cdot (K_X+B)^2 = 0.
$$
Thus Proposition \ref{propositionRR} gives 
$$
\chi(X', \sO_{X'}(nL')) = n L'\cdot \frac{K_{X'}^2+c_2(X')}{12} + \chi(X', \sO_{X'})
$$
for all $n \in \N$.
Thus \eqref{todd} yields a constant $k$ such that 
$$
\chi(X', \sO_{X'}(nL')) \geq  k 
$$
for all $n \in \N$. Since $X$ has rational singularities, we conclude that 
$$
\chi(X, \sO_X(nL)) \geq  k 
$$
for all $n \in \N$. Since $H^3(X, nL)=0$ for $n \gg 0$ and $\dim H^2(X, nL)$ is constant for $n \gg 0$ by Corollary \ref{corollaryserre} 
(note that the components of $B$ are Cohen-Macaulay by \cite[5.25]{KM98}),
we arrive at 
\begin{equation} \label{hzero}  
h^0(X, nL) \geq h^1(X, nL) + c
\end{equation}
with some constant $c \in \Z$.

{\em Step 4. A simple case.} 
Although not really necessary, it is instructive to give the simple concluding argument in the case of strict inequality in 
(\ref{todd}). Then the preceding computation yields that
$$
h^0(X, nL) \geq h^1(X, nL) + D n
$$
with some positive constant $D$. Thus 
$$
\kappa(K_X+B) = \kappa(X, L)>0,
$$ 
so $\kappa (X) \geq 1$ by assumption b).

{\em Step 5: Conclusion.}
By \eqref{hzero} it suffices to show that $h^1(X, nL)$ grows at least linearly. 
We consider the exact sequence
\begin{equation} \label{exseq}  
0 \to \sO_{X}(nL-B) \to \sO_{X}(nL) \to \sO_{B}(nL) \to 0.
\end{equation} 
By Corollary \ref{corollaryserre} we know that $h^2(X, \sO_{X}(nL-B))$ is constant for $n \gg 0$.
Taking cohomology  of the exact sequence \eqref{exseq}, it remains to show that $h^1(B, \sO_B(nL))$ grows at least linearly. 
To this extent, we will prove that $\chi(B, \sO_B(nL))$ is constant. 
Assuming this for the time being, let us see how to conclude: by Lemma \ref{lemmaserre} we have
$H^2(B, \sO_B(nL))=0$ for $n \gg 0$. Moreover by adjunction \cite[16.9.1]{Uta92}
$\sO_B(K_B) \simeq \sO_B(K_X+B)$, so by Proposition \ref{propositionslcone}
$$ 
h^0(B, \sO_B(nL)) = h^0(B, \sO_{B}(nm(K_{X} + B))) = h^0(B, \sO_B(n m K_B))
$$
grows linearly. Thus $h^1(B, \sO_B(nL))$ grows linearly.

{\em Proof of the claim.} 
By \cite[Thm.3.1]{LR13} the Euler characteristic $\chi(B, \sO_B(nL))$ on the slc surface $B$ is computed by the usual Riemann-Roch 
formula\footnote{We refer to \cite[Ch.2.3]{LR13} for the definition of the intersection product on the non-normal surface $B$.}
$$ 
\chi(B, \sO_B(nL)) = \chi(B,\sO_B) + \frac{1}{2} (nL|_B) \cdot (nL|_B - K_B).
$$ 
Yet by \eqref{zerosquare} we have $0 = L^2 \cdot B = (L|_B)^2$. Since $\sO_B(K_B) \simeq \sO_B(K_X+B)$ is a multiple of $L|_B$,
this also implies that $L|_B \cdot K_B=0$.
\end{proof} 

\subsection{Proof of Theorem \ref{theoremmain}}

\begin{proof} 
By \cite[Thm.0.3]{DP03} we have $\kappa(X) \geq 0$.
If $\nu(X) = 3,$ then $K_X$ is big, hence $X$ is Moishezon and therefore projective \cite{Nam02}. Thus the result follows from the base point free theorem. 
If $\kappa(X)=\nu(X) \leq 2$ the statement follows from Kawamata's theorem \cite[Thm.1.1]{Kaw85b}, \cite[Thm.5.5]{Nak85}, \cite[Sect.4]{Fuj11}.

By Theorem \ref{theoremnuone} and Theorem \ref{theoremnutwo} we are thus left to exclude the possibility that
$\kappa(X)=1$ and $\nu(X)=2$.  This is done exactly as in \cite[Thm.7.3]{Kaw85b}.
\end{proof}

\section{Applications} 

In this concluding section we apply the MMP to explore the structure of non-algebraic compact K\"ahler threefolds $X$ with Kodaira dimension $\kappa (X) \leq 0.$ 

\subsection{Uniruled threefolds} 
\begin{theorem} \label{appli}  Let $X$ be a smooth non-algebraic compact K\"ahler threefold with $\kappa (X) = - \infty.$ Then $X$ is bimeromorphic to a normal compact K\"ahler threefold
$X'$ with at most terminal singularities with the following properties. There exists a contraction
$$ \varphi: X' \to Y$$
of an extremal ray in $\overline{NA}(X')$ such that 
\begin{enumerate}
\item[(1)] $Y$ is a normal  non-algebraic K\"ahler surface with only rational singularities. 
\item[(2)] There is a finite set $A \subset Y$ such
that $\varphi \vert X' \setminus \varphi^{-1}(A) \to Y \setminus A$ is a conic bundle. 
\item[(3)] $\varphi$ realises the MRC fibration of $X'$, and $\kappa (\hat Y) \in \{0,1\} $ for any desingularisation $\hat Y \rightarrow Y$. 
\end{enumerate} 
\end{theorem}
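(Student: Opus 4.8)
The plan is to deduce Theorem \ref{appli} from the K\"ahler minimal model program for uniruled threefolds \cite{HP15}, together with Kodaira's projectivity criterion and the fact that a Moishezon K\"ahler space is projective. Since $\kappa(X)=-\infty$, the threefold $X$ is uniruled by \cite[Cor.1.4]{HP13a}, and uniruledness is a bimeromorphic invariant, so $K$ is pseudoeffective on no bimeromorphic model of $X$. Running the K\"ahler MMP \cite{HP15} thus yields a bimeromorphic model $X'$, a normal $\Q$-factorial compact K\"ahler threefold with terminal (hence isolated) singularities, together with the contraction $\holom{\varphi}{X'}{Y}$ of a $K_{X'}$-negative extremal ray in $\NA{X'}$ with $\dim Y<3$; this end-product is a Mori fibre space, since the MMP cannot terminate with $K_{X'}$ nef. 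The space $Y$ is K\"ahler by Corollary \ref{corollarycontractionkaehler}, and as in the proof of Proposition \ref{propositioncontraction} --- using that $R^q\varphi_*\sO_{X'}=0$ for $q\ge 1$ by \cite{Anc87}, since $-K_{X'}$ is $\varphi$-ample and $X'$ has rational singularities --- the normal surface $Y$ has rational singularities. Now $-mK_{X'}+\varphi^*A$ is ample for $A$ ample on $Y$ and $m\gg 0$, so if $Y$ were Moishezon then so would be $X'$, hence (being also K\"ahler) $X'$, and then $X$, would be projective --- impossible. Hence $Y$ is not algebraic, so $\dim Y\ne 0$; and if $\dim Y=1$ then $Y$ is a smooth projective curve, $-mK_{X'}$ is $\varphi$-relatively very ample for $m\gg 0$, and the closed embedding of $X'$ into the projective bundle $\PP(\varphi_*\sO_{X'}(-mK_{X'}))$ over $Y$ again makes $X'$ projective. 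Therefore $\dim Y=2$, which proves (1).

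For (2), note that the general fibre $F$ of $\varphi$ satisfies $-K_{X'}\cdot F>0$, so $F\simeq\PP^1$, and every fibre of $\varphi$ is one-dimensional, as for every three-dimensional Mori fibre space over a surface. Let $A\subset Y$ be the union of the finite sets $Y_{\sing}$ and $\varphi(X'_{\sing})$. Over $Y\setminus A$ both $Y$ and $X'$ are smooth and $\varphi$ is equidimensional of relative dimension one, hence flat; then $\varphi_*\sO_{X'}(-K_{X'})$ is locally free of rank three and embeds $X'$ fibrewise as a curve of degree two in the associated $\PP^2$-bundle, that is, $\varphi$ restricts to a conic bundle over $Y\setminus A$, which is (2) (compare the structure of such contractions, \cite{KM98}).

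Finally, for (3), let $X'\dashrightarrow Z$ be the MRC fibration of the uniruled threefold $X'$, so $Z$ is not uniruled. A general fibre of $\varphi$ is a rational curve through a general point of $X'$, hence is contained in a general fibre of the MRC map, so $\dim Z\le 2$. If $\dim Z\le 1$, choose a desingularisation $W\to X'$ through which the MRC map becomes a morphism $f\colon W\to Z'$ onto a smooth model of $Z$; then $H^0(W,\Omega^2_W)=0$ --- directly when $\dim Z=0$, because $W$ is then rationally connected, and when $\dim Z=1$ by a Leray spectral sequence for $f$, using that a general fibre is a rational surface (so $R^1f_*\sO_W$ is torsion and $R^2f_*\sO_W=0$) and that $Z'$ is a curve. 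By Kodaira's criterion $W$ would then be projective, hence $X'$ Moishezon and K\"ahler, hence projective --- a contradiction. So $\dim Z=2$; the general MRC fibre, being a curve containing the general $\varphi$-fibre $\PP^1$, then equals it, so $\varphi$ realises the MRC fibration and $Y$ is bimeromorphic to the non-uniruled variety $Z$. Thus any desingularisation $\hat Y$ is non-uniruled, so $\kappa(\hat Y)\ge 0$; and $\kappa(\hat Y)\le 1$, since otherwise $\hat Y$ would be of general type, hence Moishezon, hence a projective surface, forcing $Y$ projective and $X'$ projective as above. This proves (3). The step I expect to require the most care is making the various projectivity inputs --- relative ampleness combined with pullback of an ample class, Kodaira's criterion $H^2(\cdot,\sO)=0\Rightarrow$ projective, and ``Moishezon $+$ K\"ahler $=$ projective'' --- rigorous in the singular K\"ahler analytic setting at hand, and verifying that a desingularisation of a compact K\"ahler space is again K\"ahler (used freely, via \cite{Var89}); the remaining steps are then routine.
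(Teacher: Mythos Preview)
Your argument is essentially correct and follows the same strategy as the paper: show that the base of the MRC fibration has dimension two (otherwise $H^2(\cdot,\sO)=0$ forces projectivity), run the K\"ahler MMP of \cite{HP15} to a Mori fibre space, and read off the statements. Two citation errors should be fixed, and there is one organisational difference worth noting.

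\textbf{Citation errors.} You invoke Corollary \ref{corollarycontractionkaehler} to conclude that $Y$ is K\"ahler, and the proof of Proposition \ref{propositioncontraction} to conclude that $Y$ has rational singularities. Both results are stated for \emph{bimeromorphic} contractions on a \emph{non-uniruled} threefold, whereas here $\varphi$ is a Mori fibre space on a uniruled $X'$; neither applies as written. The conclusions are nevertheless true and are part of \cite[Thm.1.1, Rem.4.2]{HP15}, which is what the paper cites for (1) and (2) outright. Likewise, your claim that every fibre of $\varphi$ is one-dimensional (needed for the flatness step in your conic-bundle argument) is true but is again part of the structure theorem in \cite{HP15}, not something you can read off the statements in the present paper.

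\textbf{Comparison with the paper.} The paper first argues, on the \emph{smooth} manifold $X$, that the MRC base has dimension two (via $H^2(F,\sO_F)=0$ for a rationally connected fibre and Kodaira's criterion), and only then runs the MMP; it then simply cites \cite{HP15} for (1) and (2). You instead run the MMP first and carry out the projectivity arguments on the singular $X'$, which forces you to pass to a desingularisation $W$ and to invoke ``Moishezon $+$ K\"ahler $+$ rational singularities $\Rightarrow$ projective'' \cite{Nam02}. Both routes are valid; the paper's is shorter because the vanishing $H^2(X,\sO_X)=0$ and Kodaira's criterion are immediate on a smooth K\"ahler manifold, and because it offloads the structural claims to \cite{HP15} rather than re-deriving them.
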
 

\begin{proof} 
By  \cite[Cor.1.4]{HP13a} the manifold $X$ is uniruled. We first claim that ``the'' base $B$, chosen smooth,  of the MRC-fibration $X \dashrightarrow B$ has dimension two.
Indeed, if $\dim B=1$, then the MRC fibration is realised by a morphism $X \rightarrow B$ and the general fibre $F$ is rationally connected,
hence $H^2(F, \sO_F)=0$. This immediately implies $H^2(X, \sO_X)=0$, so $X$ is projective by Kodaira's criterion. The same of course applies when $\dim B = 0$
which is to say that $X$ is rationally connected. Notice also that $B$ is non-algebraic, otherwise $X$ were algebraic. 

By \cite[Thm.1.1]{HP15} we can run a MMP which terminates with a Mori fibre space $\varphi: X' \to Y$. Moreover the properties $(1)$ and $(2)$
are shown in \cite[Thm.1.1]{HP15} and \cite[Rem.4.2]{HP15}. We have seen above that the base of the MRC fibration has dimension two,
so $\varphi$ realises this fibration for $X'$. In particular $Y$ is not algebraic, so any desingularisation $\hat Y$ is not uniruled.
Thus we have $\kappa(\hat Y) \geq 0$. The non-algebraicity of $Y$ also yields $\kappa(\hat Y) \leq 1$. 
\end{proof}

We describe the structure of $X$ resp. $X'$ more closely. If  the algebraic dimension $a(Y)  = 1,$ we denote by $f: Y \to C$ the
algebraic reduction. 

\begin{corollary} In the setting of Theorem \ref{appli}, the algebraic dimensions  $(a(X),a(Y)) $ can take the following values.
\begin{enumerate} 
\item[(1)] $(0,0),$ and $Y$ is bimeromorphic to a K3-surface or a torus.
\item[(2)] $(1,0),$ and $X$ is bimeromorphic (up to an \'etale quotient possibly)  of a product of a K3-surface or a tori with an elliptic curve.
\item[(3)] $(1,1),$ and $f \circ \varphi$ is the algebraic reduction of $X';$ the general fiber
$f \circ \varphi$ being a ruled surface $\mathbb P(V)$ over a (possibly varying) elliptic curve $E$ with $V = \sO_E \oplus L$ and $L \equiv 0$ but not torsion 
or $V$ the non-split extension of two trivial line bundles. 
\item[(4)] $(2,1),$ and $X' \simeq Y \times \mathbb P_1,$ possibly after a base change $\tilde C \to C.$ 
\end{enumerate}
\end{corollary}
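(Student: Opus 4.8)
The plan is to use the conic bundle $\varphi\colon X'\to Y$ of Theorem~\ref{appli} to cut down the possible pairs $(a(X),a(Y))$ to the four listed, and then to treat each case with the Enriques--Kodaira classification of compact complex surfaces. First I would record the elementary constraints. Since $\varphi$ is surjective with connected fibres, pull-back of meromorphic functions gives $a(Y)\le a(X)$; since the general fibre of $\varphi$ is a rational curve, the standard behaviour of the algebraic dimension under such fibrations yields $a(X)\le a(Y)+1$. As $X$ is non-algebraic, $a(X)\le 2$, and by part~(1) of Theorem~\ref{appli} the surface $Y$ is non-algebraic, so $a(Y)\le 1$. Combining these leaves exactly the pairs $(0,0),(1,0),(1,1),(2,1)$, every excluded pair (such as $(2,0)$) being ruled out by one of the inequalities.

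For the two cases with $a(Y)=0$ I would first pin down $Y$. By part~(3) of Theorem~\ref{appli} we have $\kappa(\hat Y)\in\{0,1\}$; the value $1$ is impossible, since the Iitaka fibration of a surface of Kodaira dimension one is an elliptic fibration onto a curve and would force $a(\hat Y)\ge1$. Thus $\kappa(\hat Y)=0$, and the classification of surfaces shows that the minimal model of $\hat Y$ is a K3 surface, an abelian surface, an Enriques surface or a bielliptic surface; the last two are projective, hence excluded, which proves the statement about $Y$ in case~(1). In case~(2) one has in addition $a(X)=1$: I would pass to the algebraic reduction $g\colon X'\dashrightarrow C$ onto a smooth curve, whose general fibre $G$ is a surface of algebraic dimension $0$, hence not uniruled, of non-negative Kodaira dimension, and therefore (again by surface classification) bimeromorphic to a K3 surface or a torus. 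Since the fibres $G$ sweep out $X'$ whereas $Y$ carries only finitely many curves, the restriction $\varphi|_G\colon G\to Y$ must be generically finite onto $Y$; analysing the resulting family of K3 surfaces or tori over $C$, and using that $X'$ has algebraic dimension only one to rule out further moduli, then yields the stated structure in~(2), up to the \'etale quotient.

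For $a(Y)=1$ I would work with the algebraic reduction $f\colon Y\to C$, which is an elliptic fibration, and its composite $f\circ\varphi\colon X'\to C$. Over a general fibre $E$ of $f$ the map $\varphi$ restricts to a $\PP^1$-bundle $\PP(V)\to E$ over an elliptic curve, and the classification of ruled surfaces over an elliptic curve by algebraic dimension separates the two remaining cases: if $a(\PP(V))=1$ then $V\cong\Osheaf_E\oplus L$ with $L$ numerically trivial but not torsion, or $V$ is the non-split self-extension of $\Osheaf_E$, and one verifies that $f\circ\varphi$ is then the algebraic reduction of $X'$, which is case~(3); if $a(\PP(V))=2$ then $\PP(V)$ is trivial and, after a finite base change $\tilde C\to C$ trivialising the $\PP^1$-bundle globally, $X'\simeq Y\times\PP^1$, which is case~(4).

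The step I expect to be the real obstacle is the rigidity analysis hidden in cases~(2) and~(4): one must control how the fibrewise structure over $C$ varies, show that non-trivial variation would contradict the computed value of $a(X)$, and extract the precise \'etale-quotient and base-change statements. This rests on the structure theory of (iso)trivial families of tori and K3 surfaces over a curve and on the explicit geometry of $\PP^1$-bundles over elliptic curves, together with the conic bundle structure of $\varphi$; the remaining bookkeeping --- matching Kodaira and algebraic dimensions and identifying the base curve $C$ --- is routine.
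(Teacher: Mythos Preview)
For the record, the paper does not argue directly: it cites \cite[14.1]{Fuj87} and \cite[9.1]{CP00} for cases (1)--(3) and then extracts (4) from a fibre-product description proved in \cite[9.1]{CP00}. Your direct approach is therefore different in spirit, and the opening moves --- the inequalities $a(Y)\le a(X)\le a(Y)+1$ pinning down the four pairs, and the identification of $Y$ via $\kappa(\hat Y)=0$ when $a(Y)=0$ --- are correct.

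There is, however, a genuine error in your treatment of the two cases with $a(Y)=1$. You propose to separate (3) from (4) by the algebraic dimension of the fibre $\PP(V)$ of $f\circ\varphi$, writing ``if $a(\PP(V))=1$\ldots'' and ``if $a(\PP(V))=2$ then $\PP(V)$ is trivial''. But $\PP(V)$ is a ruled surface over the elliptic curve $E$ and hence projective, so $a(\PP(V))=2$ in every case; your dichotomy is vacuous and the second implication is false for every non-trivial $V$. What actually distinguishes (3) from (4) is not a fibrewise invariant but a global one: whether $X'$ carries meromorphic functions that are non-constant along the fibres of $f\circ\varphi$. This depends on whether the conic bundle $\varphi$ admits a second (bi)section over $Y$, which in turn is governed by how the bundles $V$ vary as $E$ moves over $C$, not by the isomorphism type of a single $\PP(V)$. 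The bundle types you list in (3) are the right ones precisely because those are the $V$ for which $\PP(V)\to E$ has a unique section, and this is what globally obstructs extra meromorphic functions --- but that is an argument about sections of $\varphi$, not about $a(\PP(V))$. This global analysis is exactly what \cite{CP00} carries out, and it cannot be short-circuited by a fibrewise algebraic-dimension count; the rigidity step you flag at the end is indeed where the real content lies, and the cited references are where it is done.
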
 

\begin{proof} Assertion (1), (2)  and (3) are contained in \cite[14.1]{Fuj87}, \cite[9.1]{CP00}. 
As to (4), since it is shown in  \cite[9.1]{CP00} that a more precise structure property holds: 
after passing to $\tilde X$ and $\tilde Y$  via a finite (ramified) cover $\tilde C \to C$, we obtain 
$$\tilde X  \simeq  Z \times_{\tilde C }\tilde Y,$$
where $Z \to \tilde C$ is bimeromorphic to $\Bbb P_1\times \tilde C$, hence the claim.

\end{proof}

\begin{example} The cases $(2)$ and $(4)$ can obviously be realised by products, we  give examples for the other cases. 

a) Let $Y$ be a K3 surface with ${\rm Pic}(Y) = 0$ and set $X = \mathbb P(T_Y).$ Then $X$ does not contain any divisor, hence 
$a(X) = a(Y) = 0.$ 

b) Let $Y$ be a two-dimensional torus of algebraic dimension $1$, and denote by $F$ the general fibre of the algebraic reduction $\holom{f}{Y}{C}$. 
Let $\sL \in {\rm Pic}(Y)$ be a line bundle such that $\sL|_F$ is numerically trivial but not torsion,
and set $X := \mathbb P(\sO_Y \oplus \sL)$. We claim that $a(X) = a(Y) = 1$: 
it suffices to prove that $\kappa(X, \sG) \leq 1$ for any line bundle $\sG$ on $X$.
 Now any line bundle $\sG$ on $X$ is of the form
$$ \sG = \sO_{\mathbb P (\sO_Y \oplus \sL)}(k) \otimes \pi^*(\sF) $$
with a line bundle $\sF$ on $Y.$ If $\kappa (\sG) \geq  1$, then $k \geq 0$ and
$$ 
h^0(X,\sG^{\otimes m}) = h^0(Y,S^{km}(\sO_Y \oplus \sL) \otimes \sF^m)
= \sum_{l=0}^{km} h^0(Y, \sL^l \otimes \sF^m).
$$ 
Since $f$ is the algebraic reduction, the line bundle $\sF$ has degree $0$ on $F$. 
Since $\sL|_F$ is not torsion there exists a unique $l_0 \in \{ 0, \ldots, km\}$ such that $(\sL^l \otimes \sF^m)|_F \simeq \sO_F$.
Thus we have 
$$
h^0(X,\sG^{\otimes m}) 
= h^0(Y, \sL^{l_0} \otimes \sF^m),
$$ 
We immediately obtain $\kappa (X, \sG) \leq 1$. 
\end{example}

\subsection{Threefolds with trivial canonical bundles}

We will study the Albanese map for certain threefolds with terminal singularities (cf. \cite[2.4.1]{BS95} for the existence of Albanese maps
in the presence of rational singularities).

\begin{lemma} \label{lemmaalbanese}
Let $X$ be a (non-algebraic) compact K\"ahler threefold with terminal singularities. If $\kappa (X) = 0$, then 
the Albanese map $\alpha: X \to  A = {\rm Alb}(X)$ is surjective with connected fibres. In particular we have $q(X) \leq 3$.
\end{lemma}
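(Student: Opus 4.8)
The plan is to exploit that $X$ has a non-negative Kodaira dimension minimal model after running the MMP, combined with the structure theory of minimal models with $\kappa = 0$, to control the Albanese map. First I would replace $X$ by a terminal $\mathbb Q$-factorial minimal model $X'$ via the MMP of \cite{HP13a}, \cite{HP15}: since $X$ is not uniruled (as $\kappa(X)=0\geq 0$) and Theorem \ref{theoremmain} now applies, $K_{X'}$ is semi-ample, and because $\kappa(X')=\kappa(X)=0$ the Iitaka fibration is trivial, so $K_{X'}\equiv 0$ and in fact $mK_{X'}\sim \mathcal O_{X'}$ for some $m$. Since the MMP steps are bimeromorphic and the Albanese is a bimeromorphic invariant for varieties with rational singularities (and terminal singularities are rational), $\mathrm{Alb}(X)=\mathrm{Alb}(X')$ and it suffices to prove the statement for $X'$. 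So I would reduce to the case $K_X$ torsion.

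Next I would invoke the structure theory for threefolds with torsion canonical bundle and terminal (hence canonical) singularities: after a finite quasi-\'etale cover $\tilde X \to X$, which does not change $\kappa$ and changes the Albanese only by an isogeny, $\tilde X$ admits a Beauville--Bogomolov type decomposition, or at the very least one knows from Kawamata's work (\cite{Kaw85b} and the abundance results quoted) that the Albanese map $\alpha:\tilde X\to A$ is a surjective morphism with connected fibres onto an abelian variety; this is the analogue of the classical fact that for minimal varieties of Kodaira dimension zero the Albanese map is surjective with connected fibres (Kawamata's theorem). The key inputs are: (i) $c_1(X)=0$ in the appropriate sense forces the cotangent sheaf to have the numerically trivial determinant, and the generic nefness of $\Omega_X$ (Proposition \ref{propositiongenericallynef}) together with the dual statement controls the positivity of $\alpha^*\Omega_A$; (ii) surjectivity follows because otherwise the image would be a proper abelian subvariety $B\subsetneq A$, and pulling back holomorphic one-forms from $A/B$ would produce one-forms vanishing nowhere on $X$ whose wedge powers contradict $\dim\alpha(X)<q(X)$; (iii) connectedness of the fibres follows by Stein factorisation together with the fact that an \'etale cover of an abelian variety is again abelian, so the intermediate variety must coincide with $A$.

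Concretely, for surjectivity I would argue: the holomorphic one-forms $H^0(X,\Omega_X)$ pull back from $A$, so if $\dim\alpha(X)=d<q(X)=:q$, then generic one-forms have everywhere-positive-dimensional kernel, yet a Bogomolov-type vanishing (the one-forms are ``closed'' and the image generates $A$) forces $d=q$. Here the rational singularities guarantee $H^0(X,\Omega_X^{[1]})\cong H^0(A,\Omega_A^1)$ has dimension $q$, and a desingularisation argument transports the classical statement for smooth Kähler manifolds (where Kawamata's theorem on the Albanese of minimal models applies). Once $\alpha(X)=A$, in particular $\dim A\leq \dim X=3$, giving $q(X)\leq 3$; and connectedness of fibres comes from Stein factorisation $X\to A'\to A$ with $A'\to A$ finite \'etale, hence $A'$ abelian, but then by universality of the Albanese $A'=A$.

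The main obstacle I expect is making the ``holomorphic one-forms are closed and their vanishing is controlled'' argument rigorous in the presence of terminal (non-smooth) singularities, i.e. transporting Kawamata's theorem (the Albanese map of a minimal model with $\kappa=0$ is surjective with connected fibres) from the smooth algebraic setting to a singular Kähler threefold. The cleanest route is probably: take a resolution $\pi:\hat X\to X$, note $\kappa(\hat X)=\kappa(X)=0$ and $\hat X$ is Kähler, apply the smooth Kähler version of Kawamata's theorem to $\hat X$ (which is available, e.g. via \cite{Kaw85b} combined with the fact that $\hat X$ has a minimal model with numerically trivial canonical class), and then observe $\mathrm{Alb}(\hat X)=\mathrm{Alb}(X)$ because $X$ has rational singularities, so the Albanese morphism of $X$ factors as $X\to \mathrm{Alb}(X)$ with the same image and connected fibres as that of $\hat X$. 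The surjectivity and connectedness then descend, and $q(X)=\dim\mathrm{Alb}(X)\leq\dim X=3$ is immediate from surjectivity.
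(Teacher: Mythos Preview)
Your final paragraph arrives at exactly the paper's approach: pass to a desingularisation $\hat X \to X$, use that $\kappa(\hat X)=\kappa(X)=0$ (terminal implies canonical, so plurigenera agree), apply the known result for smooth compact K\"ahler manifolds with $\kappa=0$, and descend via $\mathrm{Alb}(\hat X)=\mathrm{Alb}(X)$ (rational singularities). The paper's proof is literally one line: apply \cite[Main Thm.~I]{Uen87} to a desingularisation.

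Your first three paragraphs are an unnecessary detour. Running the MMP to reach a model with $K_{X'}$ torsion, invoking a Beauville--Bogomolov-type splitting after a quasi-\'etale cover, and then sketching a direct surjectivity argument via one-forms all work in principle, but they are far heavier than what the lemma requires, and you yourself abandon them in the last paragraph for the clean route. There is no need to assume or arrange $K_X$ nef or torsion: the statement for smooth K\"ahler manifolds with $\kappa=0$ (Ueno) does not require minimality.

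One genuine citation issue: you appeal to \cite{Kaw85b} for the smooth case, but Kawamata's paper is about algebraic varieties. For compact K\"ahler threefolds the correct reference is Ueno \cite{Uen87}, which is precisely what the paper invokes. Your parenthetical attempt to bridge this (``combined with the fact that $\hat X$ has a minimal model with numerically trivial canonical class'') would make the argument circular in spirit, since producing that minimal model and identifying $K\equiv 0$ is the content of the surrounding section; better to cite the K\"ahler result directly.
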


\begin{proof}
Apply \cite[Main Thm.I]{Uen87} to a desingularisation $\hat X \rightarrow X$.
\end{proof}

\begin{theorem}  \label{theoremkxzero}
Let $X$ be a non-algebraic compact K\"ahler threefold with terminal singularities.
If $K_X \equiv 0$, there exists a Galois cover $f: \tilde X \to X$ that is \'etale in codimension one 
such that either $X$ is a torus or a product of an elliptic curve and a K3 surface. 
\end{theorem}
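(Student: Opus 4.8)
The plan is to first pass to the global index-one cover in order to trivialise the canonical bundle, and then to argue according to the dimension of the Albanese variety. Since $K_X$ is nef, Theorem \ref{theoremmain} shows that $K_X$ is semi-ample; being numerically trivial it is then torsion, say $\sO_X(mK_X)\simeq\sO_X$ with $m$ minimal. Let $f:\tilde X\to X$ be the associated cyclic cover of degree $m$: it is Galois, \'etale away from the finitely many singular points of $X$ (hence \'etale in codimension one), and $\tilde X$ is again a compact K\"ahler threefold with terminal singularities and $\sO_{\tilde X}(K_{\tilde X})\simeq\sO_{\tilde X}$. If $\tilde X$, or any further cover of it that is \'etale in codimension one, is shown to be a torus or a product of a K3 surface and an elliptic curve, then composing with $f$ yields the Galois cover asserted in the theorem; moreover $\tilde X$ is non-algebraic, since otherwise $X=\tilde X/(\Z/m)$ would be algebraic. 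As $K_{\tilde X}\equiv 0$ we have $\kappa(\tilde X)=0$, so by Lemma \ref{lemmaalbanese} the Albanese map $\alpha:\tilde X\to A$ is surjective with connected fibres; put $q:=\dim A=q(\tilde X)\in\{0,1,2,3\}$.

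Next I would dispose of the two extreme values of $q$. If $q=0$, then since $\tilde X$ is Cohen--Macaulay with $\omega_{\tilde X}\simeq\sO_{\tilde X}$, Serre duality gives $h^2(\tilde X,\sO_{\tilde X})=h^1(\tilde X,\omega_{\tilde X})=h^1(\tilde X,\sO_{\tilde X})=q=0$; as $\tilde X$ has rational singularities, a resolution $\hat X$ then satisfies $h^{0,2}(\hat X)=h^{2,0}(\hat X)=0$, so $H^2(\hat X,\Q)$ is dense in $H^{1,1}(\hat X)$, the K\"ahler cone contains a rational class, and $\hat X$ is projective. Hence $\tilde X$ is Moishezon and $X$ is a Moishezon compact K\"ahler threefold, so $X$ is projective by \cite{Nam02}, contradicting the hypothesis. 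If $q=3$, then $\alpha$ is a birational morphism onto an abelian threefold; writing $K_{\tilde X}=\alpha^*K_A+\sum a_iE_i$ with all $a_i>0$ over the smooth variety $A$ and using $K_{\tilde X}\equiv 0\equiv K_A$, the negativity lemma forces the sum to be empty, so $\alpha$ contracts no divisor, and a small proper birational morphism onto the smooth variety $A$ is an isomorphism; hence $\tilde X\simeq A$ is a torus.

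For $q\in\{1,2\}$ the strategy is to recognise $\alpha$ as an isotrivial fibre bundle and then trivialise it by a base change. By generic smoothness, and since the singular locus of $\tilde X$ is finite, a general fibre $F$ of $\alpha$ is smooth and connected, and adjunction for the generic submersion $\alpha$ gives $\omega_F\simeq\omega_{\tilde X}|_F\simeq\sO_F$; thus $F$ is an elliptic curve if $q=2$, and a K3 surface or a $2$-torus if $q=1$. In all cases $\omega_{\tilde X/A}=\omega_{\tilde X}\otimes\alpha^*\omega_A^{-1}\simeq\sO_{\tilde X}$, and $\tilde X$ is relatively minimal over $A$ since $K_{\tilde X}$ is nef. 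I would then invoke the rigidity of minimal elliptic (resp.\ abelian-surface, resp.\ K3) fibrations over a projective base whose relative dualizing sheaf is trivial --- equivalently $\deg\alpha_*\omega_{\tilde X/A}=0$ --- to conclude that $\alpha$ is a smooth fibre bundle with finite structure group up to an isogeny of $A$. A finite \'etale base change $A'\to A$ then makes $\tilde X':=\tilde X\times_A A'\to A'$ either the product $F_0\times A'$ or a torsor under a constant abelian scheme over $A'$, so in any case $\tilde X'$ is smooth; since $A'\to A$ is \'etale, $\tilde X'\to\tilde X$ is \'etale with smooth source, so $\tilde X$ is smooth and $\tilde X'$ is the required cover. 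When $F_0$ is a K3 surface this gives $\tilde X'\simeq F_0\times A'$ with $A'$ an elliptic curve; in the elliptic-curve and $2$-torus cases $\tilde X'$ is a complex torus. Composing with $f$ finishes the proof.

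The main obstacle is the third paragraph: proving, under the single hypothesis $\omega_{\tilde X/A}\simeq\sO_{\tilde X}$, that the fibration $\alpha$ has no degenerate fibres and is isotrivial, and then arranging the base change that turns it into a product or a torus. This needs the Arakelov/Kodaira-type rigidity for families of curves and of surfaces with trivial canonical class, together with the Torelli-type finiteness of the monodromy, adapted to the mildly singular, possibly non-algebraic K\"ahler setting; everything else is a formal consequence of abundance (Theorem \ref{theoremmain}) and the Albanese structure result (Lemma \ref{lemmaalbanese}).
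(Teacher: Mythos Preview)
Your reduction to the index-one cover and the handling of the cases $q=0$ and $q=3$ are correct and close to the paper's argument (the paper reaches $q\geq 1$ via Riemann--Roch, computing $\chi(\tilde X,\sO_{\tilde X})=0$ from Proposition \ref{propositionRR}, rather than by your direct Serre-duality route, but both are fine).

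The genuine gap is in your treatment of $q\in\{1,2\}$. You propose to invoke ``rigidity of minimal elliptic/abelian-surface/K3 fibrations with trivial relative dualizing sheaf'' to conclude that the Albanese map $\alpha$ is itself a smooth fibre bundle, but this is exactly the hard content and you do not supply it. Two points are missing. First, the base $A$ is a possibly non-algebraic torus and $\tilde X$ is only terminal, so standard Arakelov-type rigidity for smooth families over projective curves does not apply; one needs structure theorems specific to compact analytic threefolds with nontrivial Albanese, and the paper quotes precisely these: \cite[Main Thm.~I,2),3)]{Uen87} and \cite[Prop.~6.7(i)]{CP00}. Second, and more seriously, these results only give that $\tilde X$ is \emph{bimeromorphic} to a fibre bundle, not that $\alpha$ is smooth. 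Your attempt to deduce smoothness of $\tilde X$ from smoothness of a base-changed model $\tilde X'$ is circular: the map $\tilde X'\to\tilde X$ is \'etale only if $\alpha$ is already smooth. The paper closes this gap differently: in the $q=1$ case it produces a bimeromorphic map $\mu:\tilde X\dashrightarrow\bar X$ with $\bar X$ a torus or a product $A'\times(\mbox{K3})$, notes that both sides have numerically trivial canonical class so $\mu$ decomposes into a sequence of flops \cite[4.9]{Kol89}, and then observes that $\bar X$ contains no very rigid rational curve, hence there are no flops and $\mu$ is an isomorphism. In the $q=2$ case the paper instead passes to a further \'etale cover raising the irregularity to $3$ and applies the $q=3$ argument. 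Without one of these mechanisms your outline does not establish that $\tilde X$ itself (rather than some bimeromorphic model) has the asserted structure.
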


\begin{proof}
By Theorem \ref{theoremmain}, the divisor $K_{X}$ is semi-ample. Thus we can choose $m \in \N$ minimal such that $\sO_{X}(mK_{X}) = \sO_{X}$. 
Let $X' \rightarrow X$ be the induced cyclic cover, then we have $\sO_{X'}(K_{X'}) \simeq \sO_{X'}$.
In particular $X'$ is Gorenstein with terminal singularities \cite[Cor.5.21]{KM98}. Thus the Riemann-Roch formula \ref{propositionRR} gives 
$\chi(X' ,\sO_{X'}) = 0$. 
Since $h^2(X', \sO_{X'}) \geq 1,$ the variety $X'$ being non-algebraic with rational singularities only, we conclude that
$h^1(X', \sO_{X}) \geq 1$. 

Now consider the Albanese map $\alpha: X' \to {\rm Alb}(X')=:A$. By Lemma \ref{lemmaalbanese} the morphism $\alpha$ is surjective with connected fibres. 

{\em Case 1: $q(X')=3$.}  Then $\alpha$ is bimeromorphic. Since $A$ is smooth, $K_{X'} \equiv E$ with $E$ an effective divisor
whose support equals the $\alpha$-exceptional locus. Since $K_{X'} \equiv 0$ we conclude that $X' \simeq A$.

{\em Case 2: $q(X')=2$.} By \cite[Main Thm.I,2)]{Uen87} there exists a finite set $S \subset A$ such that $\alpha$ is an elliptic bundle
over $A \setminus S$. By \cite[Prop.6.7(i)]{CP00} this implies that (after finite \'etale base change) the fibration
$\alpha$ is bimeromorphic to a compact K\"ahler manifold $X''$ that is an elliptic bundle $X'' \rightarrow A$. 
Yet a K\"ahler manifold which is an elliptic bundle over a torus is an \'etale quotient of a torus \cite{Bea83},
so after finite \'etale cover we have $q(X')=q(X'')=3$. We conclude by applying Case 1.

{\em Case 3: $q(X')=1$.} By \cite[Main Thm.I,3)]{Uen87} there exists an analytic fibre bundle $X^* \rightarrow A$ that is bimeromorphic
to $X'$. Since the Kodaira dimension is an invariant of varieties with terminal singularities, we have $\kappa(X^*)=0$. In particular the general
fibre $F$ of $X^* \rightarrow A$ is the blow-up of a torus or a K3 surface\footnote{If $F$ was bimeromorphic to an Enriques or bielliptic
surface we would have $0 = H^2(X^*, \sO_{X^*}) = H^2(X', \sO_{X'})$, a contradiction.}.
We next run a relative MMP over the elliptic curve $A$. Since $X^* \rightarrow A$ is a fibre bundle, every step of this MMP 
is the blow-up along an \'etale multisection of the fibration, so the outcome is an analytic fibre bundle $\bar X \rightarrow A$ such that
the general fibre $F$ is a torus or K3 surface. If $F$ is covered by a torus, then $\bar X$ is a torus after finite \'etale cover, see e.g. \cite{Bea83}. 
If $F$ is a K3 surface, the fibre bundle trivialises after finite \'etale base change $A' \rightarrow A$ (\cite[Cor.4.10]{Fuj78}, cf. \cite[Lemma 2.15]{a19} for more details).
Thus we have $\bar X \simeq A' \times F$. In conclusion we see that (up to finite \'etale cover) we have a bimeromorphic map
$$
\mu : X' \dashrightarrow \bar X
$$
with $\bar X$ a torus or a product  $A' \times F$. Since both $K_{X'}$ and $K_{\bar X}$ are numerically trivial, we see that $\mu$ is an isomorphism in
codimension one \cite{Han87}, \cite[4.3]{Kol89}. Moreover $\mu$ decomposes into a finite sequence of flops by \cite[4.9]{Kol89}. Note however that the last
flop of this sequence yields a rational curve in $\bar X$ that is very rigid (in the sense of \cite[Defn.4.3]{HP13a}). Since $\bar X$ is a torus or a product  $A' \times F$
such a curve does not exist on $\bar X$. Thus $\mu$ is an isomorphism.
\end{proof}

Using the existence of minimal models for a smooth compact K\"ahler threefold, we deduce

\begin{corollary} Let $X$ be a non-algebraic compact K\"ahler threefold with $\kappa (X) = 0.$ There exists a finite cover which is  bimeromorphic to a torus or
a  product of an elliptic curve and a K3 surface. 
\end{corollary}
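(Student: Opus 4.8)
The plan is to reduce the statement to Theorem \ref{theoremkxzero} via the minimal model program, and then to transport the resulting cover back along a bimeromorphic map. First I would replace $X$ by a desingularisation; this changes neither $\kappa$ nor the property of being non-algebraic (Moishezon-ness is a bimeromorphic invariant), and a finite cover of the resolution with the stated property yields one of $X$ by the Stein-factorisation device used below. So assume $X$ smooth. Since $\kappa(X)=0\geq 0$, the threefold $X$ is not uniruled, hence by the minimal model program for compact K\"ahler threefolds \cite{HP13a}, \cite{HP15} there is a bimeromorphic map $\varphi\colon X\dashrightarrow X'$ onto a normal $\Q$-factorial compact K\"ahler threefold $X'$ with terminal singularities and $K_{X'}$ nef. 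Being bimeromorphic to $X$, the threefold $X'$ is again non-algebraic, and $\kappa(X')=\kappa(X)=0$.

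Next I would run abundance on $X'$: by Theorem \ref{theoremmain} the divisor $K_{X'}$ is semi-ample. Since $\kappa(X')=0$ we have $h^0(X',\ell K_{X'})\leq 1$ for every $\ell$, so choosing $m$ divisible enough that $mK_{X'}$ is globally generated and $h^0(X',mK_{X'})=1$, the unique (up to scalar) section of $mK_{X'}$ vanishes nowhere; hence $mK_{X'}\sim 0$, and in particular $K_{X'}\equiv 0$. Thus $X'$ is a non-algebraic compact K\"ahler threefold with terminal singularities and numerically trivial canonical class, and Theorem \ref{theoremkxzero} provides a finite Galois cover $f\colon \tilde X'\to X'$, \'etale in codimension one, with $\tilde X'$ either a torus or a product of an elliptic curve and a K3 surface.

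It remains to descend $f$ along $\varphi$ to a finite cover of $X$, and this is the only non-formal point: the difficulty is that $\varphi$ need not be a morphism and may contract divisors, so $f$ cannot be pulled back directly. I would resolve the indeterminacy of $\varphi$: pick a smooth $W$ with proper bimeromorphic morphisms $p\colon W\to X$ and $q\colon W\to X'$, let $g\colon W''\to W$ be the normalisation of $W\times_{X'}\tilde X'$, and observe that $g$ is finite, while $W''\to \tilde X'$ is proper bimeromorphic, being the base change of $q$ (an isomorphism over the dense open locus where $q$ is an isomorphism, whose preimage under the finite surjection $f$ is dense in $\tilde X'$) followed by normalisation. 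In particular $W''$ is bimeromorphic to $\tilde X'$. Finally take the Stein factorisation $W''\xrightarrow{h}\tilde X\xrightarrow{\pi}X$ of $p\circ g$: the map $\pi$ is finite, and since $p\circ g$ is generically finite of degree $\deg f$ while the fibres of $h$ are connected, over a general point each of the $\deg\pi$ preimages in $\tilde X$ has exactly one preimage in $W''$, so $\deg\pi=\deg f$ and $h$ is bimeromorphic. Hence $\pi\colon\tilde X\to X$ is the desired finite cover, with $\tilde X$ bimeromorphic to $\tilde X'$, that is, to a torus or to a product of an elliptic curve and a K3 surface. The main obstacle is thus purely bookkeeping around the bimeromorphic map; all the substantive input is already packaged in Theorem \ref{theoremmain} and Theorem \ref{theoremkxzero}.
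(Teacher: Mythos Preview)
Your proof is correct and follows the same route the paper intends: run the MMP to reach a minimal model $X'$, use abundance (Theorem~\ref{theoremmain}) together with $\kappa(X')=0$ to conclude $K_{X'}$ is torsion, and then invoke Theorem~\ref{theoremkxzero}. The paper presents this corollary as an immediate consequence of these two theorems and does not write out a proof; in particular it does not spell out the Stein-factorisation step you use to transport the cover from $X'$ back to $X$. That extra care on your part is sound (and arguably needed for the reading ``finite cover of $X$''), though the paper seems content to interpret the statement as ``some bimeromorphic model of $X$ admits such a finite cover''.
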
 

It was known since some time that Theorem \ref{theoremkummer} is a consequence of the existence of good minimal models \cite[p.731]{Pet98}. We will now derive Theorem \ref{theoremkummer} from the more general Theorem \ref{theoremkxzero}:

\begin{proof}[Proof of Theorem \ref{theoremkummer}] Since $X$ is not uniruled, $K_X$ is pseudo-effective \cite{Bru06}.
By \cite[Thm.1.1]{HP13a} there exists a minimal model $X \dashrightarrow X'$. Since $X$ has algebraic dimension zero,
we see that $\kappa(X)=0$. Thus we have $\nu(X')=\kappa(X')=0$ by Theorem \ref{theoremmain}, i.e. the canonical divisor $K_{X'}$ 
is numerically trivial. Since $X$ (and hence $X'$) is not covered by curves Theorem \ref{theoremkxzero} yields that 
$X' \simeq T/G$ with $T$ a torus and $G$ a finite group. Since $X$ (and hence $X'$) is not covered by positive-dimensional subvarieties,
the torus $T$ has no positive-dimensional subvarieties. In particular $T/G$ has no positive-dimensional
subvarieties, so $X \dashrightarrow T/G$ extends to a morphism.
\end{proof} 

\newcommand{\etalchar}[1]{$^{#1}$}
\def\cprime{$'$}


\begin{thebibliography}{BHPVdV04}

\bibitem[Anc82]{Anc82}
Vincenzo Ancona.
\newblock Faisceaux amples sur les espaces analytiques.
\newblock {\em Trans. Amer. Math. Soc.}, 274(1):89--100, 1982.

\bibitem[Anc87]{Anc87}
Vincenzo Ancona.
\newblock Vanishing and nonvanishing theorems for numerically effective line
  bundles on complex spaces.
\newblock {\em Ann. Mat. Pura Appl. (4)}, 149:153--164, 1987.

\bibitem[Ara10]{Ara10}
Carolina Araujo.
\newblock The cone of pseudo-effective divisors of log varieties after
  {B}atyrev.
\newblock {\em Math. Z.}, 264(1):179--193, 2010.

\bibitem[AT84]{AV84}
V.~Ancona and Vo~Van Tan.
\newblock On the blowing down problem in {${\bf C}$}-analytic geometry.
\newblock {\em J. Reine Angew. Math.}, 350:178--182, 1984.

\bibitem[BCE{\etalchar{+}}02]{8authors}
Thomas Bauer, Fr{\'e}d{\'e}ric Campana, Thomas Eckl, Stefan Kebekus, Thomas
  Peternell, S{\l}awomir Rams, Tomasz Szemberg, and Lorenz Wotzlaw.
\newblock A reduction map for nef line bundles.
\newblock In {\em Complex geometry (G\"ottingen, 2000)}, pages 27--36.
  Springer, Berlin, 2002.

\bibitem[BCHM10]{BCHM10}
Caucher Birkar, Paolo Cascini, Christopher~D. Hacon, and James McKernan.
\newblock Existence of minimal models for varieties of log general type.
\newblock {\em J. Amer. Math. Soc.}, 23(2):405--468, 2010.

\bibitem[BDPP13]{BDPP13}
S{\'e}bastien Boucksom, Jean-Pierre Demailly, Mihai P{\u a}un, and Thomas
  Peternell.
\newblock The pseudo-effective cone of a compact {K\"a}hler manifold and
  varieties of negative {K}odaira dimension.
\newblock {\em Journal of Algebraic Geometry}, 22:201--248, 2013.

\bibitem[Bea83]{Bea83}
Arnaud Beauville.
\newblock Vari\'et\'es {K}\"ahleriennes dont la premi\`ere classe de {C}hern
  est nulle.
\newblock {\em J. Differential Geom.}, 18(4):755--782 (1984), 1983.

\bibitem[BHPVdV04]{BHPV04}
Wolf~P. Barth, Klaus Hulek, Chris A.~M. Peters, and Antonius Van~de Ven.
\newblock {\em Compact complex surfaces}, volume~4 of {\em Ergebnisse der
  Mathematik und ihrer Grenzgebiete. 3. Folge.}
\newblock Springer-Verlag, Berlin, second edition, 2004.

\bibitem[Bou02]{Bou02}
S{\'e}bastien Boucksom.
\newblock C{\^o}nes positifs des vari{\'e}t{\'e}s complexes compactes.
\newblock {\em Thesis (Grenoble), permanent url
  http://tel.archives-ouvertes.fr/tel-00002268/}, 2002.

\bibitem[Bou04]{Bou04}
S{\'e}bastien Boucksom.
\newblock Divisorial {Z}ariski decompositions on compact complex manifolds.
\newblock {\em Ann. Sci. \'Ecole Norm. Sup. (4)}, 37(1):45--76, 2004.

\bibitem[BPEG13]{BEG13}
S{\'e}bastien {B}oucksom; Philippe~{Eyssidieux} and Vincent {Guedj}, editors.
\newblock {\em {An introduction to the K\"ahler-Ricci flow. Selected papers
  based on the presentations at several meetings of the ANR project MACK.}}
\newblock Cham: Springer, 2013.

\bibitem[Bru06]{Bru06}
Marco Brunella.
\newblock A positivity property for foliations on compact {K}\"ahler manifolds.
\newblock {\em Internat. J. Math.}, 17(1):35--43, 2006.

\bibitem[BS94]{BS94}
Shigetoshi Bando and Yum-Tong Siu.
\newblock Stable sheaves and {E}instein-{H}ermitian metrics.
\newblock In {\em Geometry and analysis on complex manifolds}, pages 39--50.
  World Sci. Publ., River Edge, NJ, 1994.

\bibitem[BS95]{BS95}
Mauro~C. Beltrametti and Andrew~J. Sommese.
\newblock {\em The adjunction theory of complex projective varieties},
  volume~16 of {\em de Gruyter Expositions in Mathematics}.
\newblock Walter de Gruyter \& Co., Berlin, 1995.

\bibitem[Cao13]{Cao13}
Junyan Cao.
\newblock A remark on compact {K}{\"a}hler manifolds with nef anticanonical
  bundles and its applications.
\newblock {\em arXiv preprint}, 1305.4397, 2013.

\bibitem[CH13]{a19}
Beno{\^{\i}}t Claudon and Andreas H{\"o}ring.
\newblock Compact {K}\"ahler manifolds with compactifiable universal cover.
\newblock {\em Bull. Soc. Math. France}, 141(2):355--375, 2013.
\newblock With an appendix by Fr{\'e}d{\'e}ric Campana.

\bibitem[Cor07]{Cor07}
Alessio Corti, editor.
\newblock {\em Flips for 3-folds and 4-folds}, volume~35 of {\em Oxford Lecture
  Series in Mathematics and its Applications}.
\newblock Oxford University Press, Oxford, 2007.

\bibitem[CP00]{CP00}
Fr{\'e}d{\'e}ric Campana and Thomas Peternell.
\newblock Complex threefolds with non-trivial holomorphic {$2$}-forms.
\newblock {\em J. Algebraic Geom.}, 9(2):223--264, 2000.

\bibitem[CT13]{CT13}
Tristan~C. Collins and Valentino Tosatti.
\newblock {K}{\"a}hler currents and null loci.
\newblock {\em Inventiones Mathematicae}, doi 10.1007/s00222-015-0585-9, 2013.

\bibitem[Deb01]{Deb01}
Olivier Debarre.
\newblock {\em Higher-dimensional algebraic geometry}.
\newblock Universitext. Springer-Verlag, New York, 2001.

\bibitem[DHP13]{DHP13}
Jean-Pierre Demailly, Christopher~D. Hacon, and Mihai P{\u{a}}un.
\newblock Extension theorems, non-vanishing and the existence of good minimal
  models.
\newblock {\em Acta Math.}, 210(2):203--259, 2013.

\bibitem[DP03]{DP03}
Jean-Pierre Demailly and Thomas Peternell.
\newblock A {K}awamata-{V}iehweg vanishing theorem on compact {K}\"ahler
  manifolds.
\newblock {\em J. Differential Geom.}, 63(2):231--277, 2003.

\bibitem[DP04]{DP04}
Jean-Pierre Demailly and Mihai Paun.
\newblock Numerical characterization of the {K}\"ahler cone of a compact
  {K}\"ahler manifold.
\newblock {\em Ann. of Math. (2)}, 159(3):1247--1274, 2004.

\bibitem[Eno88]{Eno88}
Ichiro Enoki.
\newblock Stability and negativity for tangent sheaves of minimal {K}\"ahler
  spaces.
\newblock In {\em Geometry and analysis on manifolds ({K}atata/{K}yoto, 1987)},
  volume 1339 of {\em Lecture Notes in Math.}, pages 118--126. Springer,
  Berlin, 1988.

\bibitem[Fuj78]{Fuj78}
T.~Fujita.
\newblock On {K}{\"a}hler fiber spaces over curves.
\newblock {\em J. Math. Soc. Japan}, pages 779--794, 1978.

\bibitem[Fuj83]{Fuj83}
Akira Fujiki.
\newblock On the structure of compact complex manifolds in {C}.
\newblock In {\em Algebraic varieties and analytic varieties ({T}okyo, 1981)},
  volume~1 of {\em Adv. Stud. Pure Math.}, pages 231--302. North-Holland,
  Amsterdam, 1983.

\bibitem[Fuj87]{Fuj87}
Takao Fujita.
\newblock On polarized manifolds whose adjoint bundles are not semipositive.
\newblock In {\em Algebraic geometry, {S}endai, 1985}, volume~10 of {\em Adv.
  Stud. Pure Math.}, pages 167--178. North-Holland, Amsterdam, 1987.

\bibitem[Fuj07]{Fuj07}
Osamu Fujino.
\newblock Special termination and reduction to pl flips.
\newblock In {\em Flips for 3-folds and 4-folds}, volume~35 of {\em Oxford
  Lecture Ser. Math. Appl.}, pages 63--75. Oxford Univ. Press, Oxford, 2007.

\bibitem[Fuj11a]{Fuj11b}
Osamu Fujino.
\newblock Non-vanishing theorem for log canonical pairs.
\newblock {\em J. Algebraic Geom.}, 20(4):771--783, 2011.

\bibitem[Fuj11b]{Fuj11}
Osamu Fujino.
\newblock On {K}awamata's theorem.
\newblock In {\em Classification of algebraic varieties}, EMS Ser. Congr. Rep.,
  pages 305--315. Eur. Math. Soc., Z\"urich, 2011.

\bibitem[Ful98]{Ful98}
William Fulton.
\newblock {\em Intersection theory}, volume~2 of {\em Ergebnisse der Mathematik
  und ihrer Grenzgebiete.}
\newblock Springer-Verlag, Berlin, second edition, 1998.

\bibitem[Gra62]{Gra62}
Hans Grauert.
\newblock \"{U}ber {M}odifikationen und exzeptionelle analytische {M}engen.
\newblock {\em Math. Ann.}, 146:331--368, 1962.

\bibitem[Gri10]{Gri10}
Julien Grivaux.
\newblock Chern classes in {D}eligne cohomology for coherent analytic sheaves.
\newblock {\em Math. Ann.}, 347(2):249--284, 2010.

\bibitem[Han87]{Han87}
Masaki Hanamura.
\newblock On the birational automorphism groups of algebraic varieties.
\newblock {\em Compositio Math.}, 63(1):123--142, 1987.

\bibitem[Har77]{Har77}
Robin Hartshorne.
\newblock {\em Algebraic geometry}.
\newblock Springer-Verlag, New York, 1977.
\newblock Graduate Texts in Mathematics, No. 52.

\bibitem[HP15a]{HP13a}
Andreas H{\"o}ring and Thomas Peternell.
\newblock Minimal models for {K}{\"a}hler threefolds.
\newblock {\em Inventiones Mathematicae}, doi 10.1007/s00222-015-0592-x, 2015.

\bibitem[HP15b]{HP15}
Andreas H{\"o}ring and Thomas Peternell.
\newblock Mori fibre spaces for {K}{\"a}hler threefolds.
\newblock {\em J. Math. Sci. Univ. Tokyo}, 22:1--28, 2015.

\bibitem[Kaw85]{Kaw85b}
Y.~Kawamata.
\newblock Pluricanonical systems on minimal algebraic varieties.
\newblock {\em Invent. Math.}, 79(3):567--588, 1985.

\bibitem[Kaw88]{Kaw88}
Yujiro Kawamata.
\newblock Crepant blowing-up of {$3$}-dimensional canonical singularities and
  its application to degenerations of surfaces.
\newblock {\em Ann. of Math. (2)}, 127(1):93--163, 1988.

\bibitem[Kaw91]{Kaw91}
Yujiro Kawamata.
\newblock On the length of an extremal rational curve.
\newblock {\em Invent. Math.}, 105(3):609--611, 1991.

\bibitem[Kaw92a]{Kaw92c}
Yujiro Kawamata.
\newblock Abundance theorem for minimal threefolds.
\newblock {\em Invent. Math.}, 108(2):229--246, 1992.

\bibitem[Kaw92b]{Kaw92b}
Yujiro Kawamata.
\newblock Termination of log flips for algebraic {$3$}-folds.
\newblock {\em Internat. J. Math.}, 3(5):653--659, 1992.

\bibitem[KK10]{KK10}
J{\'a}nos Koll{\'a}r and S{\'a}ndor~J. Kov{\'a}cs.
\newblock Log canonical singularities are {D}u {B}ois.
\newblock {\em J. Amer. Math. Soc.}, 23(3):791--813, 2010.

\bibitem[KM98]{KM98}
J{\'a}nos Koll{\'a}r and Shigefumi Mori.
\newblock {\em Birational geometry of algebraic varieties}, volume 134 of {\em
  Cambridge Tracts in Mathematics}.
\newblock Cambridge University Press, Cambridge, 1998.
\newblock With the collaboration of C. H. Clemens and A. Corti.

\bibitem[Kob87]{Kob87}
Shoshichi Kobayashi.
\newblock {\em Differential geometry of complex vector bundles}, volume~15 of
  {\em Publications of the Mathematical Society of Japan}.
\newblock Princeton University Press, Princeton, NJ, 1987.
\newblock Kan{\^o} Memorial Lectures, 5.

\bibitem[Kol89]{Kol89}
J{\'a}nos Koll{\'a}r.
\newblock Flops.
\newblock {\em Nagoya Math. J.}, 113:15--36, 1989.

\bibitem[Kol97]{Kol95}
J{\'a}nos Koll{\'a}r.
\newblock Singularities of pairs.
\newblock In {\em Algebraic geometry---{S}anta {C}ruz 1995}, volume~62 of {\em
  Proc. Sympos. Pure Math.}, pages 221--287. Amer. Math. Soc., Providence, RI,
  1997.

\bibitem[Kol07]{Kol07}
J{\'a}nos Koll{\'a}r.
\newblock Kodaira's canonical bundle formula and adjunction.
\newblock In {\em Flips for 3-folds and 4-folds}, volume~35 of {\em Oxford
  Lecture Ser. Math. Appl.}, pages 134--162. Oxford Univ. Press, Oxford, 2007.

\bibitem[Kol13]{Kol13}
J{\'a}nos Koll{\'a}r.
\newblock {\em Singularities of the minimal model program}, volume 200 of {\em
  Cambridge Tracts in Mathematics}.
\newblock Cambridge University Press, Cambridge, 2013.
\newblock With a collaboration of S{\'a}ndor Kov{\'a}cs.

\bibitem[Kwc92]{Uta92}
J{\'a}nos Koll{\'a}r~(with 14~coauthors).
\newblock {\em Flips and abundance for algebraic threefolds}.
\newblock Soci\'et\'e Math\'ematique de France, Paris, 1992.
\newblock Papers from the Second Summer Seminar on Algebraic Geometry held at
  the University of Utah, Salt Lake City, Utah, August 1991, Ast{\'e}risque No.
  211 (1992).

\bibitem[LR13]{LR13}
Wenfei Liu and S{\"o}nke Rollenske.
\newblock Geography of {G}orenstein stable log surfaces.
\newblock {\em arXiv preprint}, 1307.1999, 2013.

\bibitem[Miy87]{Miy87}
Yoichi Miyaoka.
\newblock The {C}hern classes and {K}odaira dimension of a minimal variety.
\newblock In {\em Algebraic geometry, {S}endai, 1985}, volume~10 of {\em Adv.
  Stud. Pure Math.}, pages 449--476. North-Holland, Amsterdam, 1987.

\bibitem[Miy88]{Miy88}
Yoichi Miyaoka.
\newblock Abundance conjecture for {$3$}-folds: case {$\nu=1$}.
\newblock {\em Compositio Math.}, 68(2):203--220, 1988.

\bibitem[Mor88]{Mor88}
Shigefumi Mori.
\newblock Flip theorem and the existence of minimal models for {$3$}-folds.
\newblock {\em J. Amer. Math. Soc.}, 1(1):117--253, 1988.

\bibitem[Nak87]{Nak85}
Noboru Nakayama.
\newblock The lower semicontinuity of the plurigenera of complex varieties.
\newblock In {\em Algebraic geometry, {S}endai, 1985}, volume~10 of {\em Adv.
  Stud. Pure Math.}, pages 551--590. North-Holland, Amsterdam, 1987.

\bibitem[Nam02]{Nam02}
Yoshinori Namikawa.
\newblock Projectivity criterion of {M}oishezon spaces and density of
  projective symplectic varieties.
\newblock {\em Internat. J. Math.}, 13(2):125--135, 2002.

\bibitem[Pau98]{Pau98}
Mihai Paun.
\newblock Sur l'effectivit\'e num\'erique des images inverses de fibr\'es en
  droites.
\newblock {\em Math. Ann.}, 310(3):411--421, 1998.

\bibitem[Pet98]{Pet98}
Thomas Peternell.
\newblock Towards a {M}ori theory on compact {K}\"ahler threefolds. {II}.
\newblock {\em Math. Ann.}, 311(4):729--764, 1998.

\bibitem[Pet01]{Pet01}
Thomas Peternell.
\newblock Towards a {M}ori theory on compact {K}\"ahler threefolds. {III}.
\newblock {\em Bull. Soc. Math. France}, 129(3):339--356, 2001.

\bibitem[Sak84]{Sak81}
Fumio Sakai.
\newblock Weil divisors on normal surfaces.
\newblock {\em Duke Math. J.}, 51(4):877--887, 1984.

\bibitem[{Sho}92]{Sho92}
V.V. {Shokurov}.
\newblock {3-fold log flips. Appendix by Yujiro Kawamata: The minimal
  discrepancy coefficients of terminal singularities in dimension three.}
\newblock {\em {Russ. Acad. Sci., Izv., Math.}}, 40(1):95--202, 1992.

\bibitem[TT86]{ToTo86}
Domingo Toledo and Yue Lin~L. Tong.
\newblock Green's theory of {C}hern classes and the {R}iemann-{R}och formula.
\newblock In {\em The {L}efschetz centennial conference, {P}art {I} ({M}exico
  {C}ity, 1984)}, volume~58 of {\em Contemp. Math.}, pages 261--275. Amer.
  Math. Soc., Providence, RI, 1986.

\bibitem[Uen87]{Uen87}
Kenji Ueno.
\newblock On compact analytic threefolds with nontrivial {A}lbanese tori.
\newblock {\em Math. Ann.}, 278(1-4):41--70, 1987.

\bibitem[Var89]{Var89}
Jean Varouchas.
\newblock K\"ahler spaces and proper open morphisms.
\newblock {\em Math. Ann.}, 283(1):13--52, 1989.

\bibitem[Voi02]{Voi02}
Claire Voisin.
\newblock {\em Th\'eorie de {H}odge et g\'eom\'etrie alg\'ebrique complexe},
  volume~10 of {\em Cours Sp\'ecialis\'es}.
\newblock Soci\'et\'e Math\'ematique de France, Paris, 2002.

\end{thebibliography}
\end{document}